\newtheorem{proposition}{Proposition}[section]
\newtheorem{theorem}[proposition]{Theorem}
\newtheorem{corollary}[proposition]{Corollary}
\newtheorem{lemma}[proposition]{Lemma}
\newtheorem{remark}[proposition]{Remark}
\newtheorem{example}[proposition]{Example}
\newcommand{\nc}{\newcommand}
\nc{\md}{\mathrm{d}}
\nc{\I}{{\mathbf 1}}
\nc{\bU}{\mathbb{U}}
\nc{\cU}{\mathcal{U}}
\nc{\bN}{{\mathbf N}}
\nc{\bM}{{\mathbf M}}
\nc{\cB}{{\mathcal B}}
\nc{\cC}{{\mathcal C}}
\nc{\cK}{{\mathcal K}}
\nc{\cL}{{\mathcal L}}
\nc{\R}{{\mathbb R}}
\nc{\C}{{\mathbb C}}
\nc{\M}{{\mathcal M}}
\nc{\N}{{\mathbb N}}
\nc{\cN}{{\mathcal N}}
\nc{\Z}{{\mathbb Z}}
\nc{\bF}{{\mathbf F}}
\nc{\bw}{{\mathbf w}}
\nc{\tC}{\tilde{C}}
\nc{\tc}{\tilde{c}}
\nc{\hC}{\hat{C}}
\nc{\hc}{\hat{c}}
\nc{\tphi}{\tilde{\varphi}}
\nc{\tvphi}{\tilde{\delta}}
\nc{\hvphi}{\hat{\delta}}
\nc{\tPhi}{\tilde{\Phi}}
\nc{\Psif}{\Psi^{!}}
\nc{\Ff}{F^{!}}
\nc{\tbN}{\tilde{\mathbf{N}}}
\nc{\tx}{\tilde{x}}
\nc{\ty}{\tilde{y}}
\nc{\talpha}{\tilde{\alpha}}
\nc{\tf}{\tilde{f}}
\nc{\tGamma}{\tilde{\Gamma}}
\nc{\tmu}{\tilde{\mu}}
\nc{\Ks}{K^{\ast}}
\nc{\Ls}{L^{\ast}}
\nc{\Ce}{\textnormal{m}}
\DeclareMathOperator{\diam}{diam}
\nc{\BP}{\mathbb{P}}
\nc{\BE}{\mathbb{E}}
\nc{\BQ}{\mathbb{Q}}
\nc{\BX}{\mathbb{X}}
\DeclareMathOperator{\BV}{{\mathbb Var}}
\DeclareMathOperator{\BC}{{\mathbb Cov}}
\newcommand{\smallbox}{{\mathord{\scalebox{0.5}{$\Box$}}}}
\numberwithin{equation}{section}
\begin{document}

\renewcommand{\thefootnote}{\fnsymbol{footnote}}
\author{M.A. Klatt\footnote{German Aerospace Center (DLR), Institute for AI Safety and Security, Wilhelm-Runge-Str. 10, 89081 Ulm, Germany; German Aerospace Center (DLR), Institute of Frontier Materials on Earth and in Space, Functional, Granular, and Composite Materials, 51170 Cologne, Germany; Department of Physics, Ludwig-Maximilians-Universität München, Schellingstr. 4, 80799 Munich, Germany.}, G. Last\footnote{Institute for Stochastics, Karlsruhe Institute of Technology, Karlsruhe, Germany.}, L. Lotz\footnote{German Aerospace Center (DLR), Institute of Frontier Materials on Earth and in Space, Functional, Granular, and Composite Materials, 51170 Cologne, Germany; Institute for Stochastics, Karlsruhe Institute of Technology, Karlsruhe, Germany.} \, and D. Yogeshwaran\footnote{Theoretical Statistics and Mathematics Unit, Indian Statistical Institute, Bangalore, India.}}
\title{Invariant transports of stationary random measures:\\ asymptotic variance, hyperuniformity, and examples}

\date{\today}
\maketitle

\begin{abstract}
\noindent
We consider invariant transports of stationary random measures on $\R^d$
and establish natural mixing criteria that guarantee persistence of
asymptotic variances. To check our mixing assumptions, which are based
on two-point Palm probabilities, we combine factorial moment expansion
with stopping set techniques, among others. We complement our results by
providing formulas for the Bartlett spectral measure of the
destinations. We pay special attention to the case of a vanishing
asymptotic variance, known as hyperuniformity. By constructing suitable
transports from a hyperuniform source we are able to rigorously
establish hyperuniformity for many point processes and random measures.
On the other hand, our method can also refute hyperuniformity. For
instance, we show that finitely many steps of Lloyd's algorithm or of a
random organization model preserve the asymptotic variance if we start
from a Poisson process or a point process with exponentially fast
decaying correlation. Finally, we define a hyperuniformerer that turns
any ergodic point process with finite intensity into a hyperuniform
process by randomizing each point within its cell of a fair partition.
\end{abstract}

{
\hypersetup{hidelinks}
\tableofcontents
}

\newpage

\noindent
{\em Keywords:}  random measure, point process, asymptotic variance,
random transport, hyperuniformity, Bartlett spectral measure,
random organization, Lloyd's algorithm

\vspace{0.2cm}
\noindent
2020 Mathematics Subject Classification:  60G55, 60G57

\section{Introduction}\label{sec:intro}

Let $\Phi$ be a stationary random measure on $\R^d$ with positive
intensity~$\gamma$ which is square-integrable,  that is, $\BE \Phi(B)^2<\infty$ for all
bounded Borel sets $B\subset\R^d$. Let $\lambda_d$ be the Lebesgue
(volume) measure on $\R^d$ and let $B_r$ denotes a ball of radius $r\ge
0$, centred at the origin $0$. The {\em asymptotic variance} of $\Phi$
is defined by
\begin{align}\label{e1.2}
\sigma^2_\Phi:=\lim_{r\to\infty}\lambda_d(B_r)^{-1}\BV[\Phi(B_r)],
\end{align}
provided the limit exists. In  this paper we shall study persistence
properties of this variance under stationary transports. 

Our main motivation is the {\em hyperuniform} case $\sigma^2_\Phi=0$.
Hyperuniform point processes are characterized by an anomalous
suppression of density fluctuations on large scales~\cite{TS03,
Torquato2018}. They encompass lattices and many quasicrystals as well as
exceptional disordered ergodic point processes~\cite{Gabrielli2002,
TS03, T16, Torquato2018, BH2024}. These hyperuniform point processes
have recently attracted considerably attention in physics~\cite{TS03,
Corte08, hexner17, Torquato2018, K19, KT19a, ZKH24, DNC24,DereudreFlimmel} and
increasingly also in mathematics~\cite{GL17a, GL17b, KLY20, Coste2021,
L22a, L23, HueLeble24, BH2024}. It is known in special cases that
stationary transports keep the asymptotic variance. A key example is the perturbed
lattice to be discussed below and also later in this paper; see
\cite{Gacs75} for a seminal reference.
Another example is the stable matching between the lattice $\Z^d$ and a
Poisson process of higher intensity, explored in \cite{KLY20}. It was
proved there that the matched Poisson points are hyperuniform, even
though the case of the stationarized lattice was left open. Transports
from the stationarized lattice were also studied in the recent preprint
\cite{DFHL24}, where the authors obtained (among other things) sharp
persistence results for $d\in\{1,2\}$. Further examples, where
transports of random measures and asymptotic variance have been
studied in physics, are displacement fields~\cite{G04, Gabrielli2008,
KT18, KKT20}, diffusion processes~\cite{T21, ZKH24, DNC24}, random
self-organization~\cite{Corte08, HexnerLevine15, HexnerLevine17,
hexner17, GoldsteinLebowitzSpeer24}, construction principles for
hyperuniform porous media~\cite{KT19a, KT19b}, and the formation and
structural characteristics of foams and cellular
structures~\cite{FSFB14, K19, ChiecoDurian21, Newby2025}.
Relations between transports of random measures and the asymptotic
variance have also recently been studied in mathematics as well \cite{HueLeble24, LRY2024, Erbar25, Ezoe2025, Flimmel25}.

Our first aim here is to establish mixing criteria for the persistence of asymptotic variance
in the general setting of stationary transports.
Our second aim is to use these results for the construction of new rigorous
examples of hyperuniform point processes and random measures.

For our most general result, Theorem \ref{maintheoremmixing}, 
we consider a stationary locally square-integrable random measure $\Phi$, 
called the {\em source}, and two random {\em transport kernels} $K$ and $L$, 
assuming {\em joint stationarity}. The kernels $K,L$ transport  $\Phi$ to the {\em destinations}
$K\Phi$ and $L\Phi$. 
Under a suitable mixing-type assumption (expressed in terms of Palm expectations), 
these two destinations have the
same asymptotic variance. At first glance the choice of two destinations 
may sound counterintuitive, but is in fact a key ingredient of our approach.
For example, choosing $L$ to be the average of $K$ (in a certain sense), we show in
Theorem~\ref{maintheoremmixing2} persistence of hyperuniformity from the source $\Phi$ to
the destination $K\Phi$, again under a mixing assumption.
Alternatively, in Theorem \ref{t:splitting} we assume
$K$ to be a conditionally independent invariant randomization
of a given transport kernel $L$. Then $K\Phi$ and $L\Phi$
have the same asymptotic variance, provided one of them exists
and without further mixing assumptions. 
This works even if $\Phi$ is not locally square integrable.
A finite intensity is enough.
We  wish to stress that stationary transports can
destroy hyperuniformity, even if source and transport are independent; 
see \cite{DFHL24} for an example. 
Therefore, some assumptions are required to keep
the asymptotic variance.
Most likely our mixing assumption is not optimal. But in our opinion
it is a rather mild and natural constraint. In particular we do not need to make
any moment assumptions on the transport kernel.
We use our findings to generalize earlier results in the case of
transports independent of the source and to
construct several new rigorous examples of
hyperuniform random measures, some of them come from (or are at least
motivated by) the physics literature.
Even in the non-hyperuniform case, our results on equality of asymptotic variances can be used to provide variance lower bounds which are useful and sometimes important for central limit theorems; see for example \cite{Nazarov12,BYY19,KY23}.

The {\em Bartlett spectral measure} (also called {\em diffraction measure})
is an important tool for analyzing
second order properties of stationary random measures; see e.g.\
\cite{Bremaud20} and Subsection \ref{subBartlett}. Its Lebesgue density 
(multiplied by the intensity)
is known as {\em structure factor} in physics, where it plays a fundamental 
role in scattering experiments; see e.g.\ \cite{Torquato2018}. 
Estimating the structure factor is an important task for the statistics of spatial point
processes; see e.g.\ \cite{MugRenshaw96}.
We complement all of our results with formulas for the spectral measure
of the destinations. In the setting of Theorem \ref{maintheoremmixing2} 
for instance, Theorem~\ref{maintheoremmixing2fourier}  shows how the
spectral measure of $K\Phi$ can be expressed in terms of the spectral measure
of $\Phi$ and Palm expectations of  the spatially correlated Fourier transforms 
of $K$.

Rather than going here into further details of our general results, we illustrate them with
two examples.
In the first example we consider a simple (no multiple points) stationary point process
$\Psi$ with intensity $\gamma$ along with a random partition $\{C(x):x\in\Psi\}$ of $\R^d$.
We refer to $C(x)$ as {\em cell} associated with $x\in\Psi$ and assume that
the partition is translation covariant; see Example \ref{ex:5.2} for more detail.
We do not impose any topological restrictions on the cells; in particular,
they might not be connected. 
Let us now assume that the partition is {\em fair}, that is, we have almost surely that
$\lambda_d(C(x))=\gamma^{-1}$ for all $x\in\Psi$.
Fair partitions can be constructed
for any stationary and ergodic point process (with finite intensity),
even without further randomization; see \cite{HoHolPe06, HolroydPeres05}
and also \cite[Corollary 10.10]{LastPenrose17}.
Let $Z(x)$, $x\in\Psi$,  be random vectors in $\R^d$ which are conditionally independent
given $\Psi$ and $\{C(x):x\in\Psi\}$ and whose conditional distributions are
uniform on $C(x)$. As a consequence of our general Theorem \ref{t:splitting2}  we show 
in Example \ref{ex:hyperuniformerer} that the point process
\begin{align}
\Gamma:=\sum_{x\in\Psi}\delta_{Z(x)}
\end{align}
is hyperuniform, see also Figure \ref{f:hyperuniformerer GRF} (left). Hence, redistributing the points from $\Psi$
(conditionally) independent and completely at random in their associated
cells, results in a hyperuniform process $\Gamma$. We call such an
procedure that turns an ergodic point process with finite intensity
into a hyperuniform counterpart a {\em hyperuniformerer}.
In a simulation study, we apply our hyperuniformerer to a
cloaked lattice, the Poisson point process, and an
anti-hyperuniform hyperplanes intersection process, all of which are
turned by the hyperuniformerer into (apparently) different hyperuniform
point processes. Our
example for such a hyperuniformerer has been strongly motivated by
\cite{Gabrielli2008}, where the authors also considered a fair
partition, but moved the points to the centers of mass of their cells.
This choice requires a number of technical assumptions and some
mathematical details were omitted.

\begin{figure}[t]\label{f:hyperuniformerer GRF}
  \centering
  \includegraphics[width=0.49\textwidth]{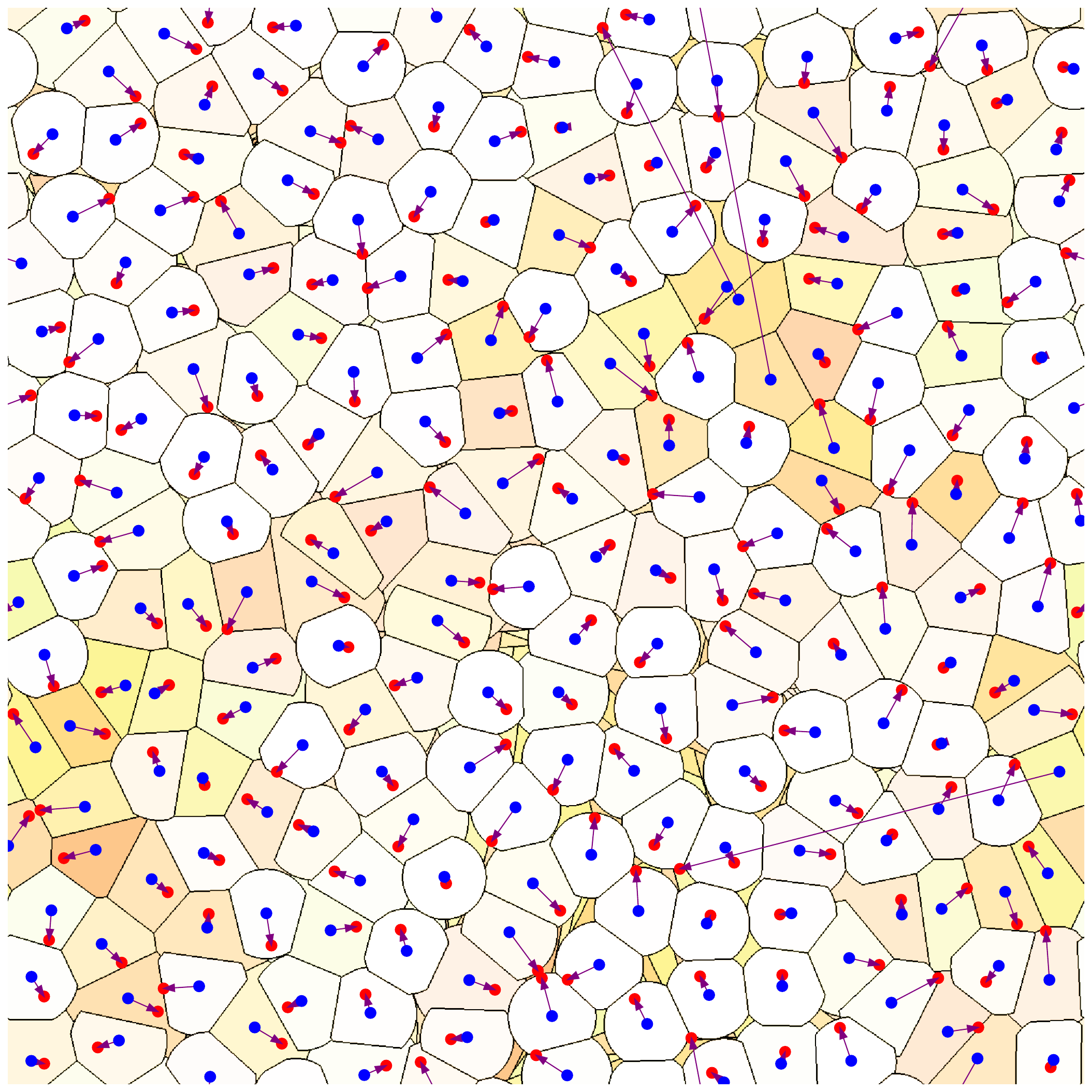}%
  \hfill%
  \includegraphics[width=0.49\textwidth]{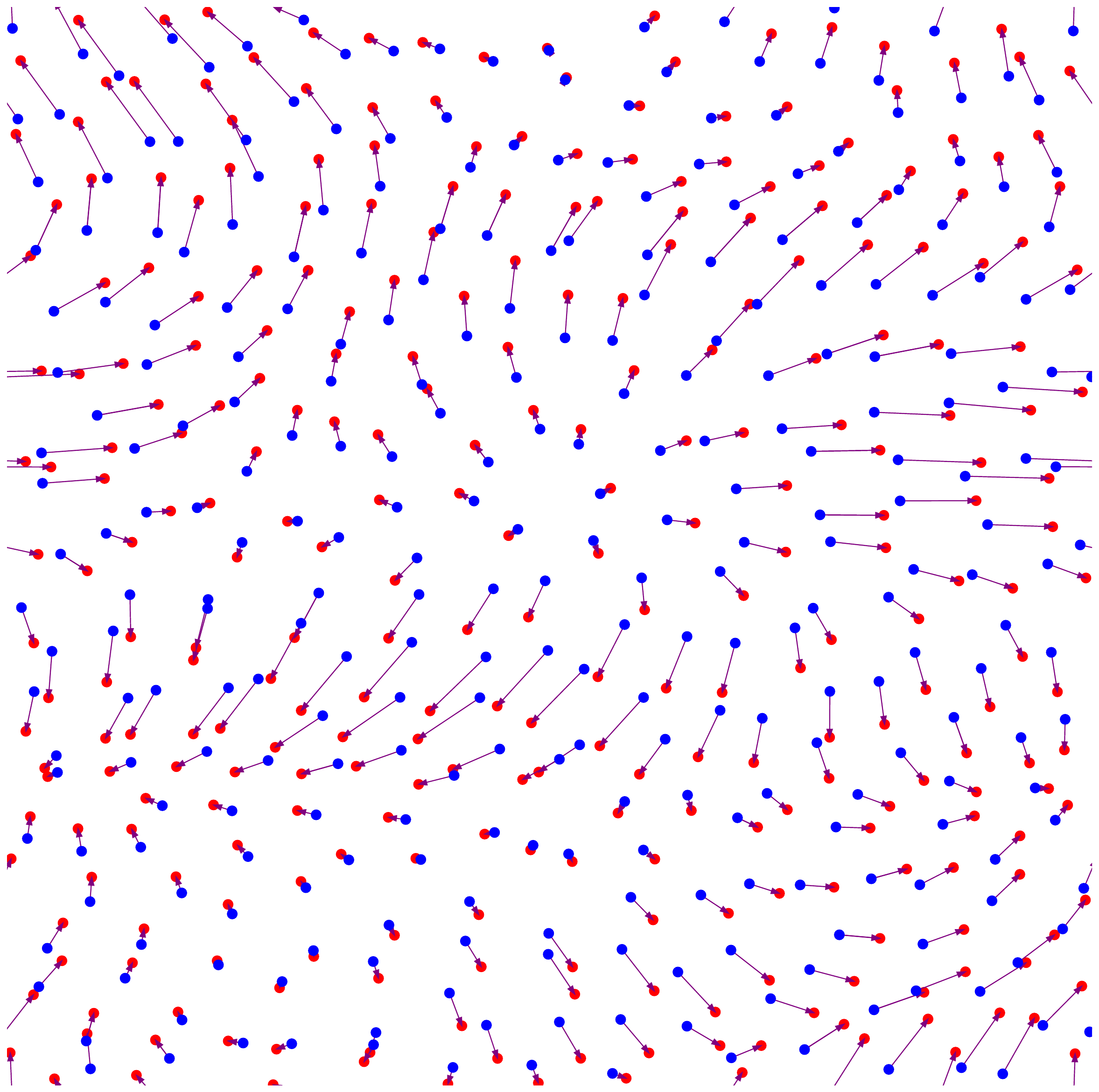}
  \caption{Two examples of invariant transports: (Left, \textit{hyperuniformerer}) We start from a
    non-hyperuniform point pattern (blue points) and construct via
    stable marriage a fair partition of space, where each cell
    has the same area. Then, we place in each cell~---~independently and
    uniformly distributed~---~a point; the resulting point process is
    hyperuniform (red points). 
    (Right, \textit{Gaussian displacements}) Each point of
    the hyperuniform source (blue points) is displaced according to a
    Gaussian random field (purple arrows) so that the destination (red
    points) is also hyperuniform; see
  Example~\ref{ex:gaussiandisplacements}.}
  \label{fig:intro}
\end{figure}

In our second example we again consider
a simple point process with finite intensity $\gamma$, this time denoted by $\Phi$.
Let $\tau\colon\R^d\to\R^d$ be an {\em invariant allocation}, that is, a (measurable) mapping which depends
on the underlying randomness in a translation covariant way; see \eqref{allocation}.
Then 
\begin{align}\label{e2.1c}
\Psi:=\sum_{x\in\Phi}\delta_{\tau(x)}
\end{align}
is a stationary point process with intensity $\gamma$. 
Let $\BP^\Phi_{0}$ be the {\em Palm probability measure} associated with $\Phi$,
describing the conditional distribution of the underlying randomness (including $\Phi$)
given that the origin $0$ is a point of $\Phi$. 
The {\em two-point  Palm probability measures} $\BP^\Phi_{0,y}$, $y\in\R^d$, admit a
similar interpretation; see Subsection \ref{subtwopoint} for more detail. 
For $y\in\R^d$ we define 
\begin{align}\label{kappaspecial}
\kappa(y):=\big\|\BP_{0,y}^\Phi((\tau(y)-y, \tau(0))\in\cdot) - \BP_0^\Phi(\tau(0)\in\cdot)^{\otimes2}\big\|,
\end{align}
where $\|\cdot\|$ denotes the total variation norm.
If $\kappa(y)\to 0$ as $\|y\|\to\infty$,
then we might expect $\Phi$ and $\Psi$  to have the same asymptotic variance.
Our Theorem \ref{maintheoremmixing2} indeed shows that the latter is the case, provided that
\begin{align}\label{eintro7}
\int \kappa(y)\,\alpha_\Phi(\md y)<\infty,
\end{align}
where $\alpha_\Phi(\cdot):=\gamma\int \mu(\cdot)\BP^\Phi_0(\md \mu)$
is the {\em reduced second moment measure} of $\Phi$. If $\Phi$ has a {\em pair correlation function}
$g$ satisfying $\int |g(x)-1|\,\md x<\infty$, then \eqref{eintro7} is equivalent to
\begin{align}\label{eintro8}
\int \kappa(y)g(y)\,\md y<\infty.
\end{align}
Hence, as $\|y\|\to\infty$,  $\kappa(y)g(y)$ should tend to $0$ sufficiently fast.
Assume now that $\tau$ is independent of $\Phi$ and any further randomness 
(if at all present). Then  the distribution of $(\tau(y), \tau(0))$ (resp.\ $\tau(0)$) under
$\BP^\Phi_{0,y}$ (resp.\ $\BP^\Phi_0$) is the stationary distribution of these random elements, 
simplifying the definition of $\kappa(y)$.
Such {\em independent (additive) displacements} were first studied in \cite{G04}.
An important special case is the {\em perturbed stationary lattice}. In this case
$\Phi=\sum_{x\in\Z^d}\delta_{x+U}$, where $U$ is uniformly distributed on the unit cube
and the random field $\{\tau(x)-x:x\in\Phi\}$ is stationary and independent of $U$.
Since $\alpha_\Phi=\sum_{y\in\Z^d}\delta_y$ condition \eqref{eintro7} boils down to
\begin{align}\label{HUlattice}
\sum_{y\in\Z^d} \kappa(y)<\infty,
\end{align}
and $\kappa(y)$ is the {\em $\beta$-mixing coefficient} between the random
variables $\tau(y)-y$ and $\tau(0)$. If \eqref{HUlattice} holds, 
then $\Psi$ is hyperuniform. The recent preprint \cite{Flimmel25} draws the same conclusion 
under assumptions on $\alpha$-mixing coefficients and an additional moment condition; 
see Remark \ref{r:Flimmel25}.
Further results on hyperuniformity of perturbed lattices can be found in \cite{KKT20,DFHL24}. 
We will treat perturbed lattices in Section \ref{s:indepdispl} in the more general setting of
independent translation fields (and kernels) applied to general random measures
and purely discrete point processes. In the case of a Gaussian translation field 
condition \eqref{HUlattice} translates into the integrability of the
covariance function of the field, see also Figure \ref{f:hyperuniformerer GRF} (right).

The paper is organized as follows.  Section \ref{sec:prelim}
summarizes some fundamental concepts for stationary random measures
and transports, used throughout the paper. Section
\ref{sec:Eq_asymp_var} contains two of our main theoretical findings.
Theorems \ref{maintheoremmixing} and \ref{maintheoremmixing2} deal
with the asymptotic variance of a transported random measure $\Phi$,
while Theorems \ref{maintheoremmixingfourier} and
\ref{maintheoremmixing2fourier} are the corresponding Fourier
versions.  In Section \ref{sec:randtransport} we assume that $\Phi$ is
purely discrete.  Theorems \ref{t:splitting} and \ref{t:splitting2}
provide significant generalizations of the hyperuniformerer, which is
discussed in Example \ref{ex:hyperuniformerer}. In Section
\ref{s:indepdispl} we investigate the special case when the transport
is given by an independent displacement field $Z$ (or displacement
kernel $K$).  Then the Palm expectations reduce to ordinary
(stationary) expectations and our mixing condition \eqref{eintro7} boils down to a
$\beta$-mixing condition on the two-dimensional marginals of
$Z$. Section \ref{s:mix_marked_pp} gives a general theorem to
  verify mixing condition \eqref{eintro7} for transport kernels based on stopping
  sets. This is done for point processes satisfying an asymptotic
  decorrelation property with the help of {\em factorial moment expansion} (FME). 
Sections  \ref{s:localalgo} and \ref{s:volumes}
  illustrate our general theorems by
  showing asymptotic equality of variances in some examples including
  random organization model and Lloyd's algorithm. 
The final Section \ref{s:hyprandset} contains an application
to hyperuniform random measure supported by random sets, inspired 
by \cite{KT19a, KT19b}.
It is possible to read sections \ref{s:indepdispl},  \ref{s:localalgo}, \ref{s:volumes}, and \ref{s:hyprandset} without the theoretical background from the other sections.
The Appendix \ref{AppendixPalm} 
provides an elaborate background on Palm calculus
along with a self-contained derivation of FME for point
processes. We give a quick derivation of total variation distance between two Gaussian random vectors in Appendix \ref{s:tvgrv}, which is used in Section \ref{s:indepdispl}.

\section{Preliminaries}\label{sec:prelim}

In this section, we recall some required notions about random measures
and point processes, and in particular, on stationary point processes
and random measures (Section \ref{substatpp}) and their Bartlett
spectral measure (Section \ref{subBartlett}). For more details on
point processes and random measures, we refer the reader to
\cite{Kallenberg17,LastPenrose17,BBK20,Bremaud20}. Finally, in Section
\ref{subTransportKernels}, we introduce transport maps and kernels, as
well as establish the relation between them. Palm theory and
higher-order correlations are introduced in Appendix
\ref{AppendixPalm}.

\subsection{Stationary point processes and random measures}\label{substatpp}

Given a metric space $\mathbb{X}$, we denote by $\bM(\mathbb{X})$
the space of all locally finite measures $\varphi$ on $\mathbb{X}$,
equipped with the smallest $\sigma$-field making the mappings $\varphi\mapsto \varphi(B)$
measurable for each Borel set $B\subset \mathbb{X}$. Of particular interest to
us are the cases $\mathbb{X}=\R^d$ and $\mathbb{X}=\R^d\times\R^d$.
A {\em random measure} on $\mathbb{X}$ is a random element $\Phi$ of 
$\bM(\mathbb{X})$ 
defined over some fixed probability space $(\Omega,\mathcal{A},\BP)$.
Note that $\Phi$ can be seen as kernel from $\Omega$ to $\R^d$.
Let $\bN(\mathbb{X})$ be the space
of all $\varphi\in \bM(\mathbb{X})$ taking values in $\N\cup\{\infty\}$.
This is a measurable subset of $\bM(\mathbb{X})$. We  
equip it with the trace $\sigma$-field. A measure $\varphi\in\bN$ is called {\em simple} if
$\varphi(\{x\})\in\{0,1\}$ for each $x\in\R^d$. In this case, we may
identify $\varphi$ with its {\em support} $\{x\in\R^d:\varphi(\{x\})>0\}$.
Let $\mathbf{N}_s$ denote the set of all such simple measures.
A {\em point process} on $\mathbb{X}$ is a random element of 
$\bN(\mathbb{X})$. 
It can be represented as
\begin{align}\label{e1.5}
\Phi=\sum^{\Phi(\mathbb{X})}_{n=1}\delta_{X_n},
\end{align}
where $X_1,X_2,\ldots$ are random elements of $\mathbb{X}$, and $\delta_x$ is the
Dirac measure at $x$. If  $X_m\ne X_n$ for $m,n\in\N$ with $m< n\le\Phi(\mathbb{X})$,
then $\Phi$ is said to be {\em simple} or equivalently a point process $\Phi$ with $\BP(\Phi\in \mathbf{N}_s)=1$ is called {\em simple}.

Now onwards, we consider a random measure on $\R^d$, equipped with the Euclidean metric
and the Borel $\sigma$-field $\cB^d$.
A  random measure $\Phi$ on $\R^d$ 
is {\em stationary} if $\theta_x\Phi\overset{d}{=}\Phi$ for each $x\in\R^d$,
where $\theta_x\varphi:=\varphi(\cdot+x)$ for $\varphi\in\mathbf{M}$. 
In this case, we have
$\BE \Phi(B)=\gamma\lambda_d(B)$, $B\in\mathcal{B}$,
where $\gamma:=\BE\Phi([0,1]^d)$ is the {\em intensity} of $\Phi$. 
We then have the {\em Campbell formula}
\begin{align}\label{eCampbell}
\BE \int f(x)\,\Phi(dx) = \gamma \int f(x)\,\md x
\end{align} 
for each measurable $f\colon\R^d\to[0,\infty]$.

Assume that $\Phi$ is a stationary random measure with finite intensity $\gamma$.
The {\em reduced second moment measure} of $\Phi$
is the measure $\alpha_\Phi$ on $\R^d$, defined by
\begin{align}\label{ersecm}
\alpha_\Phi(B):=\BE \int \I\{x\in[0,1]^d,y-x\in B\}\,\Phi^2(\md (x,y)),\quad B\in\cB^d.
\end{align}
Then, by the refined Campbell theorem \eqref{erefinedC} and \eqref{alphaPalm}, we have  
\begin{align}\label{e2.55}
\BE \int f(x,y)\,\Phi^2(\md (x,y))
&=\iint f(x,x+y)\,\,\alpha_\Phi(\md y)\,\md x
\end{align}
for each measurable $f\colon\R^d\times\R^d\to[0,\infty]$.
Moreover, if $\Phi$ is {\em locally square-integrable}, that is $\BE \Phi(B)^2<\infty$ for all bounded Borel sets $B\subset\R^d$, it  follows that $\alpha_\Phi$ is locally finite, as in \cite[Chapter 8]{LastPenrose17}.

Assume that $\Phi$ is a stationary locally square-integrable random measure.
Let $W \in \cK_0$, the space of convex bounded sets containing $0$ in their interior. 
Then $\Phi$ is said to be {\em hyperuniform} with respect to (w.r.t.) $W$ if
\begin{align}\label{e:hyperuniform}
\lim_{r\to\infty}\lambda_d(rW)^{-1}\BV[\Phi(rW)]=0.
\end{align}
In general, this property depends on $W$. If $W$ is the unit ball, then
we simply call $\Phi$ {\em hyperuniform}.
To study this and other second order properties of $\Phi$, it is
convenient to work with the {\em covariance measure} $\beta_\Phi$  of
$\Phi$. This is the signed measure 
\begin{align}\label{betaPhi}
\beta_\Phi:=\alpha_\Phi-\gamma^2\lambda_d,
\end{align}
well-defined and finite on bounded Borel sets; see also Subsection \ref{subBartlett}.
Let $f,g\colon\R^d\to\R$ be bounded measurable functions
with bounded support. Then,
it follows directly from \eqref{e2.55} and \eqref{eCampbell}
that
\begin{align}\label{e:covariance}
\BC[\Phi(f),\Phi(g)]=\int f\star g(y)\,\beta_\Phi(\md y),
\end{align}
where
$$
(f\star g)(y):=\int f(x)g(x-y)\,\md x,\quad y\in\R^d,
$$
is the {\em tilted convolution} of $f$ and $g$.
In particular, we have for any bounded $B\in\cB^d$ that
\begin{align}
\BV[\Phi(B)]=\int \lambda_d(B\cap (B+y))\,\beta_\Phi(\md y).
\end{align}
If the covariance measure of $\Phi$ has finite total variation, that is
\begin{align}\label{finitetotalv}
|\beta_\Phi|(\R^d)<\infty,
\end{align}
then it follows from dominated convergence that
the asymptotic variance $\eqref{e1.2}$ of $\Phi$ exists and
is given by
\begin{align}\label{e:asvariance}
\lim_{r\to\infty}\lambda_d(rW)^{-1}\BV[\Phi(rW)]=\beta_\Phi(\R^d),
\end{align}
where $W \in \cK_0$.
In particular, $\Phi$ is hyperuniform w.r.t.\ $W$ iff 
\begin{align}\label{HU}
\beta_\Phi(\R^d)=0.
\end{align}
This condition does not depend on $W$.
If $\Phi$ is a point process, then it is sometimes more convenient to
work with the {\em reduced second factorial moment measure} $\alpha^!_\Phi$ 
of $\Phi$, defined by 
\begin{align}\label{e2.22}
\alpha^!_\Phi(B):=\BE \int \I\{x\in[0,1]^d,y-x\in B\}\,\Phi^{(2)}(\md (x,y)),\quad B\in\cB^d;
\end{align}
see \cite[Chapter 8]{LastPenrose17}. Here, $\Phi^{(2)}$ is a point process on $\R^d\times\R^d$, defined
by
\begin{align*}
\Phi^{(2)}:=\sum_{m\ne n}\I\{(X_m,X_n)\in\cdot\},
\end{align*}
where $\Phi$ is given by \eqref{e1.5}.  Instead of  \eqref{e2.55}, 
we then have
\begin{align}\label{e2.56}
\BE \int f(x,y)\,\Phi^{(2)}(\md (x,y))=\iint f(x,x+y)\,\,\alpha^!_\Phi(\md y)\,\md x.
\end{align}
It is easy to see that
\begin{align}
\alpha^!_\Phi=\alpha_\Phi - \gamma\delta_0.
\end{align}
If $\alpha^!_\Phi$ has a Lebesgue density $\rho_2$, then 
\begin{align}\label{e:2098}
\beta_\Phi=(\rho_2-\gamma^2)\cdot\lambda_d+\gamma\delta_0.
\end{align}
Then, \eqref{finitetotalv} means
\begin{align}\label{e:totalint}
\int \big|\rho_2(x)-\gamma^2\big|\,\md x<\infty.
\end{align}
If the latter condition holds, then hyperuniformity is equivalent to
\begin{align}
\int \big(\rho_2(x)-\gamma^2\big)\,\md x=-\gamma.
\end{align}
The function $\gamma^{-2}\rho_2$ is known as the {\em pair correlation function} of $\Phi$.
If $\Phi$ is the stationary lattice, then $\beta_\Phi=\sum_{k\in\Z^d}\delta_k - \lambda_d$, and if $\Phi$ is a stationary Poisson process with finite intensity $\gamma$, then $\beta_\Phi=\gamma\delta_0$.

\subsection{The Bartlett spectral measure}\label{subBartlett}

In this paper, we understand a {\em signed measure} on $\R^d$ to be a $\sigma$-additive
function $\nu$ on the bounded Borel sets with $\nu(\emptyset)=0$. Then the restriction
of $\nu$ to a bounded set is the difference of two finite measures. If $B\in\cB^d$ is not
bounded, then $\nu(B)$ might not be defined. However, by a straightforward extension
procedure, the total variation measure $|\nu|$ of $\nu$ is well-defined and locally finite.
A signed measure $\nu$ is called {\em positive semidefinite} if
\begin{align*}
\int (f\star f)(y)\,\nu(\md y)\ge 0
\end{align*}
for all bounded measurable $f\colon\R^d\to\R$ with bounded support.
In this case, there exists a locally finite (non-negative) measure $\hat\nu$ on $\R^d$,
the {\em Fourier transform} of $\nu$, satisfying
\begin{align}\label{e:spectral}
\int f\star g(x) \,\nu(\md x)=\frac{1}{(2\pi)^d}\int \hat{f}(k)\overline{\hat{g}(k)}\,\hat{\nu}(\md k),
\end{align}
for all bounded measurable $f,g\colon\R^d\to\R$ with bounded support;
see e.g.\ \cite{BergForst75}.
Here, $\hat{f}$ denotes the {\em Fourier transform} of $f\in L^1(\lambda_d)$, defined by
\begin{align*}
\hat{f}(k):=\int f(x)e^{-i\langle k,x\rangle}\,\md x,\quad k\in\R^d.
\end{align*}
If $\nu$ has a finite total variation, then the Fourier transform of $\nu$ is absolutely
continuous w.r.t.\ $\lambda_d$. By \cite[Proposition 4.14]{BergForst75}, we have
in fact that
$\hat\nu(\md k)=\hat{\nu}(k)\md k$, where (with a common abuse of notation)
\begin{align*}
\hat{\nu}(k):=\int e^{-i\langle k,x\rangle}\,\nu(\md x),\quad k\in\R^d.
\end{align*}
The function $\hat{\nu}$ is continuous and bounded. One can actually also use this as the definition for the Fourier transform of any signed measure $\nu$ with finite total variation.

Let us now fix a stationary locally square-integrable random measure $\Phi$ on $\R^d$.
As before, we denote by $\gamma$ the intensity and by $\beta_\Phi$ the covariance measure
of $\Phi$. It follows from \eqref{e:covariance} that
the measure $\beta_\Phi$ is positive semi-definite.
Its Fourier transform $\hat{\beta}_\Phi$
is known as the {\em Bartlett spectral measure} of $\Phi$; see \cite{Bremaud20}.
Let $f,g\colon\R^d\to\R$ be measurable bounded functions with bounded support.
Combining \eqref{e:covariance} and \eqref{e:spectral} yields
\begin{align}
\BC[\Phi(f),\Phi(g)]=\frac{1}{(2\pi)^d}\int \hat{f}(k)\overline{\hat g(k)}\,\hat{\beta}_\Phi(\md k),
\end{align}
and in particular,
\begin{align}\label{e:spectralvariance}
\BV[\Phi(f)]=\frac{1}{(2\pi)^d}\int |\hat{f}(k)|^2\,\hat{\beta}_\Phi(\md k).
\end{align}
By \cite[Theorem 3.6]{BH2024}, $\Phi$ is hyperuniform w.r.t.\ a Fourier smooth (see \eqref{Fsmooth}) $W \in \cK_0$ iff
\begin{align}\label{spectralasymptoticvariance}
\lim_{\varepsilon\to 0}\varepsilon^{-d}\hat{\beta}_\Phi(B_\varepsilon)=0,
\end{align}
If $\Phi$ is the stationary lattice, then $\hat\beta_\Phi=\sum_{k\in\Z^d\setminus\{0\}}\delta_k$, and 
if $\Phi$ is a stationary Poisson process with finite intensity $\gamma$, then
$\hat\beta_\Phi=\gamma\lambda_d$;
see e.g. \cite{Torquato2018,BH2024,Coste2021}.
If $\beta_\Phi$ has finite total variation, then we write the density
in the form 
\begin{align}\label{sfactor}
\hat{\beta}_\Phi(\md k)=\gamma S_\Phi(k)\,\md k,
\end{align}
where $S_\Phi$ is continuous.
In the physics literature, the function $S_\Phi\colon\R^d\to\R^d$ is known
as {\em structure factor} of $\Phi$. Using \eqref{e:asvariance}, one can then see for any  $W \in \cK_0$,
\begin{equation}\label{e:asvariancefourier}
    \lim_{r\to\infty}\lambda_d(rW)^{-1}\BV[\Phi(rW)] = \beta_\Phi(\R^d) = \gamma S_\Phi(0).
\end{equation}
Thus, hyperuniformity of $\Phi$ is equivalent to
\begin{align}\label{e:S0}
S_\Phi(0)=0.
\end{align}
If $\Phi$ is a point process and $\alpha^!_\Phi$ has a density
$\rho_2$, then \eqref{e:2098} shows that 
\begin{align}\label{sfactor2}
S_\Phi=1+\gamma\hat{h}_2,
\end{align}
where $h_2\colon\R^d\to \R^d$ is the {\em total pair correlation function} given
by $h_2:=\gamma^{-2}\rho_2-1$.

\subsection{Transports and transport kernels}
\label{subTransportKernels}

A {\em  random transport} is a measurable mapping
$T\colon \Omega \to \bM(\R^d\times \R^d)$ such that $T(\omega,\cdot\times\R^d)$
is locally finite for all $\omega$. Hence $T$ is a random measure on $\R^d\times\R^d$ 
while $T(\cdot\times\R^d)$ is a random measure on $\R^d$.
We shall often drop the argument $\omega$ from $T$ but use it for clarity when needed. 
A random transport is {\em  stationary} if it is distributionally invariant
under diagonal shifts, that is,
$\theta_xT\overset{d}{=}T$ for each $x\in\R^d$, where, 
this time, the shift operator $\theta_x\colon \bM(\R^d\times \R^d)\to \bM(\R^d\times \R^d)$
is defined by
\begin{align}\label{shiftdiag}
\theta_x\varphi(B\times C):=\varphi((B+x)\times (C+x)),\quad \,x\in \R^d, \,B,C\in\cB^d.
\end{align}
The fact that we are using $\theta_x$ to denote the shift on 
$\bM(\R^d)$, on $\bM(\R^d\times \R^d)$, and on $\Omega$ (as in 
Subsection \ref{subPalm}) should (hopefully) not cause any confusion. The meaning
will always be clear from the context.

Unless stated otherwise, we will work  here and later in the setting of
Subsection \ref{subPalm}, which describes the Palm calculus.  
A random transport on $\R^d$ is said to be {\em invariant} if
\begin{align}\label{adapttransport}
T(\omega,(B+x)\times (C+x))=T(\theta_x\omega,B\times C),\quad \omega\in \Omega,\,x\in \R^d, \,B,C\in\cB^d.
\end{align}
In this case, $T(\cdot\times\R^d)$ is an invariant random measure in the sense of \eqref{adapt}.
It then follows from \eqref{Pstat} that $T$ (and of course also $T(\cdot\times\R^d)$) is stationary.

\begin{remark}\rm The terminology {\em invariant} random measure (or random transport) always refers
to a flow on the underlying sample space. This flow is either abstract or given explicitly on a canonical
space, like $\bM(\R^d\times \R^d)$ for instance. The underlying probability measure $\BP$ is
then always assumed to be stationary in the sense of \eqref{Pstat}.
This implies  (distributionally) stationarity of invariant random measures or other, suitably flow invariant
random objects.
\end{remark}

A kernel $K$ from $\Omega\times\R^d$ to $\R^d$ is called {\em invariant} if
\begin{align}\label{adaptkernel}
K(\omega,x,B+x)=K(\theta_x\omega,0,B),\quad \omega\in \Omega,\,x\in \R^d, \,B\in\cB^d.
\end{align}
It is called a {\em probability kernel} if $K(\omega,x,\R^d) = 1$ for all $\omega \in \Omega, x \in \R^d$. Similar to dropping $\omega$, we also refer 
to $T$ (or $K$) as a (random) transport from $\R^d$ to $\R^d$.
Given $x\in\R^d$ we mean by $K(x)\equiv K(x,\cdot)$ the random probability measure
$\omega\mapsto K(\omega,x,\cdot)$. For  convenience we often
write $K_x$ instead of $K(x)$.

\begin{proposition}\label{propkernel} Assume that $T$ is an invariant random transport 
such that $\Phi:=T(\cdot\times\R^d)$ has a finite intensity.
Then there exists an invariant probability kernel $K$ from $\Omega\times\R^d$ to $\R^d$ such that
\begin{align}\label{transportkernel}
T(\omega,\cdot)=\int \I\{(x,y)\in\cdot\}\, K(\omega,x,\md y)\,\Phi(\omega,\md x),\quad \BP\text{-a.e. $\omega\in\Omega$}.
\end{align}
The kernel $K$ can be chosen so that $K(\cdot,\cdot,B)$ is $\sigma(T)\otimes\cB^d$-measurable for each $B\in\mathcal{B}$.
\end{proposition}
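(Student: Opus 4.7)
The plan is to first produce any measurable disintegration of $T$ with respect to $\Phi$ by standard measure theory, and then modify it in a canonical shift-covariant way so that invariance \eqref{adaptkernel} holds identically. Since $T(\omega,\cdot\times\R^d)=\Phi(\omega,\cdot)$ and $\R^d\times\R^d$ is Polish, a standard disintegration result (as in \cite{Kallenberg17}) yields a probability kernel $K_0$ from $\Omega\times\R^d$ to $\R^d$ which is $\sigma(T)\otimes\cB^d$-measurable and satisfies
\begin{align*}
T(\omega,\md(x,y))=K_0(\omega,x,\md y)\,\Phi(\omega,\md x)
\end{align*}
for $\BP$-a.e.\ $\omega$. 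A priori $K_0$ need not satisfy \eqref{adaptkernel}.

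To repair this, I would define
\begin{align*}
K(\omega,x,B):=K_0(\theta_x\omega,0,B-x),\qquad \omega\in\Omega,\ x\in\R^d,\ B\in\cB^d.
\end{align*}
This is a probability kernel, and a direct check gives $K(\omega,x,B+x)=K_0(\theta_x\omega,0,B)=K(\theta_x\omega,0,B)$, so \eqref{adaptkernel} holds identically in $\omega$ and $x$. Because $T$ is invariant, $(\omega,x)\mapsto\theta_xT(\omega)$ is jointly $\sigma(T)\otimes\cB^d$-measurable, so the $\sigma(T)\otimes\cB^d$-measurability of $K_0$ passes to $K$.

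The substantive step is to verify that $K$ also disintegrates $T$. Applying the disintegration identity at $\theta_x\omega$ and using the invariances $T(\theta_x\omega,\cdot)=T(\omega,\cdot+(x,x))$ together with $\Phi(\theta_x\omega,\cdot)=\Phi(\omega,\cdot+x)$, the uniqueness part of the disintegration theorem forces, for each fixed $x$ and $\BP$-a.e.\ $\omega$,
\begin{align*}
K_0(\theta_x\omega,u,C)=K_0(\omega,u+x,C+x)
\end{align*}
for $\Phi(\theta_x\omega,\cdot)$-a.e.\ $u$ and all $C$ in a countable generator of $\cB^d$. The main obstacle is that this exceptional set depends on $x$, so one cannot immediately specialize to $u=0$. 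To consolidate the null sets I would integrate against $\lambda_d(\md x)\,\BP(\md\omega)$ and apply the refined Campbell formula for $\Phi$; shift-invariance converts the per-$x$ null sets into a single null set for the Campbell measure of $\Phi$ on $\Omega\times\R^d$, outside of which $K_0(\omega,x,\cdot)$ coincides with $K_0(\theta_x\omega,0,\cdot-x)=K(\omega,x,\cdot)$. Substituting this equality into the disintegration of $T$ established for $K_0$ then yields \eqref{transportkernel} for $K$, and the measurability claim is preserved under this modification.
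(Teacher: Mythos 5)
Your plan is to first disintegrate $T(\omega,\cdot)$ over $\Phi(\omega,\cdot)$, obtaining some kernel $K_0$ from $\Omega\times\R^d$ to $\R^d$, and then to replace it by the shift-covariant kernel $K(\omega,x,B):=K_0(\theta_x\omega,0,B-x)$ and argue that $K$ still disintegrates $T$. You have correctly located the delicate point (one cannot immediately specialize the uniqueness identity to $u=0$), but the proposed repair --- integrating against $\lambda_d(\md x)\,\BP(\md\omega)$ and invoking the refined Campbell formula --- does not close the gap, and in fact no repair along these lines can succeed. The Campbell measure $C_\Phi(\md(\omega,x))=\Phi(\omega,\md x)\,\BP(\md\omega)$ gives mass zero to the slice $\Omega\times\{0\}$, because $\BE\Phi(\{0\})=\gamma\lambda_d(\{0\})=0$. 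A disintegration of $T$ over $\Phi$ therefore pins $K_0$ down only $C_\Phi$-a.e.\ and leaves $K_0(\cdot,0,\cdot)$ entirely unconstrained. Yet your $K$ evaluates $K_0$ precisely there. A concrete counterexample: take the identity transport $T(\omega,\cdot)=\int\I\{(x,x)\in\cdot\}\Phi(\omega,\md x)$ over a stationary Poisson process, so the canonical $K_0(\omega,x,\cdot)=\delta_x$. Modify it to $K_0'(\omega,0,\cdot):=\delta_e$ for some fixed $e\neq 0$, leaving all other values unchanged; $K_0'$ is still a valid disintegration of $T$. But then $K'(\omega,x,\cdot)=K_0'(\theta_x\omega,0,\cdot-x)=\delta_{e+x}$, so $K'\Phi(\omega,\cdot)=\Phi(\omega,\cdot-e)\neq T(\omega,\R^d\times\cdot)$ almost surely. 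Thus the claimed coincidence of $K$ and $K_0$ off a single Campbell-null set is simply false for a general choice of $K_0$, and the refined Campbell formula cannot rescue it since it only speaks about what happens $C_\Phi$-a.e.

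The paper sidesteps this issue by not disintegrating $T$ over $\Phi$ at all. Instead it forms the shifted Campbell measure $\BQ_0$ of $T$ on $\Omega\times\R^d$ (whose first marginal is $\gamma\BP_0^\Phi$) and disintegrates \emph{that}, producing a kernel $K_0$ from $\Omega$ alone --- with no spatial argument --- to $\R^d$, determined $\BP_0^\Phi$-a.e. The invariant kernel is then defined by conjugating with the flow, $K(\omega,x,\cdot):=K_0(\theta_x\omega,\cdot-x)$, and there is no origin slice to worry about: the flow covariance is built into the object being disintegrated rather than imposed afterwards. Verifying that this $K$ disintegrates $T$ is just the refined Campbell computation run backwards. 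If you want to keep the flavour of your construction, the right object to disintegrate is $\BE\int\I\{(\theta_x,y-x)\in\cdot\}\,T(\md(x,y))$ on $\Omega\times\R^d$, not $T(\omega,\cdot)$ $\omega$-by-$\omega$.
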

\begin{proof} Let $A\in\mathcal{A}\otimes\mathcal{B}^d$ 
and consider the measure $M_A$ on $\R^d$ defined by
\begin{align*}
M_A:=\int \I\{x\in \cdot,(\theta_x\omega,y-x)\in A\}\,T(\omega,\md (x,y))\,\BP(\md \omega).
\end{align*}
Let $B\in\mathcal{B}^d$ and $z\in\R^d$. Then
\begin{align*}
M_{A}(B-z)&=\BE \int \I\{x\in B-z,(\theta_x,y-x)\in A\}\,T(\theta_0,\md (x,y))\\
&=\BE \int \I\{x+z\in B,(\theta_{x+z},y-x)\in A\}\,T(\theta_z,\md (x,y))\\
&=\BE \int \I\{x\in B,(\theta_{x},y-x)\in A\}\,T(\theta_0,\md (x,y))=M_A(B).
\end{align*}
Since 
\begin{align*}
M_A(B)\le \int \I\{x\in B\}\,T(\omega,\md (x,y))\,\BP(\md \omega)=\BE \Phi(B),
\end{align*}
and $\Phi$ has a finite intensity,
the measure $M_A$ is locally finite. Therefore
\begin{align}\label{e145}
M_A(B)=M_A([0,1]^d)\lambda_d(B)=\BQ_0(A)\lambda_d(B),
\end{align}
where the (finite) measure $\BQ_0$ on $\Omega \times \R^d$ is given by
\begin{align*}
\BQ_0:=\BE \int \I\{x\in [0,1]^d,(\theta_x,y-x)\in \cdot\}\,T(\md (x,y)).
\end{align*}
It follows from \eqref{e145} and basic principles of measure theory that
\begin{align*}
\BE \int \I\{(x,\theta_x,y-x)\in \cdot\}\,T(\md (x,y))=\int\I\{(x,\omega,y)\in\cdot\}\,\BQ_0(\md (\omega,y))\,\md x.
\end{align*}
Since we have assumed $(\Omega,\mathcal{A})$ to be Borel, 
there exists a probability kernel $K_0$ from $\Omega$ to $\R^d$ satisfying 
\begin{align*}
\BQ_0(\md (\omega,y))=K_0(\omega,\md y)\,\BQ_0(\md \omega\times\R^d);
\end{align*}
see e.g.\ \cite[Theorem A.14]{LastPenrose17}.  Note that, by the definition of the Palm probability measure \eqref{Palm}, $\BQ_0(\cdot\times\R^d)=\gamma\BP^\Phi_0$, where $\gamma$ is the intensity of $\Phi$.
Therefore, we obtain 
\begin{align}\label{e147}
\BE \int \I\{(x,\theta_x,y-x)\in \cdot\}\,T(\md (x,y))=\gamma \iiint\I\{(x,\omega,y)\in\cdot\}\,K_0(\omega,\md y)\,\BP^\Phi_0(\md \omega)\,\md x,
\end{align}
which generalizes the refined Campbell theorem \eqref{erefinedC}.
Applying the refined Campbell theorem \eqref{erefinedC} to the right-hand side of
\eqref{e147} gives
\begin{align}\label{e149}
\BE \int \I\{(x,\theta_x,y-x)\in \cdot\}\,T(\md (x,y))=\BE \iint\I\{(x,\theta_x,y-x)\in\cdot\}\,K(\theta_0,x,\md y)\,\Phi(\md x),
\end{align}
where the probability kernel from $\Omega\times\R^d$ to $\R^d$ is given by
\begin{align}\label{e152}
K(\omega,x,\cdot):=K_0(\theta_x\omega,\cdot-x),\quad (\omega,x)\in\Omega\times\R^d.
\end{align}
Hence, we obtain from \eqref{e149}
\begin{align*}
\BE \int \I\{(x,\theta_0,y)\in \cdot\}\,T(\md (x,y))=\BE \iint\I\{(x,\theta_0,y)\in\cdot\}\,K(\theta_0,x,\md y)\,\Phi(\md x),
\end{align*}
which shows \eqref{transportkernel}.

To prove the measurability assertion, we consider the space $\Omega':=\bM(\R^d\times\R^d)$ equipped
with the natural (diagonal) shift and the probability measure $\BP'$, given as the distribution of $T$.
Then we can construct an invariant probability kernel $K'$ as before and (re)define
$K(\omega,x,\cdot):=K'(T(\omega),x,\cdot)$.
\end{proof}

The kernel $K$ in \eqref{transportkernel} is called (Markovian) {\em 
transport kernel} in
\cite{LaTho09} and elsewhere. The random probability measure 
$K(x,\cdot)$ describes
how a unit mass at $x\in\R^d$ is displaced in space. It is often more
convenient to work with the kernel $\Ks$ from $\Omega\times\R^d$ to $\R^d$ defined
by
\begin{align}\label{transportkernel2}
\Ks(\omega,x,B):=K(\omega,x,B+x),\quad 
(\omega,x,B)\in\Omega\times\R^d\times\cB^d.
\end{align}
Then $\Ks(x,\cdot)$ describes the displacement relative to $x$. Note that $\Ks$ and $K$ are functionals of each other i.e., we can define either of them in terms of the other.
Instead of \eqref{adaptkernel}, we then have
\begin{align}\label{adaptkernel1}
\Ks(\omega,x,B)=K(\theta_x\omega,0,B),\quad \omega\in \Omega,\,x\in 
\R^d, \,B\in\cB^d,
\end{align}
that is, $\Ks$ is invariant under joint shifts of the first two arguments.

\begin{proposition}\label{p:propinvPalm} Let the assumption of Proposition \ref{propkernel} be satisfied and let  $\gamma$
be the intensity of $\Phi:=T(\cdot\times\R^d)$. Then $\Psi :=T(\R^d\times\cdot)$ is $\BP$-almost surely
locally finite and has intensity $\gamma$. Furthermore,
\begin{align}\label{invPalm}
\BE^{\Phi}_0\int \I\{\theta_x\in\cdot\}\,K_0(\md x)=\BP^{\Psi}_0,
\end{align}
where $K$ is an invariant probability kernel satisfying \eqref{transportkernel}.
\end{proposition}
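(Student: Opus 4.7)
The plan is to deduce both claims from Proposition~\ref{propkernel} by applying the refined Campbell theorem twice, together with the invariance properties \eqref{adaptkernel} and the fact that $K$ is a probability kernel.

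First I would verify that $\Psi$ is locally finite with intensity $\gamma$. Using \eqref{transportkernel}, for any bounded $B\in\cB^d$,
\begin{align*}
\BE\,\Psi(B)=\BE\int K(\omega,x,B)\,\Phi(\omega,\md x)
=\BE\int K_0(\theta_x\omega,B-x)\,\Phi(\omega,\md x).
\end{align*}
By the refined Campbell theorem \eqref{erefinedC}, this equals $\gamma\int\BE^\Phi_0[K_0(B-x)]\,\md x$. Fubini and the change of variables $y\mapsto x-y$ together with $K_0(\omega,\R^d)=1$ reduce the double integral to $\gamma\lambda_d(B)\,\BE^\Phi_0[K_0(\R^d)]=\gamma\lambda_d(B)$, so $\Psi$ has intensity $\gamma$ and is locally finite almost surely; stationarity of $\Psi$ follows from the diagonal stationarity of $T$.

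Next I would verify the Palm identity \eqref{invPalm}. Fix a bounded measurable $f\colon\Omega\to[0,\infty)$. By the definition \eqref{Palm} of the Palm probability measure and \eqref{transportkernel},
\begin{align*}
\gamma\,\BE^\Psi_0[f]
&=\BE\int \I\{y\in [0,1]^d\}\,f(\theta_y)\,T(\md(x,y))\\
&=\BE\int\!\!\int \I\{y\in[0,1]^d\}\,f(\theta_y)\,K(\theta_0,x,\md y)\,\Phi(\md x).
\end{align*}
Using \eqref{adaptkernel} in the form $K(\omega,x,\md y)=K_0(\theta_x\omega,\md(y-x))$, substituting $y=x+z$, and writing $\theta_{x+z}=\theta_z\circ\theta_x$, the inner double integral becomes
\begin{align*}
\int\!\!\int \I\{x+z\in[0,1]^d\}\,f(\theta_z(\theta_x\omega))\,K_0(\theta_x\omega,\md z)\,\Phi(\omega,\md x).
\end{align*}
Applying the refined Campbell theorem to this expression with the function $g(x,\omega'):=\I\{x+z\in[0,1]^d\}f(\theta_z\omega')K_0(\omega',\md z)$ integrated against $\md z$ yields
\begin{align*}
\gamma\,\BE^\Psi_0[f]
=\gamma\int\BE^\Phi_0\!\!\int\I\{x+z\in[0,1]^d\}\,f(\theta_z)\,K_0(\md z)\,\md x.
\end{align*}
Applying Fubini and integrating $x$ out yields $\lambda_d([0,1]^d-z)=1$, leaving exactly $\gamma\,\BE^\Phi_0\!\int f(\theta_z)\,K_0(\md z)$, which proves \eqref{invPalm}.

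The main bookkeeping obstacle is the repeated use of the flow. One has to carefully split the shift $\theta_{x+z}=\theta_z\theta_x$ so that $\theta_x$ alone interacts with the invariance \eqref{adaptkernel} of $K$ and with the refined Campbell theorem, while $\theta_z$ remains inside $f$ for the final Palm expectation. Once this bookkeeping is done correctly, both $\gamma$-factors cancel and the probability-kernel property $K_0(\R^d)=1$ makes the $\md x$-integral collapse, producing the stated identity without any additional regularity on $T$ beyond the standing finite-intensity assumption.
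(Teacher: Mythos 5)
Your proof is correct and follows the same route as the paper: the intensity calculation is the one given there, and for \eqref{invPalm} the paper merely states that it ``follows from \cite[Theorem 4.1]{LaTho09} or a direct calculation''---you have supplied precisely that direct calculation, unfolding $\BP^\Psi_0$ via \eqref{Palm}, disintegrating $T$ with \eqref{transportkernel}, rewriting $K$ by \eqref{adaptkernel}, invoking the refined Campbell theorem, and using $K_0(\R^d)=1$ together with $\lambda_d([0,1]^d-z)=1$ to collapse the $x$-integral. The bookkeeping of the split $\theta_{x+z}=\theta_z\circ\theta_x$ is handled correctly.
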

\begin{proof} Let $B\in\mathcal{B}^d$. Then we obtain from 
\eqref{transportkernel} and the refined Campbell theorem
\begin{align*}
\BE T(\R^d\times B)&=\BE\int K(\theta_x,0,B-x)\,\Phi(\md x)\\
&=\gamma\,\BE^\Phi_0\int  K(\theta_0,0,B-x)\,\md x\\
&=\gamma\,\BE^\Phi_0\iint \I\{y+x\in B\} K(\theta_0,0,\md y)\,\md x=\gamma\lambda_d(B).
\end{align*}
This proves the first two assertions.
Equation \eqref{invPalm} follows from
\cite[Theorem 4.1]{LaTho09} or a direct calculation.
\end{proof}

\section{Equality of asymptotic variance} \label{sec:Eq_asymp_var}

In this section, we formulate our first general results on equality of asymptotic variances. First, we prove equality of asymptotic variances under two (random) invariant probability kernels, $K$ and $L$, in Theorem \ref{maintheoremmixing}. Later, we specialize to the case when $L$ is the mean of $K$ in Theorem \ref{maintheoremmixing2}. In parallel, we also present analogues of these results in Fourier space in Theorems \ref{maintheoremmixingfourier} and \ref{maintheoremmixing2fourier} respectively.

\subsection{Comparing the destinations of two transports}
If $K$ is a kernel from $\Omega\times\R^d$ to $\R^d$ and
$\Phi$, which we will call source, is a kernel from $\Omega$ to $\R^d$, then we write
$K\Phi$, which we will call destination, for the kernel from  $\Omega$ to $\R^d$ defined by
\begin{align*}
K\Phi (\omega,\cdot):=\int K(\omega,x,\cdot)\,\Phi(\omega,\md x),\quad \omega\in\Omega.
\end{align*}
We will be interested in the case where $\Phi$ and $K$ are invariant.
Then the destination $K\Phi$ is invariant in the sense of
\eqref{adapt}. If, in addition, $K$ is a probability kernel and
the source $\Phi$ is locally finite with a finite intensity, then
Proposition \ref{p:propinvPalm} justifies to consider the destination 
$K\Phi$ as a random measure. Recall that we also denote
the random measure $K(y,\cdot)$, for $y\in\R^d$, by $K(y)\equiv K_y$.

The {\em total variation norm} of a finite signed measure $\nu$ on  a
measurable space is defined by
\begin{align*}
\|\nu\|:=\sup \bigg|\int f\,\md \nu\bigg|,
\end{align*}
where the supremum extends over all measurable functions
with values in $[-1,1]$. If $\nu=\nu_1-\nu_2$ is the difference of two probability measures $\nu_1$ and $\nu_2$, then
\begin{equation}\label{e:tv definition relation}
    \|\nu\| = 2 \sup \bigg| \int f \,\md \nu_1 - \int f \,\md \nu_2 \bigg|,
\end{equation}
where the supremum extends over all measurable functions
with values in $[0,1]$. We denote the product measure of $\nu_1,\nu_2$ as $\nu_1 \otimes \nu_2$.

\begin{theorem}\label{maintheoremmixing} Let  $\Phi$ be a locally square-integrable invariant random measure,
and let $K,L$ be invariant probability kernels from $\Omega\times\R^d$ to $\R^d$.
Let $W \in \cK_0$. 
Define the function $\kappa\colon\R^d\to [0,2]$ by 
\begin{align}\label{e:kappa}        
\kappa(y):=\big\|\BE^\Phi_{0,y}[K_y\otimes K_0-L_y\otimes L_0]\big\|,\quad y\in\R^d.
\end{align}
Assume that
 \begin{equation}\label{maintheoremmixingcondition}
 \int \kappa(y)\, \alpha_\Phi(\md y) < \infty.
 \end{equation}
Then 
\begin{align}\label{e:equalasvariance}
\lim_{r\to\infty}\lambda_d(rW)^{-1}\BV[K\Phi(rW)]=\lim_{r\to\infty}\lambda_d(rW)^{-1}\BV[L\Phi(rW)]
\end{align}
if one of the limits exists.
In particular, $K\Phi$ is hyperuniform with respect to $W$ iff $L\Phi$ has this property.
  \end{theorem}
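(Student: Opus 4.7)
The plan is to show that the \emph{difference} of variances is $o(\lambda_d(rW))$ as $r\to\infty$ by a direct second‑moment computation, using the two‑point Palm formulation of $\alpha_\Phi$ and two applications of dominated convergence; the constant $\kappa(y)$ appears naturally as the total‑variation gap of the integrand in Palm space. This is enough, for if $\lambda_d(rW)^{-1}(\BV[K\Phi(rW)] - \BV[L\Phi(rW)])\to 0$ and one of the two rescaled variances converges, so does the other, to the same limit.

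\textbf{Step 1 (means cancel).} By Proposition \ref{p:propinvPalm} applied to the invariant random transport associated with $K$ (and with $L$), both $K\Phi$ and $L\Phi$ are invariant random measures of intensity $\gamma$. Hence $\BE[K\Phi(rW)] = \BE[L\Phi(rW)] = \gamma\lambda_d(rW)$, and the variance difference equals the second‑moment difference.

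\textbf{Step 2 (two-point Palm expression).} Writing $K\Phi(rW) = \int K(x,rW)\,\Phi(\md x)$ and squaring, then applying the refined second‑order Campbell theorem (Subsection \ref{subtwopoint}) together with the invariance identity \eqref{adaptkernel} to relocate each pair of points to $\{0,y\}$, I would show
\begin{align*}
\BE[K\Phi(rW)^2]
= \int \alpha_\Phi(\md y) \int \BE^\Phi_{0,y}\bigl[K_0(rW-x)\,K_y(rW-x)\bigr]\,\md x.
\end{align*}
Observing that $K_0(A)K_y(B) = (K_y\otimes K_0)(B\times A)$ and defining the signed measure $\nu_y := \BE^\Phi_{0,y}[K_y\otimes K_0 - L_y\otimes L_0]$ on $\R^d\times\R^d$, subtraction of the analogous formula for $L$ gives
\begin{align*}
\BV[K\Phi(rW)] - \BV[L\Phi(rW)]
= \int \alpha_\Phi(\md y) \int \nu_y\bigl((rW-x)\times(rW-x)\bigr)\,\md x.
\end{align*}

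\textbf{Step 3 (Fubini and dominated convergence).} Swapping the $\md x$ and $\nu_y$ integrals (legitimate because $|\nu_y|$ is finite),
\begin{align*}
\int \nu_y\bigl((rW-x)\times(rW-x)\bigr)\,\md x
= \int \lambda_d\bigl(rW\cap(rW+u-v)\bigr)\,\nu_y(\md(u,v)).
\end{align*}
For $W\in\cK_0$ convex with $0$ in its interior, $\lambda_d(rW\cap(rW+w))/\lambda_d(rW)\to 1$ for every fixed $w\in\R^d$ (the symmetric difference has measure $O(r^{d-1})$). Since $Q_y(K) := \BE^\Phi_{0,y}[K_y\otimes K_0]$ and $Q_y(L)$ are probability measures on $\R^d\times\R^d$, their difference $\nu_y$ has total mass zero and $\|\nu_y\| = \kappa(y) \le 2$. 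Dominated convergence with respect to the finite measure $|\nu_y|$ (integrand bounded by $1$) then gives
\begin{align*}
\lambda_d(rW)^{-1}\int \nu_y\bigl((rW-x)\times(rW-x)\bigr)\,\md x \;\longrightarrow\; \int 1\,\nu_y(\md(u,v)) = 0
\end{align*}
for every $y$. The same pointwise bound furnishes the uniform majorant $\kappa(y)$, which is $\alpha_\Phi$-integrable by \eqref{maintheoremmixingcondition}, so a second application of dominated convergence, this time against $\alpha_\Phi$, yields $\lambda_d(rW)^{-1}(\BV[K\Phi(rW)]-\BV[L\Phi(rW)])\to 0$, as desired.

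\textbf{Main obstacle.} The substantive step is Step~2: the bookkeeping with the two-point Palm probabilities under repeated applications of \eqref{adaptkernel} to pull $K(\cdot,x,\cdot)$ and $K(\cdot,y,\cdot)$ into $K_0$ and $K_y$ under $\BP^\Phi_{0,y}$. Once that formula is correctly set up, the remainder is a straightforward two‑layer dominated convergence, with $\kappa$ doing exactly what it is designed to do: controlling the total variation of the signed integrand at Palm level.
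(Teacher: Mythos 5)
Your proof is correct and follows essentially the same route as the paper: the same second-order Campbell/Palm identity yields the difference of second moments as a double integral over $\alpha_\Phi(\md y)$ and the kernel tensor-product measures, and the conclusion then follows by dominated convergence. The only difference in packaging is that the paper first collapses the $y$-integral into a signed measure $\eta:=\alpha_{K\Phi}-\alpha_{L\Phi}$ on $\R^d$ (via the tilted convolution $K_y\star K_0$) and applies dominated convergence once, whereas you keep the $y$-dependence and apply dominated convergence in two nested layers; these are equivalent. One small point you leave implicit: you should record that the assumption \eqref{maintheoremmixingcondition} together with your bound $\bigl|\BE[K\Phi(rW)^2]-\BE[L\Phi(rW)^2]\bigr|\le\lambda_d(rW)\int\kappa\,\md\alpha_\Phi<\infty$ shows that if one of $K\Phi$, $L\Phi$ is locally square-integrable then so is the other, so that the variance difference is actually well-defined; the paper does this explicitly in Lemma~\ref{etafinitetv}.
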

The total variation in \eqref{e:kappa} is finite and bounded by $2$ because $K$ and $L$ are probability kernels. The
    proof of Theorem \ref{maintheoremmixing}, and also later proofs,
    shall rely upon analysis of the signed random measure
\begin{align}\label{etafirst}
\eta:=\alpha_{K\Phi}-\alpha_{L\Phi},
\end{align}
where $K,L$ are as in Theorem \ref{maintheoremmixing}, and which is defined for all sets where the RHS is not $\infty-\infty$. If the destinations $K\Phi$ and $L\Phi$
are square-integrable, then $\eta$ is a (locally finite) signed measure.
In view of \eqref{betaPhi} and \eqref{e:asvariance}, one can expect that the destinations $K\Phi$ and
$L\Phi$ have the same asymptotic variance as soon as $\eta$ has total mass $0$. 
We will establish this fact in Lemma \ref{etafinitetv}.

The first step towards proving Lemma \ref{etafinitetv} is the following lemma, which expresses the expectations on the RHS of \eqref{etafirst} in terms of Palm expectations of $\Phi$. For two measures $\nu$ and $\nu'$ on $\R^d$, we denote the {\em tilted convolution} of $\nu$ and $\nu'$ by
\begin{align*}
\nu\star\nu':=\int \I\{x-y\in\cdot\}\,\nu(\md x)\,\nu'(\md y).
\end{align*}

\begin{lemma}\label{l:345} Suppose that $\Phi$ is an invariant random measure
with positive and finite intensity $\gamma$. Let
$K$ be an invariant probability kernel from $\Omega\times\R^d$ to $\R^d$. Then
\begin{align}\label{e:200}
\alpha_{K\Phi}(B) = \gamma\BE ^\Phi_0 \int (K_y\star K_0)(B)\,\Phi(\md y),\quad B\in\cB^d.
\end{align}
If $\Phi$ is locally square-integrable, then
\begin{align}\label{e:201}
\alpha_{K\Phi}(B) = \int \BE^\Phi_{0,y}[(K_y\star K_0)(B)]\,\alpha_\Phi(\md y), \quad B\in\cB^d.
\end{align}
\end{lemma}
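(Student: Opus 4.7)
The plan is to treat \eqref{e:200} directly using Proposition \ref{p:propinvPalm}, which identifies $\Psi := K\Phi$ as an invariant random measure with intensity $\gamma$ and prescribes its Palm probability measure. Starting from the standard Palm--Campbell identity $\alpha_\Psi(B) = \gamma\,\BE^\Psi_0[\Psi(B)]$ (an immediate consequence of the refined Campbell theorem applied to $\Psi$), formula \eqref{invPalm} will convert this into
\begin{align*}
\alpha_{K\Phi}(B) = \gamma\,\BE^\Phi_0 \int (K\Phi)(\theta_x, B)\,K_0(\md x)
= \gamma\,\BE^\Phi_0 \int (K\Phi)(B+x)\,K_0(\md x),
\end{align*}
where the second equality uses the invariance of $K\Phi$. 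Expanding $K\Phi = \int K(y,\cdot)\,\Phi(\md y)$, swapping integrals, and recognising the inner double integral as $\iint \I\{a - x \in B\}\,K_y(\md a)\,K_0(\md x) = (K_y \star K_0)(B)$ yields \eqref{e:200}. Note that only finite intensity of $\Phi$ is used here, not local square-integrability.

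For \eqref{e:201}, the strategy is to first establish the general disintegration identity
\begin{align*}
\gamma\,\BE^\Phi_0 \int F(\omega, y)\,\Phi(\md y) = \int \BE^\Phi_{0,y}[F(\omega, y)]\,\alpha_\Phi(\md y),
\end{align*}
valid for nonnegative measurable $F$, which is meaningful precisely because local square-integrability of $\Phi$ forces $\alpha_\Phi$ to be locally finite. To derive it, I would rewrite the left-hand side via the refined Campbell theorem in the reverse direction as $\BE \int \I\{x\in[0,1]^d\}\int F(\theta_x\omega, y)\,\Phi(\theta_x\omega,\md y)\,\Phi(\md x)$, and then use the pushforward rule $\int g(y)\,\Phi(\theta_x\omega, \md y) = \int g(z-x)\,\Phi(\omega, \md z)$ to fold this into an integral against $\Phi^2$ of the form $\BE \int \I\{x\in[0,1]^d\} F(\theta_x\omega, z-x)\,\Phi^2(\md(x,z))$. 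The two-point Palm disintegration of $\Phi^2$ then produces the right-hand side, with the $x$-integral $\int \I\{x \in [0,1]^d\}\,\md x = 1$ trivializing the $x$-dependence. Applying this identity with $F(\omega,y) := (K_y \star K_0)(B)$ and combining with \eqref{e:200} gives \eqref{e:201}.

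The main technical subtlety is the bookkeeping of the flow action when moving between the stationary and Palm settings: one must verify that the invariance relation $K(\omega, y,\cdot) = K(\theta_y\omega, 0, \cdot-y)$ and the pushforward of $\Phi$ under $\theta_x$ combine to produce precisely the integrand form $G(\theta_x\omega, x, z-x)$ demanded by the two-point Palm formula. Once this is tracked carefully, no further moment conditions beyond finite intensity (for \eqref{e:200}) or local square-integrability (for \eqref{e:201}) are required, and the proof reduces to a sequence of Fubini-type manipulations framed around Proposition \ref{p:propinvPalm} and the second-order refined Campbell theorem.
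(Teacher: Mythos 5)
Your proposal is correct and does take a genuinely different route from the paper's. For \eqref{e:200}, the paper proceeds by a single direct computation: it expands the definition of $\alpha_{K\Phi}$ via \eqref{ersecm}, writes $K\Phi$ as a double integral against $\Phi \otimes \Phi$, performs a change of variables to put the integrand into the shift-invariant form required by the refined Campbell theorem, and then applies that theorem in one step (this is the chain \eqref{e290}). You instead delegate the shift bookkeeping to Proposition \ref{p:propinvPalm}: starting from $\alpha_\Psi(B)=\gamma\,\BE^\Psi_0\Psi(B)$ (which needs $\gamma_\Psi = \gamma$, again Proposition \ref{p:propinvPalm}) and then applying \eqref{invPalm} and invariance of $K\Phi$, the calculation collapses to a Fubini swap. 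For \eqref{e:201}, the paper applies the two-point Palm identity \eqref{2pointPalm} directly to the intermediate line \eqref{e290}, whereas you first isolate and prove the clean general disintegration identity $\gamma\,\BE^\Phi_0\int F(\cdot,y)\,\Phi(\md y)=\int \BE^\Phi_{0,y}[F(\cdot,y)]\,\alpha_\Phi(\md y)$ (which indeed follows from \eqref{erefC} combined with \eqref{2pointPalm2} and the invariance of $\Phi$ and $F$, as you outline) and then specialize it to $F(\omega,y)=(K_y\star K_0)(B)$. Both proofs rely on the same underlying tools, but yours is more modular: it produces a reusable disintegration lemma and factors the kernel-shifting argument through Proposition \ref{p:propinvPalm} rather than repeating it inline. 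The tradeoff is that your route leans on the slightly heavier statement \eqref{invPalm}, while the paper's computation is self-contained given only the definitions.
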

\begin{proof}
Recall from Proposition \ref{p:propinvPalm} that $K\Phi$ has the same intensity as $\Phi$.
Let $B\in\cB^d$. By definition of $\alpha_{K\Phi}$ in \eqref{ersecm},
\begin{align*}
\alpha_{K\Phi}(B)=\BE\int\I\{x\in[0,1]^d\}K\Phi(B+x)\, K\Phi(\md x).
\end{align*}
Further, by definition of $K\Phi$ and invariance, we obtain that
\begin{align}\label{e290}\notag
&\alpha_{K\Phi}(B)
=\BE\iiint \I\{x\in[0,1]^d\}K(y,B+x)K(z,\md x)\,\Phi(\md y)\,\Phi(\md z)\\
&=\BE\iiint \I\{x+z\in[0,1]^d\}K(\theta_y,0,B+x+z-y)K(\theta_z,0,\md x)\,\Phi(\md y)\,\Phi(\md z).
\end{align}
Therefore, we obtain from the refined Campbell theorem \eqref{erefinedC} that
\begin{align*}
&\alpha_{K\Phi}(B)
=\BE\iiint \I\{x+z\in[0,1]^d\}K(\theta_{y+z},0,B+x-y)\, K(\theta_z,0,\md x)\,\Phi(\theta_z,\md y)\,\Phi(\md z)\\
&=\gamma\BE^\Phi_0\iiint \I\{x+z\in[0,1]^d\}K(\theta_{y},0,B+x-y)\, K(0,\md x)\,\Phi(\md y)\,\md z\\
&=\gamma\BE^\Phi_0\iint K(y,B+x)\, K(0,\md x)\,\Phi(\md y).
\end{align*}
This proves the first assertion.

Now, assume that $\Phi$ is locally square-integrable. Then we obtain from \eqref{e290} and \eqref{2pointPalm},
\begin{align*}
\alpha_{K\Phi}(B)
&=\iint \BE^\Phi_{0,y}\bigg[\int \I\{x+z\in[0,1]^d\}K(\theta_y,0,B+x-y)\, K(0,\md x)\bigg]\,\alpha_\Phi(\md y)\,\md z\\
&=\int \BE^\Phi_{0,y}\bigg[\int K(\theta_y,0,B+x-y)\, K(0,\md x)\bigg]\,\alpha_\Phi(\md y)\\
&=\int \BE^\Phi_{0,y}\bigg[\int K(y,B+x)\, K(0,\md x)\bigg]\,\alpha_\Phi(\md y).
\end{align*}
This proves the second assertion.
\end{proof}

\begin{lemma}\label{etafinitetv}
  Let $\Phi$ be a locally square-integrable invariant random measure,
  and let $K,L$ be invariant probability kernels from
  $\Omega\times\R^d$ to $\R^d$. Assume that $K\Phi$ or $L\Phi$ is
  locally square-integrable and that condition
  \eqref{maintheoremmixingcondition} is fulfilled.
Then $K\Phi$ and $L\Phi$ are both locally square-integrable, 
and $\eta$, 
defined by \eqref{etafirst},
is a signed measure with total mass $0$ and total variation of at most $\int \kappa(y)\,\alpha_\Phi(\md y)<\infty$.
\begin{proof}
Because $K\Phi$ or $L\Phi$ is locally square-integrable, the number 
$\eta(B)$ is well-defined  for bounded $B\in\cB^d$. (Potentially it might equal $-\infty$ or $\infty$.)
By Lemma \ref{l:345}, we have
\begin{align}\label{e:331}
\eta(B) 
 = \int \BE^\Phi_{0,y}[K_y\star K_0 - L_y\star L_0](B)\, \alpha_\Phi(\md y).
\end{align}
Even for unbounded $B$, the modulus of the integrand is bounded by $\kappa$.
Thus,
because $\kappa$ satisfies the integrability condition
\eqref{maintheoremmixingcondition}, the signed measure $\eta$ 
is defined on all Borel sets, and has total variation of at most
$\int \kappa(y)\,\alpha_\Phi(\md y)<\infty$. This also implies that
$K\Phi$ and $L\Phi$ are both locally square-integrable because
$\alpha_{K\Phi}(B)$ or $\alpha_{L\Phi}(B)$, and their
difference $\eta(B)$, are finite. Hence, both are finite. Finally, from
\eqref{e:331}, we can also derive $\eta(\R^d)=0$, as for any $y\in\R^d$,
$\BE^\Phi_{0,y}[K_y\star K_0 - L_y\star L_0]$
is the difference of two probability measures and thus has
total mass $0$.
\end{proof}
\end{lemma}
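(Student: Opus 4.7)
The plan is to express $\eta=\alpha_{K\Phi}-\alpha_{L\Phi}$ via Lemma~\ref{l:345} and reduce matters to a pointwise total-variation estimate on a family of Palm-averaged signed measures. Applying \eqref{e:201} to $K$ and $L$ separately (both sides are well-defined elements of $[0,\infty]$ because $\Phi$ is locally square-integrable, so the derivation in the proof of Lemma~\ref{l:345} goes through by Tonelli without a priori knowing that the destinations are locally square-integrable) yields, for every $B\in\cB^d$,
\begin{align*}
\eta(B) = \int \BE^\Phi_{0,y}\bigl[(K_y\star K_0 - L_y\star L_0)(B)\bigr]\,\alpha_\Phi(\md y).
\end{align*}

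The heart of the argument is the pointwise estimate
\begin{align*}
\bigl\|\BE^\Phi_{0,y}[K_y\star K_0 - L_y\star L_0]\bigr\| \le \kappa(y), \quad y\in\R^d.
\end{align*}
This rests on the observation that the tilted convolution $\nu\star\nu'$ is the pushforward of $\nu\otimes\nu'$ under $(a,b)\mapsto a-b$, and pushforwards contract total variation. Concretely, for measurable $f\colon\R^d\to[-1,1]$ the function $\tilde f(a,b):=f(a-b)$ satisfies $|\tilde f|\le 1$, and Fubini (justified because the inner signed measure has total variation at most $2$) gives
\begin{align*}
\int f\,\md\,\BE^\Phi_{0,y}[K_y\star K_0 - L_y\star L_0] = \int \tilde f\,\md\,\BE^\Phi_{0,y}[K_y\otimes K_0 - L_y\otimes L_0],
\end{align*}
whose absolute value is at most $\kappa(y)$ by definition of $\kappa$; taking the supremum over $f$ yields the claimed bound.

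Combining these two displays, and extending the integral identity from indicators to bounded test functions by linearity and dominated convergence, one obtains $|\int f\,\md\eta|\le \int \kappa(y)\,\alpha_\Phi(\md y)$ for every measurable $f\colon\R^d\to[-1,1]$, so $\|\eta\|\le \int\kappa\,\md\alpha_\Phi<\infty$ by hypothesis \eqref{maintheoremmixingcondition}. Total mass zero is then immediate: both $K_y\star K_0$ and $L_y\star L_0$ are probability measures on $\R^d$, so the inner integrand evaluated at $B=\R^d$ vanishes identically in $y$, whence $\eta(\R^d)=0$. Finally, if $K\Phi$ (say) is locally square-integrable, then $\alpha_{K\Phi}$ is locally finite, and $\alpha_{L\Phi}=\alpha_{K\Phi}-\eta$ inherits this property from the global finiteness of $\|\eta\|$, so $L\Phi$ is locally square-integrable as well.

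The delicate point I anticipate is the measure-theoretic bookkeeping: one has to use \eqref{e:201} as a $[0,\infty]$-valued identity throughout, defer the subtraction that defines $\eta(B)$ until after the total-variation bound is in hand, and verify each Fubini exchange (between the Palm expectation, the $f$-integral, and the $\alpha_\Phi$-integral) purely from the uniform bound $\|K_y\otimes K_0-L_y\otimes L_0\|\le 2$ and the hypothesis that $\kappa\in L^1(\alpha_\Phi)$, without ever assuming moment conditions on $K$ or $L$.
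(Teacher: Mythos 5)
Your proof is correct and follows essentially the same route as the paper's: express $\eta$ via Lemma~\ref{l:345}, bound the integrand's total variation by $\kappa(y)$, then conclude finite total variation, total mass zero, and local square-integrability of both destinations. The only difference is that you spell out the contraction of total variation under the pushforward $(a,b)\mapsto a-b$ (which the paper leaves implicit in the assertion that ``the modulus of the integrand is bounded by $\kappa$,'' and acknowledges in the remark following the proof of Theorem~\ref{maintheoremmixing}).
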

\begin{proof}[Proof of Theorem \ref{maintheoremmixing}]
  If $K\Phi$ and $L\Phi$ are not locally square-integrable, the
    limits in \eqref{e:equalasvariance} are not well-defined. Now
    assume the contrary. From Lemma \ref{etafinitetv}, we know that
    condition \eqref{maintheoremmixingcondition} implies
    that both $K\Phi$ and $L\Phi$ are locally square-integrable and
    that $\eta$ is a signed measure with finite total variation and
    total mass $0$. 
    
Take a bounded $B\in\cB^d$. Then 
\begin{align}
            \BV[K\Phi(B)] &= \BE[K\Phi(B)^2] - (\BE K\Phi(B))^2\nonumber\\
            &= \BE\bigg[\int_BK\Phi(B)\,K\Phi(\md x)\bigg] - \gamma^2\lambda_d(B)^2\nonumber\\
 &= \int_B \alpha_{K\Phi}(B-x)\,\md x - \gamma^2\lambda_d(B)^2\nonumber\\
 &= \int_B \alpha_{L\Phi}(B-x)\,\md x + \int_{B} \eta(B-x)\,\md x - \gamma^2\lambda_d(B)^2\nonumber\\
  &= \BV[L\Phi(B)] + \int_B \eta(B-x)\,\md x.
 \end{align}
It remains to show that for all $W \in \cK_0$
\begin{equation}\label{e:086}
\lim_{r\to\infty}\frac{1}{\lambda_d(rW)}\int_{rW} \eta(rW - x)\,\md x= 0.
\end{equation}
We have
\begin{align*}
\frac{1}{\lambda_d(rW)}\int_{rW} \eta(rW - x)\,\md x &= \frac{1}{\lambda_d(rW)}\int_{rW}\int \I\{y\in(rW - x)\}\,\eta(\md y)\,\md x\\
&= \frac{1}{\lambda_d(rW)}\iint_{rW} \I\{x\in(rW - y)\}\,\md x\,\eta(\md y)\\
&= \int \frac{\lambda_d(rW\cap(rW - y))}{\lambda_d(rW)}\,\eta(\md y).
\end{align*}
The above integrand is bounded by $1$ and tends pointwise to $1$ as $r\to\infty$. Hence, the dominated convergence theorem yields
\eqref{e:086} because $\eta$ has finite total variation and total mass $0$, as previously stated in Lemma \ref{etafinitetv}.
\end{proof}
As one can see in the proof of Lemma \ref{etafinitetv}, it is possible to relax condition \eqref{maintheoremmixingcondition} 
slightly by replacing definition \eqref{e:kappa} by
\begin{equation*}   
\kappa(y):=\big\|\BE^\Phi_{0,y}[K_y\star K_0-L_y\star L_0]\big\|,\quad y\in\R^d.
\end{equation*}
Further, in both versions one can replace the transport kernels $K$
and $L$ by the relative displacement kernels $\Ks$ and $\Ls$, as
defined in \eqref{transportkernel2}, without changing $\kappa$.

We can also formulate a version of Theorem \ref{maintheoremmixing} in
Fourier space. If $K$ is a kernel from $\Omega\times\R^d$ to $\R^d$,
we will simplify the notation for the Fourier transform to
$\hat{K}_y(k) := \widehat{K_y}(k),$ for $y,k\in\R^d$.
\begin{theorem}\label{maintheoremmixingfourier}
In the setting of Theorem \ref{maintheoremmixing}, assume that \eqref{maintheoremmixingcondition} holds. Then
\begin{equation}\label{bartletmeasuredifference}
    \hat\beta_{K\Phi} = \hat\beta_{L\Phi} + \hat\eta\cdot\lambda_d,
\end{equation}
where the signed measure $\eta$ is defined by \eqref{etafirst}. Further, we have
\begin{equation}\label{maintheoremmixingfourierdensity}
\hat\eta(k) = 
\int \BE_{0,y}^\Phi \Big[\hat{K}_y(k)\overline{\hat{K}_0(k)} - \hat{L}_y(k)\overline{\hat{L}_0(k)}\Big] \, \alpha_\Phi(\md y),\quad k\in\R^d,
\end{equation}
and $\hat{\eta}$ is continuous with $\hat{\eta}(0) = 0$.
\begin{proof}
By Definition of $\eta$ in \eqref{etafirst}, we have 
\begin{equation}\label{e:431}
    \alpha_{K\Phi} - \gamma^2 \lambda_d = \alpha_{L\Phi} - \gamma^2 \lambda_d + \eta.
\end{equation}
Moreover, by Lemma \ref{etafinitetv} $\eta$ has finite total variation
and total mass $0$. Thus, $\hat\eta$ is well-defined, continuous, and fulfills
$\hat\eta(0)=0$. Now, \eqref{bartletmeasuredifference} directly
follows from the application of the Fourier transform to
  \eqref{e:431}, and \eqref{maintheoremmixingfourierdensity} follows
from the definition of $\eta$ and \eqref{e:201}.
\end{proof}
\end{theorem}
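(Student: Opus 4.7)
The plan is to reduce everything to properties of the signed measure $\eta$ already established in Lemma \ref{etafinitetv} together with standard facts about Fourier transforms of finite signed measures, and then to compute $\hat\eta$ explicitly via Lemma \ref{l:345}.

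First, I would record that since $K\Phi$ and $L\Phi$ both have intensity $\gamma$ (by Proposition \ref{p:propinvPalm}), we have $\beta_{K\Phi}-\beta_{L\Phi}=\alpha_{K\Phi}-\alpha_{L\Phi}=\eta$, so the identity $\beta_{K\Phi}=\beta_{L\Phi}+\eta$ holds as signed measures. Under condition \eqref{maintheoremmixingcondition}, Lemma \ref{etafinitetv} tells us that $\eta$ has finite total variation and total mass zero. This is the key input: a finite signed measure $\eta$ admits an ordinary (bounded continuous) Fourier transform $\hat\eta(k):=\int e^{-i\langle k,x\rangle}\,\eta(\md x)$, and by Proposition 4.14 of \cite{BergForst75} (quoted after \eqref{e:spectral}), this coincides with the density of the Fourier-transformed measure with respect to $\lambda_d$. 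Taking Fourier transforms on both sides of $\beta_{K\Phi}=\beta_{L\Phi}+\eta$ then yields \eqref{bartletmeasuredifference}. Continuity of $\hat\eta$ is automatic, and $\hat\eta(0)=\eta(\R^d)=0$.

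Next, I would derive the explicit formula \eqref{maintheoremmixingfourierdensity}. Starting from \eqref{e:201} in Lemma \ref{l:345},
\[
\eta(B)=\int\BE^{\Phi}_{0,y}\bigl[(K_y\star K_0-L_y\star L_0)(B)\bigr]\,\alpha_\Phi(\md y),
\]
and applying $\int e^{-i\langle k,x\rangle}\,(\cdot)(\md x)$ to both sides, I would swap the outer integration against $\alpha_\Phi$ with the Fourier integration using Fubini. This is legitimate because for each $y$ the inner integrand is bounded in modulus by $\kappa(y)$, and $\kappa$ is $\alpha_\Phi$-integrable by \eqref{maintheoremmixingcondition}. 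The Fourier transform of a tilted convolution factorises: for any finite measures $\mu,\nu$ on $\R^d$,
\[
\widehat{\mu\star\nu}(k)=\iint e^{-i\langle k,u-v\rangle}\,\mu(\md u)\,\nu(\md v)=\hat\mu(k)\overline{\hat\nu(k)},
\]
which gives the bracketed expression in \eqref{maintheoremmixingfourierdensity} after applying it to $K_y\star K_0$ and $L_y\star L_0$ inside the Palm expectation (and using that the Palm expectation commutes with the Fourier integral, again by the same uniform $\kappa(y)$ bound).

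The main (mild) obstacle is bookkeeping for Fubini, namely checking that $|\hat{K}_y(k)\overline{\hat{K}_0(k)}-\hat{L}_y(k)\overline{\hat{L}_0(k)}|$ is dominated by $\kappa(y)$ uniformly in $k$ so that the swap of integration with Palm expectation and with $\alpha_\Phi$ is justified. This follows from the definition \eqref{e:tv definition relation} of the total variation norm applied to the probability measures $K_y\otimes K_0$ and $L_y\otimes L_0$ with the bounded test function $(u,v)\mapsto e^{-i\langle k,u-v\rangle}$ (decomposed into real and imaginary parts). Once this uniform bound is in place, every interchange in the calculation is legal and the proof concludes.
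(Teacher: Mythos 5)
Your proposal is correct and follows essentially the same route as the paper: identify $\beta_{K\Phi}-\beta_{L\Phi}=\eta$, invoke Lemma \ref{etafinitetv} for finite total variation and zero mass, take Fourier transforms, and compute $\hat\eta$ from \eqref{e:201} using the factorization of the Fourier transform of a tilted convolution. You merely spell out the Fubini justification (domination by $\kappa(y)$, uniformly in $k$) that the paper leaves implicit in the phrase ``follows from the definition of $\eta$ and \eqref{e:201}.''
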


\subsection{Comparing the source and destination of a transport}

To prepare Theorem \ref{maintheoremmixing2}, we need to introduce some
further terminology. Following \cite{BH2024}, we call a bounded set $B\in\cB^d$
{\em Fourier smooth} if the Fourier transform $\widehat{\I_B}$ of $\I_B$ satisfies
\begin{align}\label{Fsmooth}
\widehat{\I_B}(k)\le c (1+\|k\|)^{-(d+\vartheta)/2}, \quad k\in\R^d,
\end{align}
for some $c,\vartheta>0$. The Euclidean ball is Fourier smooth; see \cite[Remark 3.5]{BH2024}. Any (non-random)
probability measure $L_0$ on $\R^d$ can be extended to a (non-random) probability kernel
$L$ from $\R^d$ to $\R^d$ by choosing
\begin{align}\label{detL}
L(x,B):=L_0(B-x),\quad (x,B)\in\R^d\times\cB^d.
\end{align}
Then, the relative displacement kernel $\Ls$, defined by \eqref{transportkernel2}, does not depend on its first argument and is equal to $L_0$, i.e.,
\begin{align*}
\Ls_x = \Ls(x)=L_0,\quad x\in\R^d.
\end{align*}

\begin{theorem}\label{maintheoremmixing2}
Let  $\Phi$ be a locally square-integrable invariant random measure,
and let $K$ be an invariant probability kernel from $\Omega\times\R^d$ to $\R^d$.
Define the probability kernel $\Ks$ by \eqref{transportkernel2}, and then define the function $\kappa\colon\R^d\to [0,2]$ by
\begin{align}\label{e:kappa2}        
\kappa(y):=\big\|\BE^\Phi_{0,y}[\Ks_y\otimes \Ks_0]-\big(\BE^\Phi_{0}[\Ks_0]\big)^{\otimes 2}\big\|,\quad y\in\R^d.
\end{align}
Assume that \eqref{maintheoremmixingcondition} holds. Then the following statements hold.
\begin{enumerate}
 \item[{\rm (i)}] Let $W \in \cK_0$ and assume that $\Phi$ is hyperuniform with respect to $W$.
Then $K\Phi$ is hyperuniform with respect to $W$.
\item[{\rm (ii)}] Assume, that either $W$ is Fourier smooth or $\beta_{\Phi}$ has finite total variation, i.e., \eqref{finitetotalv} holds.
Then 
\begin{align*}
\lim_{r\to\infty}\lambda_d(rW)^{-1}\BV[\Phi(rW)]=\lim_{r\to\infty}\lambda_d(rW)^{-1}\BV[K\Phi(rW)].
\end{align*}
\end{enumerate}
\end{theorem}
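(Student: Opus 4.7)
Let $L_0:=\BE^\Phi_0[\Ks_0]$, a deterministic probability measure on $\R^d$, and let $L$ be the (deterministic) invariant probability kernel defined from $L_0$ via \eqref{detL}, so that $\Ls_y=L_0$ for every $y\in\R^d$. With this choice, the mixing function of Theorem~\ref{maintheoremmixing} written in terms of the relative displacement kernels $\Ks,\Ls$ (which is permitted by the remark following the proof of Theorem~\ref{maintheoremmixing}) is precisely the $\kappa$ defined in~\eqref{e:kappa2}. Hence, under~\eqref{maintheoremmixingcondition}, Theorem~\ref{maintheoremmixing} applies to the pair $(K,L)$ and yields
\[
\lim_{r\to\infty}\lambda_d(rW)^{-1}\BV[K\Phi(rW)]=\lim_{r\to\infty}\lambda_d(rW)^{-1}\BV[L\Phi(rW)]
\]
as soon as one of the two limits exists. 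The task is thus reduced to comparing the asymptotic variances of $L\Phi$ and $\Phi$.

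For part~(i), Fubini gives $L\Phi(rW)=\int \Phi(rW-y)\,L_0(\md y)$. Since $L_0$ is a probability measure and $\gamma\lambda_d(rW)=\int\gamma\lambda_d(rW-y)\,L_0(\md y)$, Jensen's inequality applied to the convex function $t\mapsto t^2$ gives
\[
\bigl(L\Phi(rW)-\gamma\lambda_d(rW)\bigr)^{2}\le\int\bigl(\Phi(rW-y)-\gamma\lambda_d(rW-y)\bigr)^{2}\,L_0(\md y).
\]
Taking expectations and invoking stationarity of $\Phi$ yields $\BV[L\Phi(rW)]\le\BV[\Phi(rW)]$, so hyperuniformity of $\Phi$ with respect to $W$ transfers to $L\Phi$, and then to $K\Phi$ by the displayed equality above.

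For part~(ii), applying Lemma~\ref{l:345} to the deterministic $L$ gives $\alpha_{L\Phi}=\alpha_\Phi*(L_0\star L_0)$; since $L_0\star L_0$ has total mass one and $\lambda_d$ is translation invariant, this yields
\[
\beta_{L\Phi}=\beta_\Phi*(L_0\star L_0),\qquad \hat\beta_{L\Phi}(\md k)=|\hat L_0(k)|^{2}\,\hat\beta_\Phi(\md k).
\]
If $\beta_\Phi$ has finite total variation, then so does $\beta_{L\Phi}$ with $\beta_{L\Phi}(\R^d)=\beta_\Phi(\R^d)$, and \eqref{e:asvariance} immediately yields the equality of the asymptotic variances of $\Phi$ and $L\Phi$. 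If instead $W$ is Fourier smooth, one invokes the variance-valued form of \cite[Theorem~3.6]{BH2024}, which identifies the asymptotic variance with the small-ball limit $\lim_{\varepsilon\to 0}\varepsilon^{-d}\hat\beta_\Phi(B_\varepsilon)$; continuity of $|\hat L_0|^{2}$ at $0$ with value $1$ then gives, for every $\eta>0$, some $\delta>0$ on which $(1-\eta)\hat\beta_\Phi(B_\varepsilon)\le\hat\beta_{L\Phi}(B_\varepsilon)\le(1+\eta)\hat\beta_\Phi(B_\varepsilon)$, and this $(1\pm\eta)$-sandwich shows that the small-ball limit is the same for $\Phi$ and $L\Phi$.

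The principal subtle point is the Fourier-smooth branch of~(ii): one must use the variance-valued, rather than merely the hyperuniformity-valued, form of the spectral characterisation from~\cite{BH2024}. Everything else is a routine consequence of Theorem~\ref{maintheoremmixing}, Lemma~\ref{l:345}, Fubini, and Jensen.
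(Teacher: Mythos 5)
Your proposal is correct and follows essentially the same route as the paper: introduce the deterministic mean kernel $L$ with $\Ls_y=L_0:=\BE^\Phi_0[\Ks_0]$, apply Theorem~\ref{maintheoremmixing} to equate the asymptotic variances of $K\Phi$ and $L\Phi$, and then compare $L\Phi$ with $\Phi$ directly. Your Jensen argument for $\BV[L\Phi(B)]\le\BV[\Phi(B)]$ is a cosmetic variant of the Cauchy--Schwarz computation in Lemma~\ref{l:variancemono}, and your convolution identity $\hat\beta_{L\Phi}=|\hat L_0|^2\cdot\hat\beta_\Phi$ is exactly~\eqref{e:bartletsmoothing}.

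One point deserves scrutiny. For the Fourier-smooth branch of (ii) you appeal to ``the variance-valued form of \cite[Theorem~3.6]{BH2024}''. The paper only records the hyperuniformity-valued form~\eqref{spectralasymptoticvariance} of that theorem, and does \emph{not} rely on any such variance-valued identification; it instead proves~\eqref{e:451} directly in Lemma~\ref{l:variancemonofourier}, by splitting the spectral integral into $B_\varepsilon$ and $B_\varepsilon^c$, using translation-boundedness of $\hat\beta_\Phi$ together with the Fourier-smoothness bound~\eqref{Fsmooth} to kill the tail, and then applying the same $|\hat L_0|^2\approx1$ sandwich you use on $B_\varepsilon$. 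If the variance-valued statement you need is not literally available in \cite{BH2024}, you would have to supply the tail estimate yourself, and that is precisely the content of Lemma~\ref{l:variancemonofourier}. So the strategy agrees; this one step in your write-up is outsourced to a citation that should be double-checked, and if it fails, the fix is the paper's own lemma.
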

\begin{proof} We apply Theorem \ref{maintheoremmixing} with the deterministic kernel $L$ determined by 
\begin{equation}
\label{e:Lsdetkernel}
    \Ls_y=\BE^\Phi_0[ K_0], \quad y\in\R^d.
\end{equation}
By the forthcoming Lemma \ref{l:variancemono}, $L\Phi$ is hyperuniform if $\Phi$ is. Moreover, since $L$ is deterministic,
the mixing functions \eqref{e:kappa} and \eqref{e:kappa2} coincide, which gives the first result.
Under each of the additional assumptions, we have the upcoming equality of asymptotic variances of $\Phi$ and $L\Phi$ \eqref{e:4095}. So the second assertion follows from \eqref{e:equalasvariance} in Theorem \ref{maintheoremmixing}.
\end{proof}

Like with Theorem \ref{maintheoremmixingfourier} for Theorem \ref{maintheoremmixing}, we can also formulate a version of 
Theorem \ref{maintheoremmixing2} in Fourier space.

\begin{theorem}\label{maintheoremmixing2fourier}
  In the setting of Theorem \ref{maintheoremmixing2}, assume that
  \eqref{maintheoremmixingcondition} holds. Then we obtain
\begin{equation}\label{bartletmeasuredifference2}
    \hat\beta_{K\Phi} = \big|\BE_0^\Phi \big[\hat{K}^*_0\big]\big|^2\cdot \hat\beta_{\Phi} + \hat\eta\cdot\lambda_d,
\end{equation}
where the signed measure $\eta$ is defined as in \eqref{etafirst} with $L$ as in
\eqref{e:Lsdetkernel}. Further, we have
\begin{equation}\label{maintheoremmixingfourierdensity2}
  \hat\eta(k) =
\int e^{-i\langle k, y\rangle} \Big(\BE_{0,y}^\Phi \Big[\hat{K}^*_y(k)\overline{\hat{K}^*_0(k)}\Big] - \big|
\BE_0^\Phi \big[\hat{K}^*_0(k)\big]\big|^2 \Big)\, \alpha_\Phi(\md y),\quad k\in\R^d,
\end{equation}
and $\hat{\eta}$ is continuous with $\hat{\eta}(0) = 0$.
\begin{proof}
The assertions directly follow from the following Lemma \ref{l:variancemonofourier} and
Theorem \ref{maintheoremmixingfourier}, when one defines $L$ as in
\eqref{e:Lsdetkernel} (see \eqref{transportkernel2} also) in the proof of Theorem \ref{maintheoremmixing2}.
\end{proof}
\end{theorem}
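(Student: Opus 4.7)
The plan is to invoke Theorem \ref{maintheoremmixingfourier} with the deterministic probability kernel $L$ specified by \eqref{e:Lsdetkernel}, namely $\Ls_y \equiv \BE_0^\Phi[\Ks_0]$ for all $y\in\R^d$; this is exactly the kernel used in the proof of Theorem \ref{maintheoremmixing2}, and condition \eqref{maintheoremmixingcondition} holds for the pair $(K,L)$ with the $\kappa$ of \eqref{e:kappa2} since for a deterministic $L$ the mixing functions \eqref{e:kappa} and \eqref{e:kappa2} coincide. Theorem \ref{maintheoremmixingfourier} then yields at once
\[
\hat\beta_{K\Phi} = \hat\beta_{L\Phi} + \hat\eta\cdot\lambda_d,
\]
together with the continuity of $\hat\eta$ and $\hat\eta(0)=0$, which is the last assertion of the theorem.

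Next I would identify $\hat\beta_{L\Phi}$ via Lemma \ref{l:variancemonofourier}, the Fourier counterpart of the variance-monotonicity statement used in the proof of Theorem \ref{maintheoremmixing2}. Since $L$ is deterministic with displacement distribution $\BE_0^\Phi[\Ks_0]$, that lemma should produce
\[
\hat\beta_{L\Phi} = \bigl|\BE_0^\Phi[\hat{K}^*_0]\bigr|^2 \cdot \hat\beta_\Phi,
\]
and plugging this identity into the previous decomposition gives \eqref{bartletmeasuredifference2} immediately.

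To derive the explicit form \eqref{maintheoremmixingfourierdensity2} of $\hat\eta$, I would rewrite the integrand in \eqref{maintheoremmixingfourierdensity} in terms of the relative displacement kernels. From $K_y(B)=\Ks_y(B-y)$ one obtains $\hat{K}_y(k) = e^{-i\langle k,y\rangle}\hat{K}^*_y(k)$, in particular $\hat{K}_0 = \hat{K}^*_0$, and analogously $\hat{L}_y(k) = e^{-i\langle k,y\rangle}\BE_0^\Phi[\hat{K}^*_0(k)]$. Multiplying these out yields
\[
\hat{K}_y(k)\overline{\hat{K}_0(k)} - \hat{L}_y(k)\overline{\hat{L}_0(k)} = e^{-i\langle k,y\rangle}\Bigl(\hat{K}^*_y(k)\overline{\hat{K}^*_0(k)} - \bigl|\BE_0^\Phi[\hat{K}^*_0(k)]\bigr|^2\Bigr),
\]
and substituting this into \eqref{maintheoremmixingfourierdensity}, while pulling the deterministic quantity outside the Palm expectation $\BE_{0,y}^\Phi$, produces exactly \eqref{maintheoremmixingfourierdensity2}.

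The only real obstacle is correctly tracking the phase factors relating $K_y$ to its relative form $\Ks_y$; once Lemma \ref{l:variancemonofourier} supplies the Fourier image of a transport with deterministic displacement and Theorem \ref{maintheoremmixingfourier} provides the Bartlett decomposition under \eqref{maintheoremmixingcondition}, the theorem follows by direct substitution without any further analytic work.
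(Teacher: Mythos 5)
Your proposal is correct and follows essentially the same route as the paper's own (very terse) proof: apply Theorem \ref{maintheoremmixingfourier} with the deterministic kernel $L$ of \eqref{e:Lsdetkernel}, identify $\hat\beta_{L\Phi}$ via Lemma \ref{l:variancemonofourier}, and translate \eqref{maintheoremmixingfourierdensity} into \eqref{maintheoremmixingfourierdensity2} using $\hat{K}_y(k)=e^{-i\langle k,y\rangle}\hat{K}^*_y(k)$, $\hat{K}_0=\hat{K}^*_0$, and $\hat{L}_0=\BE_0^\Phi[\hat{K}^*_0]$ by Fubini. Your write-up merely spells out what the paper leaves implicit; the phase-factor bookkeeping and the observation that deterministic factors pass through $\BE^\Phi_{0,y}$ are exactly the steps needed.
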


An {\em invariant allocation} is a measurable mapping
$\tau\colon\Omega\times\R^d\rightarrow\R^d\cup\{\infty\}$ that is {\em equivariant}
in the sense that
\begin{align}\label{allocation}
\tau(\theta_y\omega,x-y)=\tau(\omega,x)-y,
\quad  x,y\in\R^d,\, \omega\in\Omega.
\end{align}
Let $\Phi$ be an invariant random measure with finite intensity $\gamma$.
Define 
\begin{equation}\label{e:allocaltion transport}
    \tau\Phi:= K(\I\{\tau(\cdot)\neq\infty\}\cdot\Phi) = \int \I\{\tau(x) \in \cdot\}\, \Phi(\md x)
\end{equation}
with $K_x:=\delta_{\tau(x)}$ if $\tau(x)\ne\infty$. Otherwise, let $K_x$ equal some fixed probability measure. Then $\tau\Phi$ is invariant, and a simple calculation (as in the proof of Proposition \ref{p:propinvPalm})
shows that $\tau\Phi$ has intensity $\BP^\Phi_0(\tau(0)\ne\infty)\gamma$. 

\begin{corollary}\label{c:alloc} Let $\Phi$ satisfy the assumptions of Theorem \ref{maintheoremmixing2}. Suppose that $\tau$ is an invariant allocation
such that $\BP^\Phi_0(\tau(0)\ne \infty)=1$.  Define the function $\kappa$ by \eqref{kappaspecial} and assume that
\eqref{maintheoremmixingcondition} holds. Then the assertions (i) and (ii) of Theorem \ref{maintheoremmixing2} hold
with $K\Phi$ replaced by $\tau\Phi$.
\end{corollary}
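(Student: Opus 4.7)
The plan is to reduce the corollary to Theorem~\ref{maintheoremmixing2} by exhibiting an invariant probability kernel $K$ whose destination agrees with $\tau\Phi$ almost surely. I would set $K(\omega,x,\cdot):=\delta_{\tau(\omega,x)}$ on $\{\tau\ne\infty\}$ and $K(\omega,x,\cdot):=\delta_x$ on $\{\tau=\infty\}$. The equivariance \eqref{allocation} yields the invariance condition \eqref{adaptkernel} directly on $\{\tau\ne\infty\}$, while on $\{\tau=\infty\}$ equivariance propagates the set under the joint shift and both sides of \eqref{adaptkernel} reduce to $\delta_0(B)$, so invariance holds there as well. To pass from $K\Phi$ to $\tau\Phi$, I would compare \eqref{e:allocaltion transport} with the definition of $K\Phi$: the two differ by $\int\I\{\tau(x)=\infty,\,x\in\cdot\}\,\Phi(\md x)$, which by the Campbell formula \eqref{eCampbell} and the hypothesis $\BP^\Phi_0(\tau(0)=\infty)=0$ has vanishing mean on every bounded Borel set and hence vanishes almost surely.

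Next I would compute the relative displacement kernel via \eqref{transportkernel2}, obtaining $\Ks_x=\delta_{\tau(x)-x}$ on $\{\tau\ne\infty\}$ and $\Ks_x=\delta_0$ elsewhere. A disintegration of the refined Campbell formula \eqref{e2.55} for $\Phi^2$ shows that $\BP^\Phi_{0,y}(\tau(0)\ne\infty,\,\tau(y)\ne\infty)=1$ for $\alpha_\Phi$-a.e.\ $y$, and $\BP^\Phi_0(\tau(0)=\infty)=0$ by hypothesis. On this set of full $\alpha_\Phi$-measure the mixing function \eqref{e:kappa2} for $K$ reads
\[\big\|\BP^\Phi_{0,y}\big((\tau(y)-y,\tau(0))\in\cdot\big)-\big(\BP^\Phi_0(\tau(0)\in\cdot)\big)^{\otimes 2}\big\|,\]
which is precisely the $\kappa$ from \eqref{kappaspecial}. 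Hence the assumed integrability \eqref{maintheoremmixingcondition} transfers verbatim, and the two assertions of Theorem~\ref{maintheoremmixing2} applied to $K\Phi$ deliver the analogous statements with $\tau\Phi$ in place of $K\Phi$, using the identification $K\Phi=\tau\Phi$ a.s.\ from the previous paragraph.

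The main obstacle I anticipate is the careful null-set bookkeeping around $\{\tau=\infty\}$. One must check that invariance of $K$ holds pointwise rather than merely almost surely (so that Proposition~\ref{propkernel} and Theorem~\ref{maintheoremmixing2} apply as stated), and, more subtly, that the two-point Palm measures $\BP^\Phi_{0,y}$ also avoid this exceptional set for $\alpha_\Phi$-a.e.\ $y$, so that the two definitions of $\kappa$ genuinely coincide and the hypothesis of Theorem~\ref{maintheoremmixing2} is triggered. Once these routine but delicate points are settled, the corollary is a direct specialization of Theorem~\ref{maintheoremmixing2}.
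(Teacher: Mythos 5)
Your proposal is correct and follows essentially the same route as the paper: define $K_x:=\delta_{\tau(x)}$ on $\{\tau\ne\infty\}$ (with an invariant fallback elsewhere), show $K\Phi=\tau\Phi$ a.s., observe that \eqref{e:kappa2} reduces to \eqref{kappaspecial}, and invoke Theorem~\ref{maintheoremmixing2}. The only minor slip is the citation of the plain Campbell formula \eqref{eCampbell} for the step showing $\BE\bigl[\int\I\{\tau(x)=\infty,\,x\in B\}\,\Phi(\md x)\bigr]=0$: since the indicator depends on the environment through $\tau$, what you actually need (and implicitly use) is the refined Campbell theorem \eqref{erefinedC}, which converts that expectation into $\gamma\lambda_d(B)\,\BP^\Phi_0(\tau(0)=\infty)$; the same tool settles the two-point Palm null-set check you flag at the end.
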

\begin{proof} Define the invariant probability kernel $K$ as in \eqref{e:allocaltion transport}.
Then $K\Phi=\tau\Phi$, as $\BP^\Phi_0(\tau(0)\ne \infty)=1$. Moreover, the mixing coefficient \eqref{e:kappa2} boils down to \eqref{kappaspecial}.
Therefore, the assertions follow from Theorem \ref{maintheoremmixing2}.
\end{proof}

\begin{lemma}\label{l:variancemono}  Let $\Phi$ be an invariant locally square-integrable
random measure on $\R^d$, and let $L$ be a probability kernel from  $\R^d$ to $\R^d$ given as in
\eqref{detL}. Then we have for each bounded $B\in\cB^d$ that 
\begin{equation}
            \BV[L\Phi(B)] \leq \BV[\Phi(B)].
\end{equation}
If, in addition, either $W$ is Fourier smooth or $\beta_{\Phi}$ has finite total variation (i.e., \eqref{finitetotalv} holds), then
\begin{align}\label{e:4095}
\lim_{r\to\infty}\lambda_d(rW)^{-1}\BV[\Phi(rW)]=\lim_{r\to\infty}\lambda_d(rW)^{-1}\BV[L\Phi(rW)]
\end{align}
\end{lemma}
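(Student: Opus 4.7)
The plan is to pass to the Fourier side and identify the Bartlett spectral measure of $L\Phi$ in terms of that of $\Phi$, which then yields both assertions.

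First I rewrite $L\Phi$ applied to a test function. For any bounded measurable $f$ with bounded support, unfolding $L(x,\md y)=L_0(\md y-x)$ and Fubini give $L\Phi(f)=\Phi(\tilde f)$ with $\tilde f(x):=\int f(x+z)\,L_0(\md z)$. A direct calculation gives $\widehat{\tilde f}(k)=\hat f(k)\,\overline{\widehat{L_0}(k)}$, so that plugging into \eqref{e:spectralvariance} yields
\begin{align*}
\BV[L\Phi(f)]=\frac{1}{(2\pi)^d}\int|\hat f(k)|^2\,|\widehat{L_0}(k)|^2\,\hat\beta_\Phi(\md k).
\end{align*}
Since $|\widehat{L_0}(k)|\le L_0(\R^d)=1$, taking $f=\I_B$ immediately yields the first assertion $\BV[L\Phi(B)]\le\BV[\Phi(B)]$. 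Comparing with \eqref{e:spectralvariance} applied to $L\Phi$ simultaneously identifies the Bartlett spectral measure as $\hat\beta_{L\Phi}(\md k)=|\widehat{L_0}(k)|^2\,\hat\beta_\Phi(\md k)$, which is the key input for the remaining claim.

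For the asymptotic equality in the finite-total-variation case, \eqref{sfactor} gives $\hat\beta_\Phi(\md k)=\gamma S_\Phi(k)\,\md k$ with $S_\Phi$ continuous and bounded. Using $\widehat{\I_{rW}}(k)=r^d\widehat{\I_W}(rk)$ and substituting $u=rk$,
\begin{align*}
\frac{\BV[\Phi(rW)]}{\lambda_d(rW)}=\frac{\gamma}{(2\pi)^d\lambda_d(W)}\int|\widehat{\I_W}(u)|^2\,S_\Phi(u/r)\,\md u,
\end{align*}
and the analogous expression for $L\Phi$ carries the additional factor $|\widehat{L_0}(u/r)|^2$. Since $|\widehat{\I_W}|^2\in L^1$ by Parseval, $S_\Phi$ is bounded, $|\widehat{L_0}|\le1$, and both $S_\Phi$ and $\widehat{L_0}$ are continuous at $0$ with $\widehat{L_0}(0)=1$, dominated convergence shows both expressions tend to $\gamma S_\Phi(0)$; Parseval's identity $\int|\widehat{\I_W}|^2\,\md u=(2\pi)^d\lambda_d(W)$ cancels the prefactor, in agreement with \eqref{e:asvariancefourier}.

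In the Fourier smooth case one can no longer assume that $\hat\beta_\Phi$ has a density. I would then invoke \cite[Theorem 3.6]{BH2024} (or its underlying Abelian argument), which ties the asymptotic variance with respect to a Fourier smooth $W$ to the small-ball behavior $\epsilon^{-d}\hat\beta_\Phi(B_\epsilon)$ as $\epsilon\to0$. Combined with $\hat\beta_{L\Phi}(\md k)=|\widehat{L_0}(k)|^2\,\hat\beta_\Phi(\md k)$ and the sandwich
\begin{align*}
\Big(\inf_{k\in B_\epsilon}|\widehat{L_0}(k)|^2\Big)\hat\beta_\Phi(B_\epsilon)\;\le\;\hat\beta_{L\Phi}(B_\epsilon)\;\le\;\hat\beta_\Phi(B_\epsilon),
\end{align*}
whose lower factor tends to $1$ as $\epsilon\to0$ by continuity of $\widehat{L_0}$ at the origin, this gives $\lim_{\epsilon\to0}\epsilon^{-d}\hat\beta_{L\Phi}(B_\epsilon)=\lim_{\epsilon\to0}\epsilon^{-d}\hat\beta_\Phi(B_\epsilon)$ and hence equality of the two asymptotic variances.

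The hard part is the Fourier smooth case: without a density for $\hat\beta_\Phi$ the dominated convergence used in the finite-total-variation case breaks down, and one must rely on the Abelian-type transfer from the behavior of $\hat\beta_\Phi$ at the origin to that of $\BV[\Phi(rW)]/\lambda_d(rW)$ at large $r$. A self-contained derivation would split the Fourier-side difference $\int|\widehat{\I_{rW}}(k)|^2(1-|\widehat{L_0}(k)|^2)\,\hat\beta_\Phi(\md k)$ into a small neighborhood $B_\epsilon$ of the origin, where $1-|\widehat{L_0}|^2$ is uniformly small by continuity, and its complement, where the polynomial decay $|\widehat{\I_{rW}}(k)|^2\le c^2 r^{2d}(1+r\|k\|)^{-(d+\vartheta)}$ must be balanced against the potentially infinite tail mass of $\hat\beta_\Phi$, and then let $\epsilon\to0$ after $r\to\infty$.
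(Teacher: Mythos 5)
Your overall strategy matches the paper's: identify the Bartlett spectral measure of $L\Phi$ as $\hat\beta_{L\Phi}=|\widehat{L_0}|^2\cdot\hat\beta_\Phi$, and then compare asymptotic variances by splitting the Fourier-side integral near and far from the origin. However, there are two points where the argument as written has a gap that the paper's route avoids.

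First, the identity $L\Phi(f)=\Phi(\tilde f)$ with $\tilde f:=\int f(\cdot+z)\,L_0(\md z)$ is fine, but you then feed $\tilde f$ into \eqref{e:spectralvariance}, which the paper states (and which is generally proved) only for bounded measurable functions with \emph{bounded support}. The function $\tilde f$ is bounded and integrable but in general does not have bounded support when $L_0$ is not compactly supported. Extending \eqref{e:covariance}--\eqref{e:spectral} to such $\tilde f$ requires an approximation argument (and some a priori control, e.g.\ translation boundedness of $\alpha_\Phi$) that you do not supply; this is precisely where the hidden content lies, since the very claim $\BE[L\Phi(B)^2]<\infty$ — needed before one may even speak of $\hat\beta_{L\Phi}$ — is what you are trying to establish. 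The paper sidesteps this entirely: it first proves $\BV[L\Phi(B)]\le\BV[\Phi(B)]$ by writing $L\Phi(B)=\int\Phi(B-y)\,L_0(\md y)$ and applying Cauchy--Schwarz to $\BE[\Phi(B-x)\Phi(B-y)]$, which simultaneously shows local square-integrability of $L\Phi$. Only then does it derive $\hat\beta_{L\Phi}=|\widehat{L_0}|^2\cdot\hat\beta_\Phi$ (Lemma~\ref{l:variancemonofourier}) by transforming $\alpha_{L\Phi}=\int(L_0\star L_0)(\cdot-y)\,\alpha_\Phi(\md y)$, keeping all test functions compactly supported throughout. Your Fourier-first derivation of the variance inequality thus inverts the logical order in a way that needs repair.

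Second, for the Fourier-smooth case you correctly identify the split into $B_\varepsilon$ and its complement and the need to control the tail contribution, but you stop at ``must be balanced against the potentially infinite tail mass of $\hat\beta_\Phi$.'' The decisive fact that makes this work, and which the paper invokes explicitly, is that $\hat\beta_\Phi$ is \emph{translation bounded} (\cite[Proposition 4.9]{BergForst75}); combined with the Fourier-smoothness bound $|\widehat{\I_W}(u)|^2\le c^2(1+\|u\|)^{-(d+\vartheta)}$, this gives $r^{-d}\int_{B_\varepsilon^c}|\widehat{\I_{rW}}(k)|^2\,\hat\beta_\Phi(\md k)\le r^{-\vartheta}c^2\int_{B_\varepsilon^c}\|k\|^{-(d+\vartheta)}\hat\beta_\Phi(\md k)\to0$. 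Also, your appeal to the small-ball characterization \eqref{spectralasymptoticvariance} gives only the equivalence with hyperuniformity as stated, not directly the equality of nonzero asymptotic variances; the paper instead sandwiches the integrals over $B_\varepsilon$ directly, which is the cleaner route to the full claim.
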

\begin{proof} Proposition \ref{p:propinvPalm} shows that $L\Phi$ and $\Phi$ have the
same intensity. Let $B\in\cB^d$. Then
\begin{align*}
L\Phi(B)&=\int L_0(B-x)\, \Phi(\md x)
= \iint \I_{B-x}(y)\,L_0(\md y)\, \Phi(\md x)
= \int \Phi(B-y)\,L_0(\md y).
\end{align*}
Therefore, we obtain from the Cauchy-Schwarz inequality and invariance that
\begin{align*}
\BE[L\Phi(B)^2] &= \BE\bigg[\int \Phi(B-x)\Phi(B-y)\, L_0^2(\md (x,y)) \bigg]\\
            &= \int \BE[\Phi(B-x)\Phi(B-y)]\, L_0^2(\md (x,y))\\
            &\le \int \sqrt{\BE[\Phi(B-x)^2]\BE[\Phi(B-y)^2]}\,L_0^2(\md (x,y))
= \BE[\Phi(B)^2].
\end{align*}
This inequality implies the first assertion.

The second assertion can be derived from the upcoming Lemma \ref{l:variancemonofourier}. First assume that $W$ is Fourier smooth. Then \eqref{e:4095} can be derived from \eqref{e:451} using \eqref{e:spectralvariance}. Now, instead assume that $\hat\beta_\Phi$ has finite total variation. Then using forthcoming \eqref{e:bartletsmoothing} and $\hat{L}_0(0)= L_0(\R^d) = 1$, we derive that
\begin{equation}
    S_{L\Phi}(0) = |\hat{L}_0(0)|^2 S_\Phi(0) = S_\Phi(0),
\end{equation}
which now gives the second assertion via \eqref{e:asvariancefourier}.
\end{proof}
In Fourier space, one can get the following explicit formula that was used in the previous proof.
\begin{lemma}\label{l:variancemonofourier}
In the setting of Lemma \ref{l:variancemono}, we have
\begin{equation}\label{e:bartletsmoothing}
    \hat\beta_{L\Phi} = |\hat{L}_0|^2\cdot \hat\beta_\Phi.
\end{equation}
Further, suppose that $W \in \cK_0$ and also a Fourier smooth set. Then
\begin{equation}\label{e:451}
    \lim_{r\to\infty} \lambda_d(rW)^{-1} \int \big|\widehat{\I_{rW}}(k)\big|^2\, \hat\beta_\Phi(\md k) = \lim_{r\to\infty} \lambda_d(rW)^{-1} \int \big|\widehat{\I_{rW}}(k)\big|^2\, \hat\beta_{L\Phi}(\md k).
\end{equation}
\begin{proof}
From Lemma \ref{l:345} we obtain
\begin{align}
    \alpha_{L\Phi}(B) &= \int (L_{y}\star L_{0})(B)\, \alpha_\Phi(\md y)\nonumber\\
    &= \int (L_0\star L_0)(B-y)\alpha_\Phi(\md y),\quad B\in\cB^d.
\end{align}
Now, let $f,g$ be measurable and bounded functions with compact
support. Further, define $g_z$ by $g_z(x) := g(x-z)$ for
$x,z\in\R^d$. Note that
$\hat{g}_z(k) = e^{-i\langle k, z\rangle}\hat{g}(k)$ for
$k\in\R^d$. Using this, we get
\begin{align}
    \int \hat{f}(k)\overline{\hat{g}(k)}\, \hat\alpha_{L\Phi}(\md k) &= \int (f\star g)(x)\, \alpha_{L\Phi}(\md x)\nonumber\\
    &= \iint (f\star g)(y+z)\, (L_0\star L_0)(\md z)\, \alpha_\Phi(\md y)\nonumber\\
    &= \iint (f\star g_z)(y)\, \alpha_\Phi(\md y)\, (L_0\star L_0)(\md z)\nonumber\\
    &= \iint \hat{f}(k)\overline{\hat{g}_z(k)}\, \hat\alpha_\Phi(\md k)\, (L_0\star L_0)(\md z)\nonumber\\
    &= \iint e^{i\langle k, z\rangle}\, (L_0\star L_0)(\md z)\, \hat{f}(k)\overline{\hat{g}(k)} \hat\alpha_\Phi(\md k)\nonumber\\
    &= \int \hat{f}(k)\overline{\hat{g}(k)} |\hat{L}_0(k)|^2 \hat\alpha_\Phi(\md k).
\end{align}
As $f,g$ were arbitrary, this equality implies
\begin{equation}
    \hat\alpha_{L\Phi} = |\hat{L}_0|^2\cdot \hat\alpha_\Phi,
\end{equation}
and thus \eqref{e:bartletsmoothing} holds as well because $\hat{L}_0(0)=L_0(\R^d)=1$ , $\hat\lambda_d = \delta_0$, and $L\Phi$ has same intensity as $\Phi$ by Proposition \ref{p:propinvPalm}.

Now, suppose that $W \in \cK_0$ and also a Fourier smooth set with constants $c,\vartheta>0$. Let $\varepsilon>0$. Then, as $\big|\widehat{\I_{rW}}(\cdot)\big| = r^{d}\big|\widehat{\I_{W}}(r\cdot)\big|$, we get
\begin{align}
    r^{-d} \int_{B_\varepsilon^c} \big|\widehat{\I_{rW}}(k)\big|^2\, \hat\beta_\Phi(\md k) &= r^d \int_{B_\varepsilon^c} \big|\widehat{\I_{W}}(rk)\big|^2\, \hat\beta_\Phi(\md k)\nonumber\\
    &\leq r^d \int_{B_\varepsilon^c} c(1+r\|k\|)^{-(d+\vartheta)}\, \hat\beta_\Phi(\md k)\nonumber\\
    &\leq r^{-\vartheta} c\int_{B_\varepsilon^c} \|k\|^{-(d+\vartheta)}\, \hat\beta_\Phi(\md k).
\end{align}
From \cite[Proposition 4.9]{BergForst75}, we know that $\hat\beta_\Phi$ is translation bounded. Hence, the last integral is finite just like it is with respect to the Lebesgue measure. Thus,
\begin{equation}
    \lim_{r\to\infty} r^{-d} \int_{B_\varepsilon^c} \big|\widehat{\I_{rW}}(k)\big|^2\, \hat\beta_\Phi(\md k) = 0.
\end{equation}
Because the same holds for $L\Phi$, for \eqref{e:451} it suffices to show that 
\begin{equation}\label{e:452}
    \lim_{\varepsilon\to0}\lim_{r\to\infty} r^{-d} \int_{B_\varepsilon} \big|\widehat{\I_{rW}}(k)\big|^2\, \hat\beta_{L\Phi}(\md k) = \lim_{\varepsilon\to0}\lim_{r\to\infty} r^{-d} \int_{B_\varepsilon} \big|\widehat{\I_{rW}}(k)\big|^2\, \hat\beta_\Phi(\md k).
\end{equation}
From \eqref{e:bartletsmoothing} and the fact that $|\hat{L}_0| \leq 1$, we obtain 
\begin{align}
    r^{-d} \int_{B_\varepsilon} \big|\widehat{\I_{rW}}(k)\big|^2\, \hat\beta_{L\Phi}(\md k) &\leq r^{-d} \int_{B_\varepsilon} \big|\widehat{\I_{rW}}(k)\big|^2\, \hat\beta_\Phi(\md k),\\
    r^{-d} \int_{B_\varepsilon} \big|\widehat{\I_{rW}}(k)\big|^2\, \hat\beta_{L\Phi}(\md k) &\geq \inf_{k\in B_\varepsilon} (|\hat{L}_0(k)|^2)\, r^{-d} \int_{B_\varepsilon}\big|\widehat{\I_{rW}}(k)\big|^2\, \hat\beta_\Phi(\md k).
\end{align}
As $\hat{L}_0$ is continuous and $\hat{L}_0(0)=1$, these bounds imply \eqref{e:452}, and thus also \eqref{e:451}.
\end{proof}
\end{lemma}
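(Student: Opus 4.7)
The plan is to prove the two parts separately, with the first being a direct Fourier computation and the second a careful splitting argument near the origin.

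For the identity $\hat\beta_{L\Phi} = |\hat{L}_0|^2\cdot \hat\beta_\Phi$, I would start from Lemma \ref{l:345}, which (together with the fact that $\Phi$ and $L\Phi$ share the same intensity by Proposition \ref{p:propinvPalm}) gives
\[
\alpha_{L\Phi}(B) = \int (L_y\star L_0)(B)\,\alpha_\Phi(\md y).
\]
Since $L$ is determined by $L_0$ via \eqref{detL}, we have $L_y=L_0(\cdot-y)$, and a change of variables reduces this to $(L_y\star L_0)(B)=(L_0\star L_0)(B-y)$, so $\alpha_{L\Phi}$ is the convolution of $\alpha_\Phi$ with the (deterministic) measure $L_0\star L_0$. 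Pairing both sides against a test function of the form $f\star g$, applying Fubini, and using the defining spectral identity \eqref{e:spectral} for $\alpha_\Phi$, I would arrive at
\[
\int (f\star g)(x)\,\alpha_{L\Phi}(\md x) = \int \hat{f}(k)\overline{\hat{g}(k)}\,|\hat{L}_0(k)|^2\,\hat\alpha_\Phi(\md k),
\]
using that the Fourier transform of the tilted self-convolution $L_0\star L_0$ is $|\hat{L}_0|^2$. Since test functions of the form $f\star g$ separate locally finite measures, this gives $\hat\alpha_{L\Phi}=|\hat{L}_0|^2\cdot\hat\alpha_\Phi$; subtracting $\gamma^2\hat\lambda_d=\gamma^2\delta_0$ from both sides and using $|\hat{L}_0(0)|^2=L_0(\R^d)^2=1$ yields the claimed identity for $\hat\beta$.

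For the limit comparison \eqref{e:451} under Fourier smoothness of $W$, I would fix a small $\varepsilon>0$ and split each integral into the contributions over $B_\varepsilon$ and over $B_\varepsilon^c$. The scaling $|\widehat{\I_{rW}}(k)|=r^d|\widehat{\I_W}(rk)|$ combined with \eqref{Fsmooth} gives, for $k\in B_\varepsilon^c$, the bound $r^{-d}|\widehat{\I_{rW}}(k)|^2\le c\,r^{-\vartheta}\|k\|^{-(d+\vartheta)}$. Integrating this against $\hat\beta_\Phi$, which is translation bounded by \cite[Proposition 4.9]{BergForst75}, shows the $B_\varepsilon^c$ contribution vanishes as $r\to\infty$; the identical argument applies to $\hat\beta_{L\Phi}$, since by the first part this measure is dominated by $\hat\beta_\Phi$. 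On $B_\varepsilon$, the first part lets me write the $L\Phi$-integrand as $|\hat{L}_0(k)|^2$ times the $\Phi$-integrand, and continuity of $\hat{L}_0$ at $0$ with $\hat{L}_0(0)=1$ forces $\inf_{B_\varepsilon}|\hat{L}_0|^2\to1$ as $\varepsilon\to0$, sandwiching the $B_\varepsilon$-contributions of the two integrals between one another. Letting $r\to\infty$ and then $\varepsilon\to0$ delivers \eqref{e:451}.

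The main subtlety is that $\hat\beta_\Phi$ is in general only a translation-bounded positive measure without a Lebesgue density, so the first part cannot be done via pointwise manipulations and must proceed through integration against tilted convolutions $f\star g$; for the second part, invoking translation boundedness of $\hat\beta_\Phi$ is what makes the off-origin contribution controllable, whereas the on-origin comparison is just continuity of $\hat{L}_0$.
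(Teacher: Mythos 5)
Your proposal is correct and follows essentially the same route as the paper: express $\alpha_{L\Phi}$ as the convolution of $\alpha_\Phi$ with $L_0\star L_0$, pair against test functions $f\star g$ and invoke the spectral identity to get $\hat\alpha_{L\Phi}=|\hat L_0|^2\cdot\hat\alpha_\Phi$, then subtract the intensity term; for the second part, split at $B_\varepsilon$, kill the off-origin piece via Fourier smoothness plus translation boundedness of $\hat\beta_\Phi$, and sandwich the near-origin piece using continuity of $\hat L_0$ at $0$. No essential difference.
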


\section{Randomizing transports}\label{sec:randtransport}

In this section we assume that $\Phi$ is a (stationary) discrete random measure.
As in Theorem \ref{maintheoremmixing} we consider two invariant random probability kernels
$K$ and $L$. But this time we assume that given $L\Phi$, the random measures $K(x)$, $x\in\Phi$,
have the conditional mean $L(x)$ and are uncorrelated for different $x\in\Phi$.
Heuristically, we then have $\kappa(y)=0$ for all $y\in\R^d\setminus\{0\}$, so that
assumption \eqref{maintheoremmixingcondition} can be dropped.
On the other hand we do not need to assume that $\Phi$ is locally square integrable.
It is enough that $L\Phi$ has this property. Then Theorem \ref{t:splitting} shows
that $L\Phi$ and $K\Phi$ have the same asymptotic variance.
Theorem \ref{t:splitting2} is a more explicit version of this result.
In this theorem we assume the random measures $K(x)$, $x\in\Phi$, to be
conditionally independent with a conditional distribution that is determined
by $L(x)$ in a certain invariant way. Effectively this means that 
$K$ is constructed by randomizing $L$ 
in an invariant way.
This theorem is applied in Subsection \ref{ss:applications_randomtransp}
to construct hyperuniform processes starting with a general simple stationary
point process. The interested reader might go there directly without
studying the general background in Subsection \ref{ss:genrandomtransp}.

As said above we consider a {\em discrete} random measure $\Phi$, that is, a random element
of the space $\bM_d(\R^d)\subset \bM(\R^d)$ of  all $\varphi\in\bM(\R^d)$ 
with discrete support. For $\varphi\in \bM_d(\R^d)$ and $x\in\R^d$, we write $x\in\varphi$
if $\varphi\{x\}:=\varphi(\{x\})>0$. 
Unless stated otherwise we are working in the setting of
Subsection \ref{subPalm}.

\subsection{General results}\label{ss:genrandomtransp}

The first result of this section shows that transporting $\Phi$ with two
invariant probability kernels $L$ and $K$ leads to random measures with
the same asymptotic variance, provided the conditional covariance structure of $K$
is determined by $L$ in a specific way.

\begin{theorem}\label{t:splitting} Suppose that $\Phi$ is  an invariant purely discrete random measure
with finite intensity.
Let $L$ be an invariant probability kernel from $\Omega\times\R^d$ to $\R^d$, and
let $T$ be the random transport given by $T(d(x,y)):=L(x,dy)\Phi(dx)$.
Further, let $K$ be another  invariant probability kernel from $\Omega\times\R^d$ to $\R^d$,
satisfying
\begin{align}
\label{a:2}
\BE[K_x\otimes K_y \mid T] = L_x\otimes L_y,\quad x,y\in\Phi, x\ne y, \BP\text{-a.s.}
\end{align}
Assume that 
\begin{align}\label{851}
\BE^\Phi_0\Phi\{0\}<\infty.
\end{align} 
Then $K\Phi$ is locally square-integrable iff $L\Phi$ is locally square-integrable. In this case, we again have equality of asymptotic variances of $K\Phi$ and $L\Phi$ i.e.,
\eqref{e:equalasvariance} holds if one  of the limits exist.
  \end{theorem}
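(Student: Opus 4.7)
The strategy is to analyse the signed set function
\[
\eta:=\alpha_{K\Phi}-\alpha_{L\Phi}
\]
exactly as in the proof of Theorem~\ref{maintheoremmixing}, but to replace the integrability assumption \eqref{maintheoremmixingcondition} by the discreteness of $\Phi$ together with \eqref{a:2} and \eqref{851}. Once $\eta$ is shown to be a finite signed measure with $\eta(\R^d)=0$, the conclusion follows from the same averaging argument used before.

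\emph{Step 1 (diagonal/off-diagonal split).} For bounded $B\in\cB^d$ write $f(u,v):=\I\{u\in[0,1]^d,\,v-u\in B\}$. Because $\Phi=\sum_{x\in\Phi}\Phi\{x\}\delta_x$ is purely discrete,
\[
\iint f\,\md(K\Phi)^2=\sum_{x\in\Phi}\Phi\{x\}^2(K_x\otimes K_x)(f)+\!\!\!\sum_{\substack{x,y\in\Phi\\ x\ne y}}\!\!\Phi\{x\}\Phi\{y\}(K_x\otimes K_y)(f),
\]
and likewise for $L\Phi$. The random measure $\Phi$ and each weight $\Phi\{x\}$ are $\sigma(T)$-measurable, so conditioning on $T$ and invoking \eqref{a:2} shows that the off-diagonal contributions to $\BE\iint f\,\md(K\Phi)^2$ and $\BE\iint f\,\md(L\Phi)^2$ coincide as elements of $[0,\infty]$. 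Thus only the diagonal parts of $\alpha_{K\Phi}$ and $\alpha_{L\Phi}$ can differ.

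\emph{Step 2 (Palm reduction of the diagonal).} Using the invariance identity $\Ks_x(\omega,\cdot)=\Ks_0(\theta_x\omega,\cdot)$ from \eqref{adaptkernel1}, the substitution $a=u-x$, $b=v-x$, and the refined Campbell theorem \eqref{erefinedC} together with $\int\I\{a+x\in[0,1]^d\}\,\md x=1$, the diagonal term for $K$ reduces to $\gamma\,\BE^\Phi_0[\Phi\{0\}(\Ks_0\star \Ks_0)(B)]$, and analogously for $L$. Subtracting,
\[
\eta(B)=\gamma\,\BE^\Phi_0\!\left[\Phi\{0\}\bigl((\Ks_0\star \Ks_0)-(\Ls_0\star \Ls_0)\bigr)(B)\right].
\]
Since $\Ks_0\star \Ks_0$ and $\Ls_0\star \Ls_0$ are probability measures, their difference has total variation at most $2$ and total mass $0$; together with \eqref{851} this shows that $\eta$ extends to a finite signed measure on $\R^d$ with $|\eta|(\R^d)\le 2\gamma\,\BE^\Phi_0\Phi\{0\}<\infty$ and $\eta(\R^d)=0$.

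\emph{Step 3 (conclusion).} The diagonal term of $\alpha_{K\Phi}(B)$ is bounded by $\gamma\,\BE^\Phi_0\Phi\{0\}<\infty$ for every bounded $B$, and by Step~1 the off-diagonal contributions to $\alpha_{K\Phi}$ and $\alpha_{L\Phi}$ agree in $[0,\infty]$. Hence $\alpha_{K\Phi}(B)$ is finite if and only if $\alpha_{L\Phi}(B)$ is, proving the claimed equivalence of local square-integrability. Assuming both hold, the covariance identity used in the proof of Theorem~\ref{maintheoremmixing} becomes $\BV[K\Phi(rW)]-\BV[L\Phi(rW)]=\int_{rW}\eta(rW-x)\,\md x$, and the same dominated-convergence argument there, using $|\eta|(\R^d)<\infty$ and $\eta(\R^d)=0$, yields \eqref{e:equalasvariance}. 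The delicate point, and the main obstacle, is that \eqref{a:2} controls $K_x\otimes K_y$ only for distinct $x,y\in\Phi$ and says nothing about $K_x\otimes K_x$; the whole argument therefore hinges on exhibiting the residual diagonal discrepancy as a finite signed measure with vanishing total mass, which is precisely where the discreteness of $\Phi$ together with the moment bound \eqref{851} is indispensable.
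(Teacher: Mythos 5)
Your proof is correct and follows essentially the same route as the paper: split the second moment into diagonal ($x=y$) and off-diagonal ($x\ne y$) parts, use \eqref{a:2} together with $\sigma(T)$-measurability of $\Phi$ to equate the off-diagonal contributions of $K$ and $L$, reduce the diagonal via the refined Campbell theorem to $\gamma\,\BE^\Phi_0[\Phi\{0\}(K_0\star K_0)(B)]$, and conclude via the finite-total-variation, zero-total-mass argument from Theorem~\ref{maintheoremmixing}. (Minor note: the paper's intermediate display for $M_1(B)$ omits a $\gamma$, but its final formula \eqref{e:501} agrees with yours.)
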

\begin{proof} We follow the proof of Theorem \ref{maintheoremmixing}.
We cannot apply the latter directly, since we have not assumed that
$\Phi$ is locally square-integrable. Let $B\in\cB^d$. By definition of $K\Phi$ and $\alpha_{K\Phi}$,
we have
\begin{align*}
\alpha_{K\Phi}(B) = M_1(B)+M_2(B),
\end{align*}
where 
\begin{align*}
M_1(B)&:=\BE\iiint \I\{x\in[0,1]^d,y=z\}K(y,B+x)K(z,\md x)\,\Phi(\md y)\,\Phi(\md z),\\
M_2(B)&:=\BE\iiint \I\{x\in[0,1]^d,y\ne z\}K(y,B+x)K(z,\md x)\,\Phi(\md y)\,\Phi(\md z).
\end{align*}
As in the proof of \eqref{e:200}, we obtain
\begin{align*}
M_1(B)&=\BE^\Phi_0\int K(0,B+x)K(0,\md x)\Phi\{0\}\\
&=\BE^\Phi_0 (K_0 \star K_0)(B)\Phi\{0\}.
\end{align*}
By assumption \eqref{a:2}, and because $\Phi$ is measurable with respect to $T$,
\begin{align*}
M_2(B) &= \BE\iiint \I\{x\in[0,1]^d,y\neq z\}\BE\left[K(y,B+x)K(z,\md x)\mid T\right]\,\Phi(\md y)\,\Phi(\md z)\\
&= \BE\iiint \I\{x\in[0,1]^d,y\ne z\}L(y,B+x)L(z,\md x)\,\Phi(\md y)\,\Phi(\md z).
\end{align*}
A similar decomposition of $\alpha_{L\Phi}$ yields
\begin{align*}
\alpha_{L\Phi}(B)=M'_1(B)+M_2(B),
\end{align*}
where 
\begin{align*}
M'_1(B):=\BE^\Phi_0 (L_0\star L_0)(B)\Phi\{0\}.
\end{align*}
By our assumption that $\BE^\Phi_0\Phi\{0\}<\infty$ and that $K,L$ are probability kernels, it follows that 
$M_1(B)+M_1'(B)<\infty$. If $B$ is bounded, this implies that 
$\alpha_{K\Phi}(B)<\infty$ iff $\alpha_{L\Phi}(B)<\infty$ iff $M_2(B) < \infty$. Hence, because $B$ was arbitrary, $\BE(K\Phi(B)^2)<\infty$ iff 
$\BE(L\Phi(B)^2)<\infty$, proving the first assertion.

In the remainder of the proof, we assume that $M_2$ is locally finite,
so that $K\Phi$ and $L\Phi$ are locally square-integrable.
The signed measure $\eta$ (see \eqref{etafirst})  is then given by
\begin{align}\label{e:501}
\eta=\gamma\,\BE^\Phi_0 [(K_0\star K_0-L_0\star L_0)\Phi\{0\}].
\end{align}
Therefore, $\eta(\R^d)=0$ and
the total variation of $\eta$ is bounded by $2\gamma\,\BE^\Phi_0\Phi\{0\}$.
Now, the second assertion follows as in the proof of Theorem \ref{maintheoremmixing}.
\end{proof}  

Again, we can express this in Fourier space, but with a much simpler
formula compared to Theorems \ref{maintheoremmixingfourier} and
\ref{maintheoremmixing2fourier}.

\begin{theorem}\label{t:splittingfourier}
  In the setting of Theorem \ref{t:splitting}, assume that
  $\BE^\Phi_0\Phi\{0\}<\infty$, and that $K\Phi$ or $L\Phi$ is locally
  square-integrable. Then we obtain
\begin{equation}\label{bartletmeasuredifference3}
    \hat\beta_{K\Phi} = \hat\beta_{L\Phi} + \hat\eta\cdot\lambda_d,
\end{equation}
where the signed measure $\eta$ is defined by \eqref{etafirst}. Further, we have
\begin{equation}\label{splittingfourierdensity}
    \hat\eta(k) = 
\gamma\,\BE_0^\Phi \big[\big(\big|\hat{K}_0(k)\big|^2 - \big|\hat{L}_0(k)\big|^2\big)\Phi\{0\}\big],\quad k\in\R^d,
\end{equation}
and $\hat\eta$ is continuous with $\hat\eta(0)=0$.
\end{theorem}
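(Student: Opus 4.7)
The plan is to piggyback on the proof of Theorem \ref{t:splitting}, since nearly everything we need about $\eta$ was already established there. From that proof, under the standing hypotheses ($\BE^\Phi_0 \Phi\{0\}<\infty$ and, say, $L\Phi$ locally square-integrable), both $K\Phi$ and $L\Phi$ are locally square-integrable and the signed measure $\eta=\alpha_{K\Phi}-\alpha_{L\Phi}$ admits the explicit Palm representation \eqref{e:501}, has finite total variation bounded by $2\gamma\,\BE^\Phi_0\Phi\{0\}$, and satisfies $\eta(\R^d)=0$. Since Proposition \ref{p:propinvPalm} gives that $K\Phi$ and $L\Phi$ have the same intensity $\gamma$ as $\Phi$, subtracting $\gamma^2\lambda_d$ in \eqref{betaPhi} shows that $\beta_{K\Phi}-\beta_{L\Phi}=\eta$.

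For \eqref{bartletmeasuredifference3}, I would take the Fourier transform on both sides of $\beta_{K\Phi}=\beta_{L\Phi}+\eta$. Because $\eta$ has finite total variation, the result from \cite[Proposition 4.14]{BergForst75} (already invoked in Subsection \ref{subBartlett}) guarantees that the Fourier transform of $\eta$ is absolutely continuous with respect to $\lambda_d$ and equals $\hat\eta\cdot\lambda_d$, where $\hat\eta(k)=\int e^{-i\langle k,x\rangle}\,\eta(\md x)$ is bounded and continuous. This yields \eqref{bartletmeasuredifference3} directly.

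For the explicit formula \eqref{splittingfourierdensity}, I would plug \eqref{e:501} into the definition of $\hat\eta$ and exchange the Palm expectation with the Fourier integration. This exchange is justified by Fubini's theorem: the integrand is bounded in modulus by $(K_0\star K_0 + L_0\star L_0)(\R^d)=2$ (times $\Phi\{0\}$), and $\BE^\Phi_0\Phi\{0\}<\infty$. The only nontrivial computation is that the Fourier transform of the tilted convolution $\mu\star\mu'$ of two probability measures is the product of the Fourier transform of $\mu$ with the conjugate of the Fourier transform of $\mu'$, namely
\[
\widehat{\mu\star\mu'}(k) \;=\; \iint e^{-i\langle k,\,x-y\rangle}\, \mu(\md x)\,\mu'(\md y) \;=\; \hat\mu(k)\,\overline{\hat{\mu'}(k)},
\]
which follows straight from the definitions of $\star$ and of $\hat{\cdot}$. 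Applying this identity with $\mu=\mu'=K_0$ and then with $\mu=\mu'=L_0$ produces $|\hat K_0(k)|^2$ and $|\hat L_0(k)|^2$, and \eqref{splittingfourierdensity} follows.

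Continuity of $\hat\eta$ and the vanishing $\hat\eta(0)=\eta(\R^d)=0$ are then immediate standard consequences of $\eta$ being a finite signed measure of total mass zero. There is no real obstacle in this proof: the entire content is bookkeeping on top of the already-proved identity \eqref{e:501}, with the only point of care being the routine justification of swapping $\BE^\Phi_0$ and the Fourier integral.
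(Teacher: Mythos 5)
Your proof is correct and takes essentially the same route as the paper: recall from the proof of Theorem \ref{t:splitting} that $\eta$ has finite total variation with $\eta(\R^d)=0$ and admits the Palm representation \eqref{e:501}, subtract $\gamma^2\lambda_d$ to pass from reduced second moment measures to covariance measures, take Fourier transforms, and use $\widehat{\mu\star\mu'}(k)=\hat\mu(k)\overline{\hat{\mu'}(k)}$ together with Fubini to extract \eqref{splittingfourierdensity}. Your explicit justification of the Fubini swap and of the tilted-convolution identity just fills in a detail the paper leaves implicit.
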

\begin{proof} First, we recall \eqref{e:431}.
Moreover, $\hat\eta$ is well-defined, as from the proof of Theorem \ref{t:splitting}, we get that $\eta$ has finite total
variation. Now, \eqref{bartletmeasuredifference3} directly follows
from the application of the Fourier transform, and
\eqref{splittingfourierdensity} follows from the representation
\eqref{e:501} of $\eta$ and \eqref{e:200} in Lemma \ref{l:345}. Continuity of $\hat\eta$ follows because $\eta$ has finite total variation, and $\hat\eta(0)=0$ because $\eta(\R^d)=0$, as seen in the proof of Theorem \ref{t:splitting}. 
\end{proof}

With the following theorem, we would like to make the assumptions of
Theorem \ref{t:splitting} more explicit. Let $\bM^1\equiv \bM^1(\R^d)$ be the space of probability measures
on $\R^d$, a measurable subset of $\bM(\R^d)$. 
Let $\Pi_d$ be the space of all probability measures on $\bM^1(\R^d)$, equipped
with the standard $\sigma$-field.
We shall a consider a measurable mapping
$F\colon \bM^1(\R^d)\to \Pi_d$ with the mean value property 
\begin{align}\label{F1}
\int \pi'(\cdot) F(\pi)(\md \pi') =\pi(\cdot),
\end{align}
and the translation covariance property
\begin{align}\label{F2}
F(\theta_x\pi)=\int \I\{\theta_x\pi'\in\cdot\}\,F(\pi)(\md \pi'),\quad x\in\R^d,\pi\in \bM^1(\R^d).
\end{align}
Examples will be given in the next subsection.

\begin{theorem}\label{t:splitting2} Let $T$ be a stationary transport on $\R^d$
such that $\Phi:=T(\cdot\times\R^d)$ is  purely discrete and has a finite
intensity.
Let $L$ be a probability kernel from $\Omega\times\R^d$ to $\R^d$ 
satisfying $T(\md (x,y))=L(x,\md y)\Phi(\md x)$.
Let $K$ be another  probability kernel from $\Omega\times\R^d$ to $\R^d$
such that the family $\{K_x:x\in\Phi\}$ is conditionally independent given $T$, and 
\begin{align}\label{erandomization}
\BP(K_x\in \cdot \mid T)=F(L_x),\quad x\in\Phi,\, \BP\text{-a.s.},
\end{align}
where the measurable mapping $F\colon \bM^1(\R^d)\to \Pi_d$ satisfies \eqref{F1} and \eqref{F2}.
Finally, assume that \eqref{851} holds. 
Then $K\Phi$ is locally square-integrable iff $L\Phi$ has this property. In this case, 
the equality of asymptotic variances of $K\Phi$ and $L\Phi$ as in
\eqref{e:equalasvariance} holds if one of the limits exists.
\end{theorem}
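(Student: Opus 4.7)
The plan is to reduce Theorem \ref{t:splitting2} to Theorem \ref{t:splitting}. Two things need to be verified for the kernel $K$ given by the randomization rule \eqref{erandomization}: first, that $K$ is (or may be chosen to be) an invariant probability kernel, and second, that the conditional second-moment identity \eqref{a:2} holds between $K$ and $L$.

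The cleaner of the two is condition \eqref{a:2}. Fix distinct $x,y\in\Phi$ and Borel sets $B,C\subset\R^d$. Conditional independence of $\{K_z:z\in\Phi\}$ given $T$ gives
\begin{equation*}
\BE[K_x(B)K_y(C)\mid T]=\BE[K_x(B)\mid T]\cdot\BE[K_y(C)\mid T].
\end{equation*}
By \eqref{erandomization} and the mean-value property \eqref{F1},
\begin{equation*}
\BE[K_x(B)\mid T]=\int \pi(B)\,F(L_x)(\md\pi)=L_x(B),
\end{equation*}
and likewise for $K_y(C)$. A monotone class argument promotes this to the equality of random measures $\BE[K_x\otimes K_y\mid T]=L_x\otimes L_y$, which is precisely \eqref{a:2}.

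For the invariance of $K$, I would begin by invoking Proposition \ref{propkernel} to choose $L$ invariant (so that $T(\md(x,y))=L(x,\md y)\Phi(\md x)$ holds with an invariant $L$); in particular, the random measures $\{L_x\}$ transform equivariantly, i.e., $\theta_x L_x(\omega)=L_0(\theta_x\omega)$. The translation covariance property \eqref{F2} then says exactly that the law-assignment $\pi\mapsto F(\pi)$ commutes with shifts, so the conditional distribution of $K_x$ given $T$ is compatible with the flow. Combined with the conditional independence of the family $\{K_z:z\in\Phi\}$, this allows a pathwise invariant version of $K$ to be realized, if necessary on an enlarged canonical sample space carrying an auxiliary stationary family of independent randomizations (mark space). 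Once this realization is in place, condition \eqref{adaptkernel} holds for $K$ and assumption \eqref{a:2} has already been verified.

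With invariance of $K$ and condition \eqref{a:2} established, and with hypothesis \eqref{851} assumed, Theorem \ref{t:splitting} delivers the two conclusions: local square-integrability of $K\Phi$ is equivalent to that of $L\Phi$, and in that case the asymptotic variances of $K\Phi$ and $L\Phi$ coincide whenever one of the limits in \eqref{e:equalasvariance} exists. The main obstacle is the invariance step: the conditional-law prescription \eqref{erandomization} pins down $K$ only in distribution given $T$, so one has to use \eqref{F2} together with a careful canonical construction to produce a pathwise invariant representative. The second-moment identity, by contrast, is a direct consequence of conditional independence and the mean-value property \eqref{F1}.
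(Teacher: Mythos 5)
Your reduction to Theorem \ref{t:splitting} is exactly the paper's strategy, and your verification of condition \eqref{a:2} via conditional independence plus the mean-value property \eqref{F1} is correct and matches the paper's reasoning. However, the invariance step — which you correctly flag as the ``main obstacle'' — is where essentially all the technical work of the paper's proof actually lives, and your sketch leaves that work undone. Merely noting that \eqref{F2} ``says the law-assignment commutes with shifts'' does not yet produce an invariant pathwise version of $K$; one has to build it. The paper does this by moving to the canonical space $\bM(\R^d\times\R^d)$ for $T$, disintegrating $T$ to get an invariant $L$, re-encoding the transport as a discrete random measure $\Psi$ on $\R^d\times\bM^1$, performing a position-dependent $H$-marking with mark kernel $H(x,\pi,\cdot)=F(\pi)$, and then explicitly checking joint-shift stationarity of the marked process via a Laplace-functional computation in which \eqref{F2} enters. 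Only after forming the resulting canonical transport $\tilde T$ on $\R^d\times\R^d\times\R^d$ can Proposition \ref{propkernel} be invoked to extract invariant kernels $L'$ and $K'$, and one must then separately verify the distributional identities $L\Phi\overset{d}{=}L'\Phi'$ and $K\Phi\overset{d}{=}K'\Phi'$ to transfer the conclusion back. None of this is carried out in your proposal.

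A related imprecision: you propose to ``begin by invoking Proposition \ref{propkernel} to choose $L$ invariant.'' That proposition applies to an \emph{invariant} random transport (the pathwise condition \eqref{adapttransport}), whereas Theorem \ref{t:splitting2} assumes only that $T$ is \emph{stationary} (distributional invariance). On the original $\Omega$ there is no flow under which $T$ is necessarily invariant, so Proposition \ref{propkernel} is not available there; it becomes available only after the passage to a canonical space with the diagonal shift, which is precisely the construction your sketch defers. So the order of operations matters: the canonical-space step must come first, not ``if necessary'' at the end.
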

\begin{proof} Our goal is to apply Theorem \ref{t:splitting}. To do so, we shall
construct a probability space $(\Omega',\mathcal{A}',\BP')$, equipped
with a flow $\{\theta_x':x\in\R^d\}$ keeping $\BP'$ invariant. On this space, we shall define invariant
versions $\Phi'$ and $L'$ of $\Phi$ and $L$, respectively, along with an invariant probability kernel $K'$ 
such that \eqref{a:2} holds for the $'$-objects and, moreover,
$L\Phi\overset{d}{=}L'\Phi'$ and $K\Phi\overset{d}{=}K'\Phi'$. 

Let $\Omega^\smallbox$ be set of all $\psi\in\bM(\R^d\times\R^d)$
that $\psi(\cdot\times\R^d)\in\bM_d$. 
We  equip this space with the natural $\sigma$-field $\mathcal{A}^\smallbox$ 
the diagonal shift
and the probability measure $\BP^\smallbox:=\BP(T\in\cdot)$.
By assumption $\BP^\smallbox$ is stationary. Let $T^\smallbox$ denote the identity on $\Omega^\smallbox$.
We can disintegrate $T^\smallbox$ as
\begin{align*}
T^\smallbox(\md (x,y))=L^\smallbox(T^\smallbox,x,\md y)\,T^\smallbox(\md x\times\R^d),
\end{align*}
 where the probability kernel $L^\smallbox$ from $\Omega^\smallbox\times\R^d$ to $\R^d$
is defined by
\begin{align}
L^\smallbox(T^\smallbox,x,\cdot):=\frac{T^\smallbox(\{x\}\times\cdot)}{T^\smallbox(\{x\}\times\R^d)},
\end{align}
if $T^\smallbox(\{x\}\times\R^d)>0$. If $T^\smallbox(\{x\}\times\R^d)=0$, then we
take $L^\smallbox(T,x,\cdot)$ as a fixed probability measure on $\R^d$.
The kernel $L^\smallbox$ is invariant.
Over $\Omega^\smallbox$ we define the discrete random measure $\Psi^\smallbox$ on $\R^d\times\bM^1$
by 
\begin{align}
\Psi^\smallbox:=\int \I\{(x,L(T^\smallbox,x))\in\cdot\}\,T^\smallbox(\md x\times\R^d).
\end{align}
This random measure is a measurable function of $T^\smallbox$ and vice versa.
It is easy to check that $\Psi^\smallbox$ is stationary w.r.t.\ joint shifts, that is
\begin{align}\label{e0345}
\int\I\{(x-w,\theta_w\pi)\in\cdot\}\,\Psi^\smallbox(\md (x,\pi))\overset{d}{=}\Psi^\smallbox,\quad w\in\R^d.
\end{align}
Define a probability kernel $H$ from $\R^d\times\bM^1$ to $\bM^1$ by
\begin{equation}\label{e:H def}
    H(x, \pi, \cdot) := F(\pi),\quad (x,\pi)\in\R^d\times \bM^1.
\end{equation}
We now extend the probability space $(\Omega^\smallbox,\mathcal{A}^\smallbox,\BP^\smallbox)$  
so as to carry a (position dependent) $H$-marking $\tilde\Psi$ of $\Psi^\smallbox$. 
This marking attaches to every point from $\Psi^\smallbox$ a random mark from $M^1$, so that 
$\tilde \Psi$ becomes a random measure on $\R^d\times\bM^1\times\bM^1$.
Given $\Psi^\smallbox$, the marks are conditionally independent with conditional
distribution $F(L^\smallbox(T^\smallbox,x))$ for $x\in\Psi^\smallbox$.
The marking can be based on a representation
\begin{align}\label{e:4.826}
T^\smallbox(\cdot\times\R^d)=\sum^\infty_{n=1}Y_n\delta_{X_n},
\end{align}
where $Y_1,Y_2,\ldots$ are non-negative random variables and
$X_1,X_2,\ldots$ are random vectors in $\R^d$  such that $X_m\ne X_n$ whenever
$Y_m\ne 0$ or $Y_n\ne 0$; see e.g.\ \cite[Chapter 6]{LastPenrose17}.
The marking is then defined just as in \cite[Chapter 5]{LastPenrose17},
where the case $Y_n\equiv 1$ is treated. For notational convenience we still denote
the extended probability space by $(\Omega^\smallbox,\mathcal{A}^\smallbox,\BP^\smallbox)$
and keep our notation for $T^\smallbox$ and $L^\smallbox$.
We claim that the random measure $\tilde{\Psi}$ is stationary w.r.t.\ joint shifts. 
To check this, we
take a measurable $g\colon \R^d\times\bM^1\times\bM^1\to[0,\infty)$
and $w\in\R^d$. As in the proof of Proposition 5.4 in \cite{LastPenrose17}
 we obtain that
\begin{align*}
\BE^\smallbox &\exp\bigg[-\int g(x-w,\theta_w\pi,\theta_w\pi')\,\tilde\Psi(\md (x,\pi,\pi'))\bigg]\\
&=\BE^\smallbox\exp\bigg[\sum^\infty_{n=1} \log\int\exp\big[- Y_ng(X_n-w,\theta_w L^\smallbox(T^\smallbox,X_n),\theta_w\pi')\big]
F(L^\smallbox(T^\smallbox,X_n))(\md \pi')\bigg] \\
&=\BE^\smallbox\exp\bigg[\sum^\infty_{n=1} \log\int\exp\big[- Y_ng(X_n-w,L^\smallbox(T^\smallbox,X_n),\pi')\big]
F(\theta_wL^\smallbox(T^\smallbox,X_n))(\md \pi')\bigg], 
\end{align*}
where the second identity comes from the translation covariance \eqref{F2} of $F$.
In view of \eqref{e0345} this can easily be seen to be independent of $w$,
so that \cite[Proposition 2.10]{LastPenrose17} implies the claim.

Now we define a random measure
$\tilde{T}$ on $\R^d\times\R^d\times\R^d$ by
\begin{align}
\tilde{T}=\int \I\{(x,y,z)\in\cdot\}\,\pi(\md y)\,\pi'(\md z)\,\tilde\Psi(d(x,\pi,\pi')).
\end{align}
Since $\tilde\Psi$ is stationary, $\tilde{T}$ is stationary under diagonal (joint) shifts.

Finally, we can choose $\Omega':=\bM(\R^d\times\R^d\times\R^d)$ with
the appropriate $\sigma$-field $\mathcal{A}'$, the probability measure
$\BP':=\BP_{\tilde{T}}$ and diagonal shifts. 
By stationarity of $\BP'$ and Proposition \ref{propkernel} there exist
invariant kernels  $L'$ and $K'$ satisfying
\begin{align*}
\omega(B\times C\times\R^d) &= \int_B L'(\omega,x, C)\, \omega(\md x\times\R^d\times\R^d),\\
\omega(B\times\R^d\times C) &= \int_B K'(\omega,x,C)\, \omega(\md x\times\R^d\times\R^d),\quad B,C\in\cB^d,
 \end{align*}
for $\BP'$-a.e.\ $\omega\in\Omega'$. 
The kernels $L',K'$ satisfy
\eqref{a:2} by choice of $H$ in \eqref{e:H def}, property
\eqref{F1} of $F$ and the conditional independence of the position
dependent marking in the construction of $\tilde\Psi$.
Therefore and by our assumption \eqref{851},
Theorem \ref{t:splitting} applies. It remains to show the required
distributional identities.
Define $T'(\omega):=\omega(\cdot\times \R^d)$ for $\omega\in\Omega'$.
By construction, $T'\overset{d}{=}T^\smallbox\overset{d}{=}T$. Therefore
we have $L\Phi\overset{d}{=}L'\Phi'$  and in particular
$\Phi\overset{d}{=}\Phi'$, where $\Phi'(\omega):=\omega(\cdot\times \R^d\times\R^d)$.
By definition of a position dependent marking
the family $\{K'_x:x\in\Phi'\}$ is conditionally independent given $T'$, and
\eqref{erandomization} holds for the '-objects.
Since $\Phi\overset{d}{=}\Phi'$, this implies $K\Phi\overset{d}{=}K'\Phi'$,
finishing the proof.
\end{proof}

\begin{remark}\label{r:isotropic splitting}\rm
In the setting of Theorem \ref{t:splitting2}, assume that $T$ is isotropic, i.e., 
    \begin{equation*}
        \rho(T) \overset{d}{=} T
    \end{equation*}
    for any isometry $\rho:\R^d\to\R^d$, where $\rho(T)(B\times C) := T(\rho^{-1}(B)\times \rho^{-1}(C))$ for $B,C\in\cB^d$. Further, assume that $F$ is isometry-covariant, i.e.,
    \begin{equation*}
        F(\rho(\pi)) = \int \I\{\rho(\pi')\in\cdot\}\, F(\pi)(\md \pi')
    \end{equation*}
    for any isometry $\rho:\R^d\to\R^d$ and $\pi\in M^1(\R^d)$. Then $K\Phi$ is isotropic, i.e.,
    \begin{equation*}
        \rho(K\Phi) \overset{d}{=} K\Phi,
    \end{equation*}
where $\rho(K\Phi)(B) := K\Phi(\rho^{-1}(B))$ for $B\in\cB^d$. 

Similarly as in the proof of Theorem    \ref{t:splitting2},
this can be proved using the Laplace functional of $K\Phi$.
\end{remark}

Next we formulate the Fourier version of Theorem \ref{t:splitting2}.
Given a measurable function $f\colon\bN\to\R$ we write
$\BE^\Phi_0 f(\Phi)$ to denote the integral of $f$ w.r.t.\ the Palm probability
measure of $\Phi$ as defined on the canonical space $\bN$.
This slight abuse of notation should not cause any confusion. 

\begin{theorem}\label{t:splittingfourier2} Let the assumptions of
Theorem \ref{t:splitting2} be satisfied. Then the spectral measure of $K\Phi$ is given by
\eqref{bartletmeasuredifference3}, where
\begin{equation}
    \hat\eta(k) = \gamma\,\BE_0^\Phi\bigg[\bigg(\int |\hat{\pi}(k)|^2F(L_0)(\md \pi) - \big|\hat{L}_0(k)\big|^2\bigg)\Phi\{0\}\bigg],\quad k\in\R^d.
\end{equation}
\end{theorem}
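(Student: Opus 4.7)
The plan is to apply Theorem~\ref{t:splittingfourier} after verifying its hypotheses on the invariant space constructed in the proof of Theorem~\ref{t:splitting2}, and then to simplify the resulting cross term using the randomization~\eqref{erandomization}. First I would observe that the conditional independence of $\{K_x : x \in \Phi\}$ given $T$, combined with the mean-value property~\eqref{F1} of $F$, yields $\BE[K_x \otimes K_y \mid T] = L_x \otimes L_y$ for distinct $x, y \in \Phi$, which is hypothesis~\eqref{a:2} of Theorem~\ref{t:splitting}. Since Theorem~\ref{t:splittingfourier} additionally requires invariance of the kernels, I would transfer to the primed invariant space $(\Omega', \mathcal{A}', \BP')$ constructed in the proof of Theorem~\ref{t:splitting2}, on which $K'\Phi' \overset{d}{=} K\Phi$ and $L'\Phi' \overset{d}{=} L\Phi$, so that the corresponding spectral measures coincide. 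Together with~\eqref{851}, Theorem~\ref{t:splittingfourier} then yields~\eqref{bartletmeasuredifference3} along with
\begin{equation*}
\hat\eta(k) = \gamma\,\BE_0^{\Phi'}\!\left[\left(|\hat{K}'_0(k)|^2 - |\hat{L}'_0(k)|^2\right) \Phi'\{0\}\right], \quad k\in \R^d.
\end{equation*}

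Next I would exploit the randomization structure to simplify the cross term. Since $\Phi'\{0\}$ and $L'_0$ are $\sigma(T')$-measurable while the conditional law of $K'_0$ given $T'$ is $F(L'_0)$ by construction, conditioning on $T'$ replaces $|\hat{K}'_0(k)|^2$ by its conditional mean,
\begin{equation*}
\BE\bigl[|\hat{K}'_0(k)|^2 \mid T'\bigr] = \int |\hat{\pi}(k)|^2\, F(L'_0)(\md \pi).
\end{equation*}
Substituting this into the formula for $\hat\eta(k)$ and noting that the joint law of $(\Phi', L')$ agrees with that of $(\Phi, L)$ in the canonical sense (so the two Palm expectations coincide) yields the stated formula.

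The main obstacle is justifying the conditioning step inside the Palm expectation, that is, verifying that the conditional law of $K'_0$ given $T'$ remains $F(L'_0)$ under $\BP_0^{\Phi'}$. This should follow from the invariance of $K'$ combined with the refined Campbell formula~\eqref{erefinedC}: the Palm transform reweights by the $\sigma(T')$-measurable factor $\Phi'(\md x)$ and composes with the flow, and neither operation disrupts the conditional independence structure imposed by the position-dependent marking underlying the construction in Theorem~\ref{t:splitting2}.
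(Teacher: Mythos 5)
Your proposal is correct and follows essentially the same route as the paper's proof: pass to the invariant primed space from Theorem~\ref{t:splitting2}, apply Theorem~\ref{t:splittingfourier} there, and then simplify $\BE_0^{\Phi'}[|\hat{K}'_0(k)|^2\Phi'\{0\}]$ by conditioning on $T'$ using the identity $\BE[|\hat{K}'_0(k)|^2 \mid T'] = \int |\hat{\pi}(k)|^2\,F(L'_0)(\md\pi)$ under $\BP_0^{\Phi'}$, which is precisely the claim the paper states (in the equivalent disintegrated form for the Palm law of $K'(0)$) and justifies by the invariance of $L'$ and the translation covariance~\eqref{F2} of $F$. Your explicit flagging of the conditioning-under-Palm step as the only nontrivial point, and the sketch of why it survives the Palm reweighting, matches what the paper leaves to the reader with the phrase ``it can be easily shown.''
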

\begin{proof} We are using the notation from the
proof of Theorem \ref{t:splitting2}. Using the invariance properties of $L'$ and
$F$ it can be easily shown that 
\begin{align*}
(\BP')^{\Phi'}_0(K'(0)\in\cdot)=\iint \I\{\pi\in\cdot\}F(L'(\varphi,0))(\md \pi)\,\BP^{\Phi'}_0(\md \varphi).
\end{align*}
Since $L'\Phi'\overset{d}{=}L\Phi$ (and in particular $\Phi'\overset{d}{=}\Phi$), 
the above right-hand side equals
\begin{align*}
\BE^\Phi_0\bigg[\int \I\{\pi\in\cdot\}F(L_0)(\md \pi)\bigg].
\end{align*}
Therefore the result follows from Theorem \ref{t:splittingfourier}.
\end{proof}

\subsection{The hyperuniformerer}
\label{ss:applications_randomtransp}

In order to state a (still quite general) application of Theorem \ref{t:splitting2},
we need some definitions. 
Let $\Psi$ be an invariant simple point process with finite intensity $\gamma$ and
$\BP(\Psi(\R^d)=0)=0$ and let $\tau$ be an allocation; see \eqref{allocation}.
We call the pair $(\Psi,\tau)$ an {\em invariant partition} if
$\tau(\omega,x)\in\Psi(\omega)$ for all $\omega$ with $\Psi(\omega)\ne 0$
and all $x\in\R^d$. 
Given such an invariant partition, we define
\begin{align*}
C^\tau(x):=\{y\in\R^d: \tau(y)=x\}, \quad x\in\R^d.
\end{align*}
Then, $\{C^\tau(x):x\in\Psi\}$ is a random partition of $\R^d$, whenever $\Psi\ne 0$. 
For $x\in\Psi$, we refer to $C^\tau(x)$ as the cell with {\em center} $x$, even though 
we do not assume that $x\in C^\tau(x)$.

\begin{example}\label{ex:5.2}\rm Let $(\Psi,\tau)$ be an invariant partition,
and assume that 
\begin{align*}
\BP(\text{$0<\lambda_d(C^\tau(x))<\infty$ for all $x\in\Psi$})=1,
\end{align*}
and that the volume of the {\em zero cell} has
a finite expectation, that is
\begin{align}\label{zerocell}
\BE \lambda_d(V_0)<\infty,
\end{align}
where $V_0:=\{x\in\R^d:\tau(x)=\tau(0)\}$.

Define a random transport $T$ by
\begin{align}\label{eTTT}
T:=\iint \I\{(x,y)\in\cdot\}\I\{y\in C^\tau(x)\}\,\md y\,\Psi(\md x).
\end{align} 
Then we have $\BP$-a.s.\ that
\begin{align}\label{e:9333}
T(\cdot\times\R^d)=\sum_{x\in\Psi}\lambda_d(C^\tau(x))\delta_x,\quad
T(\R^d\times \cdot)=\lambda_d.
\end{align}
Let $Z(x)$, $x\in\Psi$,  be random vectors in $\R^d$ which are conditionally independent
given $T$ and satisfy
\begin{align}\label{e:9435}
\BP(Z(x)\in\cdot\mid T)=L(x,\cdot),\quad x\in\Psi,\,\BP\text{-a.s.},
\end{align}
where 
\begin{align}\label{e:9222}
L(x,\cdot):=\lambda_d(C^\tau(x))^{-1}\lambda_d(C^\tau(x)\cap\cdot)
\end{align}
if $x\in \Psi$ and $0<\lambda_d(C^\tau(x))<\infty$. If $x\notin\Psi$ or if
$\lambda_d(C^\tau(x))\in\{0,\infty\}$, we choose $L(x,\cdot)$ as a fixed probability
measure. Note that we have a.s.\ that $L\Phi=\lambda_d$, where
$\Phi:=T(\cdot\times\R^d)$.
We would like to apply Theorem \ref{t:splitting} 
to show that the random measure
\begin{align}\label{eGamma}
\Gamma:=\sum_{x\in\Psi}\I\{0<\lambda_d(C^\tau(x))<\infty\}\lambda_d(C^\tau(x))\delta_{Z(x)}
\end{align}
is hyperuniform. See Figure \ref{fig:weighted_voronois} (left) for an illustration of $\Gamma$. To do so, we choose 
\begin{align}\label{e:F_hyperuniformerer}
F(\pi):=\int\I\{\delta_z\in\cdot\}\,\pi(dz)
\end{align}
and $K(x,\cdot)=\delta_{Z(x)}$, $x\in\Phi$. Then $L,K$ satisfy the assumptions
of Theorem \ref{t:splitting2} and $K\Phi=\Gamma$ a.s.

It remains to make sure that assumption \eqref{851} holds. It follows by the refined Campbell theorem
that the intensity $\gamma_\Phi$ of $\Phi$ equals one, and that
\begin{align*}
\BE^\Phi_0\Phi\{0\}=\gamma_\Psi \BE^\Psi_0[\lambda_d(C^\tau(0))^2].
\end{align*}
By \cite[Corollary 4.1]{Last06}, we have $\gamma_\Psi \BE^\Psi_0\lambda_d(C^\tau(0))^2=\BE \lambda_d(V_0)$
which is finite by assumption \eqref{zerocell}. 

We can also apply Theorem \ref{t:splittingfourier2} to calculate the structure factor of $\Gamma$. 
Due to the special form \eqref{e:F_hyperuniformerer} of $F$ we have
for all $k\in\R^d$ and all $\pi'\in\bM^1(\R^d)$ that
$\hat{\pi}(k)=1$ for $F(\pi')$-a.e.\ $\pi$. Because $\beta_{\lambda_d}=0$ we
therefore obtain from Theorem \ref{t:splittingfourier2} that 
\begin{align*}
    S_\Gamma(k) &= \BE_0^\Phi\big[\big((\Phi\{0\})^2 - \big|\widehat{\I_{C^\tau(0)}}(k)\big|^2\big) (\Phi\{0\})^{-1}\big]\\
&= \BE_0^\Phi\big[\lambda_d(C^\tau(0)) - \lambda_d(C^\tau(0))^{-1}\big|\widehat{\I_{C^\tau(0)}}(k)\big|^2\big) \big]
\end{align*}
Using the refined Campbell theorem and then \cite[Proposition 4.3]{Last06} we obtain
\begin{align}\notag
S_\Gamma(k) &= \gamma_\Psi\, \BE_0^\Psi\big[\lambda_d(C^\tau(0))^2 -\big|\widehat{\I_{C^\tau(0)}}(k)\big|^2\big]\\
\label{SGamma}
&= \BE\big[\big(\lambda_d(V_0) - \lambda_d(V_0)^{-1}\big|\widehat{\I_{V_0}}(k)\big|^2\big)\big].
\end{align}
\end{example}

Next, we specialise Example \ref{ex:5.2} to the case where all cells have equal volume.
Choosing in each of the cells a point purely at random and conditionally independent (given
$(\Psi,\tau)$) for different cells, yields a hyperuniform point process.

\begin{figure}[t]
  \centering
 \includegraphics[width=\textwidth]{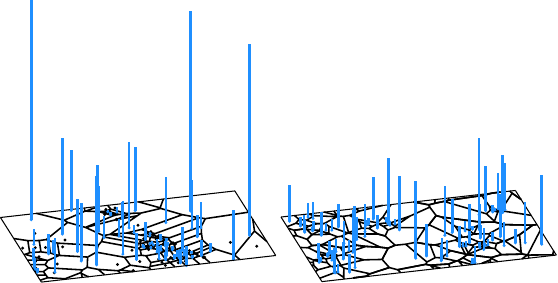}
  \caption{{Hyperuniform weighted point processes: (Left) We start from a
    Poisson hyperplane intersection process (PHIP) and construct the
    corresponding Voronoi tessellation. Despite the long-range correlations
    of this hyperfluctuating model, we can construct a hyperuniform random
    measure according to \eqref{eGamma}, i.e., we place in each
    cell~---~independently and uniformly distributed~---~a point with a
    weight equal to the cell's area. (Right) For an initial point process
    with exponentially fast decay of correlations, here a Mat\'ern process,
    we can place the weighted point at the Voronoi center (i.e., without
    further randomness); see Sec.~\ref{s:volumes}.}}
  \label{fig:weighted_voronois}
\end{figure}

\begin{example}[Hyperuniformerer]\label{ex:hyperuniformerer}\rm Let $(\Psi,\tau)$ be an invariant partition.
Assume that the partition is {\em fair} (or balanced), that is
\begin{align}\label{fairpartition}
\BP(\text{$\lambda_d(C^\tau(x))=\gamma^{-1}$ for all $x\in\Psi$})=1.
\end{align}
Taking $Z(x)$, $x\in\Psi$, as in Example \ref{ex:5.2}, it then follows that the point process
\begin{align}\label{eGamma2}
\Gamma:=\sum_{x\in\Psi}\delta_{Z(x)}
\end{align}
is hyperuniform. By \eqref{SGamma} the structure factor is given by
\begin{align}
S_\Gamma(k) &= \gamma\big(\gamma^{-2} -\BE_0^\Psi\big[\big|\widehat{\I_{C^\tau(0)}}(k)\big|^2\big]\big) \nonumber\\
&= \gamma\big(\gamma^{-2} -\BE\big[\big|\widehat{\I_{V_0}}(k)\big|^2\big]\big),\quad k\in\R^d,
\end{align}
which for $\gamma=1$ further simplifies to
\begin{align}
S_\Gamma(k) &= 1 -\BE_0^\Psi\big[\big|\widehat{\I_{C^\tau(0)}}(k)\big|^2\big] \nonumber\\
&= 1 -\BE\big[\big|\widehat{\I_{V_0}}(k)\big|^2\big],\quad k\in\R^d. \label{e:SF hyperuniformerer}
\end{align}

Fair partitions were constructed in the seminal papers
\cite{HoHolPe06, HolroydPeres05} based on a spatial version of the
Gale--Shapley algorithm. They exist if $\Psi$ is ergodic; see also
\cite[Corollary 10.10]{LastPenrose17}. If $\Psi$ is a Poisson process and $d\ge 3$,
then the gravitational allocation from \cite{CPPeresR10} is
a fair partition with much better moment properties. Both, the spatial Gale--Shapley algorithm, and the gravitational allocation are isometry-covariant. Therefore, if $\Psi$ is isotropic, these will lead to an isotropic $\Gamma$ by Remark \ref{r:isotropic splitting}.

\end{example}

Example \ref{ex:hyperuniformerer} can easily be generalized as follows.

\begin{example}\label{ex:5.5}\rm Let $(\Psi,\tau)$ be a fair partition
as in Example \ref{ex:hyperuniformerer}, and define the random transport $T$ by \eqref{eTTT}.
Fix $m \in \N$. Suppose that for each $x\in\Psi$, we have
$m$  random vectors  $Z_1(x),\ldots,Z_m(x)$ whose conditional distribution
given $T$ has the following two properties. First,  $Z_1(x),\ldots, Z_m(x)$ are
independent and uniformly distributed on $C(x)$. 
Second, for different $x\in\Psi$, the
random elements $(Z_1(x),\ldots, Z_m(x))$ are independent.
Then the random measure
\begin{align}\label{eGamma3}
\Gamma:=\sum_{x\in\Psi}\big(\delta_{Z_1(x)}+\cdots +\delta_{Z_m(x)}\big)
\end{align}
is hyperuniform. Indeed, we can apply Theorem \ref{t:splitting2} with 
\begin{align*}
F(\pi):=\frac{1}{m}\int \I\{\delta_{z_1}+\cdots +\delta_{z_m}\in\cdot\}\,\pi^m(\md (z_1,\ldots,z_m)),
\end{align*}
showing that $m^{-1}\Gamma$, and hence also $\Gamma$, are hyperuniform.
\end{example}

Our next example exhibits a hyperuniform, purely discrete random measure,
whose atoms are everywhere dense.

\begin{example}\label{ex:5.6}\rm A {\em Dirichlet process} with directing probability measure $\pi\in\bM^1(\R^d)$
is a random probability measure $\zeta$ on $\R^d$, such that $(\zeta(B_1),\ldots,\zeta(B_m))$
has a Dirichlet distribution with parameters $\pi(B_1),\ldots,\pi(B_m)$, whenever $B_1,\ldots,B_m$ is a
measurable partition of $\R^d$; see e.g.\ \cite[Exercise 15.1]{LastPenrose17}. Let $F(\pi)$ denote the
distribution of $\zeta$. The resulting mapping $F$ satisfies \eqref{F1}. By the Poisson construction
of $\zeta$, it does also satisfy \eqref{F2}.
Now, consider a fair partition as in Example \ref{ex:hyperuniformerer}.
Let $\{\zeta(x):x\in\Psi\}$ be a family of conditionally  independent (given $T$) random measures, such that
the conditional distribution of $\zeta(x)$ is that of a Dirichlet process with directing measure
$L(x)$. Then
\begin{align*}
\Gamma:=\sum_{x\in\Psi}\zeta(x)
\end{align*}
is a hyperuniform random measure. 
\end{example}

\begin{figure}[t]
  \centering
  \includegraphics[width=\textwidth]{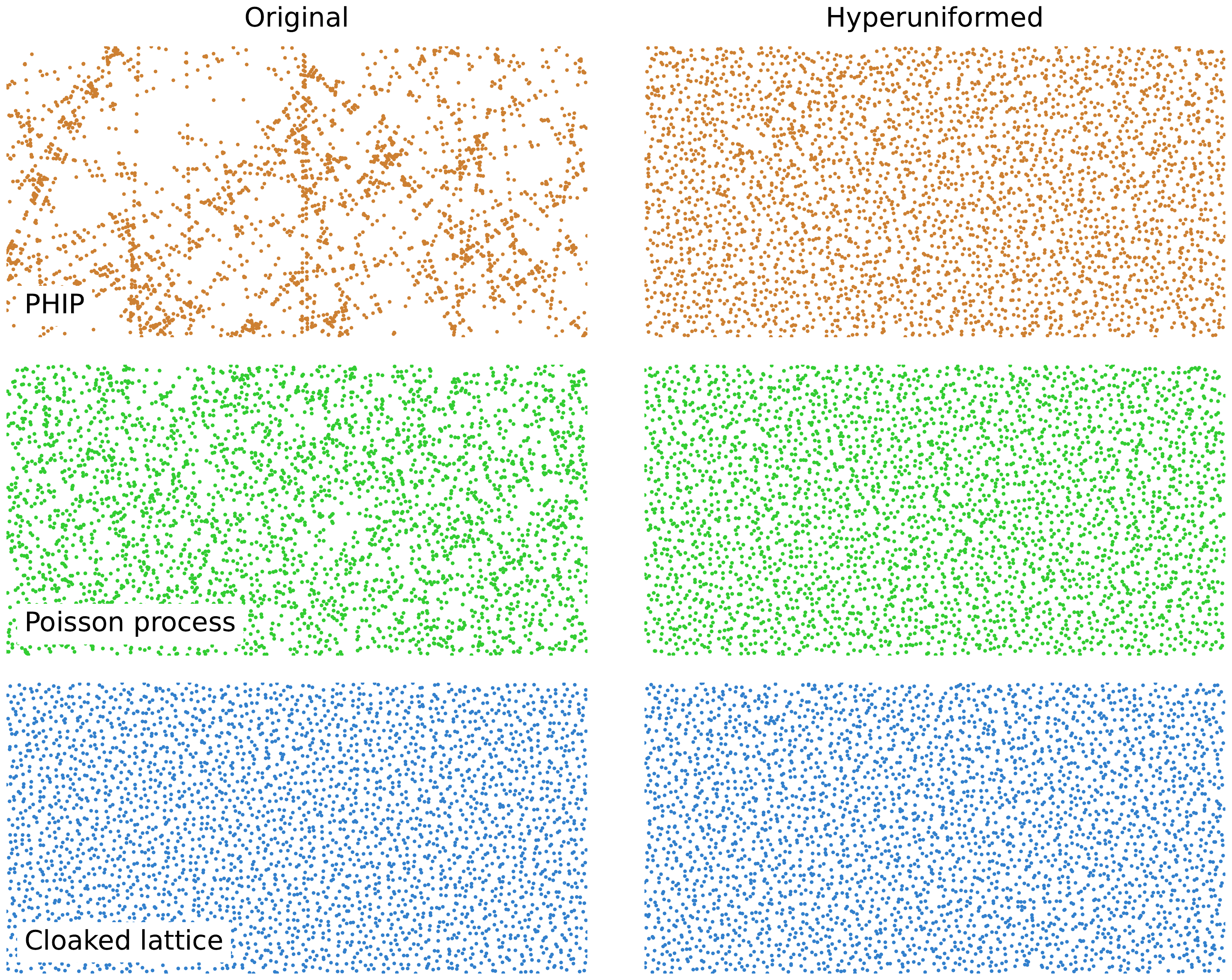}
  \caption{Exemplary applications of the hyperuniformerer: (Left) samples
    of point processes with asymptotic variances that are~---~from top to
    bottom~---~infinite, positive, or zero;
    (Right) the corresponding results of the hyperuniformerer.}
  \label{fig:hyperuniformerer_samples}
\end{figure}

\begin{figure}[t]
  \centering
 \includegraphics[width=\textwidth]{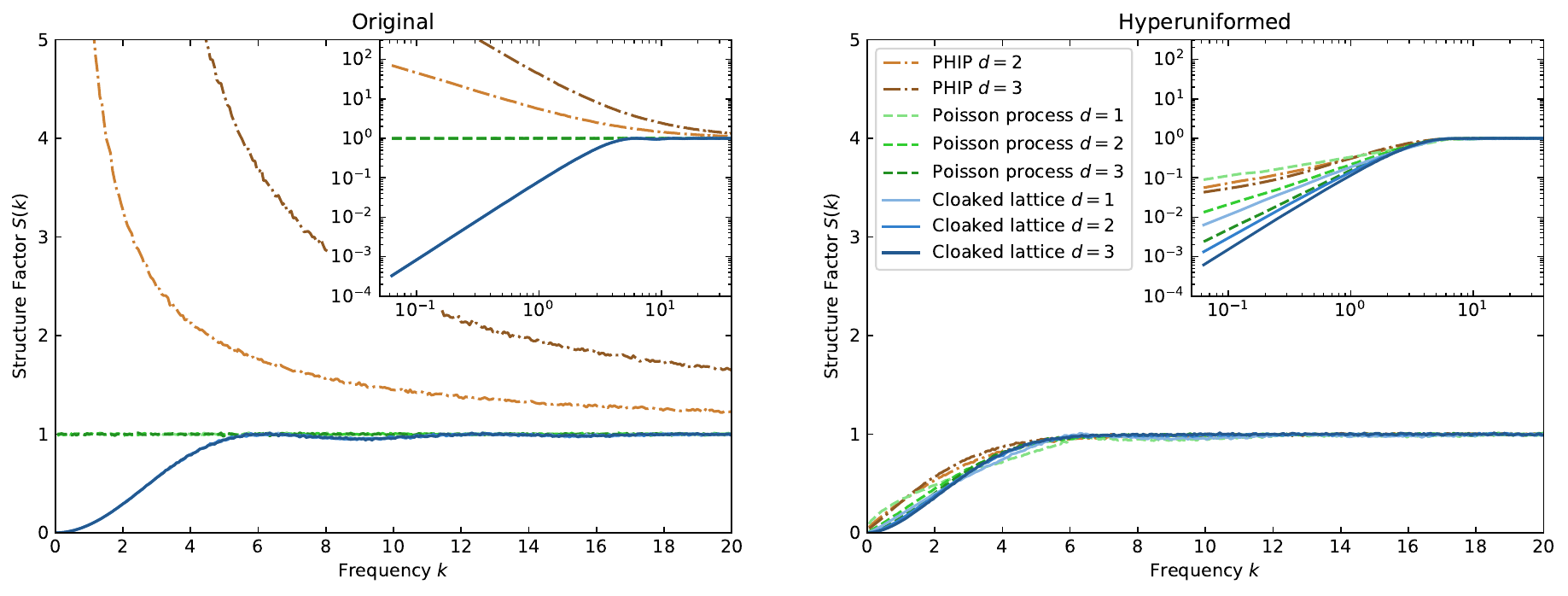}
  \caption{Structure factors before and after the application of the
    hyperuniformerer for the three models from
    Fig.~\ref{fig:hyperuniformerer_samples} across the first three
    dimensions. The insets show the same data in log-log plots.}
  \label{fig:hyperuniformerer_structure_factors}
\end{figure}

In both preceding examples, $F$ is isometry-covariant, and therefore an isotropic fair partition $(\Psi, \tau)$ will lead to an isotropic $\Gamma$ by Remark \ref{r:isotropic splitting}.
Next we formulate discretized versions of 
Examples  \ref{ex:5.2} and \ref{ex:hyperuniformerer}.

\begin{example}\label{ex:discrHyp}\rm Let $(\Psi,\tau)$ be an invariant partition
and let $\Phi$ be a stationary lattice, 
with intensity $k\gamma$ for some $k\in\N$, assume to be
invariant w.r.t.\ the underlying flow.
Assume that  $\tau(x)\in\Psi$ for all $x\in\Phi$ and that 
\begin{align*}
\BP(\text{$0<\Phi(C^\tau(x))<\infty$ for all $x\in\Psi$})=1.
\end{align*}
Just as at \eqref{eTTT} we define a random transport $T$ by
\begin{align*}
T:=\iint \I\{(x,y)\in\cdot\}\I\{y\in C^\tau(x)\}\,\Phi(\md y)\,\Psi(\md x).
\end{align*} 
Instead of \eqref{e:9333} we now have
\begin{align*}
T(\cdot\times\R^d)=\sum_{x\in\Psi}\Phi(C^\tau(x))\delta_x,\quad
T(\R^d\times \cdot)=\Phi.
\end{align*}
Define the (random) probability kernel $L$ by \eqref{e:9222}
with $\Phi$ instead of $\lambda_d$ and let 
$Z(x)$, $x\in\Psi$,  be as in \eqref{e:9435}.
Assume that \eqref{zerocell} holds with $\Phi$ in place of $\lambda_d$.
Then the random measure
\begin{align*}
\Gamma:=\sum_{x\in\Psi}\I\{0<\Phi(C^\tau(x))<\infty\}\Phi(C^\tau(x))\delta_{Z(x)}
\end{align*}
is hyperuniform. Indeed, in order to apply Theorem \ref{t:splitting}
as in Example \ref{ex:5.2}, we only need to replace the hyperuniform Lebesgue measure
by the hyperuniform point process $\Phi$.

An interesting special case arises if
\begin{align}\label{discretebalance}
\BP(\Phi(C^\tau(x)) = k \textnormal{ for all } x\in\Psi) = 1.
\end{align}
Then $T(\cdot\times\R^d)=k\Psi$ and   $\Gamma=k\sum_{x\in\Psi}\delta_{Z(x)}$.
Allocations with the balancing property \eqref{discretebalance} can be constructed
by a suitable version of the spatial Gale–Shapley algorithm from \cite{HoHolPe06, HolroydPeres05}.
In fact, it can be expected, that then the partition $\{C^\tau(x):x\in\Psi\}$ approximates,
as $k\to\infty$,  the 
stable allocation in continuum. 

We have exploited this idea in a simulation study, where we have applied
the hyperuniformerer to three exemplary point processes, exemplary
in the sense that their asymptotic variances are infinite, positive, or
zero. These examples are the anti-hyperuniform Poisson hyperplane
intersection process (PHIP) with an isotropic orientation
distribution~\cite{HugSchn24, K19}, the Poisson point
process~\cite{LastPenrose17},  and the hyperuniform cloaked
lattice~\cite{KKT20}. We simulated the first model in two and three
dimensions and the second and third in one, two, and three dimensions.
For each of the corresponding models, we simulated $30$, $15$,
and $10$ samples with an average number of points of $\bar{N}=100$,
$\bar{N}=10{,}000$, and $\bar{N}=1{,}000{,}000$ in one, two, and three
dimensions respectively.

We then implemented the hyperuniformerer using a variant of the spatial
stable allocation from~\cite{HoHolPe06, HPPS09} under periodic boundary
conditions. We approximated the Lebesgue measure by a lattice of
resolution $10{,}000$, $1{,}000$, and $500$, which corresponds to
an average of $100$ sites, $100$ pixels, and $125$ voxels per
sample point, in one, two, and three dimensions respectively.
For each sample of the original point process, we simulated
$1{,}000$ realizations of the hyperuniformed point process because
\eqref{e:SF hyperuniformerer} shows that the structure factor of the
hyperuniformed point process depends only on the distribution of a
single cell, of which we have an average of $\bar{N}$ in every sample.
Note that the number of points per sample fluctuates for the PHIP and
Poisson process. If the number of points is larger than the intensity,
not all points can be saturated during the matching and the possibility
arises that they are moved outside of the observation window by the
hyperuniformerer. If the number of points is smaller, some sites,
pixels, or voxels of the lattice remain unmatched after all points have
been saturated during the matching. Then we assume that these are all
matched to different points outside of the observation window which
introduces the maximal number fluctuation that is theoretically possible
from what we cannot observe. We will publish open-source code for our
hyperuniformerer together with the paper.
Figure~\ref{fig:hyperuniformerer_samples} shows portions of the final
samples for each of the three examples. We then estimate the structure
factor of the final point process via the scattering intensity for wave
vectors that are commensurable with the observation window; see
\cite{KlattLastHenze} for details.
Figure~\ref{fig:hyperuniformerer_structure_factors} shows the estimates
of the structure factors before and after the application of the
hyperuniformerer.

\end{example}

If $d\in\{1,2\}$ and under a moment assumption  a fair partition 
is necessarily associated with a hyperuniform process:

\begin{remark}\label{r:d12}\rm
Let $d\in\{1,2\}$, and suppose that $(\Psi,\tau)$ is an invariant fair partition. Assume that
\begin{equation}\label{e:551}
    \BE_0^\Psi\bigg[\int_{C^\tau(0)} \|x\|^d \, \md x\bigg] <\infty.
\end{equation}
It then follows from \cite{DFHL24} that  $\Psi$ is hyperuniform. 
Indeed, one can easily see that condition \eqref{e:551} implies that $\Psi$
is at finite {\em $d$-Wasserstein distance} (see \cite{DFHL24} for the definition) 
to Lebesgue measure and hence also to the stationary lattice.
Hence the asserted hyperuniformity is a consequence
of \cite[Remark 1.1]{DFHL24} and its preceding theorem.
Remarkably, this conclusion is wrong if $d\ge 3$. Proposition 4.3 in \cite{DFHL24}
provides an example of a fair partition with even uniformly bounded cells,
where $\Psi$ is not hyperuniform.
\end{remark}

We continue with an example of a fair partition where
\eqref{e:551} fails. 

\begin{example}\label{e:non-ergodic}\rm
Let $d=2$, and suppose that $X$ is a random variable, that is positive almost surely. Then define
\begin{equation}
    \Psi := \sum_{(z_1,z_2)\in\Z^2} \delta_{(Xz_1, X^{-1}z_2) + U},
\end{equation}
where given $X$, $U$ is 
uniformly distributed on  
$[-X/2, X/2)\times [-X^{-1}/2, X^{-1}/2)$. 
Thus, $\Psi$ is a stationary lattice, that is randomly stretched in one
direction by $X$ and in the other by $X^{-1}$, preserving the
area of the unit cell. Because $X$ is random, $\Psi$ is not ergodic.
Moreover, if $X$ or $X^{-1}$ has an infinite first moment,
$\Psi$ is not locally square integrable and in particular not
hyperuniform. To confirm this claim, it suffices to check that the Palm
version $\Psi_0$ of $\Psi$ is not locally integrable.
But $\Psi_0$ is just a non-stationary randomly stretched lattice.
For $\varepsilon>0$, we have
\begin{equation}
    \BE[\Psi_0(B_\varepsilon)] \geq \BE[2\varepsilon(X+X^{-1})-3] = \infty,
\end{equation}
which establishes the assertion. 
If $X$ and $X^{-1}$ have a finite first moment, but $X^{1+\varepsilon}$ or $X^{-(1+\varepsilon)}$ does not for some $\varepsilon\in(0,1)$, then, with some further calculation, one can show that $\Psi$ is still not hyperuniform even though it is locally square-integrable in this case.
Still it is possible to apply our
hyperuniformerer from Example \ref{ex:hyperuniformerer}
to construct a hyperuniform point process $\Gamma$, 
by moving the lattice points
in a suitable way.

This transport from $\Psi$ to $\Gamma$ does not fulfil the mixing condition from Theorem \ref{maintheoremmixing2}.
However, the hyperuniformerer is derived from
Theorem \ref{t:splitting}, which, in turn, under the additional
condition that the source $\Phi$ (here $\Psi$) is locally square-integrable, can be derived
from Theorem \ref{maintheoremmixing}. This shows that
Theorem \ref{maintheoremmixing} is more general than Theorem
\ref{maintheoremmixing2}.
\end{example}

\section{Displacements independent of the source}
\label{s:indepdispl}

In this section, we consider a square-integrable stationary random measure $\Phi$
with intensity $\gamma_\Phi$ along with a stationary
$\R^d$-valued random field $Z=\{Z(x):x\in\R^d\}$.  We assume that
$\Phi$ and $Z$ are independent. We focus on general stationary
random measures in Section \ref{ss:indepdisp_gen} and then consider
the (essentially) special case when $\Phi$ is a stationarized lattice in Section
\ref{ss:indepdisp_latt}.
Specialising our earlier results, we first show in Theorem \ref{t:displacement} that the random measure 
\begin{align}\label{e:displace}
\Gamma:=\int\I\{x+Z(x)\in\cdot\}\,\Phi(\md x)
\end{align}
has the same asymptotic variance as $\Phi$ under a mixing assumption
on $Z$. The same applies to the stationarized lattice $\Phi$ with the
definition of $Z$ and $\Gamma$ suitably modified; see Theorem
\ref{t:displacedlattice}. As in previous sections, we also present
Fourier versions of these theorems. 
In Subsection \ref{s:indepdisplkern}, we shall generalize the setting 
and replace the field $Z$ by a stationary family $K^*=\{K^*(x):x\in\R^d\}$
of random probability measures on $\R^d$.

Independent stationary displacements of stationary point processes were discussed
in the seminal work \cite{G04}. The forthcoming Theorems \ref{t:displacement2} and \ref{c:6.24} can be found there; see also \cite{KKT20}.  The paper \cite{G04} does also contain a discussion of spatially correlated displacement fields.  The results of this section are new in this generality.

\subsection{A general source}\label{ss:indepdisp_gen}

To make \eqref{e:displace} and our assumptions on $Z$ meaningful,
we need to impose some technical assumptions on $Z$.
Consider the Skorohod space $\bF\subset (\R^d)^{\R^d}$ of all c\`adl\`ag functions $\bw\colon\R^d\to\R^d$;
see e.g.\ \cite{Janson20}, where real-valued functions were considered.
For each $x\in \R^d$ we define the shift-operator 
$\theta_x\colon \bF\to \bF$ by $\theta_x\bw:=\bw(\cdot+s)$.
Equip $\bF$ with the smallest $\sigma$-field rendering the mappings
$\bw\mapsto \bw(x)$, $x\in\R^d$, measurable. 
Then even $(\bw,x)\mapsto \bw(x)$ is measurable, and
therefore also $(\bw,x)\mapsto \theta_x\bw$.
We assume that $Z$ is a random element of $\bF$ which is stationary, that
is $\theta_xZ\overset{d}{=}Z$, $x\in\R^d$.

\begin{theorem}\label{t:displacement} Let $\Phi$ be a stationary square-integrable random measure
and let $Z=\{Z(x):x\in\R^d\}$ be a stationary random element of $\bF$, independent of $\Phi$.
Assume that
\begin{align}\label{mixingfield}
\int \|\BP( \, (Z(y),Z(0))\in\cdot) - \BP(Z(0)\in\cdot)^{\otimes2}\|\alpha_\Phi(\md y) < \infty.
\end{align}
Let $W\in\cK_0$. If $\Phi$ is hyperuniform w.r.t.\ $W$, then so is $\Gamma$,
as defined by \eqref{e:displace}.
If either $W$ is Fourier smooth or $|\beta_\Phi|(\R^d)<\infty$, then
$\Phi$ and $\Gamma$  have the same asymptotic variance w.r.t.\ $W$,
provided one of these asymptotic variances exists. 
\end{theorem}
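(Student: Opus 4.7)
The plan is to deduce this as a special case of Theorem \ref{maintheoremmixing2}. First, I would cast the setting into the invariant framework of Section \ref{sec:prelim} by taking $\Omega := \bM(\R^d) \times \bF$ equipped with the diagonal flow $\theta_x(\varphi, \bw) := (\theta_x\varphi, \theta_x\bw)$ and with $\BP$ the product of the laws of $\Phi$ and $Z$. Joint stationarity of $\BP$ follows from the stationarity of each factor together with independence. Writing $\Phi(\omega) = \varphi$ and $Z(\omega, x) = \bw(x)$, the kernel
\begin{align*}
K(\omega, x, B) := \I\{x + Z(\omega, x) \in B\}, \qquad K_x = \delta_{x + Z(x)},
\end{align*}
is readily checked to be an invariant probability kernel in the sense of \eqref{adaptkernel}, and one has $K\Phi = \Gamma$ by construction.

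Next I would compute the mixing function $\kappa$ from \eqref{e:kappa2}. By \eqref{transportkernel2}, the relative displacement kernel reduces to $\Ks_x = \delta_{Z(x)}$. The crucial observation is that, because $Z$ is independent of $\Phi$, the distribution of $(Z(y), Z(0))$ under the two-point Palm probability $\BP^\Phi_{0,y}$ coincides with its unconditional distribution, and similarly for $Z(0)$ under $\BP^\Phi_0$. This follows from a short computation using the refined Campbell theorem, Fubini, and the stationarity of $Z$: Palm conditioning biases only $\Phi$ and therefore leaves the law of any component independent of $\Phi$ untouched. Consequently,
\begin{align*}
\kappa(y) = \big\|\BP\big((Z(y), Z(0)) \in \cdot\big) - \BP(Z(0) \in \cdot)^{\otimes 2}\big\|, \quad y\in\R^d,
\end{align*}
and assumption \eqref{maintheoremmixingcondition} becomes exactly \eqref{mixingfield}.

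With these identifications in hand, both conclusions follow directly from Theorem \ref{maintheoremmixing2}: part (i) yields hyperuniformity of $\Gamma = K\Phi$ w.r.t.\ $W$ whenever $\Phi$ is hyperuniform w.r.t.\ $W$, and part (ii) gives the equality of asymptotic variances under either Fourier smoothness of $W$ or $|\beta_\Phi|(\R^d)<\infty$. The only genuinely nontrivial step is the independence-under-Palm claim used to simplify $\kappa$; I expect this to be the main obstacle to write out cleanly, though it is essentially a folklore consequence of the definition of Palm measures combined with stationarity.
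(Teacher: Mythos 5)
Your proposal is correct and follows essentially the same route as the paper: embed $\Phi$ and $Z$ in the product flow on $\bM(\R^d)\times\bF$, identify the relative displacement kernel $\Ks_x=\delta_{Z(x)}$, and reduce the Palm mixing coefficient $\kappa$ to the unconditional one via independence and stationarity of $Z$. The only cosmetic difference is that the paper routes this through Corollary~\ref{c:alloc} (the allocation $\tau(x)=x+Z(x)$), whereas you invoke Theorem~\ref{maintheoremmixing2} directly with $K_x=\delta_{x+Z(x)}$; since Corollary~\ref{c:alloc} is just Theorem~\ref{maintheoremmixing2} specialized to such kernels, the two are the same argument.

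One small caution on wording: your remark that Palm conditioning ``biases only $\Phi$ and therefore leaves the law of any component independent of $\Phi$ untouched'' would be too glib on its own, because the refined Campbell theorem and the definition of $\BP^\Phi_{0,y}$ via~\eqref{2pointPalm2} involve shifting the whole probability space by $\theta_x$, which does move $Z$. What rescues the claim is precisely the stationarity of $Z$ (which you do list among your ingredients), so that the shifted law of $Z$ agrees with the unshifted one. The paper's proof of~\eqref{e:7.79} and~\eqref{e:7.80} spells this out, and you should make sure the written-out version of your ``short computation'' does the same.
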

\begin{proof} It is no restriction of generality to assume that $\Phi$ and $Z$ are
defined on our basic probability space $(\Omega,\mathcal{A},\BP)$, equipped with
a flow $\{\theta_x:x\in\R^d\}$. Indeed, we can work with the product space
$\Omega:=\bM(\R^d)\times\bF$, equipped with the product $\sigma$-field and
the (obviously defined) product flow. Then, the probability measure is the product of the
distributions of $\Phi$ and $Z$. Therefore, we could redefine
$\Phi$ and $Z$ as the projections onto the first (resp.\ second) coordinate.
Hence, by definition of the shift on $\bF$, the mapping $x\mapsto \tau(x):=x+Z(x)$ is an invariant allocation.

The assertion follows from Corollary \ref{c:alloc} once we establish the following two claims:
\begin{align}\label{e:7.79}
\BP^\Phi_{0,y}( \, (\tau(y)-y, \tau(0))\in\cdot)=\BP( \, (Z(y),Z(0))\in\cdot),\quad \alpha_\Phi\text{-a.e. }y\in\R^d.
\end{align}
and
\begin{align}\label{e:7.80}
\BP^\Phi_{0}(\tau(0)\in\cdot)=\BP(Z(0)\in\cdot).
\end{align}
Let $f\colon \R^d\to[0,\infty)$ and $g\colon \R^d\times\R^d\to[0,\infty)$ 
be measurable functions. Then we have
\begin{align*}
\int f(y)& \BE^\Phi_{0,y}  g(\tau(y)-y,\tau(0))\,\alpha_\Phi(\md y)\\
&=\iint \I\{x\in[0,1]^d\}f(y) \BE^\Phi_{0,y}   g(\tau(y)-y, \tau(0))\,\alpha_\Phi(\md y)\,\md x\\
&=\BE \int \I\{x\in[0,1]^d\}f(y-x) g(\tau(\theta_x,y-x)-(y-x), \tau(\theta_x,0))\,\Phi^2(\md (x,y))\\
&=\BE \int \I\{x\in[0,1]^d\}f(y-x) g(\tau(y)-y, \tau(x)-x)\,\Phi^2(\md (x,y)),
\end{align*}
where we have used \eqref{2pointPalm2} to obtain the second identity.
By definition of the allocation $\tau$ and the independence 
of $\Phi$ and $Z$, the above equals
\begin{align*}
\BE \iint &\I\{x\in[0,1]^d\}f(y-x) g(\bw(y),\bw(x))\,\Phi^2(\md (x,y))\,\BP(Z\in \md \bw)\\
&=\BE \iint \I\{x\in[0,1]^d\}f(y-x) g(\bw(y-x),\bw(0))\,\Phi^2(\md (x,y))\,\BP(Z\in \md \bw),
\end{align*}
where we have used stationarity of $Z$. By \eqref{e2.55}, this equals
\begin{align*}
\iint f(y)g(\bw(y),\bw(0))\,\alpha_\Phi(\md y)\,\BP(Z\in \md \bw),
\end{align*}
and so \eqref{e:7.79} follows. A similar (even simpler) calculation shows \eqref{e:7.80} and thereby completing the proof.
\end{proof}

An important class of displacement fields are the so called Gaussian displacement fields. 
For these, the mixing condition \eqref{mixingfield} simplifies to a condition on correlations as seen in the next example.

\begin{example}(Gaussian displacement fields)\label{ex:gdf}\rm 
\label{ex:gaussiandisplacements}
Let $\Phi$ be a stationary random measure with
finite intensity, and suppose that $Z=\{Z(x):x\in\R^d\}$ is a
stationary $\R^d$-valued Gaussian random field with c\`adl\`ag-paths, i.e., $Z$ is a random
element in $\bF$, and for $n\in\N, x_1,...,x_n\in\R^d$,
$(Z(x_1), \dots, Z(x_n))$ follows a multivariate normal
distribution. Further, assume that $\Phi$ and $Z$ are independent, and
that
    \begin{equation}\label{mixinggaussianfield}
        \int \|\BC(Z(y), Z(0))\|\alpha_\Phi(\md y) < \infty,
    \end{equation}
    where the choice of the norm is arbitrary.
    Let $W\in\cK_0$. If $\Phi$ is hyperuniform w.r.t.\ $W$, then so is $\Gamma$,
    as defined by \eqref{e:displace}.
    If either $W$ is Fourier smooth or $|\beta_\Phi|(\R^d)<\infty$, then
    $\Phi$ and $\Gamma$  have the same asymptotic w.r.t.\ $W$,
    provided one of these asymptotic variances exists. 

    The assertion follows from Theorem \ref{t:displacement} using the fact that by Lemma \ref{l:gauss beta cov bound} there exists a $c>0$ such that
    \begin{equation*}
        \|\BP((Z(y),Z(0))\in\cdot) - \BP(Z(0)\in\cdot)^{\otimes2}\| \leq c \|\BC(Z(y), Z(0))\|, \quad y\in\R^d.
    \end{equation*}
\end{example}

We continue with the Fourier version of the preceding theorem.

\begin{theorem}\label{t:displaceFourier} Let the assumptions of Theorem \ref{t:displacement} 
be satisfied. Then 
\begin{align}\label{e:6.03}
\hat\beta_\Gamma=\alpha_\Phi(\{0\})(1-|\hat\BQ|^2)\cdot\lambda_d+|\hat\BQ|^2\cdot\hat\beta_\Phi
+\chi_{\Phi,Z}\cdot\lambda_d,
\end{align}
where $\BQ:=\BP(Z(0)\in\cdot)$ and the mapping $\chi_{\Phi,Z}\colon\R^d\to\R$ is defined by 
\begin{align}\label{e:6.04}
\chi_{\Phi,Z}(k):=\int\I\{y\ne 0\} e^{-i\langle k, y\rangle} \big(\BE\big[e^{-i\langle k,Z(y)-Z(0)\rangle}\big]-|\hat\BQ(k)|^2\big)\,\alpha_\Phi(\md y),\quad k\in\R^d.
\end{align}
\end{theorem}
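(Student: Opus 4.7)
The plan is to specialize Theorem~\ref{maintheoremmixing2fourier} to the allocation kernel $K(x,\cdot):=\delta_{x+Z(x)}(\cdot)$ driving the transport in \eqref{e:displace}. As in the proof of Theorem~\ref{t:displacement}, I realize $(\Phi,Z)$ on the product space $\bM(\R^d)\times\bF$ equipped with the product flow, so that $K$ is an invariant probability kernel with $K\Phi=\Gamma$. The relative displacement kernel is then $\Ks_x=\delta_{Z(x)}$, so $\hat{K}^*_x(k)=e^{-i\langle k,Z(x)\rangle}$, a quantity of modulus one.

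Next I invoke the two identifications \eqref{e:7.79}--\eqref{e:7.80} obtained during the proof of Theorem~\ref{t:displacement}. They imply that the mixing coefficient appearing in Theorem~\ref{maintheoremmixing2} agrees with the integrand in \eqref{mixingfield}, so the assumption \eqref{mixingfield} yields \eqref{maintheoremmixingcondition}, and Theorem~\ref{maintheoremmixing2fourier} applies. The same identifications yield
\begin{equation*}
\BE_0^\Phi\!\big[\hat{K}^*_0(k)\big]=\hat\BQ(k)
\quad\text{and}\quad
\BE_{0,y}^\Phi\!\big[\hat{K}^*_y(k)\,\overline{\hat{K}^*_0(k)}\big]=\BE\!\big[e^{-i\langle k,Z(y)-Z(0)\rangle}\big]
\end{equation*}
for $\alpha_\Phi$-a.e.\ $y$. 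Substituting the first identity into \eqref{bartletmeasuredifference2} produces the middle term $|\hat\BQ|^2\cdot\hat\beta_\Phi$ of \eqref{e:6.03}.

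To extract the remaining two terms, I analyse $\hat\eta(k)$ given by \eqref{maintheoremmixingfourierdensity2} by splitting $\alpha_\Phi=\alpha_\Phi(\{0\})\delta_0+\alpha_\Phi|_{\R^d\setminus\{0\}}$. The contribution from the restriction to $\R^d\setminus\{0\}$, after applying the second identity above, is exactly $\chi_{\Phi,Z}(k)$ as defined in \eqref{e:6.04}. At the atom $y=0$, the integrand reduces to $|\hat{K}^*_0(k)|^2-|\hat\BQ(k)|^2=1-|\hat\BQ(k)|^2$, contributing $\alpha_\Phi(\{0\})(1-|\hat\BQ(k)|^2)$. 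Multiplying $\hat\eta\cdot\lambda_d$ out and adding the middle term gives precisely \eqref{e:6.03}.

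The only delicate point is the treatment of the atom of $\alpha_\Phi$ at $y=0$ (which is present, e.g., when $\Phi$ is a point process). Fortunately, since $|\hat{K}^*_0|^2\equiv 1$ regardless of the value of $Z(0)$, no fine analysis of the two-point Palm measure at coincident locations is required: the contribution at $y=0$ is determined by a trivial pointwise identity rather than by \eqref{e:7.79}, which only holds for $\alpha_\Phi$-a.e.\ $y$.
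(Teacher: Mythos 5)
Your proof is correct and follows essentially the same route as the paper: apply Theorem~\ref{maintheoremmixing2fourier} with $\Ks_y=\delta_{Z(y)}$, use the Palm identifications \eqref{e:7.79}--\eqref{e:7.80} from the proof of Theorem~\ref{t:displacement} to reduce the Palm expectations to ordinary expectations of $Z$, split $\alpha_\Phi$ into its atom at $0$ and the rest, and observe that the $y=0$ contribution is governed by the trivial identity $|\hat{K}^*_0|\equiv 1$. Your remark that the $y=0$ atom does not require \eqref{e:7.79} (which only holds $\alpha_\Phi$-a.e.) is exactly the subtlety the paper addresses with ``For $y=0$ the preceding expression equals $1$'', just stated more explicitly.
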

\begin{proof} Let the assumptions of Theorem \ref{t:displacement} hold.
We apply Theorem \ref{maintheoremmixing2fourier} with $\Ks(y)=\delta_{\tau(y)}$, $y\in\R^d$,
where the allocation $\tau$ satisfies \eqref{e:7.79} and \eqref{e:7.80}. It follows that 
\begin{align*}
\BE_0^\Phi \big[\hat{K}^*_0]&=\hat\BQ,\\
\BE_{0,y}^\Phi \Big[\hat{K}^*_y(k)\overline{\hat{K}^*_0(k)}\Big]
&=\BE \big[e^{-i\langle k, Z(y) \rangle} e^{i\langle k, Z(0) \rangle}\big],\quad \alpha_\Phi\text{-a.e.\ $y\in\R^d$}.
\end{align*}
For $y=0$ the preceding expression equals $1$.
Therefore \eqref{e:6.03} follows from \eqref{bartletmeasuredifference2}.
\end{proof}
Note that equation \eqref{e:6.04} can be rewritten as
\begin{equation*}
    \chi_{\Phi,Z}(k)=\int\I\{y\ne 0\} e^{-i\langle k, y\rangle}\BC\big[e^{-i\langle k,Z(y)\rangle}, e^{-i\langle k,Z(0)\rangle}\big]\,\alpha_\Phi(\md y),\quad k\in\R^d.
\end{equation*}

Assume now that the random measure $\Phi$ is purely discrete.
Then one often assumes the random vectors  $Z(x)$ to be conditionally
independent for different $x\in\Phi$ with a conditional distribution $\BQ$, say,
which is independent of $x$.
Since a c\`adl\`ag assumption on $Z$ might be
at odds with this independence, we treat this case by using independent marking as follows.
We can  represente $\Phi$ as
\begin{align}\label{e:4.926}
\Phi=\sum^\infty_{n=1}Y_n\delta_{X_n},
\end{align}
where $Y_1,Y_2,\ldots$ are non-negative random variables and
$X_1,X_2,\ldots$ are random vectors in $\R^d$ 
$\R^d$ such that $X_m\ne X_n$ whenever
$Y_m\ne 0$ or $Y_n\ne 0$; see e.g.\ \cite[Chapter 6]{LastPenrose17}.
Let $Z_1,Z_2,\ldots$ be independent $\R^d$-valued random variables with distribution $\BQ$,
independent of $\Phi$. We shall show that the random measure
\begin{align}\label{Gammadis}
\Gamma:=\sum^\infty_{n=1}Y_n\delta_{X_n+Z_n}
\end{align}
has the same asymptotic variance as $\Phi$. Informally, we might still
think of $\Gamma$ as given by \eqref{e:displace}, where $\Phi$ and $Z$ are
independent and the random variables $Z(x)$, $x\in\R^d$, are independent with distribution
$\BQ$. We  would need, however, a measurable version of $Z$ to make
sense of \eqref{e:displace}.

\begin{theorem}\label{t:displacement2} Suppose that $\Phi$ is a locally square-integrable 
purely discrete stationary random measure
and define the random measure $\Gamma$ by \eqref{Gammadis}.
Then the assertions of Theorem \ref{t:displacement} hold. 
Moreover, \eqref{e:6.03} holds with $\chi_{\Phi,Z}\equiv 0$.
\end{theorem}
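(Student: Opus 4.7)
The plan is to cast $\Gamma$ as a randomized transport of $\Phi$ and invoke Theorem~\ref{t:splitting2}. I would define the deterministic invariant probability kernel $L$ by $L(x,\cdot):=\BQ(\cdot-x)$ and the randomization map $F\colon\bM^1(\R^d)\to\Pi_d$ by $F(\pi):=\int\I\{\delta_z\in\cdot\}\,\pi(\md z)$. The mean value property \eqref{F1} is immediate from $\int \delta_z(\cdot)\,\pi(\md z)=\pi(\cdot)$, and the translation covariance \eqref{F2} follows from the identity $\theta_x\delta_z=\delta_{z-x}$. Realising the i.i.d.\ displacements $Z_n$ as a position-dependent $F$-marking of the atoms of $\Phi$, exactly as in the construction carried out inside the proof of Theorem~\ref{t:splitting2}, produces an invariant probability kernel $K$ with $K_{X_n}=\delta_{X_n+Z_n}$ that satisfies $K\Phi=\Gamma$, $\BP(K_x\in\cdot\mid T)=F(L_x)$, and has $\{K_x:x\in\Phi\}$ conditionally independent given $T$.

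The remaining hypothesis \eqref{851} is verified by
\begin{equation*}
\gamma\,\BE_0^\Phi\Phi\{0\}=\alpha_\Phi(\{0\})\le\BE[\Phi([0,1]^d)^2]<\infty,
\end{equation*}
which holds because $\Phi$ is locally square-integrable. Theorem~\ref{t:splitting2} then yields the equality of asymptotic variances of $K\Phi=\Gamma$ and $L\Phi$ as soon as one of them exists. To transfer this equality from $L\Phi$ back to the source $\Phi$, I would invoke Lemma~\ref{l:variancemono} applied to the deterministic kernel $L$: the pointwise inequality $\BV[L\Phi(B)]\le\BV[\Phi(B)]$ gives preservation of hyperuniformity, hence part~(i); and identity \eqref{e:4095} under Fourier smoothness of $W$ or \eqref{finitetotalv} yields part~(ii).

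For the Fourier identity \eqref{e:6.03} it remains to compute $\hat\eta$ given by Theorem~\ref{t:splittingfourier2}. With our choice of $F$ the inner integral collapses since $|\hat\delta_z(k)|^2=1$ for every $z$, so $\int|\hat\pi(k)|^2\,F(L_0)(\md\pi)=1$; together with $\hat{L}_0=\hat{\BQ}$ and $\gamma\,\BE_0^\Phi\Phi\{0\}=\alpha_\Phi(\{0\})$ this gives $\hat\eta(k)=\alpha_\Phi(\{0\})(1-|\hat\BQ(k)|^2)$. Coupling \eqref{bartletmeasuredifference3} with the smoothing formula $\hat\beta_{L\Phi}=|\hat\BQ|^2\cdot\hat\beta_\Phi$ from Lemma~\ref{l:variancemonofourier} delivers exactly \eqref{e:6.03} with $\chi_{\Phi,Z}\equiv 0$. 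I expect the only delicate step to be the measurable and flow-compatible realisation of the independent displacements on the underlying sample space, but this is precisely what the enriched-space construction in the proof of Theorem~\ref{t:splitting2} provides, so no new technical ingredient is required.
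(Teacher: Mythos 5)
Your proof is correct and takes a genuinely different route from the paper's. The paper embeds $(\Phi,\{Z_n\})$ into a canonical marked space $\bM_d^*$, defines the allocation $\tau$ directly, and checks that the two-point Palm distribution factors, $\BP^\Phi_{0,y}\big((\tau(y)-y,\tau(0))\in\cdot\big)=\BQ^{\otimes 2}$ for $\alpha_\Phi$-a.e.\ $y\ne 0$, so the mixing coefficient $\kappa$ vanishes off the diagonal and Corollary~\ref{c:alloc} (hence Theorem~\ref{maintheoremmixing2} and Theorem~\ref{maintheoremmixing2fourier}) applies directly. You instead route through the conditional-independence splitting machinery of Theorem~\ref{t:splitting2} with the deterministic kernel $L_0=\BQ$ and the Dirac randomisation $F(\pi)=\int\I\{\delta_z\in\cdot\}\,\pi(\md z)$, then transfer from $L\Phi$ back to $\Phi$ via Lemma~\ref{l:variancemono}; the Fourier formula drops out of Theorem~\ref{t:splittingfourier2} together with the smoothing identity $\hat\beta_{L\Phi}=|\hat\BQ|^2\cdot\hat\beta_\Phi$. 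Your verifications of \eqref{851} via $\gamma\,\BE^\Phi_0\Phi\{0\}=\alpha_\Phi(\{0\})\le\BE\Phi([0,1]^d)^2$ and of $\hat\eta(k)=\alpha_\Phi(\{0\})(1-|\hat\BQ(k)|^2)$ are both correct and do recover \eqref{e:6.03} with $\chi_{\Phi,Z}\equiv 0$. The trade-off: your route avoids computing with two-point Palm probabilities entirely --- independence of the displacements enters purely through the distributional hypothesis \eqref{erandomization} --- at the cost of surfacing the dependence on Lemma~\ref{l:variancemono} (and its Fourier-smooth or finite-total-variation hypotheses) at the top level rather than leaving it buried inside Theorem~\ref{maintheoremmixing2}. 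Conceptually both proofs encode the same factorisation; yours packages it as a corollary of the splitting theorem, which is arguably the more natural home here since the displacements are genuinely i.i.d.\ and the conditional-independence hypothesis of Theorem~\ref{t:splitting2} is tailored to exactly that situation.
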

\begin{proof}   Let $\bM_d^*$ be the measurable set of all $\psi\in\bM(\R^d\times\R^d)$ such
that $\psi(\cdot\times\R^d)\in\bM_d$. Define a flow $\{\theta^*_x:x\in\R^d\}$
on this space by setting 
$\theta^*_x\psi:=\int \I\{(y-x,z)\in\cdot\}\,\psi(\md (y,z))$, for $\psi\in\bM_d^*$ and $x\in\R^d$. 
Define a random element $\Psi$ of $\bM_d^*$ by
\begin{align*}
\Psi:=\sum^\infty_{n=1}Y_n\delta_{(X_n,Z_n)}
\end{align*}
Proceeding as in the proof of \cite[Proposition 5.4]{LastPenrose17}), for instance, it
is not hard to show that $\Psi$ is stationary w.r.t.\ the flow $\{\theta^*_x:x\in\R^d\}$.
Since $\Phi=\Psi(\cdot\times\R^d)$ and $\Gamma$ is a function of $\Psi$,
it is no restriction of generality to work on 
the canonical space $\bM_d^*$ equipped with the distribution of $\Psi$
as probability measure. Define an allocation $\tau$ by setting
$\tau(\psi,x)=x+z$ if $(x,z)\in\psi$. Using the proof of Theorem \ref{t:displacement}
we obtain for $\alpha_\Phi$-a.e.\ $y\in\R^d$ the intuitively obvious identity 
\begin{align*}
\BP^\Phi_{0,y}((\tau(y)-y, \tau(0))\in\cdot)=\I\{y\ne 0\}\BQ^{\otimes 2}+\I\{y=0\}\int\I\{(z,z)\in\cdot\}\,\BQ(\md z).
\end{align*}
Furthermore we have $\BP^\Phi_{0}(\tau(0)\in\cdot)=\BQ$.
Therefore $\kappa(y)$, as defined by \eqref{kappaspecial}, vanishes for
$\alpha_\Phi$-a.e.\ $y\ne 0$ and the first assertions follow from
Corollary \ref{c:alloc}. 

Formula \eqref{e:6.03} with $\chi_{\Phi,Z}\equiv 0$ follows from
Theorem \ref{maintheoremmixing2fourier} by taking there $K^*=\delta_\tau$ and
using the Palm identities mentioned above. 
\end{proof}

\subsection{The lattice as a source}
\label{ss:indepdisp_latt}

Next, we consider stationary displacements of the stationary
lattice. We avoid calling this a perturbed lattice, since it would
suggest (as elsewhere in the mathematical literature) that the
displacements are small. In fact, the displacements can be huge and
are not even assumed to have a finite moment. In view of this, it
might be a bit surprising that 
a displacement independent of the source cannot break hyperunifomity, provided it satisfies a mixing
assumption.  
A seminal work on the asymptotic number variance of a displaced lattices 
(with iid-displacements) is  \cite{Gacs75}.

\begin{theorem}\label{t:displacedlattice} Let $\Phi$ be the stationary lattice, i.e., $\Phi=\sum_{x\in\Z^d}\delta_{x+U}$, 
where $U$ is uniformly distributed on the unit cube.
Suppose that $\{Z(x):x\in\Z^d\}$ is a stationary family of $\R^d$-valued random vectors, independent of $U$. 
Assume that
\begin{align}\label{mixinglattice}
    \sum_{y\in\Z^d} \|\BP((Z(y),Z(0))\in\cdot) - \BP(Z(0)\in\cdot)^{\otimes2}\| < \infty.
\end{align}
Then
\begin{align*}
\Gamma:=\sum_{x\in\Z^d} \delta_{x+U+Z(x)}
\end{align*}
is a stationary point process and hyperuniform with respect to any Fourier smooth $W\in\cK_0$.
\end{theorem}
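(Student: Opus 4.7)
The plan is to apply Corollary \ref{c:alloc} with source $\Phi$ (the stationary lattice) and the invariant allocation $\tau$ defined by $\tau(x+U) := x+U+Z(x)$ for $x\in\Z^d$, extended arbitrarily off the atoms of $\Phi$. Formally, I would work on the canonical probability space carrying $(U,Z)$ equipped with the $\R^d$-flow under which the marked point process $\sum_{x\in\Z^d}\delta_{(x+U,Z(x))}$ is stationary; this stationarity follows from a short computation using independence of $U$ and $Z$ together with stationarity of $Z$ under $\Z^d$-shifts (a non-integer $\R^d$-shift of $U$ induces a $U$-dependent integer shift of the indexing of $Z$, which is absorbed by the distributional invariance of $Z$). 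Then $\tau$ is invariant in the sense of \eqref{allocation}, $\tau\Phi=\Gamma$, and Proposition \ref{p:propinvPalm} supplies both local finiteness and stationarity of $\Gamma$.

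The reduced second moment measure of the stationary lattice is $\alpha_\Phi=\sum_{z\in\Z^d}\delta_z$, so $\int \kappa(y)\,\alpha_\Phi(\md y)=\sum_{y\in\Z^d}\kappa(y)$. The key step is to compute the Palm distributions entering the mixing coefficient \eqref{kappaspecial}. Under $\BP^\Phi_0$ the random offset $U$ is effectively fixed, so $\Phi$ becomes the deterministic lattice $\Z^d$; independence of $U$ from $Z$ and the $\Z^d$-stationarity of $Z$ ensure the marginal law of $Z$ is unchanged. Running the same calculation as in the proof of \eqref{e:7.79} and \eqref{e:7.80} but invoking the two-point Palm identity \eqref{2pointPalm2} for the lattice yields, for $y\in\Z^d$,
\[
\BP^\Phi_{0,y}\bigl((\tau(y)-y,\tau(0))\in\cdot\bigr)=\BP\bigl((Z(y),Z(0))\in\cdot\bigr),\qquad \BP^\Phi_0(\tau(0)\in\cdot)=\BP(Z(0)\in\cdot).
\]
Consequently $\kappa(y)=\|\BP((Z(y),Z(0))\in\cdot)-\BP(Z(0)\in\cdot)^{\otimes 2}\|$ on $\Z^d$, so the summability hypothesis \eqref{maintheoremmixingcondition} is precisely \eqref{mixinglattice}.

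To invoke Corollary \ref{c:alloc}(i) I still need $\Phi$ itself to be hyperuniform w.r.t.\ any Fourier smooth $W\in\cK_0$. This follows from the spectral criterion \eqref{spectralasymptoticvariance}: the stationary lattice has $\hat\beta_\Phi=\sum_{k\in\Z^d\setminus\{0\}}\delta_k$, so $\hat\beta_\Phi(B_\varepsilon)=0$ for every $\varepsilon<1$ and hence $\lim_{\varepsilon\to 0}\varepsilon^{-d}\hat\beta_\Phi(B_\varepsilon)=0$. Corollary \ref{c:alloc}(i) then transfers hyperuniformity from $\Phi$ to $\tau\Phi=\Gamma$ for every Fourier smooth $W\in\cK_0$, which is exactly the claim.

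The main obstacle is not the computation itself, which is routine once one has identified the right Palm distributions, but the rigorous construction of an underlying flow on $(U,Z)$: a generic $\R^d$-shift moves the lattice offset $U$ across cell boundaries and thereby re-indexes $Z$ by a $U$-dependent integer translation. I would bypass this complication by working directly with the marked point process $\sum_{x\in\Z^d}\delta_{(x+U,Z(x))}$ on $\R^d\times\R^d$ in the framework of \cite[Chapter 5]{LastPenrose17}; checking that this marked process is stationary under joint diagonal $\R^d$-shifts is a one-line verification using independence of $U$ and $Z$ and $\Z^d$-stationarity of $Z$, and the Palm identities above then follow exactly as in the proof of Theorem \ref{t:displacement}.
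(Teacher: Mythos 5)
Your proposal is correct, but it takes a genuinely different route from the paper's own proof. The paper \emph{stationarizes} the discrete field: it introduces an auxiliary uniform $V$ independent of $(U,Z)$, sets $\tilde Z(x):=Z(\lfloor x+V\rfloor)$, checks that $\tilde Z$ is a stationary c\`adl\`ag field coinciding with $Z$ on $\Z^d$ (so that \eqref{mixinglattice} becomes \eqref{mixingfield}, using $\alpha_\Phi=\sum_{z\in\Z^d}\delta_z$), applies Theorem \ref{t:displacement} directly, and finally checks that $\int\I\{x+\tilde Z(x)\in\cdot\}\Phi(\md x)\overset{d}{=}\Gamma$ by independence and stationarity. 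That way all the Palm calculations come for free from Theorem \ref{t:displacement}. You instead re-do the Palm calculation from scratch via Corollary \ref{c:alloc}: you construct the flow on the canonical marked space of $\sum_{x\in\Z^d}\delta_{(x+U,Z(x))}$ (exactly the device the paper uses in the proof of Theorem \ref{t:displacement2}, not of Theorem \ref{t:displacedlattice}), verify equivariance of the allocation $\tau$, and compute $\BP^\Phi_0$, $\BP^\Phi_{0,y}$ for $y\in\Z^d$ directly — all of which checks out, since under the lattice Palm measure the offset $U$ collapses to $0$ while $Z$ keeps its law by independence and $\Z^d$-stationarity. You also correctly supply the missing ingredient that the source is hyperuniform w.r.t.\ Fourier smooth $W$ via the spectral criterion \eqref{spectralasymptoticvariance} applied to $\hat\beta_\Phi=\sum_{k\in\Z^d\setminus\{0\}}\delta_k$. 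The trade-off: the paper's stationarization trick is shorter and fully leverages existing machinery, whereas your direct route requires more care in setting up the flow and allocation, but gives a self-contained computation of the Palm measures without introducing the auxiliary randomization $V$.
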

\begin{proof} 
Let $V$ be independent of $(U,Z)$ with the same distribution as $U$. Define
\begin{align*}
\tilde{Z}(x):=Z(\lfloor x+V\rfloor),\quad x\in\R^d.
\end{align*} 
The field $\tilde{Z}:=\{\tilde{Z}(x):x\in\R^d\}$ satisfies the general assumptions of 
Theorem \ref{t:displacement} and is independent of $\Phi$. Since
$y+V -\lfloor y+V\rfloor\overset{d}{=}V$ for each $y\in\R^d$, it is easy to
see that $\tilde{Z}$ is stationary. We wish to apply Theorem \ref{t:displacement}
with $\tilde{Z}$ in place of $Z$. To check the assumptions of that theorem, we 
note that $\alpha_\Phi=\sum_{x\in\Z^d} \delta_{x}$. Furthermore
$\tilde{Z}$ coincides on $\Z^d$ with $Z$.
Hence assumption \eqref{mixinglattice}  is the same as \eqref{mixingfield}.
To conclude the assertion it remains to note that
\begin{align*}
\tilde\Gamma:=\int\I\{x+\tilde{Z}(x)\in\cdot\}\,\Phi(\md x)=\sum_{x\in\Z^d} \delta_{x+U+Z(\lfloor x+V\rfloor)}
\end{align*}
has the same distribution as $\Gamma$. This easily follows from the independence of 
$U,V,Z$ and stationarity of $Z$.
\end{proof}

\begin{remark}\rm
An example in \cite{DFHL24} shows for $d\ge 3$ that
the displaced lattice $\Gamma$ in Theorem \ref{t:displacedlattice}
need not be hyperuniform without any further assumptions on the
displacement field. Even a (deterministically) arbitrarily small
displacement can break hyperunformity of the stationary lattice. On
the other hand, it has also been proved in \cite{DFHL24} that $\Gamma$
is hyperuniform in the case $d=1,2$, as soon as
$\BE \|Z(0)\|^d<\infty$. In fact it was shown in
\cite{LRY2024,HueLeble24,Butez24} that in dimension $d=2$ a
hyperuniform point process is a displaced lattice, provided some
integrability assumption holds.
\end{remark}

Once again, as in Example \ref{ex:gdf}, the mixing condition
\eqref{mixinglattice} simplifies to a condition on the correlations
if the displacement field is Gaussian:

\begin{example}\label{e:Gaussperturb}\rm (Gaussian displacements) Let $\Phi$ be the
  stationary lattice 
and suppose that
  $\{Z(x):x\in\Z^d\}$ is a stationary $\R^d$-valued Gaussian random
  field. Further, assume that $U$ and $Z$ are independent, and that
    \begin{equation}\label{mixinggrflattice}
        \sum_{y\in\Z^d}\|\BC(Z(y), Z(0))\| < \infty,
    \end{equation}
    where the choice of the norm is arbitrary.  Then $\Gamma$, as
    defined in Theorem \ref{t:displacedlattice}, 
is a stationary point process and hyperuniform with respect to any Fourier smooth $W\in\cK_0$.
    This assertion follows from Theorem \ref{t:displacedlattice} like
    Example \ref{ex:gdf} followed from Theorem \ref{t:displacement}.
\end{example}

\begin{remark}\label{r:Flimmel25}\rm Theorem \ref{t:displacedlattice} is closely
reated to Theorem 1 in the recent preprint \cite{Flimmel25}.
There hyperuniformity of the translated lattice was
proved for an $\alpha$-mixing field $Z$ under 
moment assumptions on $Z(0)$. The mixing assumption
is similar to \eqref{mixinglattice} and involves a fractional power
of the $\alpha$-mixing coefficient of the field. Instead
we are using no moment assumption and the $\beta$-mixing coefficient, but without a fractional power and only for 
two values of the field and not for the whole trajectories.
In the Gaussian case from Example \ref{e:Gaussperturb} 
our required  decay of correlations is significantly slower
than in \cite[Corollary 2]{Flimmel25}. There, a
decay with a power strictly larger than $2d$ is required, whereas
for condition \eqref{mixinggrflattice} a decay with a power strictly
larger than $d$ suffices. In the $m$-dependent case, 
we can get rid of the moment assumption in \cite[Corollary 1]{Flimmel25}.
\end{remark}

\begin{theorem} Let the assumptions of Theorem
\ref{t:displacedlattice} be satisfied. Then the assertions of Theorem \ref{t:displaceFourier}
hold with $\chi_{\Phi,Z}(k)$ given by
\begin{align*}
\chi_{\Phi,Z}(k)=\sum_{y\in\Z^d\setminus\{0\}} e^{-i\langle k, y\rangle} 
\big(\BE\big[e^{-i\langle k,Z(y)-Z(0)\rangle}\big]-|\hat\BQ(k)|^2\big),\quad k\in\R^d.
\end{align*}
\end{theorem}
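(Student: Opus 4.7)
The plan is to reduce this Fourier statement to Theorem \ref{t:displaceFourier} via the c\`adl\`ag approximation already introduced in the proof of Theorem \ref{t:displacedlattice}. Recall that we take an auxiliary $V$, independent of $(U,Z)$ and uniform on the unit cube, and define the stationary c\`adl\`ag field $\tilde Z(x):=Z(\lfloor x+V\rfloor)$, $x\in\R^d$. The proof of Theorem \ref{t:displacedlattice} already verifies that $\tilde Z$ is stationary, independent of $\Phi$, satisfies the mixing assumption \eqref{mixingfield}, and that
\begin{equation*}
\tilde\Gamma:=\int \I\{x+\tilde Z(x)\in\cdot\}\,\Phi(\md x)\overset{d}{=}\Gamma.
\end{equation*}
In particular $\hat\beta_\Gamma=\hat\beta_{\tilde\Gamma}$, so the Bartlett spectral measure of $\Gamma$ can be computed by applying Theorem \ref{t:displaceFourier} to the pair $(\Phi,\tilde Z)$.

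The second step is to insert the lattice structure of $\alpha_\Phi$. Since $\Phi=\sum_{x\in\Z^d}\delta_{x+U}$, we have $\alpha_\Phi=\sum_{x\in\Z^d}\delta_x$, so $\alpha_\Phi(\{0\})=1$ and the integral in \eqref{e:6.04} reduces to a sum over $\Z^d\setminus\{0\}$. Because $V$ is uniform on the unit cube, for every $y\in\Z^d$ we have $\lfloor y+V\rfloor=y$ almost surely, and hence $\tilde Z(y)=Z(y)$ almost surely. In particular $\tilde Z(0)=Z(0)$ a.s., so $\BQ=\BP(\tilde Z(0)\in\cdot)=\BP(Z(0)\in\cdot)$, matching the $\BQ$ in the assertion. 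Substituting these identities into \eqref{e:6.04} for $(\Phi,\tilde Z)$ gives
\begin{equation*}
\chi_{\Phi,\tilde Z}(k)=\sum_{y\in\Z^d\setminus\{0\}} e^{-i\langle k,y\rangle}\bigl(\BE\bigl[e^{-i\langle k,Z(y)-Z(0)\rangle}\bigr]-|\hat\BQ(k)|^2\bigr),
\end{equation*}
which is the claimed expression for $\chi_{\Phi,Z}$, and the formula \eqref{e:6.03} of Theorem \ref{t:displaceFourier} then transfers to $\Gamma$ via the distributional identity above.

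I do not anticipate any genuine obstacle: the whole argument is mechanical once one observes that Theorem \ref{t:displaceFourier} is applicable to the auxiliary c\`adl\`ag field $\tilde Z$ and that on the lattice $\Z^d$ the field $\tilde Z$ coincides almost surely with $Z$. The only small point to verify carefully is this last a.s.\ coincidence, but it is immediate from $\BP(V\in\partial[0,1)^d)=0$ together with the independence of $V$ and $Z$; the latter independence is also what guarantees that replacing $\tilde Z$ by $Z$ inside the expectation in the integrand leaves the joint distribution of the finite-dimensional marginals $(\tilde Z(y),\tilde Z(0))$ unchanged on $\Z^d\times\{0\}$.
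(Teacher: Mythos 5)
Your proof is correct and follows essentially the same route as the paper: the paper's own (very terse) proof says to proceed ``as in the proof of Theorem~\ref{t:displacedlattice}'' by passing to the c\`adl\`ag field $\tilde Z(x)=Z(\lfloor x+V\rfloor)$ and then applying Theorem~\ref{maintheoremmixing2fourier} with $\Ks(y)=\delta_{\tau(y)}$, which is exactly what you do (you simply invoke the already-packaged Theorem~\ref{t:displaceFourier} rather than its ingredient Theorem~\ref{maintheoremmixing2fourier}). Your reduction of the integral in \eqref{e:6.04} to a sum using $\alpha_\Phi=\sum_{y\in\Z^d}\delta_y$ and the a.s.\ identity $\tilde Z|_{\Z^d}=Z$ is the same final computation the paper is implicitly relying on.
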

\begin{proof} As in the proof of Theorem \ref{t:displacedlattice}
we can apply Theorem \ref{maintheoremmixing2fourier} with $\Ks(y)=\delta_{\tau(y)}$,
where the allocation $\tau$ satisfies
\eqref{e:7.79} and \eqref{e:7.80}. This concludes 
the proof.
\end{proof}

\begin{corollary}\label{c:6.24}
Let $\Phi$ and $\Gamma$ be given as in Theorem \ref{t:displacedlattice}. 
Assume that $Z(0)$ and $Z(x)$ are independent  for each $x\in \Z^d\setminus\{0\}$.
Then $\Gamma$ is hyperuniform. Furthermore,
\begin{align}\label{e:6.032}
\hat\beta_\Gamma=(1-|\hat\BQ|^2)\cdot\lambda_d+\sum_{k\in\Z^d\setminus \{0\}}|\hat\BQ(k)|^2\delta_k,
\end{align}
where $\BQ:=\BP(Z(0)\in\cdot)$.
\end{corollary}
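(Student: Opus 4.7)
The plan is to derive Corollary \ref{c:6.24} essentially as a specialization of Theorem \ref{t:displacedlattice} together with the Fourier formula from the immediately preceding theorem. The key observation is that the independence assumption collapses both the mixing coefficient and the correction term $\chi_{\Phi,Z}$ to zero, after which the spectral formula drops out by inspection.

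First, I would verify the mixing condition \eqref{mixinglattice}. For $y\in\Z^d\setminus\{0\}$ we have $(Z(y),Z(0))\overset{d}{=}Z(y)\otimes Z(0)$ by assumption, and by stationarity of $Z$ the marginal law of $Z(y)$ equals $\BQ=\BP(Z(0)\in\cdot)$. Hence the total-variation term in \eqref{mixinglattice} vanishes for every $y\ne 0$, and the sum reduces to the trivially finite $y=0$ term. Consequently Theorem \ref{t:displacedlattice} applies and gives hyperuniformity of $\Gamma$ with respect to any Fourier smooth $W\in\cK_0$ (in particular, with respect to the unit ball).

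Next I would evaluate $\chi_{\Phi,Z}$ from the theorem preceding the corollary. For $y\in\Z^d\setminus\{0\}$, independence and stationarity yield
\begin{align*}
\BE\bigl[e^{-i\langle k,Z(y)-Z(0)\rangle}\bigr]
=\BE\bigl[e^{-i\langle k,Z(y)\rangle}\bigr]\BE\bigl[e^{i\langle k,Z(0)\rangle}\bigr]
=\hat\BQ(k)\overline{\hat\BQ(k)}=|\hat\BQ(k)|^{2},
\end{align*}
so every summand in the defining series for $\chi_{\Phi,Z}(k)$ is zero, giving $\chi_{\Phi,Z}\equiv 0$.

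Finally I plug into Theorem \ref{t:displaceFourier} (via the previous theorem). For the stationary lattice $\alpha_\Phi=\sum_{y\in\Z^d}\delta_y$, so $\alpha_\Phi(\{0\})=1$, and it was recorded in Subsection~\ref{subBartlett} that $\hat\beta_\Phi=\sum_{k\in\Z^d\setminus\{0\}}\delta_k$. Substituting into \eqref{e:6.03} gives
\begin{align*}
\hat\beta_\Gamma
&=(1-|\hat\BQ|^{2})\cdot\lambda_d+|\hat\BQ|^{2}\cdot\sum_{k\in\Z^d\setminus\{0\}}\delta_k+0\\
&=(1-|\hat\BQ|^{2})\cdot\lambda_d+\sum_{k\in\Z^d\setminus\{0\}}|\hat\BQ(k)|^{2}\delta_k,
\end{align*}
which is \eqref{e:6.032}. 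No step presents a real obstacle here; the only thing to be careful about is not confusing the roles of $y\in\Z^d$ (support of $\alpha_\Phi$, summation index in $\chi_{\Phi,Z}$) and $k\in\R^d$ (Fourier variable), and remembering to use stationarity of $Z$ to identify the marginal of $Z(y)$ with $\BQ$.
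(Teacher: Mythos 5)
Your proof is correct and is exactly the argument the paper implicitly relies on (the corollary is stated without proof). You correctly observe that the independence and stationarity of $Z$ make both the total-variation term in \eqref{mixinglattice} and the summands of $\chi_{\Phi,Z}$ vanish for $y\neq 0$, then substitute $\alpha_\Phi(\{0\})=1$ and $\hat\beta_\Phi=\sum_{k\in\Z^d\setminus\{0\}}\delta_k$ into \eqref{e:6.03} to obtain \eqref{e:6.032}.
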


\subsection{Displacement kernels independent of the source}
\label{s:indepdisplkern}

We still consider a locally square-integrable stationary random measure $\Phi$
with finite intensity $\gamma_\Phi$. But instead of translating $\Phi$
with a random field, we use a more general object, namely a 
family $K=\{K(x):x\in\R^d\}$ of random probability measures
on $\R^d$. This allows to split the mass of $\Phi$.
We assume that $\Phi$ and $K$ are independent
and that $\Ks$ is stationary and consider the random measure
\begin{align}\label{e:displacekernel}
\Gamma:=\iint\I\{y\in\cdot\}\,K(x,\md y)\,\Phi(\md x).
\end{align}
In the case $\Ks(x)=\delta_{Z(x)}$ (that is $K(x)=\delta_{x+Z(x)}$) this definition boils down
to \eqref{e:displace}. 
In the general case we can identify $K$ with a probability kernel
from $\Omega\times\R^d$ to $\R^d$ defined by
$K(\omega,x,\cdot):=K(\omega)(x)(\cdot)$.
In our previous notation this means that $\Gamma=K\Phi$.
For consistency we prefer the latter notation.

To make sense of the preceding assumptions we assume that
$K$ is a random element of the space  $\mathbf{K}_d$ 
to be defined as follows. Equipped with the weak topology, the space 
$\bM^1(\R^d)$ becomes Polish; see e.g.\ \cite[Lemma 4.5]{Kallenberg17}. Hence we can
define to $\mathbf{K}_d$ as the set of all {\em c\`adl\`ag functions} 
$N\colon\R^d\to \bM^1(\R^d)$; see again  \cite{Janson20} for the real-valued 
case. We equip this space with the smallest $\sigma$-field
making the mappings $N\mapsto N(x)$ measurable for each $x\in\R^d$.
Then even the mapping $(N,x)\mapsto N(x)$ is measurable.
Stationarity of $K^*$ then 
means that $\{K^*(x+y):x\in\R^d\}\overset{d}{=}K^*$ for each $y\in\R^d$.

\begin{theorem}\label{maintheoremindependent} Let  $\Phi$ and $K$ satisfy the preceding
assumptions. Assume that
\begin{align}\label{6187}        
\int \big\|\BE[\Ks_y\otimes \Ks_0]-\BE[\Ks_0]^{\otimes 2}\big\|\, \alpha_\Phi(\md y) < \infty.
\end{align}
Then the assertions of Theorem \ref{maintheoremmixing2} hold.
  \end{theorem}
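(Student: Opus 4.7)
The plan is to verify assumption \eqref{maintheoremmixingcondition} of Theorem \ref{maintheoremmixing2} directly from \eqref{6187} and then invoke that theorem. The central point is that independence of $\Phi$ and $K$ forces the Palm expectations appearing in the mixing coefficient \eqref{e:kappa2} to reduce to ordinary expectations. To do this rigorously, I would first realize both random objects on a canonical joint probability space: take $\Omega := \bM(\R^d) \times \mathbf{K}_d$ with the product $\sigma$-field, the diagonal flow induced by the shifts on each factor, and the product measure $\BP_\Phi \otimes \BP_K$. Stationarity of $\Phi$ and of $\Ks$ makes this joint distribution flow-invariant, and $\Phi, \Ks$ become the two independent projections. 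Setting $K(\omega, x, B) := \Ks(\omega, x, B-x)$ yields an invariant probability kernel in the sense of \eqref{adaptkernel}, so the framework of Theorem \ref{maintheoremmixing2} applies.

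The core of the argument is the claim that for $\alpha_\Phi$-almost every $y \in \R^d$,
\begin{align*}
\BE^\Phi_{0,y}[\Ks_y \otimes \Ks_0] = \BE[\Ks_y \otimes \Ks_0], \qquad \BE^\Phi_0[\Ks_0] = \BE[\Ks_0].
\end{align*}
Once these are established, the mixing coefficient $\kappa$ of \eqref{e:kappa2} coincides with the integrand in \eqref{6187}, so that \eqref{maintheoremmixingcondition} holds and Theorem \ref{maintheoremmixing2} gives the claimed conclusions.

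To prove the two-point identity, I would follow the template of the proof of Theorem \ref{t:displacement}. Pick measurable bounded $f\colon\R^d\to[0,\infty)$ and $g\colon \bM^1(\R^d) \times \bM^1(\R^d)\to[0,\infty)$. By the two-point Palm formula,
\begin{align*}
\int f(y) \BE^\Phi_{0,y}[g(\Ks_y, \Ks_0)]\, \alpha_\Phi(\md y)
= \BE \int \I\{x \in [0,1]^d\} f(y-x) g\bigl(\Ks(\theta_x, y-x), \Ks(\theta_x, 0)\bigr) \Phi^2(\md(x,y)).
\end{align*}
By invariance \eqref{adaptkernel1}, $\Ks(\theta_x, y-x) = \Ks_y$ and $\Ks(\theta_x, 0) = \Ks_x$. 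Using independence of $\Phi$ and $\Ks$ to condition on $\Ks$, applying \eqref{e2.55}, and then invoking stationarity of $\Ks$ (so that $\BE[g(\Ks_{x+y}, \Ks_x)]$ does not depend on $x$ and equals $\BE[g(\Ks_y, \Ks_0)]$) reduces the right-hand side to $\int f(y) \BE[g(\Ks_y, \Ks_0)]\, \alpha_\Phi(\md y)$. Arbitrariness of $f$ gives the identity. The one-point identity is analogous, using the refined Campbell formula in place of \eqref{e2.55}.

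The main obstacle is the careful bookkeeping of invariance, independence, and Palm calculus on the joint space, together with the measurability of the kernel-valued c\`adl\`ag field $\Ks$ needed to justify the Fubini-style exchange. Once that is in place, the argument is a direct generalization of the proof of Theorem \ref{t:displacement} from deterministic allocations (which correspond to delta-measure kernels) to general probability-kernel-valued transports, with no new ingredients beyond stationarity and independence.
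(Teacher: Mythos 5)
Your proposal is correct and follows essentially the same approach as the paper: realize $\Phi$ and $\Ks$ on the canonical product space $\bM(\R^d)\times\mathbf{K}_d$ with the diagonal flow, then use the template of the proof of Theorem \ref{t:displacement} to show that independence and stationarity reduce the Palm expectations $\BE^\Phi_{0,y}[\Ks_y\otimes\Ks_0]$ and $\BE^\Phi_0[\Ks_0]$ to ordinary expectations, so that the hypothesis \eqref{6187} becomes exactly \eqref{maintheoremmixingcondition}. The paper's proof is a compressed version of your argument; the details you fill in (the invariance identities $\Ks(\theta_x,y-x)=\Ks_y$, $\Ks(\theta_x,0)=\Ks_x$, conditioning on $\Ks$, and the application of \eqref{e2.55} with the stationarity of $\Ks$) are precisely what the paper is pointing to.
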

\begin{proof} As in the proof of Theorem \ref{t:displacement}
it is no restriction of generality to assume that $\Phi$ and $Z$ are
defined on our basic probability space, equipped with
a flow. This could be achieved, for instance, with the product space
$\Omega:=\bM(\R^d)\times\mathbf{K}_d$. 
The flow on this space can be defined as $\theta_x((\varphi,N)):=(\theta_x\varphi,\theta_xN)$,
where $(\theta_xN)(y,B):=N(y+x,B+x)$ for $y\in\R^d$ and $B\in\cB^d$.
Just as in the cited proof one can then show that 
\begin{align*}
\BE^\Phi_{0,y}[\Ks_y\otimes \Ks_0]=\BE[\Ks_y\otimes \Ks_0],\quad \alpha_\Phi\text{-a.e.\ $y\in\R^d$},
\end{align*}
and $\BE^\Phi_{0}\Ks_0=\BE\Ks_0$.
Therefore the  assertions follow from Theorem \ref{maintheoremmixing2}.
\end{proof}

\begin{remark}\label{remshotnoise}\rm We might interpret $\{K(x):x\in\R^d\}$ as a 
(spatially dependent) {\em noise field} perturbing $\Phi$.
If $\Phi$ is diffuse, then
$\alpha_\Phi\{0\}=0$.
A very special case is $K(x,\cdot)=\int\I\{y\in\cdot\}\,f(y-x)\,\md y$, $x\in\R^d$,
where $f\colon\R^d\to[0,\infty]$ is measurable and satisfies
$\int f(y)\,\md y=1$. Then we have $K\Phi=\int\I\{y\in\cdot\} \xi_y\,\md y$,
where
\begin{align*}
\xi_y:=\int f(y-x)\,\Phi(\md x),\quad y\in\R^d,
\end{align*}
is known as the {\em shot-noise field} based on the kernel function $f$
and $\Phi$.
\end{remark}

We next formulate the kernel version
of Theorem \ref{t:displacement2} for a purely discrete $\Phi$.
Let $K_0,K_1,\ldots$ be a sequence of independent 
random elements of $\bM^1(\R^d)$, independent of $\Phi$ and all with the same distribution. 
Represent $\Phi$ as at \eqref{e:4.926} and define a random measure $\Gamma$ by 
\begin{align}\label{Gammakernel}
\Gamma=\sum_{n=1}^\infty Y_n\int \I\{X_n+z\in\cdot\}\,K_n(\md z).
\end{align} 

\begin{theorem}\label{t:displacementkernel} Suppose that $\Phi$ is a locally square-integrable 
purely discrete stationary random measure
and define the random measure $\Gamma$ by \eqref{Gammakernel}.
Then the assertions of Theorem \ref{t:displacement} hold.
Moreover, the Bartlett spectral measure of $\Gamma$ is given by
\begin{align*}
    \hat\beta_\Gamma = \big|\BE[\hat{K}_0]\big|^2\cdot \hat\beta_{\Phi} + \hat\eta\cdot\lambda_d,
\end{align*}
where
\begin{align*}
    \hat\eta(k) &= \alpha_\Phi\{0\} \big(\BE[|\hat{K}_0(k)|^2] - \big|\BE[\hat{K}_0(k)]\big|^2\big),\quad k\in\R^d.
\end{align*}
\end{theorem}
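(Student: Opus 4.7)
The plan is to follow the template of the proof of Theorem \ref{t:displacement2}, generalising from $\R^d$-valued i.i.d.\ displacements $Z_n$ to $\bM^1(\R^d)$-valued i.i.d.\ displacement kernels $K_n$, so that Theorems \ref{maintheoremmixing2} and \ref{maintheoremmixing2fourier} become directly applicable on a suitable canonical space. Let $\bM^\smallbox$ denote the space of locally finite measures $\psi$ on $\R^d \times \bM^1(\R^d)$ whose projection onto $\R^d$ is purely discrete, equipped with the smallest $\sigma$-field making the evaluation maps measurable and with the spatial diagonal flow $\theta_x^\smallbox \psi := \int \I\{(y-x,\pi) \in \cdot\}\,\psi(\md(y,\pi))$. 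Using the representation \eqref{e:4.926} of $\Phi$, I would introduce the marked measure
\begin{align*}
\Psi := \sum_{n=1}^\infty Y_n\,\delta_{(X_n, K_n)}.
\end{align*}
Because the $K_n$ are i.i.d.\ and independent of $\Phi$, a straightforward adaptation of \cite[Proposition~5.4]{LastPenrose17} (exactly as in the proof of Theorem \ref{t:displacement2}) shows that $\Psi$ is stationary with respect to $\{\theta_x^\smallbox\}$. On the canonical space carrying $\Psi$, let $\Phi^\smallbox$ be the first-marginal projection and define the invariant probability kernel $K^\smallbox$ by $K^\smallbox(\psi, x, \cdot) := \pi$ whenever $(x,\pi)$ is an atom of $\psi$ (with an arbitrary fixed probability measure otherwise), so that via the relative-displacement convention \eqref{transportkernel2} we have $(K^\smallbox)^*(\psi, X_n, \cdot) = K_n$ and hence $K^\smallbox \Phi^\smallbox \overset{d}{=} \Gamma$.

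The Palm computation is the heart of the argument. By the standard behaviour of independent weighted marking under one- and two-point Palm conditioning, the mark at $0$ under $\BP_0^{\Phi^\smallbox}$ has the distribution of $K_0$, and for $\alpha_{\Phi^\smallbox}$-a.e.\ $y \ne 0$ the marks at $0$ and $y$ under $\BP_{0,y}^{\Phi^\smallbox}$ are independent copies of that same law. Consequently,
\begin{align*}
\BE_{0,y}^{\Phi^\smallbox}\!\big[(K^\smallbox)^*_y \otimes (K^\smallbox)^*_0\big] = \big(\BE[K_0]\big)^{\otimes 2} = \big(\BE_0^{\Phi^\smallbox}[(K^\smallbox)^*_0]\big)^{\otimes 2}
\end{align*}
for $\alpha_{\Phi^\smallbox}$-a.e.\ $y \ne 0$, so the mixing coefficient $\kappa$ from \eqref{e:kappa2} vanishes outside $\{0\}$. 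Since $\kappa \le 2$ and $\alpha_{\Phi^\smallbox}(\{0\}) = \alpha_\Phi(\{0\}) < \infty$ (as $\Phi$ is locally square-integrable), condition \eqref{maintheoremmixingcondition} is satisfied, and Theorem \ref{maintheoremmixing2} applied to $K^\smallbox \Phi^\smallbox \overset{d}{=} \Gamma$ yields the assertions of Theorem \ref{t:displacement}.

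For the spectral measure I would apply Theorem \ref{maintheoremmixing2fourier}. The same Palm identities give $\BE_0^{\Phi^\smallbox}[\widehat{(K^\smallbox)^*_0}(k)] = \BE[\hat K_0(k)]$; for $\alpha_\Phi$-a.e.\ $y \ne 0$ the cross term factorises to $|\BE[\hat K_0(k)]|^2$, while at $y = 0$ it equals $\BE[|\hat K_0(k)|^2]$. Substituting into \eqref{maintheoremmixingfourierdensity2}, only the atom $\alpha_\Phi(\{0\})\delta_0$ survives in the integrand, producing
\begin{align*}
\hat\eta(k) = \alpha_\Phi(\{0\})\big(\BE[|\hat K_0(k)|^2] - |\BE[\hat K_0(k)]|^2\big),
\end{align*}
and combining with the $|\BE[\hat K_0]|^2 \cdot \hat\beta_\Phi$ term from \eqref{bartletmeasuredifference2} yields the stated decomposition of $\hat\beta_\Gamma$. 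The main obstacle, such as it is, is the canonical-space bookkeeping: verifying joint stationarity of $\Psi$ and the correct two-point Palm behaviour of the marking in the presence of non-unit weights $Y_n$. Both points follow the pattern of Theorem \ref{t:displacement2} once the appropriate weighted variant of \cite[Proposition~5.4]{LastPenrose17} is in hand, and no new analytic difficulty appears beyond what is already present in the $\R^d$-valued case.
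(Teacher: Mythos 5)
Your proposal is correct and follows essentially the same route as the paper: the paper's own proof of Theorem \ref{t:displacementkernel} consists precisely of forming the marked measure $\Psi=\sum_n Y_n\delta_{(X_n,K_n)}$, observing stationarity under the spatial flow on the space of locally finite measures on $\R^d\times\bM^1(\R^d)$, and then proceeding exactly as in the proof of Theorem \ref{t:displacement2} (canonical space, Palm factorisation at $y\ne 0$ and coincidence at $y=0$, vanishing mixing coefficient, then Theorems \ref{maintheoremmixing2} and \ref{maintheoremmixing2fourier}). Your write-up spells out the steps the paper compresses into a reference, but introduces no genuinely different idea.
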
 
\begin{proof}
The point process 
\begin{align*}
\Psi:=\sum^\infty_{n=1}Y_n\delta_{(X_n,K_n)}
\end{align*}
is stationary w.r.t.\ the flow, defined similarly as in the proof of
Theorem \ref{t:displacement2}. In particular, $\Gamma$ is stationary.
Then we can proceed as in the cited proof.
\end{proof}

\begin{remark}\rm Similar as in Remark \ref{remshotnoise}.
the random probability measure $K_n$, $n\in\N$, can be interpreted as a noise 
perturbing $X_n$.  
But this time the noise is uncorrelated in space. Note that $\hat\eta$ 
equals the variance of the Fourier transform of the \textit{typical} noise
multiplied by $\alpha_\Phi\{0\}$.
\end{remark}

It is clearly possible to formulate the kernel versions
of the results in Subsection \ref{ss:indepdisp_latt} on translated lattices. 
We leave this to the reader.

\section{Stopping sets and mixing of transport maps of point processes}
\label{s:mix_marked_pp}

We consider the setup of Theorem \ref{maintheoremmixing2} in the case,
where $\Phi$ is a simple point process. We give a general theorem to
verify the integrability assumption of the mixing coefficient
$\kappa$, given by \eqref{e:kappa2}. In many examples, the probability
kernel or transport is defined via an auxiliary point process $\Gamma$
and furthermore, if the  kernels are determined by so-called
stopping sets, this allows us to bound the mixing coefficients
of the transport in terms of the mixing coefficients of the underlying
point processes and the tail probabilities of the stopping sets. This
shall be the content of the main theorem of the section; see Theorem
\ref{tgen_mix_pert_pp}. The key tool in the proof of Theorem
\ref{tgen_mix_pert_pp} is a factorial moment expansion for a
functional of two point processes (Lemma \ref{l:FME2pp}) which is
stated and proved in Section \ref{s:deccorrtransmap}. Proof of
Proposition \ref{p:dec_corr} which is crucial for the proof of Theorem
\ref{tgen_mix_pert_pp} is deferred to Section \ref{s:mix_2pp}. Section
\ref{s:ex_pp_mix} gives examples of point processes satisfying the
assumptions in Theorem \ref{tgen_mix_pert_pp}. Examples illustrating
the use of Theorem \ref{tgen_mix_pert_pp} will be furnished in
Sections \ref{s:localalgo}, \ref{s:volumes} and \ref{s:hyprandset}. The general factorial moment 
expansion result is stated and proved in Appendix \ref{subFME}.

A function $\delta \colon[0,\infty) \to [0,\infty)$ is said to be 
{\em  fast-decreasing} if $\lim_{s \to \infty}s^m\delta(s) = 0$ for all
$m \in \N$ and it is said to be {\em exponentially fast-decreasing} if
$\limsup_{s \to \infty}s^{-b}\log \delta(s) < 0$ for some $b > 0$. The
exponent $b$ is implicit in the choice of a exponentially fast
decreasing function $\delta$ and will not always be mentioned
explicitly.

In this and the next section we will be mainly dealing with simple point processes. 
Therefore we find it convenient to write $\Phi\cap B$ to denote the
restriction $\Phi_B$ to a measurable set $B\subset\R^d$.
Sometimes we use notation $\mu\cap B:=\mu_B$
even for $\mu\in\bN$ which are not simple.

\subsection{Decay of correlations and mixing of transport maps}
\label{s:deccorrtransmap}
We shall first introduce the notion of decay of correlations of point
processes and stopping sets. 
The following definition is similar to
the definition of weak exponential decay of correlations introduced
in \cite{Malyshev75,Martin80} but 
our presentation shall borrow from \cite{Nazarov12,BYY19}. 
Assume that $\Phi$ is a simple point process that has correlation functions of all orders.
(A point process having a second order correlation function must be simple.)
Recall the definition
of correlation functions from \eqref{e:lcorrfn}.
A point process $\Phi$ is said to have {\em fast decay of correlation functions} 
if there exists a fast decreasing function
$\delta \colon[0,\infty) \to [0,\infty)$ such that for all
$p,q \in \N, x_1,\ldots,x_{p+q} \in \R^d$, we have
\begin{align}\label{defn:admissible}
| \rho^{(p+q)}(x_1,\ldots ,x_{p+q}) - \rho^{(p)}(x_1,\ldots ,x_p)\rho^{(q)}(x_{p+1},\ldots ,x_{p+q}) | \leq C_{p+q}\delta(s),
\end{align}
where $s := d(\{x_1,\ldots,x_p\},\{x_{p+1},\ldots,x_{p+q}\})$ and
$C_n$, $n \in \N$, are finite constants. We term $\delta$ as the 
{\em  decay function}.  We also assume without loss of
generality that $C_n$ are non-decreasing in $n$.
We say $\Phi$ has {\em exponentially fast decay of correlations} if it
has fast decay of correlations with a decay function $\delta$ which
is exponentially fast decreasing.
We say that $\Phi$ has {\em finite-range dependence} if
$\delta$ is compactly supported.

Let $\mathcal{F}$ be the space of all closed sets equipped with Fell topology
and the corresponding Borel $\sigma$-algebra; see e.g.\ \cite{Kallenberg17,LastPenrose17}. 
A measurable 
function $S\colon \bN \to \mathcal{F}$, is said to be a {\em stopping set} if for all
compact sets $B\subset\R^d$
\begin{align}\label{stopping1}
\{\mu \in \bN: S(\mu) \subset B \} = \{\mu \in \bN : S(\mu_B) \subset B \}.
\end{align}
In other words, whether $S(\mu)$ is contained in $B$ or not is determined
by the restriction of $\mu$ to $B$. 
A convenient way to check
\eqref{stopping1} for a measurable mapping $S\colon \bN \to \mathcal{F}$ is 
\begin{align}\label{stopping2}
S(\mu)=S(\mu_{S(\mu)}+ \mu'_{S(\mu)^c}),\quad \mu,\mu'\in\bN;
\end{align}
see also \cite{LPY23}.
Indeed, by a suitable choice of $\mu'$ it can easily be shown that
\eqref{stopping2} implies \eqref{stopping1}. (The converse is true as well,
but not needed here.) The notion of stopping sets can be straightforwardly extended also to functions
$S \colon \bN \times \bN \to \mathcal{F}$ i.e., set-valued functions of two point
processes. As will be seen soon, there are many interesting examples
of stopping sets apart from deterministic sets; see Sections
Sections \ref{s:localalgo}, \ref{s:volumes} and \ref{s:hyprandset}.

We shall assume that the (random) probability kernel $K$ 
in Theorem \ref{maintheoremmixing2}
depends only on $\Phi$ and another independent
simple point processes $\Gamma$. Clearly we can assume
that $\Phi$ and $\Gamma$ are invariant point processes, defined on our basic probability
space $(\Omega,\mathcal{A},\BP)$ equipped with a flow.
(For instance $\Omega$ might be the product of $\bN\times\bN$ with another
space.) We shall assume that
$K$ is a {\em factor} of $(\Phi,\Gamma)$, meaning that
there is a invariant probability kernel $\tilde{K}$ from $\bN_s \times \bN_s \times \R^d$ to $\R^d$
such that
\begin{align}\label{factor}
K(x,\cdot)\equiv \tilde{K}(\Phi,\Gamma,x,\cdot),\quad x\in\R^d.
\end{align} 
In other words, $K$ depends only on $\Phi,\Gamma$ and no additional source of
randomness. As before, invariance of $\tilde K$ means that
$\tilde{K}(\theta_x\varphi,\theta_x\mu,0,\cdot) =\tilde{K}(\varphi,\mu,x,\cdot+x)$ for all
$(\varphi,\mu,x)\in\bN\times\bN\times\R^d$.

With this background, we now state our main theorem that helps us to verify 
mixing of various transport kernels in Sections \ref{s:localalgo}, \ref{s:volumes} and \ref{s:hyprandset}.

\begin{theorem}\label{tgen_mix_pert_pp}
Let $\Phi,\Gamma$ be independent stationary point processes with
non-zero intensity $\gamma$ such that they have exponentially fast
decay of correlations with the same decay function $\delta$ and
constants $C_k$, $k \in \N$, with $C_k = O(k^{ak})$ for some $a <1$. 
Let $\tilde K$ be an invariant probability kernel from
$\bN\times \bN \times \R^d$ to $\R^d$ and define the 
probability kernel $K$ from $\Omega\times\R^d$ to $\R^d$ by \eqref{factor}.
Assume that  there is a stopping set $S\colon \bN \times \bN \to \mathcal{F}$ such that
\begin{align}
\label{e:KdetS}
\tilde{K}(\varphi,\mu,0,\cdot) = \tilde{K}(\varphi \cap S(\varphi,\mu),\mu \cap S(\varphi,\mu),0,\cdot\cap S(\varphi,\mu)),
\quad \varphi,\mu\in\bN,
\end{align}
and
that there exists a decreasing function $\delta_1 \leq 1$ such that
\begin{align}\label{stopbounds}
\max \{ \BP^{\Phi}_{0}(S(\Phi,\Gamma) \not\subset B_t), 
\sup_{y \in \R^d}\BP^{\Phi}_{0,y}(S(\Phi,\Gamma) \not\subset B_t) \} \leq \delta_1(t).
\end{align}
Then the mixing coefficient in \eqref{e:kappa2} satisfies
\begin{equation}
\label{e:alphabdstopset}
\kappa(y) \leq \hC \big( \delta_1((\|y\|/8)^{\beta}) + \hvphi(\|y\|) \big),\quad y\in\R^d,
\end{equation}
where $\hC$ is a finite constant, $\hvphi$  is a fast-decreasing function and
$\beta < \frac{b(1-a)}{(d+2)}, \beta \leq 1$, where $b$ is the exponent of the decay function $\delta$.
Further, as a consequence we have that \eqref{maintheoremmixingcondition} holds 
(i.e., $\int \kappa(y) \alpha_{\Phi}(\md y) < \infty$) if 
\begin{equation}
\label{e:intvarphi1}
\int_1^{\infty}s^{\frac{d}{\beta}-1}\delta_1(s) \md s < \infty.
\end{equation}
Thus if \eqref{e:intvarphi1} holds, then the assertions (i) and (ii)
of Theorem \ref{maintheoremmixing2} hold.

Furthermore, if $\Phi,\Gamma$ are finite-range dependent point
  processes (i.e., there exists $r_0 < \infty$ such that
  $\delta(s) = 0$ for all $s \geq r_0$ with constants
  $C_k \leq k! c^k$ for some $c < \infty$), then we have that
\begin{equation}
\label{e:kappadelcompsupp}
\kappa(y) \leq 4\delta_1(\|y\|/3), \text{ for all } \|y\| \geq 3r_0.
\end{equation}
\end{theorem}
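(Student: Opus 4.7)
The overall strategy is to use the stopping set property \eqref{e:KdetS} to localize the dependence of $K^*_0$ and $K^*_y$, and then exploit the decay of correlations to decouple them asymptotically. By invariance of $\tilde K$ we have $K^*_y = \tilde K(\theta_y\Phi,\theta_y\Gamma,0,\cdot)$, so $K^*_0$ is a function of $(\Phi,\Gamma)$ restricted to $S_0:=S(\Phi,\Gamma)$, while $K^*_y$ is a function of $(\Phi,\Gamma)$ restricted to $S_y:=y+S(\theta_y\Phi,\theta_y\Gamma)$. Fix a truncation radius $t$ with $2t<\|y\|$, so $B_t$ and $B_t+y$ are disjoint, and introduce the ``good'' event $A_t:=\{S_0\subset B_t\}\cap\{S_y\subset B_t+y\}$. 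Splitting via $A_t$ on the two-point-Palm side and via $\{S_0\subset B_t\}$ on each factor of the product one-point-Palm side, the triangle inequality for total variation yields
\begin{equation*}
\kappa(y)\le 4\,\delta_1(t) + \bigl\|\BE^\Phi_{0,y}\bigl[\I_{A_t}(K^*_y\otimes K^*_0)\bigr] - \BE^\Phi_0\bigl[\I_{\{S_0\subset B_t\}}K^*_0\bigr]^{\otimes 2}\bigr\|,
\end{equation*}
where the prefactor $4$ absorbs the four tail probabilities controlled by \eqref{stopbounds}.

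The heart of the proof is the ``decoupling'' remainder. Here I would apply the factorial moment expansion of Lemma \ref{l:FME2pp} separately under $\BP^\Phi_{0,y}$ and under $\BP^\Phi_0\otimes\BP^\Phi_0$; since $\Gamma$ is independent of $\Phi$, it is insensitive to Palming on $\Phi$ and contributes identical series on both sides. The $\Phi$-part of each expansion is an absolutely convergent double series indexed by $(p,q)$, of integrals of reduced Palm correlation functions of $\Phi$ over $p$-tuples in $B_t$ and $q$-tuples in $B_t+y$. The fast decay of correlations \eqref{defn:admissible}, applied at separation $\|y\|-2t$, controls the difference of the two integrands at each configuration by $C_{p+q+2}\,\delta(\|y\|-2t)$; summing with volume factors $\lambda_d(B_t)^{p+q}$ and using $C_k=O(k^{ak})$ with $a<1$ yields an error of the form $\Psi(t)\,\delta(\|y\|-2t)$, with $\Psi$ growing only mildly in $t$. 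This combinatorial control is essentially the statement of Proposition \ref{p:dec_corr}, whose proof is deferred to Section \ref{s:mix_2pp}.

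For the finite-range case ($\delta(s)=0$ for $s\ge r_0$), the decoupling remainder vanishes identically once $\|y\|-2t\ge r_0$, since factorial moments factor exactly across the two disjoint regions and two-point Palm on $\{0,y\}$ decouples into a product of one-point Palms. The clean choice $t=\|y\|/3$ then yields \eqref{e:kappadelcompsupp} for $\|y\|\ge 3r_0$. In the general case, I would optimize by taking $t=(\|y\|/8)^\beta$ with $\beta<b(1-a)/(d+2)$: then $\|y\|-2t\asymp\|y\|$, the stretched-exponential $\delta(\|y\|-2t)$ beats the subpolynomial $\Psi(t)$, and the remainder can be absorbed into a fast-decreasing $\hvphi(\|y\|)$, giving \eqref{e:alphabdstopset}. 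For \eqref{maintheoremmixingcondition}, exponential decay of correlations gives $\alpha_\Phi$ a bounded Lebesgue density away from the origin (since $\rho^{(2)}\le\gamma^2+C_2\delta$), so $\int\kappa(y)\,\alpha_\Phi(\md y)$ is dominated by $\int\delta_1((\|y\|/8)^\beta)\,\md y$, which via polar coordinates and the substitution $s=(r/8)^\beta$ becomes a constant multiple of $\int_1^\infty s^{d/\beta-1}\delta_1(s)\,\md s$, finite by \eqref{e:intvarphi1}. The assertions (i) and (ii) then follow directly from Theorem \ref{maintheoremmixing2}.

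The main obstacle is the factorial moment expansion analysis underlying the decoupling step: one must control an infinite series in $(p,q)$ in which both the volume factors $\lambda_d(B_t)^{p+q}$ and the constants $C_{p+q+2}$ grow unboundedly with $t$. The hypothesis $C_k=O(k^{ak})$ with $a<1$ is precisely what allows $t\to\infty$ as $\|y\|\to\infty$ at a positive fractional rate $\|y\|^\beta$ while still extracting a fast-decreasing bound on $\kappa(y)$; without this growth control, the decoupling series would diverge before the separation gain from $\delta(\|y\|-2t)$ could take effect.
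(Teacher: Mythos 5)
Your proposal is correct and follows essentially the same route as the paper: localize $K^*_0,K^*_y$ via the stopping-set radius, pay $4\delta_1(r)$ for the four tail events from \eqref{stopbounds}, invoke Proposition~\ref{p:dec_corr} (proved via the FME of Lemma~\ref{l:FME2pp}) for the decoupling remainder, pick $r=(\|y\|/8)^\beta$ so the stretched-exponential $\delta$ dominates the growth in $C_k$ and volume factors, and reduce the integrability to \eqref{e:intvarphi1} by polar coordinates using boundedness of $\rho^{(2)}$. The only cosmetic difference is that you insert indicator functions of the stopping-set events while the paper directly restricts the point configurations to balls of radius $r$ to form the local functional $F$; both choices yield a local functional compatible with Proposition~\ref{p:dec_corr} and the same constants.
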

Even though we can have the decay functions and constants of
$\Phi,\Gamma$ different, it is clear that these can be combined to
yield a common decay function and constant. More explicit bounds on
$\hvphi$ in \eqref{e:alphabdstopset} can be deduced with some work
from the proof of upcoming Proposition \ref{p:dec_corr}, the key to
proving the theorem. Both the proposition and hence the above
theorem can possibly be extended to include more general kernels
that involve additional randomness, coming, for instance, from
marked point processes.

Unless stated otherwise, now we fix the kernel $\tilde{K}$ from Theorem \ref{tgen_mix_pert_pp}.
To rewrite the mixing coefficient $\kappa$ in \eqref{e:kappa2} we find it convenient
to introduce two simple point processes $\tPhi,\tGamma$ along with probability measures
$\BP^{(y)}$, $y\in\R^d$, on the underlying sample space $(\Omega,\mathcal{A})$
satisfying 
\begin{align*}
\BP^{(y)}((\Phi,\Gamma)\in A,(\tPhi,\tGamma)\in A')
=\BP^{\Phi}_{0}((\Phi,\Gamma)\in A)\BP^{\Phi}_{0}((\theta_{-y}\Phi,\theta_{-y}\Gamma)\in  A'),\quad y\in\R^d,
\end{align*}
for all measurable $A,A'\subset\bN\times\bN$. Hence the pairs
$(\Phi,\Gamma)$ and $(\tPhi,\tGamma)$ are independent under
$\BP^{(y)}$. Moreover, 
$\BP^{(y)}((\Phi,\Gamma)\in\cdot)=\BP^\Phi_0((\Phi,\Gamma)\in\cdot)$
and  $\BP^{(y)}((\tPhi,\tGamma)\in\cdot)=\BP^\Phi_0((\theta_{-y}\Phi,\theta_{-y}\Gamma)\in\cdot)$;
see also \eqref{multPalm}.
We note here that $\BP^\Phi_0((\Phi,\Gamma)\in\cdot)$ is the product
of the Palm distribution $\BP^\Phi_0(\Phi\in\cdot)$ of $\Phi$
and the distribution $\BP(\Gamma\in\cdot)$ of $\Gamma$.  Given a
bounded measurable function $f\colon\R^d\times\R^d \to [0,1]$ we
define $\tf\colon \R^d \times \bN^4 \to [0,1]$ as
\begin{align}\label{e:tfdef}
\tf(y,\varphi,\mu,\tphi,\tmu) := \iint f(x,z-y) \tilde{K}(\varphi,\mu,0,\md x) \tilde{K}(\tphi,\tmu, y,\md z).
\end{align} 
Then we can write the mixing coefficient $\kappa$ in \eqref{e:kappa2} as 
\begin{align}\label{e:kappaEdef}
\kappa(y) = 2\sup_{f } \Big| \BE^{\Phi}_{0,y}\Big[\tf(y,\Phi,\Gamma,\Phi,\Gamma) \Big] 
- \BE^{(y)} \Big[ \tf(y,\Phi,\Gamma,\tPhi,\tGamma) \Big] \Big|,
\end{align}
where the supremum is taken over all measurable functions $f\colon\R^d\times\R^d\to[0,1]$ 
and $\BE^{(y)}$ denotes expectation w.r.t.\ $\BP^{(y)}$. We again note that $\BP^\Phi_{0,y}((\Phi,\Gamma)\in\cdot)$ is the product measure $\BP^\Phi_{0,y}(\Phi\in\cdot) \otimes \BP(\Gamma\in\cdot)$. By $B(x,r)$, we denote the ball of radius $r \geq 0$ centred at $x \in \R^d$ and for convenience abbreviate $B(0,r)$ to $B_r$.

We shall state the proposition here but defer its proof to the next
section (Section \ref{s:mix_2pp}) as it requires some more
technicalities. A function $F\colon \R^d \times \bN^4 \to \R$ is said to be {\em local} if there exists $r_F \in [0,\infty)$ such that
\begin{equation}\label{e:zetalocal}
F(x,\varphi,\mu,\tphi,\tmu) = F(x,\varphi \cap B_{r_F},\mu \cap B_{r_F},\tphi \cap B(x,r_F),\tmu \cap B(x,r_F)).
\end{equation}

\begin{proposition}\label{p:dec_corr}
Let $\Phi,\Gamma$ be independent stationary point processes with
intensity $\gamma$ such that they have exponentially fast decay of
correlations with the same decay function $\delta$ and constants
$C_k$, $k \in \N$, with $C_k = O(k^{ak})$ for some $a < 1$. Let
$F\colon \R^d \times \bN^4 \to [0,1]$ be a measurable
translation invariant function such that
$F(x,\varphi,\mu,\tphi,\tmu) = 0$ if $0 \notin \varphi$ or
$x \notin \tphi$. Assume  that $F$ is local  as in \eqref{e:zetalocal}.
Then, for $\beta <\frac{b(1-a)}{(d+2)}, \beta \leq 1$, where $b$ is the exponent associated to the decay function $\delta$, and $y \in \R^d$ with $\|y\| \geq 8r_F^{1/\beta}$, we have that
\begin{align}
\Big| \BE^{\Phi}_{0,y} \big[ F ( y,\Phi,\Gamma,\Phi,\Gamma ) \big] \, \rho^{(2)}(0,y) - 
\BE^{(y)}\big[ F (y,\Phi,\Gamma,\tPhi,\tGamma )\big] \, \gamma^2 \Big| 
  & \leq \tC \tvphi(\|y\|),   \label{e:decorrxi}
  \end{align}
where $\tvphi$ is a fast-decreasing function and $\tC$ is a finite constant where both $\tilde\delta$ and $\tilde{C}$ can be chosen independently of $F$ and in particular $r_F$.
\end{proposition}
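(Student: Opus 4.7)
The plan is to apply the factorial moment expansion (FME) for functionals of two independent point processes from Lemma \ref{l:FME2pp} to both sides of \eqref{e:decorrxi} and to compare the resulting series term by term via the decay of correlations of $\Phi$ and $\Gamma$. Set $R:=r_F$. Locality \eqref{e:zetalocal} together with the vanishing hypothesis on $F$ means that $F(y,\Phi,\Gamma,\tPhi,\tGamma)$ depends only on the restrictions $\Phi\cap B_R$, $\Gamma\cap B_R$, $\tPhi\cap B(y,R)$, $\tGamma\cap B(y,R)$, and vanishes unless $0\in\Phi$ and $y\in\tPhi$. Since $\|y\|\ge 8R^{1/\beta}$ and $\beta\le 1$, the balls $B_R$ and $B(y,R)$ are disjoint and separated by $s:=\|y\|-2R\ge 3\|y\|/4$ (the regime $R\le 1$ is handled by absorbing everything into the constant $\tC$).

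Using Lemma \ref{l:FME2pp}, I would expand $F$ as a formal series of integrals against the factorial moment measures of the four restricted configurations, with coefficients $H_{p,p',q,q'}$ bounded by $1$ in absolute value, supported in $B_R^p\times B(y,R)^{p'}\times B_R^q\times B(y,R)^{q'}$, and constrained to carry one of the $\Phi$-points at $0$ and one of the $\tPhi$-points at $y$. Taking expectations and applying the refined Campbell theorem, the left-hand side of \eqref{e:decorrxi} becomes a series whose $(p,p',q,q')$-term involves a joint $(p+p')$-point correlation function $\rho^{\Phi,(p+p')}$ of $\Phi$ and a joint $(q+q')$-point correlation function $\rho^{\Gamma,(q+q')}$ of $\Gamma$, both evaluated at points in $B_R\cup B(y,R)$. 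On the right-hand side, the independence of $(\Phi,\Gamma)$ from $(\tPhi,\tGamma)$ under $\BP^{(y)}$ factorises these joint correlations into products $\rho^{\Phi,(p)}\,\rho^{\Phi,(p')}$ and $\rho^{\Gamma,(q)}\,\rho^{\Gamma,(q')}$.

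The difference of the two $(p,p',q,q')$-terms is then controlled via \eqref{defn:admissible} by expressions containing a factor $C_{p+p'}\delta(s)$ or $C_{q+q'}\delta(s)$ and otherwise bounded by products of marginal correlation functions. Combined with a crude bound $\rho^{(k)}\le (c_1 k)^{c_2 k}$ and integration of each variable over a ball of volume of order $R^d$, the $(p,p',q,q')$-term contributes at most
\begin{equation*}
\mathrm{const}\cdot\frac{(c\,R^d)^{p+p'+q+q'}}{p!\,p'!\,q!\,q'!}\,C_{p+p'+q+q'}\,\delta(s).
\end{equation*}
Summing over $(p,p',q,q')$, the growth condition $C_n=O(n^{an})$ with $a<1$ together with Stirling's formula makes the series converge and yields a bound of the form $\exp\bigl(c'R^{d/(1-a)}\bigr)\,\delta(s)$ up to polynomial corrections in $R$. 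Since $R\le(\|y\|/8)^{\beta}$, $s\ge 3\|y\|/4$, and $\delta(t)\le e^{-t^b}$ for large $t$, the combined exponent is
$c'(\|y\|/8)^{d\beta/(1-a)}-c''\|y\|^b+O(\log\|y\|)$,
which is fast decreasing in $\|y\|$ as soon as $d\beta/(1-a)<b$; the slightly stronger constraint $\beta<b(1-a)/(d+2)$ in the statement absorbs the polynomial factors arising from a sharper form of the crude bound on $\rho^{(k)}$ (each factor picking up an $R^{2}$-type correction). Crucially, the resulting constants $\tC$ and $\tvphi$ depend only on $\gamma$, $\delta$, $(C_n)$ and $\beta$, not on $F$ or $r_F$.

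The main obstacle is the bookkeeping for the FME: one must verify that the series actually converges (this is exactly where $a<1$ is used, to beat the factorial $n!$ with the combinatorial factor $C_n$), and then extract a bound whose leading decay in $\|y\|$ is fast while the prefactors are uniform over $F$. The threshold $\|y\|\ge 8r_F^{1/\beta}$ is precisely the regime in which the $R^d$-type expansion factors are dominated by the exponential decay $\delta(s)$, and the exponent $(d+2)$ (rather than $d$) in the admissible range of $\beta$ records the cost of the crude bound used on the correlation functions.
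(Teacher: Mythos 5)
Your proposal follows essentially the same route as the paper's proof: apply the two-point-process FME (Lemma \ref{l:FME2pp}) to both Palm expectations, exploit that under $\BP^{(y)}$ the correlation functions factor over $B_R$ and $B(y,R)$, control each term of the resulting double series via the fast decay of correlations at separation $s=\|y\|-2R$, and sum the series using $C_n=O(n^{an})$ with $a<1$ to beat the factorials. The only slip is that the FME coefficients (iterated difference operators applied to $F$) are bounded by $2^{p+p'+q+q'}$ rather than $1$, but this is harmless since it is absorbed into the constant $c$ in your $(cR^d)^{p+p'+q+q'}$ factor.
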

More precisely, $\tC$ depends only on $d,b,\beta,\gamma$ and $C_k$'s and $\tvphi$ depends only on the these parameters as well as $\delta$.
\begin{proof}[Proof of Theorem \ref{tgen_mix_pert_pp}]
We need to prove only \eqref{e:alphabdstopset} as the remaining
claim on integrability of $\kappa$ can be deduced from
\eqref{e:alphabdstopset}, the fast-decreasing nature of $\hvphi$, that
$\alpha_{\Phi}(\md y) = \rho^{(2)}(0,y)\md y + \gamma \delta_0$
with $\rho^{(2)}$ being bounded due to the fast decay of
correlations (see \eqref{e:corrbd}), and the trivial bound of
$\kappa(0) \le 2$.

Fix a measurable function $f\colon\R^d\times\R^d\to[0,1]$ 
and set $t:= \|y\|$. Assume without loss
of generality $t \geq 8$. Let
$\beta:= \min\{\frac{b(1-a)}{2(d+2)},1\}$ as in the theorem and
set $r:= (\frac{t}{8})^{\beta}$. We define
\begin{align*}
S(y,\varphi,\mu):=S(\theta_y\varphi, \theta_y\mu)+y,\quad (y,\varphi,\mu) \in \R^d \times \bN\times\bN.
\end{align*}

Consider the RHS of \eqref{e:kappaEdef}. Recalling $\tf$ as
defined in \eqref{e:tfdef}, we set 
$$ 
F(y,\varphi,\mu,\tphi,\tmu) := \tf(y,\varphi \cap B_r,\mu \cap B_r, \tphi \cap  B(y,r),\tmu \cap B(y,r)).
$$
We have that $F$ is a local functional with $r_F:=r$ as in
\eqref{e:zetalocal}. By our assumption on $f$ and
translation invariance of $\tilde K$, $F$ is also translation invariant and
$F \in [0,1]$ as needed in Proposition \ref{p:dec_corr}. Since the stopping set $S$ determines $\tilde K$ by assumption \ref{e:KdetS}, we have that if $S(0,\varphi,\mu) \subset B_r$ and $S(y,\tphi,\tmu) \subset B(y,r)$ then $F(y,\varphi,\mu,\tphi,\tmu) = \tf(y,\varphi,\mu,\tphi,\tmu)$. From this observation and since $\tf$ is bounded by $1$, we have that
\begin{align*}
  \BE^{\Phi}_{0,y}  &\Big[ \Big|\tf ( y,\Phi,\Gamma,\Phi,\Gamma ) - F ( y,\Phi,\Gamma,\Phi,\Gamma ) \Big| \Big] \\
& \le \BP^{\Phi}_{0,y}(S(0,\Phi,\Gamma) \not\subset B_r) + \BP^{\Phi}_{0,y}(S(y,\Phi,\Gamma) \not\subset B(y,r)) 
\le 2\delta_1(r),
\end{align*} 
where we have used assumption \eqref{stopbounds}.
Similarly, we obtain that
\begin{align*}
\BE^{(y)}\Big[ \Big| \tf ( y,\Phi,\Gamma,\tPhi,\tGamma ) - F ( y,\Phi,\Gamma,\tPhi,\tGamma ) \Big| \Big]
 &  \leq 2\delta_1(r).
 \end{align*}
Thus combining the above bounds with the triangle inequality we derive that 
\begin{align}
\big| \BE^{\Phi}_{0,y}& \big[\tf(y,\Phi,\Gamma,\Phi,\Gamma) \big] 
- \BE^{(y)}[ \tf(y,\Phi,\Gamma,\tPhi,\tGamma)] \big|  \nonumber \\
  \label{e:kappaybd1}  
&\leq 4\delta_1(r)  + \Big| \BE^{\Phi}_{0,y} 
\big[ F ( y,\Phi,\Gamma,\Phi,\Gamma ) \big]-\BE^{(y)}\big[ F (y,\Phi,\Gamma,\tPhi,\tGamma )\big] \big|. 
\end{align}
Now we bound the last term in the RHS of \eqref{e:kappaybd1} as follows. By a simple use of triangle inequality, $F \in [0,1]$, fast decay of correlations for $\Phi$ and \eqref{e:decorrxi} together yields that 
\begin{align}
  \big| &\BE^{\Phi}_{0,y} \big[ F ( y,\Phi,\Gamma,\Phi,\Gamma ) \big]- 
  \BE^{(y)}\big[ F (y,\Phi,\Gamma,\tPhi,\tGamma )\big] \big| \nonumber \\
&\le  \big| \BE^{\Phi}_{0,y}[ \cdots ] \big| \times \big |1 - \gamma^{-2} \rho^{(2)}(0,y) \big|
+ \gamma^{-2} \big| \BE^{\Phi}_{0,y}[ \cdots ] \rho^{(2)}(0,y)  - \BE^{(y)} [  \cdots ] \gamma^2 \big|  \label{e:EyF_bound} \\
& \le \gamma^{-2}\big( C_2\delta(\|y\|) + \tC\tvphi( \|y\|) \big). \nonumber 
\end{align}
Further $\tvphi$ above is same as the fast-decreasing function in  Proposition \ref{p:dec_corr}. 
Substituting the above bound in \eqref{e:kappaybd1}, we obtain that
\begin{equation}
\label{e:bdalpha2}
\kappa(y) \leq 4\delta_1((\|y\|/8)^{\beta}) + \gamma^{-2}( C_2\delta(\|y\|) + \tC\tvphi(\|y\|)).
\end{equation}
Since $\tvphi$ and $\delta$ are fast decreasing, so is $\hvphi := (2\gamma)^{-2}( C_2\delta + \tC\tvphi)$ and
thus the the proof of \eqref{e:alphabdstopset} is complete.

We now prove the claim \eqref{e:kappadelcompsupp}. Under the condition on the constants $C_k$, we have that for all bounded subsets $B$, there exists $a > 0$ such that $\BE[e^{a\Phi(B)}] < \infty$. Thus the correlation functions determine the distribution of the point process $\Phi$ (see e.g. \cite[Proposition 4.12]{LastPenrose17}). In particular, the restrictions of the correlation functions $\rho^{(p)}$ to $B$ determine the distribution of $\Phi \cap B$. Suppose that $A,B$ are bounded subsets that are at least $r_0$ apart i.e., $\inf_{x \in A, y \in B}\|x-y\| \geq r_0$. By assumption, we have that for all $p,q \geq 1$ and $x_1,\ldots,x_p \in A$, $x_{p+1},\ldots,x_{p+q} \in B$, it holds that
$$ \rho^{(p+q)}(x_1,\ldots,x_{p+q}) = \rho^{(p)}(x_1,\ldots,x_{p}) \rho^{(q)}(x_{p+1},\ldots,x_{p+q}).$$
Thus, we have that $\Phi \cap A$ and $\Phi \cap B$ are independent point processes. Similarly, we can argue about independence of $\Gamma \cap A$ and $\Gamma \cap B$.

Now we follow the above derivation upto \eqref{e:kappaybd1} by choosing $\|y\| \geq 3r_0$ and $r := \|y\|/3 \geq r_0$. Since $\Phi \cap B_r,\Gamma \cap B_r$ are independent of $\Phi \cap B(y,r),\Gamma \cap B(y,r)$ respectively, we have that the second term on the RHS of \eqref{e:kappaybd1} vanishes and hence we obtain \eqref{e:kappadelcompsupp}
\end{proof}

\subsection{Mixing of local functionals of point processes}
\label{s:mix_2pp}

The aim of this subsection is to prove the key asymptotic decorrelation
inequality \eqref{e:decorrxi}, used in the proof of
Theorem \ref{tgen_mix_pert_pp}.  Our proof will be via the factorial
moment expansion (FME) for functionals as in Theorem \ref{t:FME2pp} and we will borrow the terminology and notions from therein.

Given a function $F\colon \bN \times \bN \to \R$ 
we define $F^+\colon (\R^d)^2\times\bN^2 \to \R$ by 
\begin{align}\label{F+}
F^+(x,y,\varphi,\mu) = F(\varphi+\delta_x+\delta_y,\mu).
\end{align}
Similarly as in Proposition \ref{p:dec_corr} we say that the function $F$ is {\em local} if for some $r_F > 0$,  
\begin{align} 
\label{e:local}
F(\varphi,\mu) & = F(\varphi \cap B_{r_F},\mu \cap B_{r_F}),  \quad \varphi,\mu \in \bN.
\end{align}

In the next lemma we use a lower index $\Phi$ to indicate the
dependence of the correlation functions $\rho^{(n)}_\Phi$ on $\Phi$.
As in Subsection \ref{subhigherpoint} we denote by $\rho^{(l)}_{\Phi,y_1,\ldots,y_m}$ the $l$-th correlation function of the reduced Palm distributions $\BP^{!\Phi}_{y_1,\ldots,y_m}$ of $\Phi$. Recall that $o$ denotes the null measure, and also we shall use difference operators as introduced in Section \ref{subFME}.

\begin{lemma}[FME expansion for functionals of two independent point processes.]
\label{l:FME2pp}
Let $\Phi,\Gamma$ be two independent point processes having correlation
functions and with bounded intensity functions 
$\rho^{(1)}_\Phi,\rho^{(1)}_\Gamma$. Let
$\Phi, \Gamma$ satisfy exponentially fast decay of correlations as in
\eqref{defn:admissible} with the same decay function
$\delta$ and same constants $C_k$ such that $C_k = O(k^{ak})$ for
some $a < 1$. Let $F$ be a bounded local function.  Then we have that
for Lebesgue a.e.\  $y \in \R^d$ with $\rho^{(2)}_\Phi(0,y) > 0$,
\begin{align*}
\BE^{\Phi}_{0,y} \big[ F(\Phi,\Gamma) \big] & = 
\int  F(\varphi,\mu)\, \BP^{\Phi}_{0,y}((\Phi,\Gamma)\in \md(\varphi,\mu)) \\
& = \int  F^+(0,y,\varphi,\mu)\, \BP^{!\Phi}_{0,y}((\Phi,\Gamma)\in \md(\varphi,\mu)) \\
&  = \sum_{k=0}^{\infty}\sum_{l=0}^{\infty} \frac{1}{k!l!}
\int_{(\R^d)^l}\int_{ (\R^d)^k} D^{k,2}_{z_1,\ldots,z_k}[D^{l,1}_{y_1,\ldots,y_l}F^+(0,y,o,o)] \\
& \qquad \times \rho_{\Gamma}^{(k)}(z_1,\ldots,z_k) \, \rho_{\Phi,0,y}^{(l)}(y_1,\ldots,y_l) \, \md(z_1, \ldots,z_k)\, \md(y_1, \ldots,y_l),
\end{align*}
where $\rho_{\Phi,0,y}^{(l)}$ denotes the $l$th 
correlation function of $\Phi$ under the Palm distribution $\BP^{!\Phi}_{0,y}$ and $D^{\cdot,1}, D^{\cdot,2}$ denote difference operators applied to $\varphi$ and $\mu$ respectively in $F^+$.
\end{lemma}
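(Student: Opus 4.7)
The plan is to establish the two identities separately, the second one via a double application of the single-process FME from Appendix \ref{subFME}.

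For the first identity I would use the standard relationship between the two-point Palm measure and the reduced two-point Palm measure: under $\BP^\Phi_{0,y}$ the configuration $\Phi$ almost surely contains $\delta_0+\delta_y$, and subtracting these two atoms produces a process distributed according to $\BP^{!\Phi}_{0,y}$. Independence of $\Phi$ and $\Gamma$ ensures that the joint conditional distribution of $(\Phi-\delta_0-\delta_y,\Gamma)$ under $\BP^\Phi_{0,y}$ coincides with that of $(\Phi,\Gamma)$ under $\BP^{!\Phi}_{0,y}$. By the definition of $F^+$ in \eqref{F+} this gives $\BE^\Phi_{0,y}[F(\Phi,\Gamma)]=\BE^{!\Phi}_{0,y}[F^+(0,y,\Phi,\Gamma)]$, valid for Lebesgue-a.e.\ $y$ with $\rho^{(2)}_\Phi(0,y)>0$.

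For the expansion I would apply FME twice, first to $\Gamma$ and then to $\Phi$. Since $\Phi$ and $\Gamma$ are independent and Palm-conditioning on $\Phi$ does not alter the marginal of $\Gamma$, I would first condition on $\Phi$ and apply FME to $\Gamma$, viewing $\mu\mapsto F^+(0,y,\varphi,\mu)$ as a bounded local functional with localization radius $r_F$ inherited from $F$. This yields
\begin{equation*}
\BE[F^+(0,y,\varphi,\Gamma)]
=\sum_{k=0}^\infty \frac{1}{k!}\int_{(\R^d)^k}
D^{k,2}_{z_1,\ldots,z_k}F^+(0,y,\varphi,o)\,\rho_\Gamma^{(k)}(z_1,\ldots,z_k)\,\md(z_1,\ldots,z_k).
\end{equation*}
Taking $\BE^{!\Phi}_{0,y}$ of both sides, interchanging with the sum and the $\md(z_1,\ldots,z_k)$-integral, and then applying FME to $\Phi$ under $\BP^{!\Phi}_{0,y}$---whose $l$-th correlation function is by definition $\rho^{(l)}_{\Phi,0,y}$---produces the double expansion stated in the lemma.

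The main obstacle is justifying the interchange of the two expectations, the two infinite sums, and the iterated integrals. Locality of $F$ forces $D^{k,2}_{z_1,\ldots,z_k}F^+(0,y,\cdot,o)$ to vanish unless all $z_i$ lie in a fixed ball of radius $r_F$, and similarly $D^{l,1}_{y_1,\ldots,y_l}$ is supported on a bounded region depending only on $r_F$, $0$ and $y$; hence each individual integral is over a bounded domain and finite. The growth hypothesis $C_k=O(k^{ak})$ with $a<1$, combined with boundedness of the intensity functions, gives uniform bounds on $\rho_\Gamma^{(k)}$ and $\rho^{(l)}_{\Phi,0,y}$ of the same order, which against the factorials $k!\,l!$ in the denominators ensures absolute convergence of the double series. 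A Fubini/dominated-convergence argument then legitimizes all the exchanges and completes the proof.
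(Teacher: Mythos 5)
Your proof is correct, but it takes a genuinely different route than the paper's. The paper appeals directly to Theorem~\ref{t:FME2pp}, the FME for functionals of two independent point processes; that theorem was itself established by merging $\Phi$ and $\Gamma$ into a single point process on $\BX\times\{1,2\}$ and applying the one-process FME (Theorem~\ref{t:FME}) once. The paper's proof of Lemma~\ref{l:FME2pp} then reduces to verifying condition~\eqref{FMEass2} of Theorem~\ref{t:FME2pp}, with the twist that one factor is $\Phi$ under the reduced Palm measure $\BP^{!\Phi}_{0,y}$; the verification uses locality (so difference operators vanish outside $B_{r_F}$), boundedness ($|D^{k,2}[D^{l,1}F^+]|\le M2^{l+k}$), the Palm correlation formula~\eqref{e:palmcorr}, and the correlation bound~\eqref{e:corrbd}, yielding a term bounded by $M C_l C_k l k (2\pi_d\kappa_0 r_F^d)^{l+k}/\rho^{(2)}_\Phi(0,y)$, summable against $1/(k!l!)$ because $C_k=O(k^{ak})$ with $a<1$.

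You instead iterate the single-process FME: first in $\Gamma$ for fixed $\varphi$, then take $\BE^{!\Phi}_{0,y}$, interchange, and apply FME in $\Phi$ under Palm. This is a valid alternative and uses exactly the same ingredients (locality gives compact integration domains; \eqref{e:corrbd} controls $\rho_\Gamma^{(k)}$ and $\rho^{(l)}_{\Phi,0,y}$; the growth $C_k=O(k^{ak})$ against $k!l!$ gives absolute summability so Fubini/DCT applies). What the paper's route buys you is that it packages the full double-interchange justification into the single application of Theorem~\ref{t:FME2pp}, so there is no separate expectation-versus-series-versus-integral exchange to justify by hand. Your route is slightly more elementary (no lifting to $\BX\times\{1,2\}$) but pushes the interchange work onto you; you correctly identify where that care is needed, though carrying it out would essentially reproduce the verification of~\eqref{FMEass2}. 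Your reduction $\BE^\Phi_{0,y}[F(\Phi,\Gamma)]=\BE^{!\Phi}_{0,y}[F^+(0,y,\Phi,\Gamma)]$ via independence is the same as the first two lines of the lemma's display and is correct as stated (a.e.\ $y$ with $\rho^{(2)}_\Phi(0,y)>0$).
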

\begin{proof}
Locality of $F$ trivially ensures that \eqref{continfty2} holds for
any point processes $\Phi,\Gamma$. So we only need to verify
\eqref{FMEass2} but the additional complication is that we have to
check it for $\Phi$ under $\BP_{0,y}$.

For $\rho \in\{ \rho_{\Phi}, \rho_{\Gamma}\}$ and $\alpha \in\{ \alpha_{\Phi},\alpha_{\Gamma}\}$, the following holds:
\begin{equation}\label{e:palmcorr}
\rho^{(l)}_{0,y}(y_1,\ldots,y_l) = \frac{\rho^{(l+2)}(0,y,y_1,\ldots,y_l)}{\rho^{(2)}(0,y)},
\quad \alpha^{(l+1)}\text{-a.e.\ $(y,y_1,\ldots,y_l)$},
\end{equation}
which follows from \eqref{corrPalm} and the translation invariance of the correlation
functions of $\Phi$ and $\Gamma$; see \eqref{corrinvariance}.
Now from the fast decay of correlations of $\Phi$ and $\Gamma$ we have that (see \cite[(1.12)]{BYY19})
\begin{equation}
\label{e:corrbd}
 \sup_{(y_1,\ldots,y_l) \in (\R^d)^{l}} \rho^{(l)}(y_1,\ldots,y_l) \leq lC_l \kappa_0^l,
\end{equation}
where $\kappa_0 := \sup_{y \in \R^d}\rho^{(1)}(y)$ and is bounded by
assumption. 

Suppose that $F$ is bounded by $M$, then by the recursive definition of the difference operators we have that for all $\varphi,\mu \in \bN$ 
and $l,k \in \N_0$,  
\begin{align*}
\big|D^{k,2}_{z_1,\ldots,z_k}[D^{l,1}_{y_1,\ldots,y_l}F^+(0,y,o,o)]\big| \leq M2^{l+k}.
\end{align*}
Furthermore,  by the locality of $F$,  we have that $D^{k,2}_{z_1,\ldots,z_k}[D^{l,1}_{y_1,\ldots,y_l}F^+(0,y,o,o)] = 0$ 
if for some $i$,  $y_i \notin B_{r_F}$ or $z_i \notin B_{r_F}$ (see \cite[(3.8)]{BYY19}).  Thus combining this observation and the 
above bounds\ with \eqref{e:palmcorr} and \eqref{e:corrbd},   
we have that
\begin{align*}
& \int_{\R^{d(l+k)}}\big| D^{k}_{z_1,\ldots,z_k}[D^{l}_{y_1,\ldots,y_l}F^+(0,y,o,o)] \big| \, \rho_{\Gamma}^{(k)}(z_1,\ldots,z_k) \, \rho_{\Phi,0,y}^{(l)}(y_1,\ldots,y_l) \\
& \qquad \qquad \times \md(z_1, \ldots,z_k)\, \md(y_1, \ldots,y_l) \\
& = \,  \int_{B_{r_F}^{(l+k)}}\big| D^{k}_{z_1,\ldots,z_k}[D^{l}_{y_1,\ldots,y_l}F^+(0,y,o,o)] \big| \, \rho_{\Gamma}^{(k)}(z_1,\ldots,z_k) \, \rho_{\Phi,0,y}^{(l)}(y_1,\ldots,y_l) \\
& \qquad \qquad \times \md(z_1, \ldots,z_k)\, \md(y_1, \ldots,y_l) \\
& \le \, \rho_{\Phi}(0,y)^{-1} \, M \, C_l \, C_k \, l \, k \, (2 \, \pi_d \, \kappa_0 \, r_F^d)^{l+k},
\end{align*}
where $\pi_d$ is the volume of the unit ball. Thus under our assumption on the correlation constants $C_l$ and positivity of $\rho_{\Phi}(0,y)$, we have that \eqref{FMEass2} holds. 
\end{proof}

Now we have all ingredients to prove Proposition \ref{p:dec_corr}. If
there was no dependence on $\Gamma$, we could directly apply
\cite[Theorem 1.11]{BYY19} to bound by $\tC \tvphi(s)$ where $\tvphi$
is another fast-decreasing function depending on
$\delta$. Neverthless, we can still use the techniques as in
\cite[Theorem 1.11]{BYY19} to bound the second term by $\tC \tvphi(s)$
and this is what we do in our proof.

\begin{proof}[Proof of Proposition \ref{p:dec_corr}]
Our proof strategy is to apply FME as given in Lemma
\ref{l:FME2pp} to the two expectations in the LHS of
\eqref{e:decorrxi}.  We first make a key observation that makes
bounding these expectations via FME tractable.

Fix $y \in \R^d$ and without loss of generality, $r :=r_F \geq 1$ 
and $t := \|y\| > 8r \ge \, 8r^{1/{\beta}}$. For $F$ as in the proposition, with a slight abuse of notation, set 
\begin{align*}
F^+(y,\varphi,\mu,\tphi,\tmu) = F( y, \varphi+\delta_0, \mu,\tphi+\delta_y,\tmu),
\end{align*}
hoping that this causes no confusion with \eqref{F+}. Since there are four point processes involved, we shall use the following notation for difference operators which is consistent with the terminology in Theorem \ref{t:FME2pp}. Denoting $\varphi_1 = \varphi, \varphi_2 = \tphi, \varphi_3 = \mu$ and $\varphi_4 = \tmu$, we use $D^{k,i}_{\cdots}$ for the difference operators applied to the $i$th counting measure $\varphi_i$ by fixing the other $\varphi_j, j \neq i$. Further, we can iterate them as follows: For $j,k,l,m \geq 0$
\begin{align*}
&    D^{j}_{z_1,\ldots,z_{j}} 
D^{k}_{z_{j+1},\ldots,z_{j+k}} \big[ D^{l}_{y_1,\ldots,y_{l}} 
D^{m}_{y_{l+1},\ldots,y_{l+m}}F^+(y,\varphi,\mu,\tphi,\tmu) \big] \\
& = D^{j,3}_{z_1,\ldots,z_{j}} 
D^{k,4}_{z_{j+1},\ldots,z_{j+k}}D^{l,1}_{y_1,\ldots,y_{l}} 
D^{m,2}_{y_{l+1},\ldots,y_{l+m}}F^+(y,\varphi,\mu,\tphi,\tmu).
\end{align*}
Though the order of iteration is not important, we shall stick to the
above convention to simplify our notation and drop the superscripts on
difference operators.

By the property of difference operators and the locality of $F$, we have that  \begin{equation} \label{e:nondegdiffop}
D^{j}_{z_1,\ldots,z_{j}} 
D^{k}_{z_{j+1},\ldots,z_{j+k}} \big[ D^{l}_{y_1,\ldots,y_{l}} 
D^{m}_{y_{l+1},\ldots,y_{l+m}}F^+(y,\varphi,\mu,\tphi,\tmu) \big] = 0
 \end{equation}
for all $z_1,\ldots,z_{j+k}\in\R^d$ and $y_1,\ldots,y_{l+m}\in\R^l$ such that
$z_i \notin B_r \cup B(y,r)$ for some $i\in[j+k]$ or $y_i \notin B_r \cup B(y,r)$ 
for some $i\in[l+m]$.
Also, because $F^+ \in [0,1]$, we have that 
\begin{equation}
\Big| D^{j}_{z_1,\ldots,z_{j}} 
D^{k}_{z_{j+1},\ldots,z_{j+k}} \big[ D^{l}_{y_1,\ldots,y_{l}} 
D^{m}_{y_{l+1},\ldots,y_{l+m}}F^+(y,\varphi,\mu,\tphi,\tmu) \big] \Big| \leq 2^{j+k+l+m}.
\label{e:bdsDiff}
\end{equation}
Now using the above facts on difference operators and their symmetry, 
We shall apply and compare the FME expansions of $\BE^{\Phi}_{0,y} \big[ F ( y,\Phi,\Gamma,\Phi,\Gamma ) \big] \, \rho^{(2)}(0,y)$ 
and $\BE^{(y)}\big[ F (y,\Phi,\Gamma,\tPhi,\tGamma )\big] \, \gamma^2$. 

In case $\varphi = \tphi, \mu = \tmu$, we abbreviate $F^+(y,\varphi,\mu,\tphi,\tmu)$ by $F^+(y,\varphi,\mu)$. Thus by applying FME expansion in Lemma \ref{l:FME2pp} straightforwardly to
$\BE^{\Phi}_{0,y}[F(y,\Phi,\Gamma,\Phi,\Gamma)]$ and then using the symmetry
of the difference operators, \eqref{e:nondegdiffop}, and the Palm
correlation formula \eqref{e:palmcorr}, we obtain that
\begin{align*}
& \BE^{\Phi}_{0,y} \big[ F ( y,\Phi,\Gamma,\Phi,\Gamma ) \big] \, \rho^{(2)}(0,y) \\
& = \sum_{k=0}^{\infty}\sum_{l=0}^{\infty} \frac{\rho^{(2)}(0,y)}{k!l!}
\int_{(\R^d)^l}\int_{ (\R^d)^k} D^{k}_{z_1,\ldots,z_k}[D^{l}_{y_1,\ldots,y_l}F^+(0,y,o,o)] \\
& \qquad \times \rho_{\Gamma}^{(k)}(z_1,\ldots,z_k) \, \rho_{\Phi,0,y}^{(l)}(y_1,\ldots,y_l) \, \md(z_1, \ldots,z_k)\, \md(y_1, \ldots,y_l) \\
&   = \sum_{k=0}^{\infty}\sum_{k_1 = 0}^{k}\sum_{l=0}^{\infty}\sum_{l_1 =0}^l \frac{1}{k_1!l_1!(k-k_1)!(l-l_1)!} \\
& \quad \times \int_{ B_r^{k_1} \times B(y,r)^{k-k_1}\times B_r^{l_1} \times B(y,r)^{l-l_1} } D^{k_1}_{z_1,\ldots,z_{k_1}} 
D^{(k-k_1)}_{z_{k_1+1},\ldots,z_k}[D^{l_1}_{y_1,\ldots,y_{l_1}} 
D^{(l-l_1)}_{y_{l_1+1},\ldots,y_l}F(y,\delta_0,o,\delta_y,o)]  \, \, \\
& \qquad \qquad \times \rho^{(k)}_k(z_1,\ldots,z_k) \, \rho^{(l+2)}_\Phi(0,y,y_1,\ldots,y_l) 
\, \md(z_1,\ldots,z_k,y_1 \ldots,y_l)
\end{align*}

On the other hand, note that by locality of $F$,
$F (y,\Phi,\Gamma,\tPhi,\tGamma )$ is also a function of two point processes
$\Phi' := \big( \Phi \cap B_r \big) \cup \big( \tPhi \cap B(y,r) \big)$ and
$\Gamma' := \big( \Gamma \cap B_r \big) \cup \big( \tGamma \cap B(y,r) \big)$. Obserrve that the independence of $\Phi,\Gamma$ with $\tPhi,\tGamma$ factorizes
their respective correlation functions i.e.,
$$\rho_{\Phi \cup \tPhi}^{(k)}(x_1,\ldots,x_k) = \sum_{S \subset [k]}\rho^{|S|}_{\Phi}(x_i : i \in S)\rho^{k-|S|}_{\tPhi}(x_i : i \notin S),$$
and a similar decomposition holds for $\rho_{\Gamma \cup \tGamma}$ as well; For example, see \cite[(1.9)]{BYYsupp19}. Further if we assume that $x_1,\ldots,x_{k_1} \in B_r$ and $x_{k_1+1},\ldots,x_k \in B(y,r)$, then from the above decomposition we have that
$$\rho_{\Phi'}^{(k)}(x_1,\ldots,x_k) = \rho_{\Phi}^{(k_1)}(x_1,\ldots,x_{k_1})\rho_{\tPhi}^{(k-k_1)}(x_{k_1+1},\ldots,x_k),$$
and a similar factorization applies to $\Gamma'$ as well. Also by the independent superposition property, it holds that the Palm correlations for above choice of $x_1,\ldots,x_k$ factorizes as
$$\rho_{\Phi',0,y}^{(k)}(x_1,\ldots,x_k) = \rho_{\Phi,0}^{(k_1)}(x_1,\ldots,x_{k_1})\rho_{\tPhi,y}^{(k-k_1)}(x_{k_1+1},\ldots,x_k).$$
Now, as before, applying FME expansion in Lemma \ref{l:FME2pp} with respect to $\Phi',\Gamma'$, using the symmetry
of the difference operators, \eqref{e:nondegdiffop} and the Palm
correlation formula \eqref{e:palmcorr}
\begin{align*}
& \BE^{(y)}\big[ F (y,\Phi,\Gamma,\tPhi,\tGamma )\big] \gamma^2  \\
& = \sum_{k=0}^{\infty}\sum_{l=0}^{\infty} \frac{\rho^{(1)}(0)\rho^{(1)}(y)}{k!l!}
\int_{(\R^d)^l}\int_{ (\R^d)^k} D^{k}_{z_1,\ldots,z_k}[D^{l}_{y_1,\ldots,y_l}F^+(0,y,o,o)] \\
& \qquad \times \rho_{\Gamma'}^{(k)}(z_1,\ldots,z_k) \, \rho_{\Phi',0,y}^{(l)}(y_1,\ldots,y_l) \, \md(z_1, \ldots,z_k)\, \md(y_1, \ldots,y_l) \\
&  = \sum_{k=0}^{\infty}\sum_{k_1 = 0}^{k}\sum_{l=0}^{\infty}\sum_{l_1 =0}^l \frac{1}{k_1!l_1!(k-k_1)!(l-l_1)!} \\
& \quad \times \int_{ B_r^{k_1} \times B(y,r)^{k-k_1}\times B_r^{l_1} \times B(y,r)^{l-l_1}}  D^{k_1}_{z_1,\ldots,z_{k_1}} 
D^{(k-k_1)}_{z_{k_1+1},\ldots,z_k}[D^{l_1}_{y_1,\ldots,y_{l_1}} D^{(l-l_1)}_{y_{l_1+1},\ldots,y_l}F(y,\delta_0,o,\delta_y,o)]  \, \, \\ 
& \qquad \times \rho^{(k_1)}_\Gamma(z_1,\ldots,z_{k_1})\rho^{(k-k_1)}_\Gamma(z_{k_1},\ldots,z_k) \\
& \qquad \quad \times \rho^{(l_1+1)}_\Phi(0,y_1,\ldots,y_{l_1}) \, \rho^{(l-l_1+1)}_\Phi(y,y_{l_1},\ldots,y_l) \, \md(z_1.\ldots.z_k,y_1\ldots, y_l) 
\end{align*}
So, from the above two identities and \eqref{e:bdsDiff}, we derive that
\begin{align}
& |\BE^{\Phi}_{0,y}[F (y,\Phi,\Gamma,\Phi,\Gamma )]\rho^{(2)}(0,y) -\BE^{(y)}\big[ F (y,\Phi,\Gamma,\tPhi,\tGamma )\big] \gamma^2| \nonumber \\
& \leq \sum_{k=0}^{\infty}\sum_{k_1 = 0}^{k}\sum_{l=0}^{\infty}\sum_{l_1 =0}^l \frac{2^{k+l} }{k_1!l_1!(k-k_1)!(l-l_1)!} \int_{ B_r^{k_1} \times B(y,r)^{k-k_1}\times B_r^{l_1} \times B(y,r)^{l-l_1} }  \nonumber \\
& \qquad \times \Big| \rho^{(k)}_k(z_1,\ldots,z_k) \, \rho^{(l+2)}_\Phi(0,y,y_1,\ldots,y_l) \nonumber \\
& \qquad \qquad - \rho^{(k_1)}_\Gamma(z_1,\ldots,z_{k_1}) \, \rho^{(k-k_1)}_\Gamma(z_{k_1},\ldots,z_k) \, \rho^{(l_1+1)}_\Phi(0,y_1,\ldots,y_{l_1}) \, \rho^{(l-l_1+1)}_\Phi(y,y_{l_1},\ldots,y_l) \Big| \nonumber \\
\label{e:xi-Tibounds} & \qquad \qquad \qquad \times \md(z_1.\ldots.z_k,y_1\ldots, y_l).
\end{align}
We shall now bound the term in the modulus above.  Set
\begin{align*}
A &=  \rho^{(l+2)}_\Phi(0,y,y_1,\ldots,y_l), \qquad \qquad \qquad \qquad  \qquad A' = \rho_\Gamma(z_1,\ldots,z_k) \\
B &= \rho^{(l_1+1)}_\Phi(0,y_1,\ldots,y_{l_1}) \rho^{(l-l_1+1)}_\Gamma(y,y_{l_1+1},\ldots,y_l),  \quad 
B' = \rho^{(k_1)}_\Gamma(z_1,\ldots,z_{k_1})\rho^{(k-k_1)}_\Gamma(z_{k_1+1},\ldots,z_k).
\end{align*}
Note that by our choice of $t$ and $r$, the points
$0,y_1,\ldots,y_{l_1}$ and $y,y_{l_1+1},\ldots,y_l$ are separated by
distance at least $t - 2r \geq 3t/4$. So are the points
$z_1,\ldots,z_{k_1}$ and $z_{k_1+1},\ldots,z_k$. Then, using
\eqref{e:corrbd}, the assumption of fast-decay of correlations, and setting $\gamma_1 = \max \{\gamma,1\}$, we
can derive that
\begin{align*}
|AA' - BB'| & \leq |AA' - A'B| + |A'B - BB'|  \\
& \leq |A'| \, |A - B| + |B| \, |A' - B'| \\
& \leq  |A'| \, |A-B| + |A-B| \, |A'-B'| + |A| \, |A' - B'| \\
& \leq kC_k\gamma^k C_{l+2} \delta\Big(\frac{3t}{4}\Big) 
+ C_{l+2}C_k \delta\Big(\frac{3t}{4}\Big)^2 + (l+2)C_{l+2}\gamma^{l+2}\delta\Big(\frac{3t}{4}\Big)   \\
& \leq \delta\Big(\frac{3t}{4}\Big) C_kC_{l+2} (k\gamma^k + 1 + (l+2)\gamma^{l+2}) 
\leq \delta\Big(\frac{3t}{4}\Big) C_kC_{l+2}(k+1)(l+2)\gamma_1^{k+l+2}
\end{align*}
Thus substituting the above bounds into \eqref{e:xi-Tibounds}, we have that \eqref{e:xi-Tibounds} can be bounded above by
\begin{align}
& \delta\Big(\frac{3t}{4}\Big) \sum_{k=0}^{\infty}\sum_{k_1 = 0}^{k}\sum_{l=0}^{\infty}\sum_{l_1 =0}^l 
\frac{C_kC_{l+2}(2\pi_dr^d)^{k+l}(k+1)(l+2)\gamma_1^{k+l+2}}{k_1!l_1!(k-k_1)!(l-l_1)!} \nonumber \\
\label{e:diffT4T8} & \leq \delta\Big(\frac{3t}{4}\Big) \sum_{k=0}^{\infty}\sum_{l=0}^{\infty} 
\frac{C_kC_{l+2}(4\pi_dr^d)^{k+l}(k+1)(l+2)\gamma_1^{k+l+2}}{k!l!}.
\end{align}
The above double series splits into the product of two series.
Each of these series can be treated with the methods used around
\cite[(3.26)]{BYY19}.  It follows that for some constants $c_1,c_2$, 
$$ \sum_{k=0}^{\infty}\sum_{l=0}^{\infty} \frac{C_kC_{l+2}(4\pi_dr^d)^{k+l}(k+1)(l+2)
\gamma_1^{k+l+2}}{k!l!} \leq c_1e^{c_2r^{\frac{2+d}{1-a}}} = c_1e^{c_2(\frac{t}{8})^{\beta \frac{2+d}{1-a}}}.$$
The constants $c_1,c_2$ depend only on $\beta,d,\gamma_1$ and $C_k$'s but are independent of $F$ and $r = r_F$. Thus, using that  $\delta$ is exponentially fast decreasing with exponent $b$ and $\beta \frac{2+d}{1-a} < b$, we have that
\begin{align*}
|\BE^{\Phi}_{0,y}[F (y,\Phi,\Gamma,\Phi,\Gamma )]\rho^{(2)}(0,y) -\BE^{(y)}\big[ F (y,\Phi,\Gamma,\tPhi,\tGamma )\big] \gamma^2| \leq c_1\delta\Big(\frac{3t}{4}\Big) e^{c_2(\frac{t}{8})^{\beta \frac{2+d}{1-a}}} \le \tC \tvphi(t),
\end{align*}
for an (exponentially) fast-decreasing function $\tvphi$, finite constant $\tC$ and
non-zero constant $\tc$. By choice of $c_1,c_2$ above, we also obtain that $\tilde{C}$ depends only on $\beta,d,b,\gamma_1$ and $C_k$'s and $\tvphi$ depends only on these parameters and $\gamma$ but both are independent of $F$ as well as $r_F$. This completes the proof of
\eqref{e:decorrxi}.
\end{proof}

\subsection{Decay of correlations and void probabilities: Examples}
\label{s:ex_pp_mix}

A key assumption on point processes $\Phi,\Gamma$ in Theorem
\ref{tgen_mix_pert_pp} is exponentially fast decay of correlations
with certain assumptions on the decay constants. 
If, for instance, $\Phi$ is a
stationary {\em $\alpha$-determinantal process} with
$\alpha = -1/m$, $m \in \N$ and kernel $K$, then it has exponential
fast decay of correlations if $|K(x,y)| \leq \omega(|x-y|)$ where
$\omega$ is an exponentially fast decreasing function. Furthermore,
the constants $C_l = O(l^{al})$ for some $a < 1$; see \cite[Proposition 2.2]{BYY19}
It is known that more point processes (for ex., zeros of Gaussian
entire functions, Gibbs point proceses, Cox point processes et al.)
satisfy exponentially fast decay of correlations; 
see \cite[Section 2.2.2 and 2.2.3]{BYY19}. 
However the condition on growth of $C_l$ 
must be checked in each case.  
For the (rarified) Gibbs processes studied
in \cite{SchrYuk13} the condition holds. The same is true for the related ``subcritical" Gibbs processes studied in \cite[Section 3.3.5]{Betsch22}. For permanental processes (see Proposition \ref{p:perm}) 
whose kernel has bounded support, it can be easily
seen that the left hand-side of \eqref{defn:admissible} vanishes for $s$ larger than some
$r_0>0$, uniformly in $p,q$ and $x_1,\ldots,x_{p+q}$. The same applies to the stationary version
of a {\em shot noise Cox process}; see e.g.\ \cite[Example 15.15]{LastPenrose17}.
In both cases we can apply \eqref{e:kappadelcompsupp}. For permanental, the trivial upper bound on permanent gives that $C_K \leq k! \|K\|_{\infty}^k$ for all $k \in \N$. In the example of shot noise Cox processes,
assuming that the intensity field has moments of all orders, we can obtain bounds on $C_k$ via H\"{o}lder's inequality.
For a  kernel with unbounded support it does not seem to be possible
to bound the constants $C_l$ in the required way.

In many examples, the verification of decay bounds for stopping sets boils
down to suitable void probability bounds; see for example
Propositions \ref{p:nn2pp}, \ref{p:nn1pp} and Example \ref{e:weightedVoronoi}. 
Let $\Phi$ be a stationary simple point process with finite intensity measure
and Palm probability measure $\BP_0$ and $\BP_{0,y}$. The reduced
Palm probability measures are denoted by
$\BP^!_0:=\BP_0(\Phi-\delta_0\in\cdot)$ and 
$\BP^!_{0,y}:=\BP_{0,y}(\Phi-\delta_0-\delta_y\in\cdot)$. (The careful reader will again
notice a slight abuse of notation.) The required bounds are of the form
\begin{align}\label{void}
\max\{\BP(\Phi(B_t)=0), \BP^{!}_{0}(\Phi(B_t)=0),\BP^{!}_{0,y}(\Phi(B_t) =0)\} \le \delta_1(t), \, 
\alpha_\Phi\text{-a.e.\ $y\in\R^d$},\,t\ge 0,
\end{align}
where $\delta_1$ is a fast decaying function.
Of course, a stationary Poisson process has this property
with $\delta_1(t)=e^{-ct^d}$ for some $c>0$.
We now present less trivial examples.

\begin{example}
\label{ex:alphaDPPvoid}
\rm Assume that $\Phi$ is
a stationary $\alpha$-determinantal point process with $-1/\alpha \in \N$. Then we have
for each bounded  Borel set $B$,
\begin{align*}
\max\{\BP(\Phi(B)=0), \BP^{!}_{0}(\Phi(B) =0), \BP^{!}_{0,y}(\Phi(B)=0)\} 
\leq  ce^{-c'\lambda_d(B)}, \quad y \in \R^d.
\end{align*}
for some constants $c,c'$ that depend on $\alpha$ and $d$; see \cite[Corollary 1.10]{BYYsupp19}. Though the Corollary in \cite{BYYsupp19} is not stated for the (non-Palm) void probability bound $\BP(\Phi(B)=0)$, the proofs therein work more easily in this case and yield the above bound. Thus a stationary $\alpha$-determinantal point process as above with an exponentially fast decaying kernel is well suited for our applications of Theorem \ref{tgen_mix_pert_pp}. In particular they satisfy the assumptions of upcoming Propositions \ref{p:nn2pp} and \ref{p:nn1pp}.
\end{example}

Let $K\colon\R^d\times\R^d\to \R$ be a
symmetric jointly continuous function, which is 
non-negative definite and translation invariant.
Let $k\in\N$ and $\{Z_1(x):x\in\R^d\},\ldots,\{Z_k(x):x\in\R^d\}$  
be independent centered Gaussian fields with covariance function $K/2$.
Define $W(x):=Y_1(x)^2+\cdots+Y_k(x)^2$, $x\in\R^d$.
A {\em Cox process} with random intensity measure
$W(x) \, dx$ is a  {\em $k/2$-permanental process} with kernel
$K$ and reference measure $\lambda_d$; see
e.g.\ \cite[Chapter 14]{LastPenrose17} for more details.

\begin{proposition}\label{p:perm} Suppose that $\Phi$ is a
$k/2$-permanental process with a continuous and positive semi-definite kernel
$K$ and reference measure $\lambda_d$. Assume moreover that
\begin{align}\label{8025}
\int K(0,x)^2\,\md x<\infty
\end{align}
and $K(0,0)>0$. Then there exist $c,c'>0$ such that for $r\ge 0$ and  $\alpha_\Phi$-a.e.\ $y\in\R^d$
\begin{align}\label{8024}
\max\{\BP(\Phi(B_r)=0),\BP^!_0(\Phi(B_r)=0), \BP^!_{0,y}(\Phi(B_r)=0)\}\le c\exp\big[-c' r^{d/2}\big].
\end{align}
\end{proposition}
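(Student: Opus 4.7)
The plan is to exploit the Cox-process representation: conditional on $W$, the process $\Phi$ is Poisson with random intensity measure $W(x)\,\md x$, so
\[\BP(\Phi(B_r)=0) = \BE\bigl[e^{-V_r}\bigr], \qquad V_r:=\int_{B_r} W(x)\,\md x.\]
Because $W = \sum_{i=1}^k Y_i^2$ with the $Y_i$ iid centred Gaussian fields of covariance $K/2$, a Karhunen--Lo\`eve expansion on $L^2(B_r)$ for the integral operator $T_r$ with kernel $K/2$ gives $V_r \stackrel{d}{=} \sum_{j} \lambda_j \chi_k^{(j)}$, where the $\chi_k^{(j)}$ are iid $\chi^2_k$ variables and the $\lambda_j\ge 0$ are the eigenvalues of $T_r$. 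Hence
\[\BE\bigl[e^{-V_r}\bigr] = \prod_j (1+2\lambda_j)^{-k/2}.\]

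I will control this product via two trace identities. By translation invariance, $\sum_j \lambda_j = \operatorname{tr}(T_r) = (K(0,0)/2)\lambda_d(B_r) = c_1 r^d$, while Fubini and assumption \eqref{8025} give
\[\sum_j \lambda_j^2 = \operatorname{tr}(T_r^2) = \tfrac14 \iint_{B_r\times B_r} K(x,y)^2\,\md x\,\md y \le c_2 r^d,\]
so $\max_j \lambda_j \le \bigl(\sum_j \lambda_j^2\bigr)^{1/2} \le \sqrt{c_2}\,r^{d/2}$. The elementary inequality $\log(1+2\lambda)\ge 2\lambda/(1+2\lambda)$ for $\lambda\ge 0$ then yields
\[\sum_j \log(1+2\lambda_j) \ge \frac{2\sum_j \lambda_j}{1+2\max_j \lambda_j} \ge \frac{2c_1 r^d}{1+2\sqrt{c_2}\,r^{d/2}} \ge c\, r^{d/2}\]
for $r$ large, from which the bound on $\BP(\Phi(B_r)=0)$ follows after absorbing bounded $r$ into the constant.

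For the reduced Palm versions I will use the Cox--Palm identity; since given $W$ the Poisson process has no atoms at fixed points, it gives, for $\alpha_\Phi$-a.e.\ $y$,
\[\BP^{!}_0(\Phi(B_r)=0) = \frac{\BE[W(0)\,e^{-V_r}]}{\BE W(0)}, \qquad \BP^{!}_{0,y}(\Phi(B_r)=0) = \frac{\BE[W(0)W(y)\,e^{-V_r}]}{\BE[W(0)W(y)]}.\]
Cauchy--Schwarz bounds the numerators by $(\BE W(0)^2)^{1/2}(\BE e^{-2V_r})^{1/2}$ and $(\BE[W(0)^2 W(y)^2])^{1/2}(\BE e^{-2V_r})^{1/2}$, while the eigenvalue argument above applied with $2\lambda_j$ in place of $\lambda_j$ bounds $\BE e^{-2V_r}$ by $c\exp(-c'' r^{d/2})$. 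The denominators are uniformly positive: $\BE W(0) = kK(0,0)/2 > 0$ by the hypothesis $K(0,0)>0$, and Isserlis' formula yields $\BE[W(0)W(y)] = (kK(0,0)/2)^2 + (k/2)K(0,y)^2 \ge (kK(0,0)/2)^2$ for every $y$. The moments $\BE W(0)^2$ and $\BE[W(0)^2 W(y)^2]\le \BE W(0)^4$ (by Cauchy--Schwarz and stationarity) are finite Gaussian moments, which completes the proof.

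The main technical point is producing the precise exponent $d/2$: a Hanson--Wright style concentration for $V_r$ around its mean of order $r^d$ would in fact yield the stronger $\exp(-cr^d)$, but the Mercer route combined with the Hilbert--Schmidt bound $\max_j \lambda_j \le \sqrt{c_2}\,r^{d/2}$ delivers the cleaner $r^{d/2}$ using only the minimal second-moment assumption \eqref{8025}, which is precisely what is recorded in \eqref{8024}.
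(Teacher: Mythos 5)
Your proof is correct and follows essentially the same route as the paper's: you derive the void-probability product formula $\prod_j(1+2\lambda_j)^{-k/2}$ via the Karhunen--Lo\`eve/chi-squared representation (the paper instead quotes \cite[(14.32)]{LastPenrose17}, which encodes the same MGF computation), then use the trace identity $\sum_j\lambda_j \asymp r^d$ together with the Hilbert--Schmidt bound $\max_j\lambda_j\lesssim r^{d/2}$ coming from \eqref{8025} to extract the exponent $r^{d/2}$, and you handle the reduced Palm versions exactly as the paper does, via the Cox--Palm identity plus Cauchy--Schwarz. The only cosmetic difference is that the paper bounds $\BE[e^{-2V_r}]$ by $\BE[e^{-V_r}]$ directly rather than re-running the eigenvalue argument with doubled eigenvalues, but both give the same order.
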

Thus if $\Phi$ is a $k/2$-permanental process as above with a compactly supported kernel $K$, then it is well suited for our applications of Theorem \ref{tgen_mix_pert_pp}. In particular it will satisfy the assumptions of upcoming Propositions \ref{p:nn2pp} and \ref{p:nn1pp}.
\begin{proof}
Let  $B\subset\R^d$ be a compact set. As in \cite[(14.15)]{LastPenrose17}
we can write 
\begin{align*}
K(x,y)=\sum^\infty_{j=1}\gamma_{B,j}g_{B,j}(x)g_{B,j}(y),\quad x,y\in B,
\end{align*}
where the functions $g_{B,j}(x)$, $j\in\N$, are pairwise orthogonal
in $L^2((\lambda_d)_B)$ with norm one. 
The non-negative numbers $\gamma_{B,j}$ are the eigenvalues
of the linear integral operator $K_B$ on $L^2((\lambda_d)_B)$  
associated with the restriction of $K$ to $B\times B$. By
\cite[(14.32)]{LastPenrose17} and the inequality $1+s\le e^s$, $s\in\R$,
we have
\begin{align}\label{8022}
\BP(\Phi(B)=0)\le\exp\bigg[-\frac{k}{2}\sum_j\tilde\gamma_{B,j}\bigg]
\end{align}
where $\tilde\gamma_{B,j}:=\gamma_{B,j}/(1+\gamma_{B,j})$.
On the other hand it follows from the Cauchy--Schwarz inequality that
the operator norm of $K_B$ is bounded by
\begin{align*}
\|K\|_B:=\bigg(\int_{B\times B} K(x,y)^2\, d(x,y)\bigg)^{1/2}.
\end{align*}
In particular we obtain $\gamma_{B,j}\le \|K\|_B$, $j\in\N$, and
it follows from \eqref{8022} that
\begin{align}\label{8023}
\BP(\Phi(B)=0)\le\exp\bigg[-\frac{kK(0,0)\lambda_d(B)}{2 (\|K\|_B +1)}\bigg],
\end{align}
where we have used that 
$$
\sum_j\gamma_{B,j}=\int_B K(x,x)\,\md x=K(0,0)\lambda_d(B).
$$
By translation invariance of $K$ and Fubini's theorem we have
\begin{align*}
\|K\|^2_B=\int_B \lambda_d(B\cap (B+x))K(0,x)^2\,\md x.
\end{align*}
Using assumption \eqref{8025}
we obtain that $\lambda_d(B_r)^{-1}\|K\|^2_{B_r}\to \int K(0,x)^2\,\md x<\infty$
as $r\to\infty$. In view of \eqref{8023} this proves the first part of \eqref{8024}.

To treat Palm probabilities we use that
\begin{align*}
\BP^!_{0,y}(\Phi\in\cdot)= (\BE W(0)W(y))^{-1}\BE W(0)W(y)\I\{\Phi\in\cdot\},\quad \alpha_\Phi\text{-a.e.\ $y\in\R^d$}.
\end{align*}
This can be proved in a straigthforward way by using the Mecke equation for Cox processes;
see \cite[Theorem 13.8]{LastPenrose17}.
A straightforward calculation with a bivariate normal distribution
yields the well-known formula
\begin{align}\label{e:normalbi}
4\ \BE W(0)W(y)=k^2 K(0,0)^2+2k K(0,y)^2,\quad y\in\R^d.
\end{align}
Taking a compact set $B\subset\R^d$ we hence obtain  
for $\alpha_\Phi$-a.e.\ $y\in\R^d$ that 
\begin{align}\notag \label{Coxnull}
\BP^!_{0,y}(\Phi(B)=0)&\le c\,\BE W(0)W(y)\I\{\Phi(B)=0\}\\
&=c\,\BE W(0)W(y)\exp\bigg[-\int_B W(x)\,\md x\bigg]\notag\\
&\leq c \sqrt{\BE[ W(0)^2W(y)^2]} \sqrt{\BE\bigg[\exp\bigg[-2\int_B W(x) \,\md x\bigg]\bigg]}\notag\\
&\leq c\sqrt{\BE[W(0)^4]} \sqrt{\BP(\Phi(B)=0)},
\end{align}
where the equality come from conditioning w.r.t.\ $W$ and $c:=4 k^{-2}K(0,0)^{-2}$.
As $\BE[W(0)^4]<\infty$, the third part of \eqref{8024} can be derived from our previous bound on $\BP(\Phi(B)=0)$. The remaining part of \eqref{8024} can be proven similarly (The argument is simpler).
\end{proof}

In the next proposition we consider a {\em Gibbs process} $\Phi$
with a {\em Papangelou intensity} $\lambda \colon \R^d\times\bN\to[0,\infty)$,
a measurable function. The distribution of such a process is determined by the
so-called GNZ-equations 
\begin{align}\label{eGNZ}
\BE\Bigg[\int f(x,\Phi)\,\Phi(\mathrm{d}x)\Bigg]=
\BE\bigg[ \int f(x,\Phi+\delta_x) \lambda(x,\Phi)\,\mathrm{d}x\bigg],
\end{align}
which should hold for all measurable $f\colon\BX\times\bN\to[0,\infty)$.
We shall assume that $\lambda$ is translation invariant and bounded from 
above by some $a\ge 0$.
Examples of Gibbs processes satisfying \eqref{void} for the stationary
void probabilities can be found in the seminal paper \cite{SchrYuk13}.
We shall use here a result from \cite{LastOtto23}
to establish \eqref{void} for a certain class of Gibbs processes.
Taking  a reflection symmetric and bounded Borel set $N\subset\R^d$ 
we shall assume that
\begin{align}\label{paplocal}
\lambda(x,\mu)=\lambda(x,\mu_{N_x}),\quad (x,\mu)\in\R^d\times\bN,
\end{align}
where $N_x:=N+x$.

\begin{proposition}\label{p:voidGibbs} Let $\Phi$ be a stationary Gibbs process with a bounded Papangelou
intensity satisfying \eqref{paplocal}. Assume that $\lambda(0,o)>0$. 
Then there exist $c,c'>0$ such that
\begin{align}\label{G1}
\max\{\BP(\Phi(B_r)=0),\BP^{!}_{0}(\Phi(B_r)=0)\}\le c\exp\big[-c' r^d\big],\quad r\ge 0.
\end{align}
Assume moreover that 
\begin{align}\label{G11}
\{y\in N:\lambda(0,\delta_y)=0\}\subset \{y\in N:\lambda(0,\mu+\delta_y)=0\},\quad \mu\in\bN,
\end{align}
and
\begin{align}\label{G12}
\sup\{\lambda(y,\mu+\delta_0)/\lambda(y,\delta_0):y\in N,\lambda(y,\delta_0)>0,\mu\in\bN\}<\infty.
\end{align}
Then
\begin{align}\label{G3}
\BP^{!}_{0,y}(\Phi(B_r) =0)
\le 
c \exp\big[-c' r^d\big],\quad  \alpha_\Phi\text{-a.e.\ $y\in\R^d$},\,r\ge 0.
\end{align} 
\end{proposition}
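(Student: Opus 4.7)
The plan is to establish the stationary void bound first and then derive both Palm versions from it via the Georgii--Nguyen--Zessin equation \eqref{eGNZ}. For the non-Palm bound on $\BP(\Phi(B_r)=0)$ in \eqref{G1}, I would invoke the main void-probability estimate from \cite{LastOtto23}: under a bounded, translation invariant, local (in the sense of \eqref{paplocal}) Papangelou intensity with $\lambda(0,o)>0$, the locality at the empty configuration enables a coarsening argument stochastically dominating $\Phi$ from below by a positive-intensity Bernoulli field on a lattice of mesh proportional to $\diam N$, yielding $\BP(\Phi(B_r)=0)\le c\exp[-c'r^d]$.

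For the Palm version in \eqref{G1}, apply \eqref{eGNZ} in its reduced form $\gamma\,\BE^!_0 h(\Phi)=\BE[\lambda(0,\Phi) h(\Phi)]$ to $h=\I\{\Phi(B_r)=0\}$, so that $\BP^!_0(\Phi(B_r)=0)\le (a/\gamma)\,\BP(\Phi(B_r)=0)$ since $\lambda\le a$. It remains to see that $\gamma>0$: since $\lambda\le a$ stochastically dominates $\Phi$ by a Poisson process of intensity $a$, the decreasing event $\{\Phi(N)=0\}$ has probability at least $e^{-a\lambda_d(N)}$, and on this event locality together with $\lambda(0,o)>0$ gives $\lambda(0,\Phi)=\lambda(0,o)>0$; hence $\gamma=\BE\lambda(0,\Phi)\ge\lambda(0,o)\,e^{-a\lambda_d(N)}>0$.

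For the two-point Palm bound \eqref{G3}, iterating \eqref{eGNZ} yields
\[
\rho^{(2)}(0,y)\,\BE^!_{0,y} h(\Phi)=\BE\bigl[\lambda(0,\Phi)\,\lambda(y,\Phi+\delta_0)\,h(\Phi)\bigr]
\]
for $\alpha_\Phi$-a.e.\ $y\in\R^d$. Plugging in $h=\I\{\Phi(B_r)=0\}$, I would split into the cases $y\notin N$ and $y\in N$. If $y\notin N$, then $0\notin N_y$ and locality gives $\lambda(y,\Phi+\delta_0)=\lambda(y,\Phi)\le a$, while restricting the defining expectation of $\rho^{(2)}(0,y)$ to $\{\Phi(N\cup N_y)=0\}$ and using Poisson domination produces the uniform lower bound $\rho^{(2)}(0,y)\ge\lambda(0,o)^2 e^{-2a\lambda_d(N)}$; the ratio is then bounded by a $y$-independent constant times $\BP(\Phi(B_r)=0)$. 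If $y\in N$ (so $0\in N_y$ by reflection symmetry of $N$), condition \eqref{G12} yields the upper bound $\lambda(y,\Phi+\delta_0)\le M\lambda(y,\delta_0)$, where $M$ denotes the finite supremum appearing there; while the same Poisson-domination argument on $\{\Phi(N\cup N_y)=0\}$, combined with locality, gives the lower bound $\rho^{(2)}(0,y)\ge\lambda(0,o)\,\lambda(y,\delta_0)\,e^{-2a\lambda_d(N)}$. The common factor $\lambda(y,\delta_0)$ then cancels in the ratio, again producing a uniform constant. Finally, for $y\in N$ with $\lambda(y,\delta_0)=0$, condition \eqref{G11} (translated via translation invariance of $\lambda$ and $-y\in N$) forces $\lambda(y,\Phi+\delta_0)\equiv 0$, hence $\rho^{(2)}(0,y)=0$, so such $y$ form an $\alpha_\Phi$-null set that may be discarded.

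The main obstacle is confirming that the cited void-probability result from \cite{LastOtto23} truly applies at this level of generality and delivers the rate $\exp[-c'r^d]$; the remainder is routine bookkeeping with iterated GNZ, locality, and Poisson domination.
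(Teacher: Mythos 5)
Your approach matches the paper's: the non-Palm void bound comes from \cite{LastOtto23}, the one-point Palm bound from the GNZ formula $\BP^!_0(\Phi\in\cdot)=(\BE\lambda(0,\Phi))^{\oplus}\BE\lambda(0,\Phi)\I\{\Phi\in\cdot\}$ together with $\lambda\le a$ and the Poisson-domination lower bound for $\BE\lambda(0,\Phi)$, and the two-point bound from the iterated GNZ identity $\rho^{(2)}(0,y)\,\BE^{!}_{0,y}h(\Phi)=\BE[\lambda(0,\Phi)\lambda(y,\Phi+\delta_0)h(\Phi)]$ with the same case split on $y\in N$ vs.\ $y\notin N$, the same use of \eqref{G11} to discard the $\alpha_\Phi$-null set where $\lambda(y,\delta_0)=0$, and the same cancellation of $\lambda(y,\delta_0)$ via \eqref{G12}. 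On the one point you flagged: the result invoked is \cite[Corollary 7.7]{LastOtto23}, which gives the \emph{deterministic} pathwise bound $\BP(\Phi(B_r)=0)\le\exp\bigl[-\int\I\{x\in B_r\}\,b\,\lambda(x,\Phi_{B_r^c})\,\md x\bigr]$ with $b:=e^{-a\lambda_d(N)}$, proved via disagreement coupling; this is \emph{not} a stochastic-domination-from-below argument (such domination by a positive-intensity field would fail, e.g., for hard-core Gibbs processes). One then finishes by locality: with $N\subset B_{r_0}$, for $\|x\|\le r-r_0$ we have $N_x\subset B_r$, so $\lambda(x,\Phi_{B_r^c})=\lambda(x,o)=\lambda(0,o)>0$, and the exponent is at most $-b\,\lambda(0,o)\,\lambda_d(B_{r-r_0})$, which gives the rate $\exp[-c'r^d]$.
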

\begin{proof} It follows from \cite[Corollary 7.7]{LastOtto23}
that 
\begin{align}\label{harduppbou2}
\BP(\Phi(B_r)=0)\le \exp\bigg[-\int \I\{x\in B_r\} \, b \, \lambda(x,\Phi_{B^c_r})\,\md x\bigg],
\end{align}
where $b:=e^{-a\lambda_d(N)}$. Since $N$ is bounded there exists $r_0>0$ such that
$N\subset B_{r_0}$, so that $N_x\subset B(x,r_0)$. Hence 
$N_x\cap B^c_r=\emptyset$ for $\|x\|\le r-r_0$ and it follows that
\begin{align*}
\BP(\Phi(B_r)=0)\le \exp\bigg[-\int \I\{x\in B_{r-r_0}\}b \lambda(x,o)\,\md x\bigg],
\end{align*}
By translation invariance of $\lambda$ we have $\lambda(x,0)=\lambda(0,0)$
and the first part of \eqref{G1} follows.
Iterating \eqref{eGNZ} easily shows that we can choose
$\BP^!_{0,y}$ such that
\begin{align}\label{G15}
\BP^!_{0,y}(\Phi\in\cdot)
=(\BE \lambda^{(2)}(y,\Phi))^{\oplus}\BE \lambda^{(2)}(y,\Phi)\I\{\Phi\in\cdot\},\quad y\in\R^d,
\end{align}
where $\lambda^{(2)}\colon\R^d\times\bN\to[0,\infty)$ is defined by 
$\lambda^{(2)}(y,\mu):=\lambda(0,\mu)\lambda(y,\mu+\delta_0)$
and $a^{\oplus}:=\I\{a\ne  0\}a^{-1}$ is the generalized 
inverse of $a\in\R$. We have
\begin{align*} 
\BE \lambda^{(2)}(y,\Phi)\ge \BE\I\{\Phi(N)=\Phi(N_y)=0\}\lambda(0,o)\lambda(y,\delta_0)
\end{align*}
Since $\Phi$ is stochastically dominated by a stationary Poisson process with intensity $a$
(see \cite{GeorKun97}) we obtain that
\begin{align*}
\BE \lambda^{(2)}(y,\Phi)\ge \lambda(0,0)\lambda(y,\delta_0)\exp[-a\lambda_d(N\cup N_y)]
\ge \lambda(0,o)\lambda(y,\delta_0)\exp[-2a\lambda_d(N)].
\end{align*}
If $y\notin N$, then $\lambda(y,\delta_0)=\lambda(y,o)=\lambda(0,o)>0$.
If $y\in N$ and $\lambda(y,\delta_0)=0$ then \eqref{G11} implies $\lambda^{(2)}(y,\Phi)=0$.
In any case 
\begin{align*}
(\BE \lambda^{(2)}(y,\Phi))^{\oplus}\le c \lambda(y,\delta_0)^\oplus.
\end{align*}
Using \eqref{G15} and then assumption  \eqref{G12} (and the boundedness of $\lambda$) we obtain that
\begin{align*}
\BP^{!}_{0,y}(\Phi(B_r)=0)\le c\,\BE \lambda(y,\Phi+\delta_0)\lambda(y,\delta_0)^\oplus\I\{\Phi(B_r)=0\}
\le c'' \,\BP(\Phi(B_r)=0),
\end{align*}
for some $c''>0$. Hence \eqref{G3} follows from the first part of the proof.

The second part of \eqref{G1} follows from 
\begin{align*}
\BP^!_{0}(\Phi\in\cdot)
=(\BE \lambda(0,\Phi))^{\oplus}\BE \lambda(0,\Phi)\I\{\Phi\in\cdot\},
\end{align*}
and $\BE \lambda(0,\Phi)\ge \lambda(0,0)\exp[-2\lambda_d(N)]$. Assumptions
\eqref{G11} and \eqref{G12} are not required in this case.
\end{proof}

\begin{example}\rm Suppose that $U\colon \R^d\to [0,\infty)$ is a measurable
and symmetric function with bounded support. Let $a>0$ and define
\begin{align*}
\lambda(x,\mu):=a \exp\bigg[-\int U(y-x)\,\mu(\mathrm{d}y)\bigg],\quad 
		(x,\mu) \in \R^d\times\bN.
\end{align*}
A point process with Papangelou intensity $\lambda$ is 
a Gibbs process with {\em pair potential} $U$.
This $\lambda$ satisfies all assumptions from Proposition \ref{p:voidGibbs}, including
\eqref{G11} and \eqref{G12}. The first part of \eqref{G1} is covered by
\cite[Lemma 3.3]{SchrYuk13}.
An interesting special case is the {\em Strauss  process}, where
$\lambda(x,\mu)=ab^{\mu(B(x,r_0))}$ for some $b\in[0,1]$ and $r_0> 0$. For $b=0$ this process describes hard spheres in equilibrium. Thus if $\Phi$ is a Gibbs process with pair potential $U$ and $a$ small enough, then it is well suited for our applications of Theorem \ref{tgen_mix_pert_pp}. In particular it will satisfy the assumptions of upcoming Propositions \ref{p:nn2pp} and \ref{p:nn1pp}. Conditions for the precise choice of $a$ can be inferred from the 'subcritical' condition  in \cite[Section 3.3.5]{Betsch22} or 'rarefication condition' in \cite{SchrYuk13}.
\end{example}

\section{Local transport kernels of point processes}
\label{s:localalgo}

In this section, we will provide more natural examples of
  invariant probability kernels satisfying the assumptions of Theorem
  \ref{maintheoremmixing2} and thereby preserving equality of
  asymptotic variances. The driving intuition behind these examples is
  that local probability kernels should have good mixing as required
  by Theorem \ref{maintheoremmixing2}. We characterize `locality' by
  requiring that the kernels are determined by `nice' random stopping
  sets of the underlying point process (and possibly some independent
  point process). The key theoretical tool formalizing this is Theorem
  \ref{tgen_mix_pert_pp}, which was stated and proven in Section
  \ref{s:mix_marked_pp}. This applies to point processes having fast
  decay of correlation functions as in Definition
  \ref{defn:admissible} and proceeds via factorial moment
  expansions. The framework of exponentially fast decay of correlations
  and stopping sets introduced at the beginning of Section
  \ref{s:mix_marked_pp} is necessary to follow the proof of results in
  this section. In order to follow the results more easily, the reader
  can assume that the underlying point process in our examples is
  Poisson. We remind the reader that other examples of point processes satisfying the decay of correlation conditions and void probability assumptions in upcoming results can be found in Section \ref{s:ex_pp_mix}.

Motivated by the random organization model, we first start with a
  simple example of probability kernels determined by a bounded set
  around a point in Section \ref{s:bddperturbations}. Next, in Section
  \ref{s:nnshift}, we study examples based on nearest-neighbour shifts
  of a point process and here already the stopping set framework of
  Theorem \ref{tgen_mix_pert_pp} is necessary. Finally, in
  Section \ref{s:lloyd}, we show that non-hyperuniformity of the
  Poisson process is preserved under finitely many iterations of the
  Lloyd's algorithm.  Applications to random measures and random sets
  are discussed in Sections \ref{s:volumes} and \ref{s:hyprandset}. Though Theorem
  \ref{tgen_mix_pert_pp} allows us to choose more general probability
  kernels, in this section we consider only allocations.

\subsection{The random organization model}
\label{s:bddperturbations}

Our methods directly apply to a prominent model of
self-organization in driven systems known as \textit{random organization}~\cite{Corte08},
which has recently attracted considerable attention. 
In the terminology of stochastic geometry, the model starts from a particle process,
where each particle is a compact, convex set and the particle centers form a Poisson point process.
The model then iteratively shifts all particles in a cluster with more than one particle.
The shifts are random with uniformly distributed directions but within a fixed distance.

Heuristic arguments and simulations suggest that there is a phase
transition such that below a critical intensity, the
model relaxes to a frozen state where no particle moves; but above the
critical intensity, the model remains active for all times. For a
related model in one dimension, such a phase transition has been proven
rigorously~\cite{Ayyer2023}.
Simulations, moreover, suggest that the model is hyperuniform at
the critical point~\cite{HexnerLevine15} and in the active phase for
variants of the model~\cite{HexnerLevine17, hexner17}.
For a closely related model on the lattice, the facilitated exclusion
process (also known as the conserved lattice gas model), hyperuniformity
at the critical density has recently been proven in one
dimension~\cite{GoldsteinLebowitzSpeer24}.
Here we show that hyperuniformity cannot be achieved in a finite number of
steps, i.e., hyperuniformity can only be obtained in the limit of an infinite number of steps.

We now give a general framework that includes all bounded perturbations as well
as shows that recursively applying bounded perturbations to point processes with
fast decay of correlations preserve the variance asymptotics. Informally, in
this model, points are perturbed locally depending on the local configuration
(say the configuration within a unit ball). 

\begin{proposition}
\label{p:var_bdd_pert}
Let $\Phi$ be a stationary point process with non-zero intensity and having exponentially fast decay of correlations with the decay function $\delta$ and constants
$C_k, k \in \N$, such that $C_k = O(k^{ak})$ for some $a < 1$.  Let
$Y\colon \bN \to B_1$ be a measurable function such that
$Y(\varphi) = Y(\varphi \cap B_1)$. Define a sequence of point
processes $\Phi_k, k \geq 1$ as follows.  $\Phi_0 = \Phi$ and for all
$k \geq 1$,
\begin{align}\label{e:defnpsik}
\Phi_k(B):= \sum_{x \in \Phi_{k-1}} \delta_{x + Y(\Phi_{k-1} - x)}.
\end{align}
Then,  we have that for all $k \geq 1$,  
$$\lim_{r \to \infty} \lambda_d(B_r)^{-1}\BV[\Phi_k(B_r)] 
= \lim_{r \to \infty} \lambda_d(B_r)^{-1}\BV[\Phi(B_r)].$$
\end{proposition}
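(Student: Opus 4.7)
The plan is to collapse the $k$-fold iteration to a single application of Corollary \ref{c:alloc} by building a composite invariant allocation $\tau_k$ that sends $\Phi$ directly to $\Phi_k$, and then controlling the mixing coefficient of $\tau_k$ with Theorem \ref{tgen_mix_pert_pp}. The crucial feature is that $\tau_k$, although no longer depending on just the unit ball around each point, is still a deterministic local functional of $\Phi$ with a bounded locality radius $r_k$ depending only on $k$.

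I define $\tau_j$ recursively by $\tau_0(x) = x$ and
\begin{equation*}
\tau_j(x) := \tau_{j-1}(x) + Y\bigl(\theta_{\tau_{j-1}(x)}\Phi_{j-1}\bigr), \qquad j = 1, \ldots, k.
\end{equation*}
Since $Y$ depends only on its argument measure and $\theta$ is equivariant, each $\tau_j$ defines an invariant allocation on $\R^d$, and by construction $\Phi_k = \sum_{x \in \Phi}\delta_{\tau_k(x)}$ with $\BP_0^{\Phi}(\tau_k(0) \neq \infty) = 1$. I then claim by induction that $\tau_j(x)$ is determined by $\Phi \cap B(x, r_j)$, with $r_1 = 1$ from the locality hypothesis $Y(\varphi) = Y(\varphi \cap B_1)$. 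For the induction step, note $\tau_{j-1}(x) \in B(x, j - 1)$ since each step shifts by at most one, so any $z \in \Phi_{j-1} \cap B(\tau_{j-1}(x), 1)$ is the image under $\tau_{j-1}$ of some $y \in \Phi$ with $\|y - x\| \leq (j - 1) + 1 + (j - 1) = 2j - 1$, and evaluating $\tau_{j-1}$ at each such $y$ only uses $\Phi \cap B(y, r_{j-1})$. This yields $r_j \leq r_{j-1} + 2j - 1$ and hence $r_k \leq k^2$.

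With $\tau_k$ in hand, I set $K(x, \cdot) := \delta_{\tau_k(x)}$ and let $\Gamma$ be a stationary unit-rate Poisson process independent of $\Phi$ (it plays no role in $K$, but is included to match the formulation of Theorem \ref{tgen_mix_pert_pp}; its hypotheses are trivially satisfied). The kernel $K$ satisfies \eqref{e:KdetS} with the deterministic stopping set $S(\varphi, \mu) \equiv B_{r_k}$, so the tail function is $\delta_1(t) = \I\{t < r_k\}$ and \eqref{e:intvarphi1} holds trivially. Thus Theorem \ref{tgen_mix_pert_pp} supplies the mixing bound \eqref{maintheoremmixingcondition} for the coefficient $\kappa$ in \eqref{e:kappa2} (which coincides with \eqref{kappaspecial} in this allocation setting). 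Since $W = B_1$ is Fourier smooth and $\Phi$ is locally square-integrable by the boundedness of its second correlation function, Corollary \ref{c:alloc} and Theorem \ref{maintheoremmixing2}(ii) yield
\begin{equation*}
\lim_{r \to \infty} \lambda_d(B_r)^{-1} \BV[\Phi_k(B_r)] = \lim_{r \to \infty} \lambda_d(B_r)^{-1} \BV[\Phi(B_r)],
\end{equation*}
as required.

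The main obstacle is the inductive radius bound: one must carefully track how the locality neighborhood of a point at iteration $j$ depends on the locality neighborhoods at all earlier iterations, and verify that when several points of $\Phi$ lie close together their overlapping regions are absorbed into the linear recursion $r_j \leq r_{j-1} + (2j - 1)$ rather than causing exponential blow-up. A secondary technical point is to ensure that $\tau_k$ is measurable and equivariant as a map on all of $\R^d$ (not only on $\supp \Phi$), so that the invariant-allocation framework of Corollary \ref{c:alloc} applies directly; the recursive formula above makes sense verbatim for every $x \in \R^d$ and inherits equivariance from that of $\tau_{j-1}$ and $Y$.
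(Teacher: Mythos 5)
Your proof follows essentially the same route as the paper: both collapse the $k$-fold iteration into a single invariant allocation $\tau_k$ (the paper writes $x^{(k)}(\varphi,x)$), observe that $\tau_k(x)$ depends on $\Phi$ only through a deterministically bounded neighborhood of $x$, and then apply Theorem \ref{tgen_mix_pert_pp} with a deterministic ball as the stopping set, so that $\delta_1$ is compactly supported and \eqref{e:intvarphi1} holds trivially. Your explicit induction gives the radius $r_k\le k^2$ whereas the paper quotes $S=B_{6k}$ (a linear bound, obtainable by tracking the radius needed to reconstruct all of $\Phi_{j-1}$ on a ball of growing size, which only grows by $O(1)$ per step, rather than tracing each $\tau_{j-1}(y)$ separately); since any finite bound suffices, this quantitative difference is immaterial, and the exponential blow-up you worry about never arises because the union of the balls $B(y,r_{j-1})$ over nearby $y$ is automatically contained in one ball around $x$.
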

\begin{proof}
Let $\varphi\in\bN$. For each $x \in \varphi$ and for each $k \geq 1$, we define recursively
    $x^{(k)}\equiv x^{(k)}(\varphi,x)\in\R^d$ and $\varphi_k\in\bN$   as follows. 
We set $x^{(0)} = x$ and
    $x^{(k)} = x^{(k-1)} + Y(\varphi_{k-1} - x^{(k-1)})$, where
    $\varphi_k = \{x^{(k)}\}_{x \in \varphi}$.
We set the (canonically defined) transport kernel $\tilde{K}$ to be
$\tilde{K}(\varphi,x) := \delta_{x^{(k)}(\varphi,x)}$; see also \eqref{factor}. Thus by the recursive nature of
the definition of \eqref{e:defnpsik}, we can verify inductively that
$$ 
\Phi_k =  \sum_ {x\in \Phi} \delta_{x^{(k)}(\Phi,x)} = K\Phi,
$$  
where $K(x):=\tilde{K}(\Phi,x)$. Note that $K$ depends only on a
  single point process $\Phi$. Thus by definition it is easy to verify
  that $K$ satisfies the assumptions of Theorem \ref{tgen_mix_pert_pp} 
with the stopping set $S = B_{6k}$ and so $\delta_1$ is compactly supported. Thus trivially
\eqref{e:intvarphi1} holds and so does equality of asymptotic variances.
\end{proof}
It is also of interest to consider models where the perturbations
include additional randomness as in the random organization model. We shall show that our results
can apply to such models by considering one such model and when the
underlying point process $\Phi$ is a Poisson process. As will be seen,
the extension to finitely dependent point processes is immediate but
extension to more general point processes shall need an extension of
Lemma \ref{l:FME2pp} or Proposition \ref{p:dec_corr} to general marked
point processes. 

The framework is as follows.  Let $Y_1\colon \bN\to \{0,1\}$ and
$Y_2\colon \bN \to B_1$ be two measurable functions such that 
$Y_i(\cdot) = Y_i(\cdot \cap B_1)$ for $i\in\{1,2\}$.
For $i\in\{1,2\}$, $\varphi\in\bN$ and $x \in \R^d$, we set  $Y_i(x,\varphi) :=Y_i(\varphi - x)$. 
Let $U_{j,i}, j,i \geq 1$ be i.i.d.\
$B_1$-valued random vectors. 
Starting with  $\varphi_0 = \varphi\in\bN$ we define $\varphi_k\in\bN$, $k\in\N$, as follows.
Given $\varphi_{k-1}=\sum_i \delta_{x_i}$, we set
\begin{align}\label{e:defnpsik1}
\varphi_k(B):= 
\sum_{i } \delta_{x_i + Y_1(x_i,\varphi_{k-1})( Y_2(x_i,\varphi_{k-1}) + U_{k,i})},\quad k \geq 1.
\end{align}

In other words, a point $x \in \varphi_{k-1}$ is displaced only if
$Y_1(x,\varphi_{k-1}) = 1$ and if so, it is displaced by
$Y_2(x,\varphi_{k-1})$ and additionally by an independent random
vector. 

\begin{proposition}
\label{t:var_bdd_pert_rand}
Let $\Phi$ be a stationary Poisson process of non-zero intensity $\gamma$ and
$\{U_{k,n}\}$ be i.i.d.\ $B_1$-valued random vectors independent of
$\Phi$. Starting with $\Phi_0 = \Phi$, we define recursive
displacements $\Phi_k$, $k \geq 1$, using \eqref{e:defnpsik1}. Then, we
have that for all $k \geq 1$,
$$\lim_{r \to \infty} \lambda_d(B_r)^{-1}\BV[\Phi_k(B_r)] = \lim_{r \to \infty} \lambda_d(B_r)^{-1}\BV[\Phi(B_r)] = \gamma.$$
\end{proposition}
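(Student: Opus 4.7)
The plan is to apply Theorem \ref{maintheoremmixing2} directly to the $k$-iteration transport, using Slivnyak's theorem for the Poisson process to reduce the mixing condition \eqref{maintheoremmixingcondition} to an exact decorrelation at large distances. First I would enlarge the probability space $(\Omega,\mathcal{A},\BP)$ so that it carries $\Phi$ together with the i.i.d.\ marks $\{(U_{1,i},\ldots,U_{k,i})\}_i$ attached to the points $X_i$ of $\Phi$; by independent marking, the resulting marked point process $\Phi^{\ast}$ is a stationary Poisson process on $\R^d\times B_1^k$, and the flow $\theta_x$ is chosen to shift only the spatial coordinate. The $k$-step dynamics then defines an invariant probability kernel $K$ from $\Omega\times\R^d$ to $\R^d$ by $K(x,\cdot):=\delta_{X^{(k)}(x)}$ for $x\in\Phi$ (with any invariant measurable extension off $\Phi$, which does not affect $K\Phi$ or the Palm expectations of interest), where $X^{(k)}(x)$ is the iterated position produced by \eqref{e:defnpsik1} applied to $\Phi^{\ast}$. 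By construction $K\Phi=\Phi_k$.

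Next I would establish a deterministic locality bound. Since each iteration shifts any point by at most $|Y_2|+|U_{\ell,i}|\le 2$, a short induction on $\ell$, tracking the marked points needed to determine $X^{(\ell)}(x)$ via the dynamics, yields a finite $R_k$ (for instance satisfying $R_1=1$ and $R_\ell=R_{\ell-1}+4\ell-3$, so that $R_k=O(k^2)$) such that $X^{(k)}(x)$ is a measurable function of $\Phi^{\ast}$ restricted to $B(x,R_k)\times B_1^k$.

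The crucial step is the mixing bound. By Slivnyak's theorem combined with the independence of the marks, under $\BP^\Phi_{0,y}$ (for $y\neq 0$) the configuration $\Phi-\delta_0-\delta_y$ with its marks is distributed as $\Phi^{\ast}$, while the marks at the inserted points $0$ and $y$ are two fresh independent copies of the common mark distribution. When $\|y\|>2R_k$ the balls $B(0,R_k)$ and $B(y,R_k)$ are disjoint, so the restrictions of $\Phi^{\ast}$ to them are independent, and hence $\Ks_0$ and $\Ks_y$ are independent under $\BP^\Phi_{0,y}$, each with the same distribution as $\Ks_0$ under $\BP^\Phi_0$. This gives
\begin{equation*}
\BE^\Phi_{0,y}[\Ks_y\otimes\Ks_0]=\big(\BE^\Phi_0[\Ks_0]\big)^{\otimes 2},\qquad \|y\|>2R_k,
\end{equation*}
so $\kappa(y)=0$ there. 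Combined with the trivial bound $\kappa\le 2$ and the Poisson identity $\alpha_\Phi=\gamma^2\lambda_d+\gamma\delta_0$, this yields $\int\kappa(y)\,\alpha_\Phi(\md y)\le 2\gamma^2\lambda_d(B_{2R_k})+2\gamma<\infty$. Since $\beta_\Phi=\gamma\delta_0$ has finite total variation, Theorem \ref{maintheoremmixing2}(ii) now gives $\lim_{r\to\infty}\lambda_d(B_r)^{-1}\BV[K\Phi(B_r)]=\lim_{r\to\infty}\lambda_d(B_r)^{-1}\BV[\Phi(B_r)]=\gamma$. The main obstacle is the careful enlargement of the probability space and flow so that $\Phi$ remains invariant and the iteratively constructed $K$ qualifies as an invariant probability kernel in the sense of \eqref{adaptkernel}; once this bookkeeping is done, the Slivnyak-based decorrelation computation makes the mixing bound essentially automatic.
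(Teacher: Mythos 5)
Your proposal is correct and follows essentially the same route as the paper: enlarge the sample space to a marked Poisson process, observe that the $k$-step dynamics is determined by a bounded neighbourhood, invoke Slivnyak/Mecke to identify the two-point Palm distribution, deduce that $\kappa$ vanishes outside a bounded set (hence \eqref{maintheoremmixingcondition} holds trivially), and close via Theorem \ref{maintheoremmixing2}(ii). The only cosmetic differences are that the paper attaches infinite mark sequences and phrases the conclusion through Corollary \ref{c:alloc}, and that your locality radius $R_k=O(k^2)$ is derived more carefully than the paper's stated $9k$; both yield the same compactly supported stopping-set bound.
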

\begin{proof} 
To apply Corollary \ref{c:alloc} we need to redefine $\Phi$ and $\Phi_k$ on a suitable
probability space $(\Omega',\mathcal{A}',\BP')$ equipped with a flow. Let 
$\Omega'$ be the measurable set of all $\omega\in\bN(\R^d\times B_1^\infty)$ such that
$\bar\omega:=\omega(\cdot \times B_1^\infty)\in\bN_s$. For $\omega\in\Omega'$ and $x\in\R^d$
we define $\theta_x\omega\in\Omega'$ by
$\theta_x\omega(B\times C):=\omega((B+x)\times C)$. Let $\BP'$ be the distribution
of a Poisson process with intensity measure $\lambda_d\times \BQ^\infty$, where
$\BQ$ is the distribution of $U_{1,1}$. Then $\BP'$ is stationary w.r.t.\ $\{\theta_x:x\in\R^d\}$.
For $\omega\in\Omega'$ and $x\in\bar\omega$ there exists a unique
$(u_n)_{n\ge 1}\in B_1^\infty$ such that $(x,(u_n)_{n\ge 1})\in \omega$.
We write $u_n(\omega,x):=u_n$, $n\in\N$. If $x\notin\bar\omega$ we let
$u_n(\omega,x)$ equal some fixed value in $B_1$. Then $u_n$ is shift-invariant,
that is $u_n(\theta_y\omega,x-y)=u_n(\omega,x)$ for each $y\in\R^d$.

Next we define for each $k\in\N_0$ a measurable mapping $\tau_k\colon \Omega'\times\R^d\to\R^d\cup\{\infty\}$
such that $\tau_k(\omega,x)\in\R^d$ whenever $x\in\bar\omega$.
We do this recursively as follows. 
Let $\tau_0(\omega,x):=x$ if $x\in\bar\omega$.
Otherwise set $\tau_0(\omega,x):=\infty$. Let $k\in\N$ and assume that $\tau_{k-1}$ is given. Define
\begin{align*}
\chi_{k-1}(\omega):=\sum_{x\in\bar\omega}\delta_{\tau_{k-1}(\omega,x)}.
\end{align*}
If $x\in\bar\omega$ we define
\begin{align*}
\tau_k(\omega,x):=\tau_{k-1}(\omega,x) + Y_1(\chi_{k-1}(\omega) -\tau_{k-1}(\omega,x))
(Y_2(\chi_{k-1}(\omega) - \tau_{k-1}(\omega,x)) + u_k(\omega,x)).
\end{align*}
Otherwise set $\tau_k(\omega,x):=\infty$.
One can easily establish by induction, that $\tau_k$ is an invariant allocation;
see \eqref{allocation}. 

Let $\Psi$ denote the identity on $\Omega'$. Then $\Phi':=\Psi(\cdot\times\R^d)$
has the same distribution as $\Phi$. Moreover, $\Psi$ is an independent marking
of $\Phi'$; see \cite[Chapter 5]{LastPenrose17}. Therefore
\begin{align*}
\Phi'_k:=\int\I\{\tau_k(\Psi,x)\in\cdot\}\,\Phi'(dx)
\end{align*}
has the same distribution as $\Phi_k$. 

It remains to check that $\tau_k$ satisfies
the assumption \eqref{maintheoremmixingcondition} from Corollary \ref{c:alloc}.
Since $Y_2$ and the $u_n$ are $B_1$-valued, we obtain by induction that
$\|\tau_k(\omega,x)-x\|\le 2k$ for all $\omega\in\Omega'$ and all $x\in\bar\omega$.
Using the constraints on the domains of $Y_1$ and $Y_2$ it then follows that 
$\tau_k(\omega,x)=\tau_k(\omega_{B_{9k}(x)},x)$, where $\omega_B$
is the restriction of $\omega$ to $B\times B_1^\infty$
for $B\in\cB^d$. 
It follows rather straight from the Mecke equation for $\Psi$ 
(see \cite[Theorem 4.1]{LastPenrose17}) that
the Palm probability measure $\BP'^{\Phi'}_0$ is the distribution of
$\Psi+\delta_{(0,U_0)}$, where $U_0$ has distribution $\BQ^\infty$ and is independent
of $\Psi'$. Similarly we can choose for $y\ne 0$ the 
Palm probability measure $\BP'^{\Phi'}_{0,y}$ as the distribution of
$\Psi+\delta_{(0,U_0)}+\delta_{(y,U_y)}$, where $U_y$ has distribution $\BQ^\infty$ and is independent
of $(\Psi',U_0)$. Therefore for we obtain for $\|y\|\ge 18k$ that
\begin{align*}
\BP'^{\Phi'}_{0,y}(\tau_k(y)-y),\tau_k(0)\in\cdot)
=\BP'^{\Phi'}_{0}(\tau_k(y)-y\in\cdot)\otimes \BP'^{\Phi'}_{y}(\tau_k(0)\in\cdot)
\end{align*}
By stationarity we have $\BP'^{\Phi'}_{y}(\tau_k(y)-y\in\cdot)=\BP'^{\Phi'}_{0}(\tau_k(0)\in\cdot)$.
Therefore we obtain $\kappa(y)=0$ and the assertion follows from 
Corollary \ref{c:alloc}.
\end{proof}

\subsection{Nearest-neighbour shifts of point processes}
\label{s:nnshift}

Our next example uses unbounded stopping sets that have good tails
  as required by Theorem \ref{tgen_mix_pert_pp}. These are based on
  nearest neighbour shifts of points in a point process to an
  independent point process and within the same point process. Since
  the framework of Voronoi tesselation is useful for these examples
  and also other upcoming examples, we will introduce it
now.

Set $d(x,A):=\inf\{\|y-x\|:y\in A\}$ to denote the distance between a
point $x\in\R^d$ and a set $A\subset\R^d$, where
$\inf\emptyset:=\infty$. Let $\varphi\in\bN$ and $x\in\R^d$.  We call
$p\in\varphi$ {\em the nearest neighbour} of $x$ in $\varphi$ if
$\|x-p\|\le \|x-q\|$ for all $q \in \varphi_{\{p\}^c}$. If there
is more than one such $p$, we take the lexicographically smallest
point to be the nearest neighbour. In any case, we set $N(x,\varphi) = p$ and for
completeness, define $N(x,\varphi) := \infty$ if
$\varphi=\emptyset$. Given $\varphi \in \bN$ and $x \in \R^d$, we
define the {\em Voronoi cell} of $x$ (with respect of $\varphi$) as
follows :
\begin{equation}
\label{e.vortes}
C(x,\varphi) := \{ y \in \R^d : N(y,\varphi) = N(x,\varphi) \}.
\end{equation}
If $\varphi \neq \emptyset$, these cells form a partition of $\R^d$.  

Given simple point processes $\Phi,\Gamma$ on $\R^d$, we define the random
measure $\Psi$ as a perturbation of $\Phi$ to its nearest-neighbour in
$\Gamma$. More formally, let 
$Y(x):=N(x,\Gamma)-x$, $x\in\R^d$ and
$$ \Psi := \sum_{x \in \Phi} \delta_{x + Y(x)} = \sum_{z \in \Gamma}\Phi(C(z,\Gamma))\delta_z.$$ 
The below propositions can be proven by Theorem \ref{tgen_mix_pert_pp}
and a straightforward construction of stopping set to determine
the nearest neighbour. 
\begin{proposition}
\label{p:nn2pp}
Let $\Phi,\Gamma$ be independent stationary point processes with non-zero intensities and having exponentially fast decay of correlations with the same decay function
$\delta$ and constants $C_k, k \in \N$ satisfying that
$C_k = O(k^{ak})$ for some $a < 1$. Assume that there exists a fast
decreasing function $\delta_1$ such that
$$ \BP(\Gamma(B_0(t)) = 0)  \leq \delta_1(t).$$
Define {\em weighted Voronoi-cell} measure $\Psi$ as above. Then, we have that 
$$\lim_{r \to \infty} \lambda_d(B_r)^{-1}\BV[\Psi(B_r)] = \lim_{r \to \infty} \lambda_d(B_r)^{-1}\BV[\Phi(B_r)].$$
\end{proposition}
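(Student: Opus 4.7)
The strategy is to realize $\Psi$ as $K\Phi$ for a suitable factor kernel and then verify the hypotheses of Theorem~\ref{tgen_mix_pert_pp}, followed by an appeal to Theorem~\ref{maintheoremmixing2}(ii). Define the invariant probability kernel
\[
\tilde K(\varphi,\mu,x,\cdot):=\delta_{N(x,\mu)},
\]
which depends only on $\mu$, and set $K_x:=\tilde K(\Phi,\Gamma,x,\cdot)=\delta_{N(x,\Gamma)}$. Translation covariance of the nearest-neighbour map yields invariance of $\tilde K$, and by construction $\Psi=K\Phi$. As the stopping set witnessing the locality of $\tilde K$ I take
\[
S(\varphi,\mu):=\overline{B_{d(0,\mu)}},
\]
the closed ball about the origin with radius the distance from $0$ to the nearest point of $\mu$.

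First I would check that $S$ satisfies the stopping set property \eqref{stopping1}: if $S(\mu)\subset B$ then the nearest point $N(0,\mu)$ lies in $B$, hence also lies in $\mu_B$ and continues to be the nearest point, so $S(\mu_B)=S(\mu)$; the converse is similar because any $q\in\mu\setminus B$ satisfies $\|q\|>d(0,\mu_B)$ once $S(\mu_B)\subset B$. Locality of $\tilde K$ as in~\eqref{e:KdetS} is immediate because $N(0,\mu)$ is the unique closest point of $\mu\cap S(\varphi,\mu)$ to the origin. The tail bound \eqref{stopbounds} is the main place where the independence of $\Phi$ and $\Gamma$ is used: the event $\{S(\Phi,\Gamma)\not\subset B_t\}$ equals $\{\Gamma(B_t)=0\}$, and since $\Gamma$ is independent of $\Phi$ the Palm distributions $\BP^{\Phi}_0$ and $\BP^{\Phi}_{0,y}$ preserve the law of $\Gamma$. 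Hence
\[
\max\bigl\{\BP^{\Phi}_{0}(S(\Phi,\Gamma)\not\subset B_t),\;\sup_{y\in\R^d}\BP^{\Phi}_{0,y}(S(\Phi,\Gamma)\not\subset B_t)\bigr\}=\BP(\Gamma(B_t)=0)\le \delta_1(t).
\]
The integrability condition \eqref{e:intvarphi1} then holds automatically because $\delta_1$ is fast-decreasing.

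With these ingredients Theorem~\ref{tgen_mix_pert_pp} gives $\int\kappa(y)\,\alpha_\Phi(\md y)<\infty$ for the mixing coefficient \eqref{e:kappa2} associated with our $K$. Fast decay of correlations of $\Phi$ implies both that $\Phi$ is locally square-integrable (the two-point correlation function $\rho^{(2)}$ is bounded by \eqref{e:corrbd}) and that $|\beta_\Phi|(\R^d)<\infty$, since $|\rho^{(2)}(x)-\gamma_\Phi^2|\le C\delta(\|x\|)$ is integrable on $\R^d$ by the fast-decreasing property. Applying Theorem~\ref{maintheoremmixing2}(ii) (either condition suffices, and additionally $W=B_1$ is Fourier smooth by \cite[Remark~3.5]{BH2024}) yields the claimed equality
\[
\lim_{r\to\infty}\lambda_d(B_r)^{-1}\BV[\Psi(B_r)]=\lim_{r\to\infty}\lambda_d(B_r)^{-1}\BV[\Phi(B_r)].
\]

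The only nontrivial point in this plan is the clean transfer of the void probability bound through Palm conditioning, which is handled for free by the independence of $\Phi$ and $\Gamma$; everything else is verification of definitions.
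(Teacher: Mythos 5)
Your proof is correct and essentially coincides with the paper's: the same stopping set (ball of radius $d(0,\Gamma)$), the same observation that $\{S\not\subset B_t\}=\{\Gamma(B_t)=0\}$ passes through the $\Phi$-Palm measures unchanged by independence, and the same invocation of Theorem~\ref{tgen_mix_pert_pp}. Your added verification that fast decay gives $|\beta_\Phi|(\R^d)<\infty$ is a harmless bonus; the paper leaves that implicit since Theorem~\ref{tgen_mix_pert_pp} already transmits the relevant conclusion of Theorem~\ref{maintheoremmixing2}.
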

\begin{proof}
Since the other assumptions are as in Theorem \ref{tgen_mix_pert_pp}, 
we only need to provide the construction
of an appropriate stopping set for the transport kernel
    $K(x) = \delta_{x+Y(x)}$ and verify \eqref{stopbounds}.

For $x \in \varphi$, we set $S(x,\varphi,\mu) := S(x,\mu) := B(x,|N(x,\mu) - x|)$ with the understanding that
$B(x,\infty) = \R^d$. (There is no dependence
on $\varphi$.)
Given $x\in\R^d$, the mapping $(\varphi,\mu)\mapsto S(x,\mu)$  
is measurable and satisfies \eqref{stopping2}. Hence it is a stopping set.
Further, observe that
since $\Phi$ and $\Gamma$ are independent, we obtain that for any $y \in \R^d$,
$$ 
\BP_{0}(S(0,\Gamma) \not\subset B_t) = \BP_{0,y}(S(0,\Gamma) \not\subset B_t) = \BP(\Gamma(B_t) = 0),
$$
where $\BP_{0},\BP_{0,y}$ are Palm probabilities with respect to
$\Phi$. With this observation, the void probability assumption on $\Gamma$ and Theorem \ref{tgen_mix_pert_pp}, the
proof is complete.
\end{proof}
In the same spirit as above, one can also consider nearest-neighbour
shifts within a point process as follows. Again, let
$\Phi$ be a stationary point process and define
$Y(x):=N(x,\Phi_{\{x\}^c})-x$, $x\in\R^d$ i.e., $x + Y(x)$ is
the nearest-neighbour of $x$ in $\Phi$ excluding itself. In the
trivial case of $\Phi = \{x\}$, we have that $Y_x = \infty$. Thus,
we have that the perturbed measure is
$$ 
\Psi := \sum_{x \in \Phi} \delta_{x + Y(x)} = \sum_{z \in \Phi}\Phi(A(z,\Phi))\delta_z,
$$ 
where
$A(z,\varphi) = \{ x \in \varphi : x \neq z, N(x,\varphi_{\{x\}^c}) = z \}$, the points whose nearest neighbour is
$z$. Now, we have the following proposition whose proof is similar to
that of Proposition \ref{p:nn2pp} but with suitable modifications.
\begin{proposition}
\label{p:nn1pp}
Let $\Phi$ be a stationary point process with non-zero intensity and having exponentially fast decay
of correlations with the decay function $\delta$ and constants
$C_k$, $k \in \N$, satisfying that $C_k = O(k^{ak})$ for some 
$a<1$. Assume that there exists a fast decreasing function
$\delta_1$ such that
\begin{align*}
\max\{ \BP^!_0(\Phi(B_t) = 0), \sup_{y \in \R^d}\BP^!_{0,y}(\Phi(B_t) = 0)\} \leq \delta_1(t),
\end{align*}
where $\BP_0$ and $\BP_{0,y}$ are the Palm probability measures of $\Phi$.
Define the {\em weighted Voronoi-cell} measure $\Psi$ as above. Then, we have that 
$$\lim_{r \to \infty} \lambda_d(B_r)^{-1}\BV[\Psi(B_r)] = \lim_{r \to \infty} \lambda_d(B_r)^{-1}\BV[\Phi(B_r)].$$
\end{proposition}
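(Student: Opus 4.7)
The plan is to apply Theorem \ref{tgen_mix_pert_pp} in the degenerate form where the transport kernel depends only on $\Phi$. Since that theorem is phrased in terms of two independent point processes $(\Phi,\Gamma)$, I would introduce an auxiliary stationary Poisson process $\Gamma$ of intensity $\gamma$, independent of $\Phi$; Poisson has finite-range dependence ($\delta\equiv 0$) with $C_k\equiv 1$, so the joint hypotheses on $(\Phi,\Gamma)$ reduce to those placed on $\Phi$ in the statement. The invariant probability kernel is $\tilde{K}(\varphi,\mu,x,\cdot):=\delta_{N(x,\varphi_{\{x\}^c})}$ when $x\in\varphi$ and $\varphi-\delta_x\ne o$, and an arbitrary fixed probability measure otherwise; in particular $\tilde K$ does not use $\mu$, and the associated destination $K\Phi$ coincides almost surely with $\Psi$.

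For the stopping set I would take $S(\varphi,\mu):=B(0,\|N(0,\varphi_{\{0\}^c})\|)$ if $\varphi_{\{0\}^c}\ne o$ and $S(\varphi,\mu):=\R^d$ otherwise. This depends only on $\varphi$, and exactly as in the proof of Proposition \ref{p:nn2pp} it satisfies the stopping-set property \eqref{stopping2}, while the locality identity \eqref{e:KdetS} holds by construction of $\tilde K$.

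The main obstacle, and the only substantive difference from Proposition \ref{p:nn2pp}, is the tail bound \eqref{stopbounds}: under the Palm conditioning the points of $\Phi$ fixed at $0$ and $y$ themselves participate in the nearest-neighbour search. I would observe that $\{S(\Phi,\Gamma)\not\subset B_t\}=\{\Phi_{\{0\}^c}(B_t)=0\}$. Under $\BP^\Phi_0$ this becomes $\BP^{!\Phi}_0(\Phi(B_t)=0)\le\delta_1(t)$ directly from the hypothesis. Under $\BP^\Phi_{0,y}$ a case split is needed: if $\|y\|\le t$, the conditioned point at $y$ is itself a candidate nearest neighbour of $0$, forcing $S\subset B_{\|y\|}\subset B_t$ deterministically, so the event has probability $0$; if $\|y\|>t$, the event is equivalent to $(\Phi-\delta_0-\delta_y)(B_t)=0$, of probability $\BP^{!\Phi}_{0,y}(\Phi(B_t)=0)\le\delta_1(t)$. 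In either case the bound holds with the same $\delta_1$.

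Finally, because $\delta_1$ is fast-decreasing, the integrability condition \eqref{e:intvarphi1} is automatic for any admissible $\beta$ from Theorem \ref{tgen_mix_pert_pp}. That theorem then verifies the mixing hypothesis of Theorem \ref{maintheoremmixing2}, and part (ii) applied to the unit ball (which is Fourier smooth, see the remark following \eqref{Fsmooth}) yields the claimed equality $\lim_{r\to\infty}\lambda_d(B_r)^{-1}\BV[\Psi(B_r)]=\lim_{r\to\infty}\lambda_d(B_r)^{-1}\BV[\Phi(B_r)]$ whenever one side exists.
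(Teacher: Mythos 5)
The proposal is correct and follows exactly the route the paper indicates (the paper itself defers the proof, saying only that it is ``similar to that of Proposition \ref{p:nn2pp} but with suitable modifications''). You correctly identify the two required modifications: replacing the stopping set $B(0,\|N(0,\Gamma)\|)$ by $B(0,\|N(0,\Phi_{\{0\}^c})\|)$, and, because the Palm-conditioned points $0$ and $y$ now participate in the nearest-neighbour search, deriving the bound \eqref{stopbounds} from the reduced Palm void-probability hypothesis via the case split $\|y\|\le t$ (event has probability zero since $y\in\Phi_{\{0\}^c}\cap B_t$) versus $\|y\|>t$ (event reduces to $\BP^!_{0,y}(\Phi(B_t)=0)\le\delta_1(t)$), with the auxiliary independent Poisson $\Gamma$ serving merely to fit the two-process statement of Theorem \ref{tgen_mix_pert_pp}.
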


\subsection{Lloyd's algorithm}
\label{s:lloyd}
Given a convex bounded set $A \subset \R^d$, we denote its centroid/center of mass by $\Ce(A) := \frac{1}{\lambda_d(A)}\int_A x \, \md x$. Given $\varphi \in \bN$, define the centroidal shift by $\Ce(x,\varphi) := \Ce(C(x,\varphi)),$ with $C(x,\varphi)$ being the Voronoi cell as in \eqref{e.vortes}. Now define inductively a sequence of counting measures successively perturbing each point to the centroid of its Voronoi cell as follows: 
\begin{equation}
 \label{e:Lloyditeration}   
 \varphi_0 := \varphi, \varphi_k := \sum_{x \in \varphi_{k-1}} \delta_{\Ce(x,\varphi_{k-1})}, k \geq 1.
 \end{equation}
We call $\varphi_k$ the $k$-th iterate of {\em Lloyd's algorithm}  applied to $\varphi$; see~\cite{K19} and references therein. Since the Voronoi cells have disjoint interiors, all $\varphi_k$'s are simple. The algorithm is used to obtain (at least approximately) centroidal Voronoi
tessellations, for which the Voronoi center coincides with the center of mass for each cell. The upcoming proposition proves the heuristic argument from \cite[Supplementary Note~8]{K19} that Lloyd's algorithm cannot alter the value of the asymptotic variance for any finite number of iterations when starting with a Poisson process.
This value may, however, change in the limit of an infinite number of
iterations (as for random organization in Sec.~\ref{s:bddperturbations}).
Such a weak convergence is indeed suggested by simulations (even though the
exact limit value is difficult to assess)~\cite{K19}.
\begin{proposition}
\label{p:lloyd}
Let $\Phi$ be a stationary Poisson process of non-zero intensity $\gamma$. For $k \in \N$, define $\Phi_k$ as the $k$-th iterate of Lloyd's algorithm as in \eqref{e:Lloyditeration} applied to $\Phi$.  Then, for all $k \in \N$, we have that 
$$\lim_{r \to \infty} \lambda_d(B_r)^{-1}\BV[\Phi_k(B_r)] = \lim_{r \to \infty} \lambda_d(B_r)^{-1}\BV[\Phi(B_r)] = \gamma.$$
\end{proposition}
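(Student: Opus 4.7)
The plan is to realize $\Phi_k$ as an allocation transport $\tau_k\Phi$ of $\Phi$ and then verify the integrability of the mixing coefficient \eqref{kappaspecial} via the stopping-set framework of Theorem \ref{tgen_mix_pert_pp}, so that Corollary \ref{c:alloc} applies. Set $\tau_0(\omega,x):=x$ and, recursively for $j=1,\ldots,k$, $\tau_j(\omega,x):=\Ce(C(\tau_{j-1}(\omega,x),\tau_{j-1}\Phi(\omega)))$, interpreted via \eqref{e:allocaltion transport}. Each $\tau_j$ is translation equivariant in the sense of \eqref{allocation}, satisfies $\BP^\Phi_0(\tau_j(0)\ne\infty)=1$, and yields $\tau_k\Phi=\Phi_k$. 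Since $\beta_\Phi=\gamma\delta_0$ for the Poisson process, its asymptotic variance equals $\gamma$, and Corollary \ref{c:alloc} will conclude once \eqref{maintheoremmixingcondition} is established for the mixing coefficient $\kappa$.

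The Poisson correlation functions factor exactly, $\rho^{(n)}\equiv\gamma^n$, so \eqref{defn:admissible} holds with $\delta\equiv 0$ and constants $C_n=\gamma^n\le n!\,(\max(\gamma,1))^n$. Thus $\Phi$ is finite-range dependent with $r_0=0$ in the sense of the final paragraph of Theorem \ref{tgen_mix_pert_pp}. Taking $\Gamma$ to be an independent Poisson process of intensity $\gamma$ (whose role is purely formal, as the kernel below ignores it), the bound \eqref{e:kappadelcompsupp} yields $\kappa(y)\le 4\delta_1(\|y\|/3)$ for all $y\in\R^d$, where $\delta_1$ controls the stopping-set tails \eqref{stopbounds}. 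Since $\alpha_\Phi=\gamma^2\lambda_d+\gamma\delta_0$, the integrability $\int\kappa(y)\,\alpha_\Phi(\md y)<\infty$ reduces to showing that $\delta_1$ is fast-decreasing.

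Next I construct a stopping set $S_k(\varphi)$ that determines $\tilde{K}(\varphi,\mu,0,\cdot):=\delta_{\tau_k(\varphi,0)}$ in the sense of \eqref{e:KdetS}, by induction on $k$. Write $R(x,\varphi)$ for the circumradius of the Voronoi cell $C(x,\varphi)$. Set $S_1(\varphi):=B(0,2R(0,\varphi))$; this satisfies \eqref{stopping2}, contains and determines $\tau_1(0,\varphi)$. For $j\ge 2$, let $R^{(j-1)}(\varphi)$ denote the circumradius of $C(\tau_{j-1}(0,\varphi),\Phi_{j-1})$ and let $Y_{j-1}(\varphi)\subset\varphi$ be the finite set of $y\in\varphi$ whose $(j-1)$-st iterate lies within $2R^{(j-1)}(\varphi)$ of $\tau_{j-1}(0,\varphi)$; define
\begin{equation*}
S_j(\varphi):=B\bigl(\tau_{j-1}(0,\varphi),2R^{(j-1)}(\varphi)\bigr)\cup\bigcup_{y\in Y_{j-1}(\varphi)}\bigl(y+S_{j-1}(\theta_y\varphi)\bigr).
\end{equation*}
Verifying \eqref{stopping2} inductively shows that $S_j$ is a stopping set determining $\tau_j(0,\varphi)$. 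Under Slivnyak's theorem, $\BP^\Phi_0(R(0,\Phi)>t)\le c e^{-c' t^d}$ by the classical empty-half-space estimate, and the same bound holds under $\BP^\Phi_{0,y}$ uniformly in $y$ since adding one more point only shrinks the cell at $0$. A union bound over the random (but uniformly controlled) set $Y_{j-1}$, combined with the inductive tail bound on $\diam(S_{j-1})$, then propagates a super-polynomial tail from level $j-1$ to level $j$.

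The main obstacle is the bookkeeping of this recursion: at each level $S_j$ is a union of translated stopping sets at many base points $y$, and one must check uniformity of the Palm-within-Palm tails on circumradii and on the cardinalities of the sets $Y_i$. This is manageable thanks to the key Poisson feature that all iterated Palm distributions appearing in the analysis are Poisson plus at most two deterministic points (bi-Slivnyak), so the stretched-exponential circumradius bound holds uniformly along the entire recursion, and a finite iteration of union bounds preserves the faster-than-polynomial decay of $\diam(S_k)$. Once $\delta_1$ is established to be fast-decreasing, $\int\kappa(y)\,\alpha_\Phi(\md y)<\infty$ follows and Corollary \ref{c:alloc} gives $\lim_{r\to\infty}\lambda_d(B_r)^{-1}\BV[\Phi_k(B_r)]=\gamma$ for every $k\in\N$.
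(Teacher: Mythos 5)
Your high-level strategy is the same as the paper's: cast $\Phi_k$ as an allocation transport of $\Phi$, invoke Corollary~\ref{c:alloc}, and verify the mixing integrability \eqref{maintheoremmixingcondition} through the stopping-set framework of Theorem~\ref{tgen_mix_pert_pp}, exploiting the exact factorization of Poisson correlation functions. The place where you diverge is the stopping-set construction, and that is where your argument has a genuine gap.

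You define $Y_{j-1}(\varphi)$ as the set of $y\in\varphi$ whose $(j-1)$-st iterate lands within $2R^{(j-1)}$ of $\tau_{j-1}(0)$, and then set $S_j$ to be the union of $B(\tau_{j-1}(0),2R^{(j-1)})$ with $y+S_{j-1}(\theta_y\varphi)$ for $y\in Y_{j-1}$. This creates a circular dependency that you cannot dismiss as mere bookkeeping. To decide whether $y\in Y_{j-1}$, you must know $y^{(j-1)}$, which is determined by $\varphi\cap\bigl(y+S_{j-1}(\theta_y\varphi)\bigr)$; but that set is only included in $S_j$ for $y\in Y_{j-1}$. For $y\notin Y_{j-1}(\varphi)$, modifying $\varphi$ outside $S_j(\varphi)$ can in principle alter $y^{(j-1)}$ and promote $y$ into $Y_{j-1}$, which changes $R^{(j-1)}$, the cell of $\tau_{j-1}(0)$ in $\varphi_{j-1}$, and hence $\tau_j(0)$ itself. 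So \eqref{stopping2} is not established by your construction; without it, \eqref{e:KdetS} and \eqref{stopbounds} have no basis and Theorem~\ref{tgen_mix_pert_pp} does not apply. Your paragraph acknowledging ``the main obstacle is bookkeeping'' underestimates the problem: the issue is a failure of self-determinacy, not just tail estimates.

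The paper avoids exactly this trap by using a cone-based radius, adapted from Penrose. At step $k$ it covers $\R^d\setminus\{0\}$ with cones $H_i$ of small angular radius about $0^{(k-1)}$ and takes $R_k$ as the maximum over cones of the smallest enclosing radius of some previous-level stopping set $S_{k-1}(z)$ entirely inside the cone. Because this is an infimum over $z$, the quantity $R_k$ is determined by \emph{near} points and is insensitive to adding points far away (a far-away $z$ never realizes the infimum). That directional, monotone structure is what makes $S_k$ genuinely a stopping set (Lemma~\ref{l:Skstoppingset}) while still guaranteeing $C(0^{(k-1)},\varphi_{k-1})\subset B(0^{(k-1)},R_k)$, since each cone contains a point of $\varphi_{k-1}$. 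To salvage your approach you would have to replace the circumradius and the iterate-defined set $Y_{j-1}$ by a criterion that is monotone under adding outside points, which is precisely what the cone construction buys.
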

To analyse a single iterate of the Lloyd's algorithm,
  we need to define a stopping set for Voronoi cells of a Poisson
  process and this is classically done using the Voronoi flower
  construction; see proof of Theorem \ref{t:HUrandvol} or \cite{Zu99} . 
However, we will borrow
  a coarser construction from \cite[Section
  5.1]{PenroseLLN2007} and more importantly, we can
  adapt it suitably to construct stopping sets for multiple
  iterates of Lloyd's algorithm.
\begin{proof}
Define $Y: \bN \to \R^d$ as $Y(\varphi) := \Ce(0,\varphi)$.  Given $x \in \varphi$,  define recursively $x^{(k)}, \varphi_k,  k \geq 1$ as follows. $x^{(0)} := x$,  $x^{(k)} :=  x^{(k-1)} + Y(\varphi_{k-1} - x^{(k-1)})$ where $\varphi_k := \{x^{(k)}\}_{x \in \varphi}$ for $k \in \N$.  We set the transport kernel to be $K^{(k)}(x) := \delta_{x^{(k)}}$. Thus by the recursive nature of the definition of \eqref{e:Lloyditeration},  we can verify inductively that
$$ \Phi_k =  \sum_ {X \in \Phi} \delta_{X^{(k)}} = K^{(k)}\Phi.$$  
We only need to provide the construction of an appropriate stopping set for the transport kernel $K^{(k)}(x)$ satisfying \eqref{stopbounds}, as other assumptions in Theorem \ref{tgen_mix_pert_pp} hold trivially. 

\paragraph{\ul{Stopping set construction:}} Let $\varphi \in \bN$ and suppose $0 \in \varphi$. We shall now recursviely construct $S_k := S_k(0,\varphi)$ and as before, we set $S_k(x) := S_k(0,\varphi-x) + x, x \in \varphi$. Trivially, set $S_0 := \{0\}$. We first start with defining $S_1 = S$.

Let $H_i, 1 \leq i \leq m$ be a finite collection of infinite cones with apex at $0$ (but not containing $0$) and angular radius $\pi/12$ such that $\R^d \setminus \{0\} = \cup_{i=1}^m H_i$. Let $R_1 := R_1(0, \varphi)$ denote the maximum distance of $0$ to the nearest points of $\varphi$ in the cones $H_i$:
$$R_1(0,\varphi) := \max_{i=1,\ldots,m} \inf \{ r : \varphi \cap B_r \cap H_i \neq \emptyset \}.$$
Set $S(0) := S(0,\varphi) := B(0,R_1)$. Verifying \eqref{stopping2}, we have that $S(0,\varphi)$ is a stopping set. By geometric considerations and stopping set property of $S$, we shall now show that $C(0,\varphi) \subset S(0,\varphi)$ and $C(0,\varphi)$ remains unaffected by changes outside $S(0,\varphi)$. Indeed we have that $C(0,\varphi) \cap H_i \subset B_r$ if $\varphi \cap B_r \cap H_i \neq \emptyset$ and this gives that $C(0,\varphi) \subset S(0,\varphi)$. Also,
$$ \|x\| \leq \inf_{z \in \varphi} \|z-x\| \, \text{ iff }  \|x\| \leq \min_{y \in \varphi \cap S(0,\varphi)} \|x-y\|, \quad x \in \R^d.$$
Thus $x \in C(0,\varphi)$ iff $\|x\| \leq  \min_{y \in \varphi \cap S(0,\varphi)} \|x-y\|$. Hence $C(0,\varphi) = C(0,\varphi \cap S(0,\varphi))$ and since $Y(\varphi)$ is determined by $C(0,\varphi)$, we have also that $Y(\varphi) = Y(\varphi \cap S(0,\varphi))$. In other words $C(0,\varphi), Y(0,\varphi)$ are determined by $\varphi \cap S(0,\varphi)$.

Note that by definition $S(y,\varphi) := y + S(0,\varphi-y) = B(y,R_1(y))$ is the Voronoi stopping set associated to $y$ for $y \in \varphi$ where $R_1(y) := R_1(y,\varphi) : = R_1(0,\varphi-y)$. A similar convention applies to the forthcoming stopping sets $S_k$'s and radii $R_k$'s. Additionally, we set $H_i(x) := x + H_i$ for $1 \leq i \leq m,x \in \R^d$. Thus, $y^{(1)} \in S(y,\varphi)$. Hence for a compact set $B$, it holds that $y^{(1)} \in B$ if $S(y,\varphi) \subset B$. The latter event is determined by $\varphi \cap B$ as $S(y,\phi)$ is a stopping set and so $y^{(1)} \in B$ is also determined by $\varphi \cap B$. Recall that we denote the iterates of $0$ under Lloyd's algorithm as $0^{(1)},\ldots,0^{(k)},\ldots$ 

In order to prepare for our recursive definition, we shall rewrite definition of $R_1$ differently as follows. Recall that  $S_0(z) = \{z\}, z^{(0)} = z$ for all $z \in \R^d$. Thus $R_1$ can be equivalently defined as
$$ R_1 = R_1(0,\varphi) = \max_{i=1,\ldots,m} \inf_{z \in \varphi} \{ \sup_{y \in S_0(z)} |y - 0^{(0)}| : \, S_0(z) \subset H_i(0^{(0)}) \},$$
and $S_1 = S = B_{R_1}$. We now define $S_2(0) :=  B(0^{(1)},R_2) \cup B_{R_1}$ where 
$$ R_2 := R_2(0,\varphi) := \max_{i=1,\ldots,m} \inf_{z \in \varphi} \{ \sup_{y \in S_1(z)} |y - 0^{(1)}| : \, S_1(z) \subset H_i(0^{(1)}) \}.$$
Observe that $S_1 \subset S_2$. Now, iteratively, we define $S_{k}(0) := S_{k-1}(0) \cup B(0^{(k-1)},R_k)$, with $R_{k}$ defined via
$$ R_{k} := R_{k}(0,\varphi) :=   \max_{i=1,\ldots,m} \inf_{z \in \varphi} \{ \sup_{y \in S_{k-1}(z)} |y- 0^{(k-1)}| : S_{k-1}(z) \subset H_i(0^{(k-1)}) \}.$$
Note that we have suppressed $\varphi$ in $S_{k-1}$ on the RHS but evidently the iterates $0^{(1)},\ldots,0^{(k)}$ depend on $\varphi$ and so do $R_k$ and $S_k$.Note that when $\varphi$ is taken to be a point process $\Phi$, the above elements $S_k(\cdot),R_k(\cdot), y^{(k)}$ are random elements.

By definition $S_k$ is monotonic increasing. We will now show that $S_k$ is a stopping set that determines the $k$-th iterate of Lloyd's algorithm as well as satisfies the probability bounds required for the application of Theorem \ref{tgen_mix_pert_pp}. 
\begin{lemma}
\label{l:Skstoppingset}
For all $k \in \N$, $S_k$ is a stopping set and determines $C(0^{(k-1)},\varphi_{k-1})$ (i.e., the Voronoi cell of the origin in the $k$-th iterate of Lloyd's algorithm) and hence $0^{(k)}$ and $K^{(k)}(0)$ as well.
\end{lemma}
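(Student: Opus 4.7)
The plan is to proceed by strong induction on $k$. The base case $k=1$ was essentially handled in the construction preceding the statement: $S_1=B_{R_1}$ satisfies the stopping set condition \eqref{stopping2} by a direct check, the $\pi/12$-cone argument shows $C(0,\varphi)\subset S_1$ and that $C(0,\varphi)$ is unaffected by changes to $\varphi$ outside $S_1$, and hence $0^{(1)}=\Ce(C(0,\varphi))$ is a measurable function of $\varphi\cap S_1$.

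For the inductive step, I would assume the claim for $1,\ldots,k-1$: for every $z\in\varphi$, $S_{k-1}(z)$ is a stopping set, and both $z^{(k-1)}$ and the cell $C(z^{(k-1)},\varphi_{k-1})$ depend only on $\varphi\cap S_{k-1}(z)$. First I would verify that $S_k(0)=S_{k-1}(0)\cup B(0^{(k-1)},R_k)$ satisfies \eqref{stopping2}. By induction, $0^{(k-1)}$ is determined by $\varphi\cap S_{k-1}(0)\subset\varphi\cap S_k(0)$. Once $0^{(k-1)}$ is fixed, for each candidate $z$ the predicate ``$S_{k-1}(z)\subset H_i(0^{(k-1)})$'' and the value $\sup_{y\in S_{k-1}(z)}\|y-0^{(k-1)}\|$ are determined by $\varphi\cap S_{k-1}(z)$; only those $z$ with $S_{k-1}(z)\subset S_k(0)$ can contribute to the infimum, so $R_k$, and hence $S_k(0)$ itself, is determined by $\varphi\cap S_k(0)$.

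Next I would show that $S_k(0)$ determines $C(0^{(k-1)},\varphi_{k-1})$. Since $z^{(k-1)}\in S_{k-1}(z)$ for every $z$, the definition of $R_k$ guarantees that in every cone $H_i(0^{(k-1)})$ there is some $z\in\varphi$ with $z^{(k-1)}\in H_i(0^{(k-1)})\cap \overline{B}(0^{(k-1)},R_k)$. The standard Voronoi--cone estimate (the same one used implicitly for $R_1$) then gives $C(0^{(k-1)},\varphi_{k-1})\cap H_i(0^{(k-1)})\subset \overline{B}(0^{(k-1)},R_k)$, so $C(0^{(k-1)},\varphi_{k-1})\subset S_k(0)$. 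The cell is determined by the positions of those iterates $y^{(k-1)}$, $y\in\varphi$, with $y^{(k-1)}$ within $2R_k$ of $0^{(k-1)}$, and by induction each such $y^{(k-1)}$ is a measurable function of $\varphi\cap S_{k-1}(y)$. Since $0^{(k)}=\Ce(C(0^{(k-1)},\varphi_{k-1}))$ and $K^{(k)}(0)=\delta_{0^{(k)}}$, the inductive step then follows.

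The main obstacle, as I see it, will be verifying that for every such relevant $y$ the whole of $S_{k-1}(y)$ actually sits inside $S_k(0)$, so that the inductive determination of $y^{(k-1)}$ takes place within $S_k(0)$. The narrow cone half-angle $\pi/12$ is chosen precisely to provide enough angular slack for this: it forces any $z$ that realises the infimum in a cone $H_i(0^{(k-1)})$ to have its full stopping set $S_{k-1}(z)$ pulled into the cone, and hence into $S_k(0)$. Making this rigorous amounts to inductively controlling the diameter of $S_{k-1}(z)$ relative to the displacements $|z^{(j)}-z^{(j-1)}|$, $j\leq k-1$, and applying elementary Euclidean geometry in the cones; this is the step I would write out most carefully.
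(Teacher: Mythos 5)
Your proof plan follows the same inductive scheme as the paper's proof: a base case $k=1$ (which the paper handles in the text preceding the lemma), and an inductive step in which one establishes both the stopping-set property of $S_k$ and the fact that $\varphi\cap S_k(0)$ determines $C(0^{(k-1)},\varphi_{k-1})$. The two proofs differ in the order of the two sub-steps (the paper does determination first, then stopping set), but that is cosmetic. Your argument for the stopping-set property of $S_k$ is correct and slightly more explicit than the paper's: you observe that once $0^{(k-1)}$ is pinned down by $\varphi\cap S_{k-1}(0)$, only $z$ with $S_{k-1}(z)\subset H_i(0^{(k-1)})\cap B(0^{(k-1)},R_k)\subset S_k(0)$ can realise or undercut the infimum defining $R_k$, and their data live entirely inside $S_k(0)$ by the inductive stopping-set hypothesis; the paper essentially asserts this with the single observation that $0^{(k-1)}$ is determined.

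The substantive difference is in the determination step. You correctly flag the delicate point: to recompute the Voronoi cell of $0^{(k-1)}$ from $\varphi\cap S_k(0)$, one must be able to recompute the positions of all iterates $y^{(k-1)}$ that are relevant to the cell, and by induction $y^{(k-1)}$ is only recoverable if $S_{k-1}(y)\subset S_k(0)$. The paper's own proof is terse here: having established $C(0^{(k-1)},\varphi_{k-1})\subset B(0^{(k-1)},R_k)$ via the cone witnesses $z_i$ and the inequality $R_1(0^{(k-1)},\varphi_{k-1})\le R_k$, it directly concludes that the cell, $0^{(k)}$, and $K^{(k)}(0)$ are ``determined by $S_k(0)$'' without spelling out why every relevant iterate is visible. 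So the concern you raise is a real one that the paper's write-up also leaves implicit.

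Your proposed way of closing this is, however, aimed at the wrong target. You argue that the $\pi/12$ cones ``pull'' $S_{k-1}(z)$ into the cone for the $z$ realising the infimum; but for those $z$ this inclusion holds by fiat, since the infimum in the definition of $R_k$ runs only over $z$ with $S_{k-1}(z)\subset H_i(0^{(k-1)})$, and the reach bound $\le R_k$ then gives $S_{k-1}(z)\subset H_i(0^{(k-1)})\cap B(0^{(k-1)},R_k)$. The genuine obstacle concerns a \emph{different} class of points: a $y\in\varphi$ whose iterate $y^{(k-1)}$ happens to land in $B(0^{(k-1)},R_k)$ even though $S_{k-1}(y)$ does not fit in any single cone $H_i(0^{(k-1)})$ and may stick out of $S_k(0)$. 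Such a $y$ contributes nothing to the infimum but its iterate could, a priori, be a Delaunay neighbour of $0^{(k-1)}$ in $\varphi_{k-1}$. Controlling $\mathrm{diam}(S_{k-1}(y))$ via the cumulative displacements, as you suggest, does not by itself force $S_{k-1}(y)\subset S_k(0)$, since $y$ itself may be far from $0^{(k-1)}$ while $y^{(k-1)}$ is close. So the ``main obstacle'' you identify is correctly located, but the sketch you give for overcoming it does not engage with it; that part would need to be reworked before your proof could be completed.
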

\begin{proof}
By the definition of $R_k$, we find that for all $i \in \{1,\ldots,m\}$ there exists $z_i \in \varphi$ with $S_{k-1}(z_i) \subset H_i(0^{(k-1)}) \cap B(0^{(k-1)},R_k)$. This implies that $z_i^{(k-1)} \in S_{k-1}(z_i) \subset H_i(0^{(k-1)}) \cap B(0^{(k-1)},R_k)$  and furthermore $z_i^{(k-1)} \in \varphi_{k-1}$. So $R_1(0^{(k-1)},\varphi_{k-1}) \leq R_k(0,\varphi)$ which implies that $S(0^{(k-1)},\varphi_{k-1}) \subset B(0^{(k-1)},R_k)$ and hence $C(0^{(k-1)},\varphi_{k-1}) \subset B(0^{(k-1)},R_k)$. Thus $C(0^{(k-1)},\varphi_{k-1})$, $0^{(k)}$ and $K^{(k)}(0)$ are all determined by $S_{k-1}(0) \cup B(0^{(k-1)},R_k) = S_k(0)$. It now remains to show the stopping set property of $S_k$.  We shall again use \eqref{stopping2} to verify the stopping set property.

As discussed after defining $R_1$, the stopping set claim for $l=1$ holds and so we now assume that the claim holds upto $l - 1$ for some $l \geq 2$. Now for such $l$, observe that
$$R_k(0,\varphi)=R_k(0,(\varphi \cap S_k(0,\varphi)) \cup (\varphi'\cap S_k(0,\varphi)^c)), \quad \varphi,\varphi'\in\bN_s,$$
as $0^{(k-1)}$ is determined by $\varphi \cap S_{k-1}(0,\varphi) \subset \varphi \cap S_{k}(0,\varphi)$. So, by definition of $S_k$, we can verify \eqref{stopping2} for $S_k$ and thus the claim of $S_k$ being a stopping set follows.  
\end{proof}

\paragraph{\ul{Stopping set estimates:}} Now we shall again recursively derive the tail estimates necessary for application of Theorem \ref{tgen_mix_pert_pp}. 
\begin{lemma} \label{l:Skstopsetprob} For all $k \in \N$, there exists $b'_k,b_k > 0$ (depending on $m,d$) such that $\BP_0(S_k(0) \subsetneq B_t) \leq b'_ke^{-b_kt^d}$ for all $t > 0$. 
\end{lemma}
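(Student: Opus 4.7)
The proof is by induction on $k$, leveraging the recursion $S_k(0) = S_{k-1}(0) \cup B(0^{(k-1)}, R_k)$ together with the containment $0^{(k-1)} \in S_{k-1}(0)$, which is readily verified since $0^{(k-1)}$ is the centroid of the Voronoi cell $C(0^{(k-2)},\varphi_{k-2}) \subset B(0^{(k-2)}, R_{k-1}) \subset S_{k-1}(0)$. For the base case $k=1$, $S_1(0) = B_{R_1}$, so Slivnyak's theorem for the Poisson process gives
\begin{equation*}
\BP_0(S_1(0) \not\subset B_t) = \BP_0(R_1 > t) \leq \sum_{i=1}^m \BP(\Phi(H_i \cap B_t) = 0) = m \exp(-\gamma v_{\pi/12}\, t^d),
\end{equation*}
where $v_{\pi/12}$ is the volume of a cone of angular radius $\pi/12$ intersected with the unit ball.

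For the inductive step with constants $b'_{k-1}, b_{k-1}$, the containment $0^{(k-1)} \in S_{k-1}(0)$ implies that on $\{S_{k-1}(0) \subset B_{t/2}\}$ we have $\|0^{(k-1)}\| \leq t/2$, and then $R_k \leq t/2$ forces $B(0^{(k-1)}, R_k) \subset B_t$. Hence
\begin{equation*}
\BP_0(S_k(0) \not\subset B_t) \leq \BP_0(S_{k-1}(0) \not\subset B_{t/2}) + \BP_0(R_k > t/2,\, S_{k-1}(0) \subset B_{t/2}),
\end{equation*}
the first summand being controlled by the inductive hypothesis. Fix sub-cones $H'_i \subset H_i$ of half the angular radius, and a geometric constant $c_0 \in (0,1)$ with $B(y, c_0\|y\|) \subset H_i$ for $y \in H'_i$. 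A sufficient condition for $R_k \leq t/2$ is that for each $i$ there exists $z \in \Phi$ with $z - 0^{(k-1)} \in H'_i$, $\|z - 0^{(k-1)}\| \in [t/8, t/4]$, and $S_{k-1}(z) \subset B(z, c_0 t/8)$; any such $z$ satisfies $B(z, c_0 t/8) \subset H_i(0^{(k-1)}) \cap B(0^{(k-1)}, t/2)$.

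To exploit Poisson independence, restrict further to $A := \{\|0^{(k-1)}\| \leq \varepsilon t\}$ for a small $\varepsilon > 0$; the complement $A^c$ lies in $\{S_{k-1}(0) \not\subset B_{\varepsilon t}\}$, which is exponentially small in $t^d$ by induction. On $A$, after tightening to a smaller sub-cone $H''_i \subset H'_i$ and slightly shrinking the annulus (to absorb the $O(\varepsilon t)$ wobble of the apex), every $z$ in the \emph{deterministic} region $\tilde N_i := H''_i \cap \{r_1 t \leq \|y\| \leq r_2 t\}$ automatically satisfies the sub-cone and distance conditions relative to $0^{(k-1)}$. It thus suffices to bound
\begin{equation*}
\BP_0(\Phi \cap \tilde N_i = \emptyset) + \BE_0\bigl[\#\{z \in \Phi \cap \tilde N_i : S_{k-1}(z) \not\subset B(z, c_0 t/8)\}\bigr].
\end{equation*}
The first summand is a Poisson void probability on a fixed region of volume $\gtrsim t^d$ not containing the origin, hence at most $\exp(-c \gamma t^d)$. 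For the second, the Mecke equation combined with Slivnyak's theorem and translation invariance reduces the expectation to $\gamma \int_{\tilde N_i} \BP(S_{k-1}(0;\, \Phi + \delta_{-z} + \delta_0) \not\subset B_{c_0 t/8})\,\md z$. The stopping-set axiom \eqref{stopping1} ensures that on $\{S_{k-1}(0;\, \Phi + \delta_0) \subset B_{\|z\|/2}\}$ the extra point $\delta_{-z}$ is invisible, so each integrand is at most $\BP_0(S_{k-1}(0) \not\subset B_{c_0 t/8}) + \BP_0(S_{k-1}(0) \not\subset B_{\|z\|/2})$; since $\|z\| \geq r_1 t$ on $\tilde N_i$, both terms are exponentially small in $t^d$ by induction, and multiplying by the $O(t^d)$ volume of integration preserves the exponential bound, closing the induction.

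The main technical hurdle is the dependence of the integration region $N_i$ on the data-dependent point $0^{(k-1)}$, which blocks a direct Poisson void estimate. I sidestep this by discarding the exponentially rare event $A^c$ via induction and replacing $N_i$ by a deterministic region $\tilde N_i$ on $A$, at the price of slightly shrinking the sub-cone. The Mecke step additionally requires controlling the effect of the Palm-conditioning point $\delta_{-z}$ on $S_{k-1}(0)$, which is handled cleanly by the stopping-set axiom together with the inductive tail bound.
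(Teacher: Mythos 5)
Your proof is correct and works, but it follows a genuinely different route from the paper's at the one place where the argument is delicate — the handling of the data-dependent apex $0^{(k-1)}$. The paper restricts to $\{S_{k-1}(0)\subset B_{t/4}\}$ and, on that event, selects a ball $B(Z,2at)\subset H_1(0^{(k-1)})\cap B(0^{(k-1)},t/2)\setminus B_{t/4}$ whose random center $Z$ is a function of $\Phi\cap B_{t/4}$ alone; since $B(Z,2at)\subset B_{t/4}^c$, the Poisson independence of $\Phi\cap B_{t/4}$ and $\Phi\cap B_{t/4}^c$ makes the Mecke/union-bound step clean. You instead restrict to the tighter event $\{\|0^{(k-1)}\|\le\varepsilon t\}$, whose complement is already controlled by the inductive hypothesis via $0^{(k-1)}\in S_{k-1}(0)$; this lets you replace the $0^{(k-1)}$-dependent cone annulus by a \emph{deterministic} region $\tilde N_i$ at the cost of shrinking the sub-cone, and then you neutralize the Palm conditioning point $\delta_{-z}$ that Mecke introduces by invoking the stopping-set property (adding a point outside $S_{k-1}(0)$ leaves it unchanged). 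Both arguments are correct and rest on the same skeleton (cone decomposition, recursion of $S_k$, Mecke, and the stopping-set structure of $S_{k-1}$); yours is more explicit about the role of the stopping-set axiom in making the conditioning point invisible, while the paper's is more compact because it never has to reason about the Palm point at $-z$. Two small remarks: (i) you cite \eqref{stopping1}, but the property you actually use — $S(\mu+\delta_w)=S(\mu)$ whenever $w\notin S(\mu)$ — follows from the stronger characterization \eqref{stopping2}, which is what Lemma \ref{l:Skstoppingset} in fact verifies for $S_{k-1}$; and (ii) the extra term $\BP_0(S_{k-1}(0)\not\subset B_{\|z\|/2})$ in your integrand bound is superfluous, since $-z\notin B_{c_0 t/8}$ already for $z\in\tilde N_i$ (as $\|z\|\ge r_1 t> c_0 t/8$), so \eqref{stopping1} alone gives $\{S_{k-1}(0;\Phi+\delta_0+\delta_{-z})\subset B_{c_0 t/8}\}=\{S_{k-1}(0;\Phi+\delta_0)\subset B_{c_0 t/8}\}$ directly.
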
 
\begin{proof} The proof is by induction. Firstly for $S_1 = S$, we have by $R_1 = R$ and the union bound that
$$ \BP_0(S(0,\Phi) \not \subset B_t) = \BP_0(R   >   t) \leq \sum_{i=1}^m \BP( \Phi \cap H_i \cap B_t = \emptyset) \leq me^{-\frac{\pi_d t^d}{m}}, $$
where the last step is due to Poissonian assumption of $\Phi$ and $\pi_d$ is the volume of the $d$-dimensional unit ball. Now proceeding inductively for $k \geq 2$, we have that 
\begin{align}
\BP_0(S_k(0) \not \subset B_t) & \leq \BP_0(S_{k-1}(0) \not \subset B_{t/4}) + \BP_0(S_{k-1}(0) \subset B_{t/4}, R_k(0^{(k-1)})   >   t/2) \nonumber \\
\label{e:SlboundSk-1} & \leq b'_{k-1}\exp\{-b_{k-1}\frac{t^d}{4^d}\} + \BP_0(S_{k-1}(0) \subset B_{t/4}, R_k(0^{(k-1)})   >   t/2),
\end{align}
where we have used that $0^{(k-1)} \in S_{k-1}(0)$ in the first and the induction hypothesis in the second inequality. The proof is complete if we show that the latter probability can also be bounded by an exponentially decaying term.

Thus it remains to bound  $\BP_0(S_{k-1}(0) \subset B_{t/4}, R_k(0^{(k-1)}) > t/2)$. For the same, define for all $i = 1,\ldots,m$ 
$$ R_{k,i} := \inf_{z \in \Phi} \{ \sup_{y \in S_{k-1}(z)} |y-0^{(k-1)}| : S_{k-1}(z) \subset H_i(0^{(k-1)}) \}.$$
Thanks to union bound and that $R_k = \max_{i=1,\ldots,m}R_{k,i}$, it now suffices to bound  for all $i =1,\ldots,m$
$$ \BP_0(S_{k-1}(0) \subset B_{t/4}, R_{l,i}  >  t/2).$$ 
Without loss of generality, we shall consider only $\BP(S_{k-1}(0) \subset B_{t/4}, R_{k,1}   >  t/2)$ . If $S_{k-1}(0) \subset B_{t/4}$ then we have that $0^{(k-1)} \in B_{t/4}$. Hence, there exists $a > 0$ such that for all $t$ large enough, there exists $Z \in \R^d$ (only dependent on $\Phi \cap B_{t/4}$) such that $B(Z,2at) \subset H_1(0^{(k-1)}) \cap B(0^{(k-1)},t/2) \setminus B_{t/4}$. If there exists an $y \in  \Phi \cap B(Z,at)$ such that $S_{k-1}(y) \subset B(y,at)$ then we have that 
$$ S_{k-1}(y) \subset B(Z,2at) \subset H_1(0^{(k-1)}) \cap B(0^{(k-1)},t/2),$$
and so by definition $R_{k,1} \leq t/2$. By the stopping set property (Lemma \ref{l:Skstoppingset}), the events $S_{k-1}(y) \subset B(y,at)$ for some $y \in B(Z,at)$ depend only on $\Phi \cap B(Z,2at)$. Also note that by choice of $Z$, $\Phi \cap B(Z,2at) = \Phi \cap B_{t/4}^c \cap B(Z,2at)$ where $\Phi \cap B_{t/4}^c$ and $Z$ are independent. This independence will be used crucially in some of the probability derivations below. 
From this observation and that $0 \notin B(Z,2at)$, we can derive that for all $t$ large enough, 
\begin{align*}
 &\BP_0(S_{k-1}(0) \subset B_{t/4}, R_{k,1}(0) > t/2) \\
 & \leq \, \BP_0 \bigl(\{S_{k-1}(y) \subset B(y,at) \textrm{ for some } y \in \Phi \cap B(Z,at)\}^c \bigr) \\
& =  \, \BP \bigl(S_{k-1}(y) \not \subset B(y,at) \textrm{ for all } y \in \Phi \cap B(Z,at) \bigr) \\
& \leq \, \BP \bigl( \Phi \cap B(Z,at) = \emptyset \bigr) + \BP \bigl(S_{k-1}(y) \not \subset B(y,at) \textrm{ for some } y \in \Phi \cap B(Z,at) \bigr) \\
& \leq e^{-\gamma \pi_d (at)^d} + \BE\bigg[ \int \I\{y\in B(Z, at)\}\I\{S_{k-1}(y) \not \subset B(y,at)\} \, \Phi(\md y)\bigg].
\end{align*}
As $Z$ is independent of $\Phi\cap B_{t/4}^c$, it is also independent of $S_{k-1}(y) \not \subset B(y,at)$ for $y\in B_{t/4+at}^c$. Further, as $B(Z, 2at) \subset B_{t/4}^c$, we also have $B(Z, at) \subset B_{t/4+at}^c$. Together with the Mecke formula, this yields
\begin{align*}
&\BE\bigg[ \int \I\{y\in B(Z, at)\}\I\{S_{k-1}(y) \not \subset B(y,at)\} \, \Phi(\md y)\bigg]\\
& = \, \BE\bigg[ \int_{B_{t/4+at}^c} \I\{y\in B(Z, at)\}\I\{S_{k-1}(y) \not \subset B(y,at)\} \, \Phi(\md y)\bigg]\\
& = \, \gamma \int_{B_{t/4+at}^c} \BP(y\in B(Z, at))\BP_y(S_{k-1}(y) \not \subset B(y,at)) \, \md y\\
& = \, \gamma \pi_d (at)^d \BP_0(S_{k-1}(0) \not \subset B_{at})\\
& \leq  \, \gamma \pi_d (at)^d b'_{k-1}e^{-b_{k-1}(at)^d},
\end{align*}
where we have used the induction hypothesis in the last inequality. As explained above, this yields the bound that for all $t$ large enough
$$ \BP_0(S_{k-1}(0) \subset B_{t/4}, R_k(0^{(k-1)}) > t/2) \leq m \big( e^{-\gamma \pi_d (at)^d} + \gamma \pi_d (at)^d b'_{k-1}e^{-b_{k-1}(at)^d} \big),$$ 
and substituting into \eqref{e:SlboundSk-1} completes the proof of the lemma.
\end{proof}

\paragraph{\ul{Completing the proof:}}
Now we return to the proof of Proposition \ref{p:lloyd}. Following the proof method as in Lemma \ref{l:Skstopsetprob}, we can also derive a similar bound for $\sup_{y \in \R^d} \BP_{0,y}(S_k(0,\Phi) \not \subset B_t)$. The only difference is that we need to choose $B(Z,at)$ such it does not contain $y$ as well.
We remark that the choice of (random) $Z$ could depend on $y$ but the constant `$a$' will be independent of $y$ and this suffices to give the necessary bounds for our purposes. Thus, we have verified the required stopping set assumption in Theorem \ref{tgen_mix_pert_pp} and so the proof of Proposition \ref{p:lloyd} is complete.
\end{proof}

\section{Transports of Lebesgue measure} \label{s:volumes}

In this section, we consider transport kernels acting on Lebesgue measure, the simplest example of a hyperuniform random measure. But we shall see that this already yields interesting examples.
For the first general result we work in the setting of
Subsection \ref{subPalm}.

\begin{theorem}\label{t:transportLebesgue}
Let $K$ be an invariant probability kernel from $\Omega\times\R^d$ to $\R^d$,
satisfying
\begin{align}\label{e:0129}        
\int\big\|\BE[\Ks_y\otimes \Ks_0]- \BE[\Ks_0]^{\otimes 2}\big\|\,\md y<\infty.
\end{align}
Then the random measure $\int K(x,\cdot)\,\md x$ is hyperuniform w.r.t.\ any
$W \in \cK_0$.
\end{theorem}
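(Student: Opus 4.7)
The plan is to derive Theorem \ref{t:transportLebesgue} as a direct application of Theorem \ref{maintheoremmixing2}(i) with $\Phi=\lambda_d$. First I would verify that $\lambda_d$, viewed as a deterministic kernel from $\Omega$ to $\R^d$, meets all the hypotheses of Theorem \ref{maintheoremmixing2}: it is invariant (trivially), has intensity $\gamma=1$, and is locally square-integrable since $\BE[\lambda_d(B)^2]=\lambda_d(B)^2<\infty$ for bounded $B$. Moreover $\BV[\lambda_d(rW)]=0$ for every $W\in\cK_0$, so $\lambda_d$ is hyperuniform with respect to any such $W$.

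Next I would compute the Palm machinery attached to $\Phi=\lambda_d$. Plugging $\lambda_d$ into the definition \eqref{ersecm} of the reduced second moment measure gives $\alpha_{\lambda_d}=\lambda_d$, so the integrability condition \eqref{maintheoremmixingcondition} reduces to
\[
\int \kappa(y)\,\md y<\infty.
\]
For the Palm probability measures, the defining formula $\BP^{\lambda_d}_0(A)=\BE\int_{[0,1]^d}\I\{\theta_x\in A\}\,\md x$ combined with the stationarity of $\BP$ (i.e.\ $\BP(\theta_x\in A)=\BP(A)$) gives $\BP^{\lambda_d}_0=\BP$, and the analogous argument via \eqref{2pointPalm2} applied to $\Phi^2=\lambda_d\otimes\lambda_d$ yields $\BP^{\lambda_d}_{0,y}=\BP$ for Lebesgue-a.e.\ $y$. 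Consequently the mixing coefficient \eqref{e:kappa2} becomes
\[
\kappa(y)=\big\|\BE[\Ks_y\otimes \Ks_0]-\BE[\Ks_0]^{\otimes 2}\big\|,
\]
so hypothesis \eqref{e:0129} is exactly the required integrability of $\kappa$ against $\alpha_{\lambda_d}=\lambda_d$.

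With all the hypotheses of Theorem \ref{maintheoremmixing2} verified, assertion~(i) of that theorem immediately gives that $K\lambda_d=\int K(x,\cdot)\,\md x$ is hyperuniform with respect to the given $W\in\cK_0$. There is no genuine obstacle here; the only point that deserves care is the identification of the one- and two-point Palm measures of $\lambda_d$ with $\BP$, which is a short consequence of the stationarity of $\BP$ under the flow. The whole argument can be written in a few lines.
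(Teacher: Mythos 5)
Your proposal is correct and follows essentially the same route as the paper: apply Theorem~\ref{maintheoremmixing2} with $\Phi=\lambda_d$, observe that $\alpha_{\lambda_d}=\lambda_d$ and that the one- and two-point Palm measures of $\lambda_d$ coincide with $\BP$, so that \eqref{e:kappa2} becomes \eqref{e:0129} and assertion~(i) applies. The identification of the Palm measures is exactly the short stationarity argument you describe, and there is nothing further to add.
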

\begin{proof} We apply Theorem \ref{maintheoremmixing2} with $\Phi:=\lambda_d$.
It is easy to see that $\alpha_\Phi=\lambda_d$, $\beta_\Phi=0$ and $\BP^\Phi_0=\BP$.
Further we can  choose $\BP^\Phi_{0,y}=\BP$ for all $y\in\R^d$. The result follows.
\end{proof}

\begin{example}\rm Suppose that $Z=\{Z(x):x\in\R^d\}$ is a
stationary $\R^d$-valued Gaussian random field with c\`adl\`ag-paths, 
as in Example \ref{ex:gdf}. Assume that
\begin{equation}\label{mixinggaussianfield2}
        \int \|\BC(Z(y), Z(0))\|\,\md y < \infty.
\end{equation}
Then it follows from Lemma \ref{l:gauss beta cov bound} and Theorem \ref{t:transportLebesgue}
that  $\int \I\{x+Z(x)\in\cdot\}\,\md x$ is a hyperuniform random measure.
\end{example}

We continue with the Lebesgue counterpart of Theorem \ref{tgen_mix_pert_pp}.

\begin{theorem}\label{tgen_mix_pert_ppLeb}
Let $\Gamma$ be a stationary point process with
non-zero intensity $\gamma$ and having exponentially fast
decay of correlations with decay function $\delta$ and
constants $C_k$, $k \in \N$, with $C_k = O(k^{ak})$ for some $a <1$. 
Let $\tilde K$ be an invariant probability kernel from
$\bN\times \R^d$ to $\R^d$.
Assume that  there is a stopping set $S\colon \bN \to \mathcal{F}$ such that
\begin{align}
\tilde{K}(\mu,0,\cdot) = \tilde{K}(\mu_{S(\mu)},0,\cdot), \quad \mu\in\bN,
\end{align}
and that there exists a decreasing function $\delta_1 \leq 1$ such that
\begin{align}\label{stopbounds2}
\BP(S(\Gamma) \not\subset B_t)\le \delta_1(t),\quad t\ge 0.
\end{align}
Assume finally that
\begin{equation} \label{e:intvarphi2}
\int_1^{\infty}s^{\frac{d}{\beta}-1}\delta_1(s)\,\md  s < \infty,
\end{equation}
where $\beta$ is such that $\beta < \frac{b(1-a)}{(d+2)}, \beta \leq 1$.
Then the random measure $\int \tilde{K}(\Gamma,x,\cdot)\,\md x$ is 
hyperuniform w.r.t.\ any $W \in \cK_0$.
\end{theorem}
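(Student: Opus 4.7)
The plan is to reduce the claim to Theorem \ref{t:transportLebesgue} by verifying its integrability hypothesis \eqref{e:0129} for the probability kernel $K(\omega,x,\cdot):=\tilde K(\Gamma(\omega),x,\cdot)$. Since $\lambda_d$ has trivial Palm structure ($\BP^{\lambda_d}_0=\BP^{\lambda_d}_{0,y}=\BP$ and $\alpha_{\lambda_d}=\lambda_d$), the mixing coefficient \eqref{e:0129} becomes the stationary coefficient
\[
\kappa_L(y):=\big\|\BE[\Ks_y\otimes \Ks_0]-\BE[\Ks_0]^{\otimes 2}\big\|,
\]
and the task is to bound it by a fast-decreasing function of $\|y\|$ so that \eqref{e:intvarphi2} (together with a change to polar coordinates) forces $\int\kappa_L\,\md\lambda_d<\infty$.

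The control of $\kappa_L(y)$ follows the blueprint of Theorem \ref{tgen_mix_pert_pp}, but in the simpler one-point-process setting. First, write $\kappa_L$ via its variational definition as a supremum over measurable $f\colon\R^d\times\R^d\to[0,1]$ of the absolute value of
\[
\BE\bigl[\tilde f(y,\Gamma,\Gamma)\bigr]-\BE\bigl[\tilde f(y,\Gamma,\Gamma')\bigr],
\]
where $\Gamma'$ is an independent copy of $\Gamma$ and $\tilde f(y,\mu,\tilde\mu):=\iint f(x,z-y)\,\tilde K(\mu,0,\md x)\,\tilde K(\tilde\mu,y,\md z)$. Second, fix $y$ with $\|y\|\ge 8$, set $r:=(\|y\|/8)^{\beta}$, and localize $\tilde f$ by replacing the point processes with their restrictions to $B_r$ and $B(y,r)$; this produces a translation invariant local functional $F$. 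The stopping-set assumption together with \eqref{stopbounds2} gives that $\tilde f$ and $F$ agree except on an event of probability at most $2\delta_1(r)$, both under $\BP$ and under the joint law of $(\Gamma,\Gamma')$, yielding an error of $4\delta_1(r)$.

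Third, the remaining decorrelation estimate requires a single-point-process version of Proposition \ref{p:dec_corr}: for translation invariant local $F\in[0,1]$ supported on configurations in $B_r\cup B(y,r)$,
\[
\bigl|\BE[F(y,\Gamma,\Gamma)]-\BE[F(y,\Gamma,\Gamma')]\bigr|\le \tilde C\,\tilde\varphi(\|y\|),
\]
with $\tilde\varphi$ fast-decreasing and independent of $F$. This is precisely the content of \cite[Theorem 1.11]{BYY19} (as remarked just before the proof of Proposition \ref{p:dec_corr}), applied to the union point process $\Gamma\cap B_r$ together with $\Gamma'\cap B(y,r)$, whose correlation functions factorize at distance $\|y\|-2r\asymp\|y\|$ by independence and the exponentially fast decay of correlations of $\Gamma$; the assumption $C_k=O(k^{ak})$ with $a<1$ and the choice $\beta<b(1-a)/(d+2)$ are exactly what makes the double FME series converge as in \eqref{e:diffT4T8}. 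Combining the two estimates yields $\kappa_L(y)\le 4\delta_1((\|y\|/8)^\beta)+\hat\varphi(\|y\|)$ for a fast-decreasing $\hat\varphi$, and \eqref{e:intvarphi2} together with the fast decay of $\hat\varphi$ then gives \eqref{e:0129}.

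The main technical obstacle is the single-process decorrelation estimate in the third step. Although it is structurally simpler than Proposition \ref{p:dec_corr} (no Palm conditioning and no second point process on the source side), one must still track that the constants appearing in the FME expansion of $F$ via the difference operators remain bounded uniformly in $r=r_F$ and that the exponent $\beta$ can be chosen as claimed. Since \cite[Theorem 1.11]{BYY19} is tailored to exactly this situation, the cleanest write-up will simply invoke it; otherwise the argument of Proposition \ref{p:dec_corr} transfers verbatim after deleting everything involving $\Phi$ and the Palm correlation formula \eqref{e:palmcorr}.
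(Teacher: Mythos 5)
Your proof plan is correct and follows essentially the same route as the paper's (very brief) proof: reduce to the mixing criterion of Theorem~\ref{maintheoremmixing2} with Lebesgue source (your use of Theorem~\ref{t:transportLebesgue} is exactly this specialization, since $\BP^{\lambda_d}_0=\BP^{\lambda_d}_{0,y}=\BP$ and $\alpha_{\lambda_d}=\lambda_d$), and verify it by adapting the stopping-set/FME argument of Theorem~\ref{tgen_mix_pert_pp} and Proposition~\ref{p:dec_corr} to a single point process with no Palm conditioning on the source side. The paper likewise remarks that the needed analogue of Proposition~\ref{p:dec_corr} is obtained from the single-process FME (Theorem~\ref{t:FME}) rather than Lemma~\ref{l:FME2pp}, which matches your observation that the decorrelation estimate transfers once the $\Phi$-Palm structure is deleted (or by invoking the corresponding result of B\l aszczyszyn--Yogeshwaran--Yukich directly).
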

\begin{proof} The theorem can be proved as Theorem \ref{tgen_mix_pert_pp}.
In fact, it can be significantly simplified, as there is only one point process
and no Palm probabilities are involved. So one can derive an analogue of Proposition \ref{p:dec_corr} by using FME for a single point process without Palm probabilities as in Theorem \ref{t:FME} instead of Lemma \ref{l:FME2pp}.
\end{proof}

As an application we consider shifts to the  $k$-th nearest neighbour of 
a point process. 

\begin{example}\label{e:weightedVoronoi}\rm
Fix $k\in\N$ and let $\mu\in\bN$ and $x\in\R^d$.
Order the points of the support of $\mu$ by ascending distance from $x$,
using lexicographic order to break ties. 
Let $N_k(x,\mu)$ denote the $k$-th point of the support of $\mu$
w.r.t.\ this order. If the support of $\mu$ has less than $k$ points,
then let $N_k(x,\mu):=x$. Assume that the point process  $\Gamma$ satisfies the assumptions
of Theorem \ref{tgen_mix_pert_ppLeb}. 
Assume further that there exists an exponentially fast decreasing function  $\delta_1$
such that
\begin{align}\label{e:k}
\BP(\Gamma(B_t)\le  k-1) \leq \delta_1(t),\quad t>0.
\end{align}
We will derive from Theorem \ref{tgen_mix_pert_ppLeb} that the random measure 
\begin{align*}
\Psi:=\int \I\{N_k(y,\Gamma)\in\cdot\}\,\md y
\end{align*}
is hyperuniform w.r.t.\ any  $W \in \cK_0$.
Note that
\begin{align}\label{e:Psi}
\Psi=\sum_{x\in\Gamma} \, \lambda_d(C_k(x,\Gamma)) \, \delta_x,
\end{align}
where $C_k(x,\mu):=\{y\in\R^d:N_k(y,\mu)=x\}$. If the support of $\mu$ has at least $k$ points, then $\{C_k(x,\mu):x\in\mu\}$ partitions $\R^d$. 
But note that for $k\ge 3$ it is possible that almost surely a positive fraction of the $C_k(x,\Gamma)$'s will be empty,
so that \eqref{e:Psi} involves some thinning.
For $k=1$ we obtain the Voronoi tessellation mentioned in Subsection \ref{s:nnshift};
see Figure \ref{fig:weighted_voronois} for an illustration of $\Psi$.

To apply Theorem \ref{tgen_mix_pert_ppLeb}  we need to construct a suitable stopping set $S$.
As after Proposition \ref{p:nn2pp} we do this  by setting 
$S(\mu):=B_{|N_k(0,\mu)|}$ if the support of $\mu$ has at least $k$ points.
Otherwise we set $S(\mu):=\R^d$. By \eqref{stopping2}, $S$ is a stopping set.
Since $\Gamma$ is a simple point process we have
$$
\BP(S(\Gamma)\not\subset B_t)=\BP(\Gamma(B_t)\le k-1),
$$
so that Theorem \ref{tgen_mix_pert_ppLeb} indeed applies.
Assumption \eqref{e:k} allows for a similar discussion as made in Subsection \ref{s:ex_pp_mix} on void probabilities which gives examples of point processes satisfying \eqref{e:k} in the case of $k = 1$. Stationary $\alpha$-determinantal processes for $\alpha = -1/m, m \in \N$ as in Example \ref{ex:alphaDPPvoid} satisfy \eqref{e:k};
see \cite[Corollary 1.10]{BYYsupp19}. 
Using the methods from the proof of 
Proposition \ref{p:perm} one can prove \eqref{e:k} for permanental processes if the kernel $K$ has suitable integrability
properties.
We expect the Gibbs processes in Proposition \ref{p:voidGibbs}
to satisfy \eqref{e:k} also in case $k \geq 2$, but cannot offer a proof here.
\end{example}
In the case $k=1$ the random measure \eqref{e:Psi} arises by attaching
to each point of $\Gamma$ the volume of its Voronoi cell. This
is closely related to Example~\ref{ex:5.2}, which assigns the volume
of each cell to a point, but in Example~\ref{ex:5.2}, each point is
uniformly distributed inside its cell. Here, the points coincide with
the Voronoi center, see Fig.~\ref{fig:weighted_voronois}. This case was
studied in the physics literature (e.g., see
\cite{FSFB14,ChiecoDurian21}) using empirical data and heuristic
arguments. On a large scale, the random measure $\Psi$ can be seen as an
approximation of Lebesgue measure. For $W\in\cK_0$ the variance of
$\Psi(W)$ is driven by the cells intersecting the boundary of $W$, so
that the hyperuniformity of $\Psi$ should not come as a surprise.

\section{Hyperuniform random sets}
\label{s:hyprandset}

The central idea of this section comes from \cite{KT19a, KT19b}. There
the authors propose a versatile construction principle for hyperuniform
two-phase media, where dispersions are placed in the cells of a Voronoi
tessellation so that each cell has the same local packing fraction. Here
we prove the hyperuniformity of a closely related variant of this
tessellation-based procedure for the Poisson point process.
Our result also generalizes Example~10\ in \cite{KS18}.

Let $\Gamma$ be a stationary Poisson process 
with intensity $\gamma>0$.
Recall from \eqref{e.vortes} the definition of the Voronoi cell $C(x)\equiv C(x,\Gamma)$ of $x\in\Gamma$. 
Let $W\subset\R^d$ be a measurable set with finite volume $\lambda_d(W)< \infty$ and $0$ as an interior point. Fix $\alpha\in (0,1]$. For $x\in\Gamma$ define
\begin{align*}
\tau(x):=\sup\{r\ge 0: \lambda_d((rW+x)\cap C(x))\le \alpha\lambda_d(C(x))\}
\end{align*}
and
\begin{align*}
D(x)\equiv D(x,\Gamma):=(\tau(x)W+x)\cap C(x).
\end{align*}
By our assumption on $W$ and the convexity of $C(x)$ for $x\in\Gamma$, we have 
\begin{align}\label{e6.55}
\lambda_d(D(x))=\alpha\lambda_d(C(x)),\quad x\in\Gamma.
\end{align}
We consider the random closed set
\begin{align}
Z\equiv Z(\Gamma):=\bigcup_{x\in\Gamma}D(x)
\end{align}
and the associated (random) volume measure $\Psi$, defined by
\begin{align}
\Psi(B):=\lambda_d(Z\cap B)=\int \lambda_d(D(x)\cap B)\,\Gamma(\md x),\quad B\in\mathcal{B}^d.
\end{align}
Since $D(x,\Gamma)=D(0,\theta_x\Gamma)+x$, $x\in\Gamma$, it is easy to show that $Z$ is stationary,
or, more specifically, $Z(\theta_y\Gamma)+y=Z$, $y\in\R^d$.
Hence $\Psi$ is stationary as well.
From the refined Campbell theorem \eqref{erefinedC} 
and \eqref{e6.55} we easily obtain that the intensity
of $\Psi$ (the {\em volume fraction of $Z$}) is given by
\begin{align*}
\BE\Psi([0,1]^d)=\alpha\gamma\,\BE^0_\Gamma\lambda_d(C(0))=\alpha,
\end{align*}
where the second identity is well-known, see e.g.\ \cite[(9.18)]{LastPenrose17}.

\begin{theorem}
\label{t:HUrandvol}
Assume that $\Gamma$ is a Poisson process.
Then the random volume measure $\Psi$ is hyperuniform. 
\end{theorem}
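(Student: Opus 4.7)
The plan is to recast $\Psi$ as a scalar multiple of the destination of Lebesgue measure under a suitable invariant probability kernel, and then invoke Theorem \ref{tgen_mix_pert_ppLeb}. Define $\tilde K$ from $\bN \times \R^d$ to $\R^d$ by
$$
\tilde K(\mu,y,\cdot) := \frac{\lambda_d(D(N(y,\mu),\mu) \cap \cdot)}{\lambda_d(D(N(y,\mu),\mu))},
$$
the uniform distribution on the ``dispersion'' set attached to the Voronoi cell containing $y$ (well-defined whenever $\mu \neq 0$, which has full probability here). Translation covariance of the Voronoi construction and of $\tau$ makes $\tilde K$ invariant. Partitioning $\R^d$ by Voronoi cells and using \eqref{e6.55} yields
$$
\int \tilde K(\Gamma,y,B)\,\md y \;=\; \sum_{x \in \Gamma} \int_{C(x)} \frac{\lambda_d(D(x) \cap B)}{\alpha\,\lambda_d(C(x))}\,\md y \;=\; \alpha^{-1}\Psi(B).
$$
So hyperuniformity of $\Psi$ is equivalent to hyperuniformity of the destination random measure $\int \tilde K(\Gamma,y,\cdot)\,\md y$.

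Next, I would construct a stopping set $S\colon \bN \to \mathcal{F}$ adapted to $\tilde K$. Write $p(\mu) := N(0,\mu)$ and let $R(\mu)$ denote the circumradius of the cell $C(p(\mu),\mu)$ about $p(\mu)$. Set
$$
S(\mu) := \overline{B_{\|p(\mu)\| + 2R(\mu)}}.
$$
The classical Voronoi-flower bound ensures that the flower of $C(p(\mu),\mu)$ is contained in $B(p(\mu),2R(\mu)) \subset S(\mu)$, so $\mu \cap S(\mu)$ determines both $p(\mu)$ and the entire cell $C(p(\mu),\mu)$. Consequently it determines $\tau(p(\mu))$ (a functional of the cell alone) and the set $D(p(\mu),\mu) \subset C(p(\mu),\mu) \subset S(\mu)$, hence also $\tilde K(\mu,0,\cdot)$. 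Verifying \eqref{stopping2} is then routine: inserting or removing points of $\mu$ outside $S(\mu)$ changes neither the nearest neighbour of $0$ nor any face of its Voronoi cell, so $R$ and $S$ are preserved.

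Finally, I would establish the tail bound $\BP(S(\Gamma) \not\subset B_t) \le c_1 \exp(-c_2 t^d)$ by a standard argument: if the cell of $p(\Gamma)$ has circumradius exceeding $t$, then a suitably large region near the origin must contain no points of $\Gamma$, which has Poisson probability at most $\exp(-\Omega(t^d))$. This $\delta_1$ trivially fulfils the integrability condition \eqref{e:intvarphi2}, and the Poisson process is vacuously finite-range correlation-decaying (its joint correlation functions factor with $C_k = \gamma^k$), so the remaining hypotheses of Theorem \ref{tgen_mix_pert_ppLeb} hold. The conclusion gives hyperuniformity of $\alpha^{-1}\Psi$, and hence of $\Psi$, with respect to any $W \in \cK_0$.

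The principal obstacle is the geometric bookkeeping around the stopping set: one must calibrate the radius so that the entire Voronoi flower is captured and then confirm that the cell-dependent quantity $\tau(p(\mu))$ is genuinely preserved by the restriction to $\mu_{S(\mu)}$. Once this local-determination is in place, the Poisson--Voronoi tail estimate is a textbook computation and the remainder of the argument is a direct application of the Lebesgue analogue of the stopping-set mixing theorem.
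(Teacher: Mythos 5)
Your proposal is correct and follows essentially the same strategy as the paper: recast $\Psi$ as the destination of (a scalar multiple of) Lebesgue measure under an invariant transport kernel determined by the local Voronoi geometry, build a Voronoi-type stopping set, and invoke the stopping-set mixing theorem with a Poisson-Voronoi circumradius tail bound. The one genuine simplification you introduce is the choice of kernel: the paper constructs a deterministic allocation $K(y)=\delta_{y+Y(y)}$ by exhibiting a measure-preserving bijection $T_{C'(x),D(x)}$ between the shrunk cell and the dispersion set, whereas you use the uniform distribution on $D(N(y,\mu),\mu)$, which produces the same destination $\alpha^{-1}\Psi$ and sidesteps that measure-theoretic construction entirely; the ball $\overline{B_{\|p\|+2R}}$ you use is a coarsening of the paper's Voronoi flower stopping set and works for the same reason. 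The only part stated too loosely is the tail estimate $\BP(S(\Gamma)\not\subset B_t)\le c_1e^{-c_2t^d}$: saying a large region ``near the origin'' must be empty is not quite accurate since the forcing empty ball can sit far from the origin; the clean route (as in the paper) is to decompose on $\{\|p\|>t/4\}$ and then apply the Mecke formula together with the diameter tail bound of \cite[Theorem 2]{HugSchn07} for the typical cell.
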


\begin{proof} The proof is divided into two parts. In the first
part, we construct a stationary random field $Y(x), x \in \R^d$,
such that $\Psi = K \Phi$ with $\Phi(\md x) = \alpha \, \md x$
being the scaled Lebesgue measure and the transport kernel given
by $K(y) = \delta_{y+Y(y)}, y \in \R^d$. In the second part, we
construct a suitable stopping set verifying the assumptions of
Theorem \ref{tgen_mix_pert_pp}. Since $\Phi$ is trivially
hyperuniform, the conclusion of the theorem follows.

The first part of the proof is based on a pathwise argument. Let
$\Phi$ be the Lebesgue measure scaled by $\alpha$ i.e.,
$\Phi(\md x) = \alpha \md x$. We assert that there is a stationary random
field $(Y(x))_{x\in\R^d}$ such that
\begin{align}\label{e6.34}
\Psi=\alpha \int\I\{x+Y(x)\in \cdot\}\,\md x = \int\I\{x+Y(x)\in \cdot\}\, \Phi(\md x)
\end{align}
The field is constructed in two steps. First we set
$C'(x):=x+\alpha^{1/d} (C(x)-x)$, $x\in\Gamma$.
Then $C'(x)\subset C(x)$ (by convexity) and $\lambda_d(C'(x))=\alpha\lambda_d(C(x))$.
Then we use the following measure-theoretical fact. If $L,L'\in \mathcal{B}^d$ have
the same finite volume, then there is a measurable mapping $T_{L,L'}\colon L\to L'$
such that 
\begin{align*}
\int_{L} \I\{T_{L,L'}(x)\in\cdot\}\,\md x=\lambda_d(L'\cap \cdot).
\end{align*} 
In the interior of a cell $C(x)$, $x\in\Phi$, the random field $Y(y)-y$  is then defined
as the composition of the mapping $y\mapsto \alpha^{1/d}(y-x)+x$ and
$T_{C'(x),D(x)}$. 

In the second part of the proof we need to check the assumptions of
Theorem \ref{tgen_mix_pert_pp}. Since $C'(x),D(x)$, $x \in \R^d$, are
determined by $C(x,\Gamma)$, the Voronoi cell containing $x$, so is
the random vector $Y(x)$ and hence a stopping set for $C(x,\Gamma)$ is
a stopping set for $Y(x)$ and hence for $K(x)$ too.
We use here the \emph{Voronoi flower} (see for example, \cite{Zu99}) of 
$x \in \Gamma$, defined by
\begin{align*}
S(x,\Gamma) := \bigcup_{y \in C(x,\Gamma)} B(y,\|y-x\|).
\end{align*}
Let $X:= N(0,\Gamma)$ be the nearest neighbour
of $0$ in $\Gamma$ and note that $0\in C(X,\Gamma)$.
Let $S'\colon \bN\to \mathcal{F}^d$ be
(implicitly) defined  by $S'(\Gamma)=S(X,\Gamma)$.
Then $S'$ is a stopping set. Indeed, adding points in the complement
of $S(X,\Gamma)$ does not change the Voronoi cell $C(X,\Gamma)$
and hence also not the nearest neighbour of $0$. Moreover, the restriction of
$\Gamma$ to $S'$ determines $C(X,\Gamma)=C(0,\Gamma)$. 
We have for $t\ge 0$ that
\begin{align*}
\{S'(0) \not \subset B_t \}  & \subset \{ X > t/4 \}\cup  \{ X\le t/4 \}
\cap \bigcup_{x \in \Gamma \cap B(t/4)} \{ S(x,\Gamma) \not \subset B(x,t/4) \}.
\end{align*}
Therefore we obtain from the union bound and the Mecke formula 
\begin{align*}
\BP(S'(\Gamma) \not \subset B_t)
& \leq  \BP(\Gamma(B_{t/4})=0)+ \gamma \int_{B_{t/4}} \BP( S(x,\Gamma+\delta_x) \not \subset B(x,t/4))\, \md x\\
&=e^{-\gamma \pi_d t^d/4^d}+\gamma \,\BP(S(0,\Gamma+\delta_0) \not \subset B_{t/4}) \, \pi_d t^d/4^d,
\end{align*}
where we have used stationarity of $\Gamma$ to obtain the final identity.
It is well-known that there exist $c_1,c_2>0$ such that
\begin{align*} 
\BP( \diam (C(0,\Gamma +\delta_0))>s) \leq c_1e^{-c_2s^d},\quad s>0,
\end{align*}
where $\diam B$ is the {\em diameter} of a set $B\subset\R^d$;
see e.g.\ \cite[Theorem 2]{HugSchn07}. Moreover, it is easy to see that
$S(0,\Gamma+\delta_0)\subset B(0,2\diam (C(0,\Gamma +\delta_0)))$.
Since $\Phi$ is a scaled Lebesgue measure we have $\BP^\Phi_0=\BP^\Phi_{0,y}=\BP$.
Hence, the assumptions of Theorem \ref{tgen_mix_pert_pp} are satisfied
with an exponentially decaying $\delta_1$, $\delta = \I\{s=0\}$ (as $\Phi$ is scaled Lebesgue and $\Gamma$ is Poisson) and hence the integrability
of $\kappa$ follows easily from \eqref{e:intvarphi1}. Thus, $\Psi$ has
same asymptotic variance as $\Phi$ and hence is hyperuniform.
\end{proof}

\begin{appendices}

\section{Appendix: Palm calculus and Factorial moment expansions}\label{AppendixPalm}
In this appendix, we recall aspects of the Palm calculus framework necessary for some of our results. Starting with Palm probability measures, we present two-point and higher order Palm probabilities in the first three subsections - Sections \ref{subPalm}, \ref{subtwopoint} and \ref{subhigherPalm}. Then we introduce higher-order correlations and a self-contained derivation of factorial moment expansion in Sections \ref{subhigherpoint} and \ref{subFME} respectively. These are crucial for our stopping set based transport maps in Section \ref{s:mix_marked_pp} and the ensuing applications in Sections \ref{s:localalgo} and \ref{s:volumes}.
\subsection{Palm probability measures}\label{subPalm}

If $\Phi$ is a simple point process, then the Palm probability
measure $\BP^\Phi_0$ is the conditional probability measure under
the condition that $0\in\Phi$. Here it is important that $\BP^\Phi_0$
describes the statistical behaviour of the whole stochastic
experiment and not just the conditional distribution
of $\Phi$.  This can be conveniently treated
within the setting from \cite{Neveu} and \cite{LaTho09}.

Assume that $\R^d$ acts measurably on $(\Omega,\mathcal{F})$.
This means that there is a  family
of measurable mappings $\theta_s\colon\Omega\to\Omega$, $s\in \R^d$,
such that $(\omega,s)\mapsto \theta_s\omega$ is measurable,
$\theta_0$ is the identity on $\Omega$ and
\begin{align}\label{flow}
\theta_x \circ \theta_y =\theta_{x+y},\quad x,y\in \R^d,
\end{align}
where $\circ$ denotes composition. The family $\{\theta_x:x\in\R^d\}$ 
is said to be (measurable) {\em flow} on $\Omega$.
We assume that the probability measure $\BP$ is
{\em stationary} (under the flow), i.e.
\begin{align}\label{Pstat}
\BP\circ\theta_x=\BP,\quad x\in \R^d,
\end{align}
where $\theta_x$ is interpreted as a mapping from $\mathcal{F}$ to $\mathcal{F}$
in the usual way:
$$
\theta_xA:=\{\theta_x\omega:\omega\in A\},\quad A\in\mathcal{F},\, x\in \R^d.
$$
A random measure on $\R^d$ 
is said to be {\em invariant} (w.r.t.\ to the flow) or {\em flow-adapted}  if
\begin{align}\label{adapt}
\Phi(\omega,B+x)=\Phi(\theta_x\omega,B),\quad \omega\in \Omega,\,x\in \R^d, B\in\cB^d.
\end{align}
In this case $\Phi$ is stationary.

Let $\Phi$ be an invariant random measure with positive and
finite intensity $\gamma$.  Let $B\in\cB^d$ have positive and finite
Lebesgue measure. The probability  measure
\begin{align} \label{Palm}
\BP^\Phi_0(A):=\gamma^{-1}\lambda_d(B)^{-1}\iint \I_A(\theta_x\omega)\I_B(x)\,
\Phi(\omega,\md x)\,\BP(\md \omega), \quad A\in\mathcal{A},
\end{align}
is called the {\em Palm probability measure} of $\Phi$.
It follows from stationarity that this definition is indeed independent of
the choice of $B$. Therefore we obtain the
{\em refined Campbell theorem}
\begin{align}\label{erefinedC}
\iint f(x,\theta_x\omega)\,\Phi(\omega,\md x)\, \BP(\md \omega)=
\gamma \iint f(x,\omega)\,\md x\,\BP^\Phi_0(\md \omega)
\end{align}
for all measurable $f\colon\R^d\times\Omega \to [0,\infty]$.
This generalizes \eqref{eCampbell}.
We write this as
\begin{align} \label{erefC}
\BE\int f(x,\theta_x)\,\Phi(\md x)=
\gamma\,\BE^{\Phi}_0\int f(x,\theta_0)\,\md x,
\end{align}
where $\BE^{\Phi}_0$ denotes expectation with respect to $\BP^\Phi_0$. 
In particular the reduced second moment measure $\alpha_\Phi$ of $\Phi$ (see \eqref{ersecm})
is given by 
\begin{align}\label{alphaPalm}
\alpha_\Phi=\gamma\, \BE^\Phi_0\Phi.
\end{align}
If $\Phi$ is a point process, then $\BP^\Phi_0$ is concentrated on the event
$\{\omega\in\Omega:\Phi(\omega,\{0\})\ge 1\}$.

\subsection{Two-point Palm probabilities}\label{subtwopoint}

In this subsection we assume that $(\Omega,\mathcal{A})$ is a {\em Borel space}
(see \cite{LastPenrose17}) equipped with a flow $\{\theta_x:x\in\R^d\}$.
We consider an  invariant random measure $\Phi$ which is locally square-integrable. 
We assert that there is a probability kernel $(y,A)\mapsto \BP^\Phi_{0,y}(A)$
from $\R^d$ to $\Omega$ such that
\begin{align}\label{2pointPalm}
\BE \int f(x,y,\theta_0)\,\Phi^2(\md (x,y))
= \iiint f(x,x+y,\theta_{-x}\omega)\,\BP^\Phi_{0,y}(\md \omega)\,\alpha_\Phi(\md y)\,\md x
\end{align}
for all measurable $f\colon\R^d\times\R^d\times\Omega \to [0,\infty]$.
Note that this generalizes \eqref{e2.55}.
If $\Phi$ is a simple point process, then $\BP^\Phi_{0,y}$ can
be interpreted as the conditional probability measure $\BP(\cdot\mid 0,y\in\Phi)$.

To prove \eqref{2pointPalm}, we take a measurable $A\subset \R^d\times\Omega$ and
consider the measure $\nu_A$ on $\R^d$, defined by
\begin{align*}
\nu_A(B):=\BE\int \I\{x\in B,(y-x,\theta_x)\in A\}\,\Phi^2(\md (x,y)),\quad B\in\cB^d.
\end{align*}
Since $\Phi$ is locally square-integrable, the measure $\nu_A$ is locally finite.
Moreover, it easily follows from the stationarity of $\BP$ and 
the invariance \eqref{adapt} that $\nu_A$ is invariant under translations.
Therefore we have that $\nu_A(B)=\mu(A)\lambda_d(B)$, where 
\begin{align*}
\mu(A):=\nu_A([0,1]^d)=\BE\int \I\{x\in [0,1]^d,(y-x,\theta_x)\in A\}\,\,\Phi^2(\md (x,y)).
\end{align*}
Clearly $\mu(\cdot)$ is a measure and basic principles of integration theory imply that
\begin{align*}
\BE\int f(x,y-x,\theta_x)\,\,\Phi^2(\md (x,y))
=\iint f(x,y,\omega)\,\mu(\md (y,\omega))\,\md x
\end{align*}
for each measurable $f\colon\R^d\times\R^d\times\Omega \to [0,\infty]$.
By definition we have $\mu(\cdot\times\Omega)=\alpha_\Phi$. Since we have assumed
$(\Omega,\mathcal{F})$ to be Borel, we can disintegrate $\mu$ in the
form $\mu(\md (y,\omega))=\BP^\Phi_{0,y}(\md \omega)\alpha_\Phi(\md y)$ for
a probability kernel $\BP^\Phi_{0,\cdot}(\cdot)$; see e.g.\ \cite[Theorem A.14]{LastPenrose17}. 
Therefore
\begin{align}\label{2pointPalm2}
\BE\int f(x,y-x,\theta_x)\,\,\Phi^2(\md (x,y))
=\iiint f(x,y,\omega)\,\BP^\Phi_{0,y}(\md \omega)\, \alpha_\Phi(\md y)\,\md x.
\end{align}
A simple transformation yields \eqref{2pointPalm}. 

\subsection{Palm probabilities of higher order}
\label{subhigherPalm}

Again we assume here that $(\Omega,\mathcal{A})$ is a {\em Borel space}.
Let $\Phi$ be a stationary random measure on $\R^d$.
Given $n\in\N$ with $n\ge 2$ we define 
the {\em $n$th reduced moment measure} $\alpha_n$ of $\Phi$.
by
\begin{align}\label{ereducedn}
\alpha_n:=\BE \int \I\{x\in[0,1]^d,(y_1-x,\ldots,y_n-x)\in \cdot\}\,\Phi^n(\md (x,y_1,\ldots,y_{n-1})).
\end{align}
This is a measure  on  $(\R^d)^{n-1}$. Assume now that $\BE\Phi(B)^n<\infty$
for each bounded set $B\in\mathcal{B}^d$. Then the measure $\alpha_n$ is
locally finite. Assume that $(\Omega,\mathcal{A})$ is a Borel space
equipped with a flow and that $\Phi$ is invariant.
Then there is a probability kernel $(y_1,\ldots,y_{n-1},A)\mapsto \BP^\Phi_{0,y_1,\ldots,y_{n-1}}(A)$
from $(\R^d)^{n-1}$ to $\Omega$ such that
\begin{align}\label{npointPalm}\notag
\BE &\int f(x,y_1,\ldots,y_{n-1},\cdot)\,\Phi^n(\md (x,y_1,\ldots.y_{n-1}))\\
&= \iiint f(x,x+y_1,\ldots,x+y_{n-1},\theta_{-x}\omega)\,\BP^\Phi_{0,y_1,\ldots,y_{n-1}}(\md \omega)\,\alpha_n(\md (y_1,\ldots,y_{n-1}))\,\md x
\end{align}
for all measurable $f\colon(\R^d)^n\times\Omega \to [0,\infty]$.
This can be proved similarly to \eqref{2pointPalm}.
Note that
\begin{align*}
\BE \Phi^n
= \iint \I\{(x,x+y_1,\ldots,x+y_{n-1})\in\cdot\}\,\alpha_n(\md (y_1,\ldots,y_{n-1}))\,\md x.
\end{align*}
Therefore we can rewrite \eqref{npointPalm} as
\begin{align}\label{npointPalm2}\notag
\BE \int f(x_1,\ldots,x_n,\cdot)&\,\Phi^n(\md(x_1,\ldots,x_n))\\
&= \iint f(x_1,\ldots,x_n,\omega)\,\BP^\Phi_{x_1,\ldots,x_n}(\md\omega)\,\BE \Phi^n(\md (x_1,\ldots,x_n)),
\end{align}
where
\begin{align}\label{defPalm}
\BP^\Phi_{x_1,\ldots,x_n}:=\BP^\Phi_{0,x_2-x_1,\ldots,x_n-x_1}(\theta_{-x_1}\in\cdot),\quad x_1,\ldots,x_n\in\R^d,
\end{align}
are the $n$-fold Palm probability measures of $\Phi$. By our definition \eqref{defPalm}
we have the invariance property
\begin{align}\label{multPalm}
\BP^\Phi_{x_1+x,\ldots,x_n+x}(\theta_x\in\cdot)=\BP^\Phi_{x_1,\ldots,x_n},\quad x_1,\ldots,x_n,x\in\R^d,
\end{align}
If $\Phi$ is not stationary we can still define Palm probability measures via \eqref{npointPalm},
provided that the measure $\BE\Phi^n$ is $\sigma$-finite.

\subsection{Higher order correlations of point processes}\label{subhigherpoint}

Let $\Phi$ be a point process on $\R^d$ represented as in \eqref{e1.5}.
For $n \geq 1$, define the 
$n$-th factorial product of $\Phi$ as the point process on $(\R^d)^n$ defined
by
\begin{align*}
\Phi^{(n)}:=\sideset{}{^{\ne}}\sum_{m_1,\ldots,m_n}\I\{(X_{m_1},\ldots,X_{m_n})\in\cdot\},
\end{align*}
where $\sum^{\ne}$ denotes that no two indices are equal. 
The intensity measure $\alpha^{(n)}:=\BE \Phi^{(n)}$ is known as
the {\em $n$-th factorial moment measure} of $\Phi$.
It is $\sigma$-finite if and only if the same holds for  
$n$-th moment measure $\BE\Phi^n$. If this is the case, there exists
a probability kermel $(x_1.\ldots,x_n)\mapsto\BP^!_{x_1,\ldots,x_n}$
from $(\R^d)^n$ to $\bN$ satisfying
\begin{align}\label{npointreducedPalm}\notag
\BE \int f(x_1,\ldots,x_n,&\Phi-\delta_{x_1}-\cdots-\delta_{x_n})\,\Phi^{(n)}(\md(x_1,\ldots,x_n))\\
&= \iint f(x_1,\ldots,x_n,\mu)\,\BP^!_{x_1,\ldots,x_n}(\md\mu)\,\alpha^{(n)}(\md(x_1,\ldots,x_n))
\end{align}
for each measurable $f\colon(\R^d)^n\times\bN\to[0,\infty]$. 
The probability  measures $\BP^!_{x_1,\ldots,x_n}$ are the
{\em ($n$-th order) reduced Palm distributions} of $\Phi$.
As opposed to $\BP^{\Phi}_{x_1,\ldots,x_n}$, these are probability measures
on $\bN$.

We call $\rho^{(n)} \colon (\R^d)^{(n)} \to [0,\infty)$
the {\em $n$-th correlation function} of $\Phi$ if it satisfies
\begin{equation}\label{e:lcorrfn}
\BE \int f(x_1,\ldots,x_n)\,\Phi^{(n)}(\md(x_1,\ldots,x_n))
= \int f(x_1,\ldots,x_n)\rho^{(n)}(x_1,\ldots,x_n) \, \md(x_1,\ldots,x_n),
\end{equation}
for each measurable $f\colon (\R^d)^n \to [0,\infty]$. 
This function exists if  $\alpha^{(n)}=\BE\Phi^{(n)}$ is $\sigma$-finite
and absolutely continuous w.r.t.\ Lebesgue measure. 
Synonymously we say  that $\rho^{(n)}$ is the  $n$-th correlation function of the distribution
$\BP(\Phi\in\cdot)$ of $\Phi$.

Assume that $\Phi$ is stationary and that  $\BE\Phi(B)^n<\infty$
for each bounded set $B\in\mathcal{B}^d$. 
Define the measure $\alpha^!_n$ by 
\eqref{ereducedn} with $\Phi^n$ replaced by  $\Phi^{(n)}$. 
Then the correlation functions
exist iff $\alpha_n^!$ is absolutely continuous. If $\rho_n$ denotes a density, then we may
choose
\begin{align}\label{corrinvariance}
\rho^{(n)}(x_1,\ldots,x_n)=\rho_n(x_2-x_1,\ldots,x_n-x_1),\quad x_1,\ldots,x_n\in\R^d,
\end{align}
to obtain a translation invariant version of $\rho^{(n)}$.
It is also possible to obtain a translation invariant version of the
reduced Palm distributions of order $2$. Modifying the proof of  
\eqref{2pointPalm} in an obvious way, we obtain
a probability kernel $y\mapsto \BP^!_{0,y}$ from $\R^d$ to $\bN$
satisfying
\begin{align}\label{2pointPalmreduced}
\BE \int f(x,y,\Phi -\delta_x-\delta_y)\,\Phi^{(2)}(\md (x,y))
= \iiint f(x,x+y,\theta_{-x}\mu)\,\BP^!_{0,y}(\md \mu)\,\alpha^!_2(\md y)\,\md x
\end{align}
for all measurable $f\colon\R^d\times\R^d\times\bN \to [0,\infty]$.
Therefore we can and will assume that
\begin{align}\label{defPalmreduced2}
\BP^!_{x,y}=\BP^!_{0,y-x}(\theta_{-x}\in\cdot),\quad x,y\in\R^d.
\end{align}

Let $n,l\in\N$ and assume that $\alpha^{(n+l)}$ is $\sigma$-finite.
If the correlation function $\rho^{(n+l)}$
of $\Phi$ exists (under $\BP$), then it can be easily shown that the correlation functions
$\rho^{(l)}_{x_1,\ldots,x_n}$ of $\BP^!_{x_1,\ldots,x_n}$ exist
for $\alpha^{(n)}$-a.e.\  $(x_1,\ldots,x_n)$. Moreover,
\begin{align*}
\I\{\rho^{(n)}(x_1,\ldots,x_n)=0\}\rho^{(n+l)}(x_1,\ldots,x_{n+l})=0,\quad \alpha^{(n+l)}\text{-a.e.\ $(x_1,\ldots,x_{n+l})$}
\end{align*}
and 
\begin{align}\label{corrPalm}
\rho^{(l)}_{x_1,\ldots,x_n}(x_{n+1},\ldots,x_{n+l})=\frac{\rho^{(n+l)}(x_1,\ldots,x_{n+l})}{\rho^{(n)}(x_1,\ldots,x_n)},
\quad \alpha^{(n+l)}\text{-a.e.\ $(x_1,\ldots,x_{n+l})$}.
\end{align}
All these facts can be derived from \cite[Theorem 1]{Hanisch1982}; see also \cite[Proposition 2.5]{Blaszczyszyn95}.

\subsection{Factorial moment expansion}\label{subFME}
In this section, we formulate factorial moment expansion for functions of a point process and use the same to also formulate one for functions of two independent point processes. Though the former was originally proven by \cite{Blaszczyszyn95,Bartek97}, we give here a self-contained derivation under different assumptions that suffice for our purposes.
Let $\Phi$ be a point process on a Borel space $(\BX,\mathcal{X})$;
see \cite{LastPenrose17}. We assume that $\Phi$ is uniformly $\sigma$-finite,
that is, there exists an increasing sequence $B_k\in\mathcal{X}$, $k\in\N$, with union $\BX$ such that
$\BP(\Phi(B_k)<\infty)=1$ for all $k\in\N$. Then the factorial moment measures
$\alpha^{(n)}$ of $\Phi$ are well-defined for each $n\in\N$.  
Given $n\in\N$  and $x_1,\ldots,x_n\in\BX$ we define the difference 
operators $D^n_{x_1,\ldots,x_n}$ (acting on functions $h\colon\bN\to\R$) as in \cite[Chapter 18]{LastPenrose17}. For $\mu \in \bN$, the first difference operator is defined as
$$ D_{x}h(\mu) = D^1_{x}h(\mu) := h(\mu + \delta_x) - h(\mu), \, \, x \in \BX,$$
and higher-order difference operators are defined recursively
$$ D^n_{x_1,\ldots,x_n}h(\mu)  := D^1_{x_1} \big( D^{n-1}_{x_2,\ldots,x_{n-1}}h(\mu) \big) = \sum_{J \subset [n]} (-1)^{n-|J|}h(\mu + \sum_{j \in J}\delta_{x_j}),$$
with $[n] = \{1,\ldots,n\}$ and $|J|$ denoting cardinality of the set $J$. Trivially, we set $D^0h \equiv h$. We will use $o$ to denote the null measure.

\begin{theorem}\label{t:FME} Let $h\colon\bN\to\R$ be a measurable function. Assume that
\begin{align}\label{FMEass}
\sum_{n=1}^{\infty}\frac{1}{n!}\int \big|D^{n}_{x_1,\ldots,x_n}h(o)\big|\,\alpha^{(n)}(\md(x_1,\ldots,x_n))<\infty.
\end{align}
Assume also that
\begin{align}\label{continfty}
\lim_{k\to\infty}h(\Phi_{B_k})=h(\Phi),\quad \BP\text{-a.s.}
\end{align}
Then $\BE|h(\Phi)|<\infty$ and
\begin{align}\label{FMEgeneral}
\BE h(\Phi) = \sum_{n=0}^{\infty}\frac{1}{n!}\int D^{n}_{x_1,\ldots,x_n}h(o)\,\alpha^{(n)}(\md(x_1,\ldots,x_n)).
\end{align}
\end{theorem}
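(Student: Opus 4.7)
The plan is to first establish the expansion for the restriction $\Phi_{B_k}$, where $\Phi(B_k)$ is almost surely finite, and then pass to the limit as $k \to \infty$ using \eqref{FMEass} and \eqref{continfty}. The crucial algebraic ingredient is a pointwise identity, proved by induction on $n$, stating that for any finite counting measure $\mu = \delta_{x_1} + \cdots + \delta_{x_n}$ (with repetitions allowed) one has
$$h(\mu) = \sum_{T \subseteq [n]} D^{|T|}_{(x_i)_{i \in T}} h(o).$$
The inductive step is immediate from $h(\nu + \delta_x) = h(\nu) + D_x h(\nu)$ together with the hypothesis applied both to $h$ and to $D_x h$. Since $D^n$ is symmetric in its arguments, regrouping subsets of the same cardinality as ordered tuples of distinct indices and recognising the resulting sum as integration against $\mu^{(k)}$ yields the (pathwise, finite) expansion
$$h(\mu) = \sum_{n=0}^{\infty} \frac{1}{n!} \int D^n_{x_1,\ldots,x_n} h(o) \, \mu^{(n)}(\md(x_1,\ldots,x_n)).$$

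Next I would apply this identity to $\mu = \Phi_{B_k}$, take expectations, and swap sum and expectation. To justify the interchange, inserting $|D^n|$ in place of $D^n$ in the expansion and then integrating shows that
$$\BE \sum_{n=0}^{\infty} \frac{1}{n!} \int_{B_k^n} \big|D^n_{x_1,\ldots,x_n} h(o)\big| \, \Phi^{(n)}(\md(x_1,\ldots,x_n)) = \sum_{n=0}^{\infty} \frac{1}{n!} \int_{B_k^n} \big|D^n_{x_1,\ldots,x_n} h(o)\big| \, \alpha^{(n)}(\md(x_1,\ldots,x_n)),$$
which is finite by \eqref{FMEass}. Fubini then delivers
$$\BE h(\Phi_{B_k}) = \sum_{n=0}^{\infty} \frac{1}{n!} \int_{B_k^n} D^n_{x_1,\ldots,x_n} h(o) \, \alpha^{(n)}(\md(x_1,\ldots,x_n)).$$

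Finally, I would let $k \to \infty$ on both sides. On the right-hand side, dominated convergence on each summand (with $B_k^n \uparrow \BX^n$ and summable majorant $\frac{1}{n!} \int |D^n h(o)| \, \alpha^{(n)}$ from \eqref{FMEass}) yields the target series. For the left-hand side, I would introduce the pathwise dominator
$$H(\mu) := \sum_{n=0}^{\infty} \frac{1}{n!} \int \big|D^n_{x_1,\ldots,x_n} h(o)\big| \, \mu^{(n)}(\md(x_1,\ldots,x_n)),$$
which by the triangle inequality in the expansion satisfies $|h(\nu)| \le H(\nu)$ for every finite $\nu$, and which is monotone in $\nu$ (since $\nu \mapsto \nu^{(n)}$ is monotone in the pointwise order on $\bN$), so that $|h(\Phi_{B_k})| \le H(\Phi_{B_k}) \le H(\Phi)$. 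Because $\BE H(\Phi)$ equals precisely the quantity in \eqref{FMEass}, one has $H(\Phi) \in L^1(\BP)$, and then \eqref{continfty} combined with dominated convergence gives $\BE h(\Phi_{B_k}) \to \BE h(\Phi)$ as well as $\BE|h(\Phi)|<\infty$, completing the proof. The main delicate point is the construction of this $k$-uniform integrable dominator: it hinges on the triangle-inequality version of the finite expansion together with the absolute integrability \eqref{FMEass}, which is exactly why the assumption is stated with $|D^n|$ rather than $D^n$.
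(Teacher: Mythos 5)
Your proof is correct and follows essentially the same route as the paper's: establish the finite expansion for $h(\Phi_{B_k})$, take expectations, and pass to the limit via dominated convergence using the absolute series as a $k$-uniform dominator. The only difference is that you fill in two details the paper treats tersely --- a self-contained inductive proof of the finite expansion via the subset identity $h(\mu)=\sum_{T\subseteq[n]}D^{|T|}_{(x_i)_{i\in T}}h(o)$ (the paper cites \cite{Poinas19} and \cite[(18.3)]{LastPenrose17}), and the explicit monotonicity $\nu\le\nu'\Rightarrow\nu^{(n)}\le(\nu')^{(n)}$ used to dominate $|h(\Phi_{B_k})|$ by a single integrable $k$-independent bound (the paper's \eqref{821}).
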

Before the proof, we compare our assumptions with those of \cite[Theorem 3.1]{Bartek97}.
Let $\prec$ be a measurable total order on $\BX$ restricted to
$\{(x,y)\in\BX^2:x\ne y\}$. Given $\mu\in\bN(\BX)$ and $y\in\BX$
we denote by $\mu_y$ the restriction of $\mu$ to $\{x\in\BX:x\prec y\}$
Assume there exists a sequence $z_k\in\BX$, $k\in\N$,
such that $\{x\in\BX:x\prec z_k\}\uparrow \BX$
and $\BP(\Phi_{z_k}(\BX)<\infty)=1$ for each $k\in\N$. Assume also that
$\Phi$ is simple.
This is essentially the setting from \cite{Bartek97}.
Let $n\in\N$. Using the symmetry properties of the difference operators
we obtain from
\eqref{820} after some calculations
\begin{align*}
h(\Phi_{z_k})&= h(o) + \sum_{m = 1}^n \int \I\{z_k\prec x_m\prec\cdots
\prec x_1\}
D^{m}_{x_1,\ldots,x_m}h(o) \,\Phi^m(\md(x_1, \ldots, x_m))\\
&\quad + \int \I\{z_k\prec x_{n+1}\prec\cdots \prec x_1\}
D^{n+1}_{x_1,\ldots,x_{n+1}}h(\Phi_{x_{n+1}}) \,\Phi^{n+1}(\md(x_1,
\ldots, x_{n+1})).
\end{align*}
Assume now that
\begin{align}\label{B1}
\sum_{m = 1}^n \int \I\{z_k\prec x_m\prec\cdots \prec x_1\}
\big|D^{m}_{x_1,\ldots,x_m}h(o)\big| \,\alpha^{(m)}(\md(x_1, \ldots,
x_m))<\infty
\end{align}
and
\begin{align}\label{B2a}
\iint \I\{z_k\prec x_{n+1}\prec\cdots \prec
x_1\}\big|D^{n+1}_{x_1,\ldots,x_{n+1}}h(\mu_{x_{n+1}})\big|
\,\BP^!_{x_1,\ldots,x_{n+1}}(d\mu)
\,\alpha^{(n+1)}(\md(x_1, \ldots, x_{n+1}))<\infty.
\end{align}
Then it follows from dominated convergence that
\begin{align}\label{FMEprec}
\BE &h(\Phi)=f(0)+\sum_{m = 1}^n \int \I\{x_m\prec\cdots \prec x_1\}
D^{m}_{x_1,\ldots,x_m}h(o) \,\alpha^{(m)}(\md(x_1, \ldots, x_m))\\ \notag
&+\iint \I\{x_{n+1}\prec\cdots \prec x_1\}
D^{n+1}_{x_1,\ldots,x_{n+1}}h(\mu_{x_{n+1}})
\,\BP^!_{x_1,\ldots,x_{n+1}}(d\mu)
\,\alpha^{(n+1)}(\md(x_1, \ldots, x_{n+1})),
\end{align}
which is the main result from \cite{Blaszczyszyn95,Bartek97}. If
\begin{align}\label{B2}
\lim_{n\to\infty}\iint
\big|D^{n+1}_{x_1,\ldots,x_{n+1}}h(\mu_{x_{n+1}})\big|
\,\BP^!_{x_1,\ldots,x_{n+1}}(d\mu)
\,\alpha^{(n+1)}(\md(x_1, \ldots, x_{n+1}))=0,
\end{align}
then we obtain  the infinite series representation \eqref{FMEgeneral}.
Assumptions \eqref{B1} and \eqref{B2} are (slightly) weaker than
\eqref{FMEass}.
On the other hand they require additional assumptions on $\Phi$. Moreover,
\eqref{B2} involves the Palm distributions of $\Phi$ and seems to be hard to
check for an unbounded function $h$.  Since condition \eqref{FMEass}
involves only the factorial moment measures, it seems to be both
mathematically more natural and easier to check in specific examples.
\begin{proof}(Proof of Theorem \ref{t:FME}) Let us abbreviate $\Phi_k:=\Phi_{B_k}$, $k\in\N$. We have
\begin{align}\label{820}
h(\Phi_k)= h(o) + \sum_{m = 1}^\infty \frac{1}{n!} 
\int  D^{n}_{x_1,\ldots,x_n}h(o) \,(\Phi_k)^{(n)}(\md(x_1, \ldots, x_n)) 
\end{align}
provided that $\Phi(B_k)<\infty$. This is implicit in \cite{Poinas19}. Writing $\Phi_k$ as a finite
sum of Dirac measures and using formula \cite[(18.3)]{LastPenrose17}, 
the identity can be checked by a direct computation.
It follows that
\begin{align}\label{821}
|h(\Phi_k)|\le |h(o)| + \sum_{n=1}^\infty \frac{1}{n!} 
\int  \big|D^{n}_{x_1,\ldots,x_n}h(o)\big| \,\Phi^{(n)}(\md(x_1, \ldots, x_n)). 
\end{align}
Here the right-hand side is independent of $k\in\N$ and integrable
by assumption \eqref{FMEass}.
Dominated convergence shows that the  expectation of the right-hand
side of \eqref{820} tends to the right-hand side of the asserted formula
\eqref{FMEgeneral}. By assumption \eqref{continfty} and
\eqref{821} we can use dominated convergence once again to conclude
that $\BE h(\Phi_k)\to \BE h(\Phi)$ as $k\to\infty$ and that $\BE|h(\Phi)|<\infty$.
Hence the result follows from \eqref{820}. 
\end{proof}

In the following we formulate the FME for functions of two independent
point processes on $\BX$. To do so, we need to introduce some notation.
Let $g\colon\bN(\BX)\times\bN(\BX)\to\R$ be a function, $m\in\N$ and
$x_1,\ldots,x_m\in\BX$. Then $D^{m,1}_{x_1,\ldots,x_m}g$ is obtained by applying
the difference operator  $D^m_{x_1,\ldots,x_m}$ to $g(\cdot,\mu_2)$ for each 
(fixed) $\mu_2$. The result is again a function on  $\bN(\BX)\times\bN(\BX)$.
The function $D^{m,2}_{x_1,\ldots,x_m}g$ is defined in a similar way. For $m=0$ we set
$D^{0,1}_{x_1,\ldots,x_m}g=D^{0,2}_{x_1,\ldots,x_m}g:=g$. Given $n\in\N_0$ and 
$y_1,\ldots,y_n\in\BX$ these operators can be iterated as
$D^{m,1}_{x_1,\ldots,x_m}[D^{n,2}_{y_1,\ldots,y_n}g]$.
For $\mu\in\bN(\BX)$ and $c\in\R$ we set $\int c \,\md \mu^{(0)}:=c$.

\begin{theorem} 
\label{t:FME2pp}
Suppose that $\Phi_1,\Phi_2$ are independent uniformly $\sigma$-finite point
processes on $\BX$. Let $g\colon\bN(\BX)\times\bN(\BX)\to\R$ be a measurable function
such that
\begin{align}
\label{FMEass2}
\sum_{m,n=0}^{\infty} \frac{1}{m!n!}
\iint \big| D^{m,1}_{x_1,\ldots,x_m}[D^{n,2}_{y_1,\ldots,y_n}g](o,o)\big|\,\alpha_2^{(n)}(\md(y_1,\ldots,y_n))
\,\alpha_1^{(m)}(\md(x_1,\ldots,x_m))<\infty,
\end{align}
where $\alpha^{(m)}_i$ ($i=1,2$) is the $m$-th factorial moment measure of $\Phi_i$.
Assume also that
\begin{align}
\label{continfty2}
\lim_{k\to\infty}g((\Phi_1)_{B_k},(\Phi_2)_{B_k})=g(\Phi),\quad \BP\text{-a.s.}
\end{align}
Then $\BE|g(\Phi_1,\Phi_2)|<\infty$ and
\begin{align*}
\BE &g(\Phi_1,\Phi_2)\\
&=\sum_{m,n=0}^{\infty} \frac{1}{m!n!}
\iint  D^{m,1}_{x_1,\ldots,x_m}[D^{n,2}_{y_1,\ldots,y_n}g](o,o)\,\alpha_2^{(n)}(\md(y_1,\ldots,y_n))
\,\alpha_1^{(m)}(\md(x_1,\ldots,x_m)).
\end{align*}
\end{theorem}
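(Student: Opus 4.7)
The plan is to reduce Theorem \ref{t:FME2pp} to a single application of Theorem \ref{t:FME} by viewing $\Phi_1$ and $\Phi_2$ jointly as one point process on the disjoint union $\BX^\sqcup := \BX \times \{1,2\}$. Set
\begin{align*}
\Phi := \sum_{i=1}^2 \int \I\{(x,i)\in\cdot\}\,\Phi_i(\md x),
\end{align*}
which is a uniformly $\sigma$-finite point process on $\BX^\sqcup$ with exhausting sequence $B_k \times \{1,2\}$. Define $G\colon \bN(\BX^\sqcup)\to\R$ by $G(\mu) := g(\mu(\cdot\times\{1\}), \mu(\cdot\times\{2\}))$, so that $G(\Phi) = g(\Phi_1,\Phi_2)$, and the continuity condition \eqref{continfty} for $G$ follows from \eqref{continfty2}.

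Next I would verify two structural identities. First, a straightforward induction on $N$ (using that $D^1_{(x,1)}$ acts only on the first argument of $g$ and $D^1_{(y,2)}$ only on the second, hence they commute) shows that for $z_1,\ldots,z_N\in\BX$ and $\epsilon_1,\ldots,\epsilon_N\in\{1,2\}$,
\begin{align*}
D^N_{(z_1,\epsilon_1),\ldots,(z_N,\epsilon_N)} G(o) = D^{m,1}_{x_1,\ldots,x_m}[D^{n,2}_{y_1,\ldots,y_n} g](o,o),
\end{align*}
where $(x_1,\ldots,x_m)$ and $(y_1,\ldots,y_n)$ are the subsequences indexed by $\{i:\epsilon_i=1\}$ and $\{i:\epsilon_i=2\}$, with $m+n=N$. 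Second, by independence of $\Phi_1$ and $\Phi_2$, expansion of $\Phi^{(N)} = (\Phi_1+\Phi_2)^{(N)}$ on the disjoint union gives the decomposition
\begin{align*}
\int F\,\md\alpha^{(N)} = \sum_{S\subset[N]}\int F\big((z_i,\epsilon_i(S))_{i=1}^N\big)\,\alpha_1^{(|S|)}(\md(z_i)_{i\in S})\,\alpha_2^{(N-|S|)}(\md(z_i)_{i\notin S}),
\end{align*}
where $\epsilon_i(S)=1$ for $i\in S$ and $2$ otherwise. Combining these and using the symmetry of $D^{m,1}[D^{n,2}g]$ in each block of arguments, the $\binom{N}{m}$ subsets of size $m$ all give the same contribution, yielding
\begin{align*}
\frac{1}{N!}\int D^N G(o)\,\md\alpha^{(N)}
= \sum_{m+n=N}\frac{1}{m!\,n!}\iint D^{m,1}_{x_1,\ldots,x_m}[D^{n,2}_{y_1,\ldots,y_n}g](o,o)\,\alpha_2^{(n)}(\md\bar y)\,\alpha_1^{(m)}(\md\bar x).
\end{align*}

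With the same computation applied to the absolute value, assumption \eqref{FMEass2} becomes exactly the integrability hypothesis \eqref{FMEass} of Theorem \ref{t:FME} for $G$ with respect to $\Phi$. Applying that theorem, summing the resulting double series over $N$ and regrouping by $(m,n)$ (absolute convergence permits the reindexing) delivers the claimed expansion and the finiteness of $\BE|g(\Phi_1,\Phi_2)|$.

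The main obstacle is establishing the factorial moment decomposition cleanly; the symbolic bookkeeping with the sums $\sum^{\neq}$ and the factor $\binom{N}{m}$ must be handled with care so that the combinatorial factor cancels correctly against $N!$ to produce the clean $1/(m!\,n!)$ weights. All other steps (the difference operator identity, the verification of \eqref{continfty}, the single invocation of Theorem \ref{t:FME}) are either immediate or routine.
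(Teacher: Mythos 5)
Your proposal is correct and takes essentially the same approach as the paper's own proof: lift $(\Phi_1,\Phi_2)$ to a single point process on $\BX\times\{1,2\}$, set $G=g\circ T$ with $T(\mu)=(\mu(\cdot\times\{1\}),\mu(\cdot\times\{2\}))$, verify the difference-operator identity and the product decomposition of the factorial moment measures by independence, and invoke Theorem \ref{t:FME}. The only cosmetic difference is that you write the factorial moment decomposition as an (unsymmetrized) sum over subsets $S\subset[N]$ and then collapse it via the binomial coefficient, whereas the paper states it directly for symmetric integrands with the $\binom{n}{m}$ factor already absorbed; the combinatorics and the resulting $1/(m!\,n!)$ weights match in both.
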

\begin{proof}  Define a point process $\Phi$ on
$\BX':=\BX\times\{1,2\}$ by setting
$\Phi(\cdot\times\{1\}):=\Phi_1$ and $\Phi(\cdot\times\{2\}):=\Phi_2$.
Define the measurable map $T\colon\bN(\BX')\to \bN(\BX)\times \bN(\BX)$
by $T(\mu):=(\mu(\cdot\times\{1\}),\mu(\cdot\times\{2\}))$, $\mu\in\bN(\BX')$.
We have
\begin{align}\label{e:disjoint}
T(\Phi)=(\Phi_1,\Phi_2).
\end{align}

We wish to apply Theorem \ref{t:FME} with the function $h:=g\circ T$.
Fix $n\in\N$ and let $f\colon(\BX')^n\to[0,\infty)$ be a measurable, symmetric
function. Since $\Phi_1$ and $\Phi_2$ are independent, we easily get that the $n$-th factorial moment measure
$\alpha^{(n)}$ of $\Phi$ satisfies
\begin{align*}
\int f\,\md \alpha^{(n)}=\sum_{m=0}^n \binom{n}{m}&\iint f((x_1,1),\ldots,(x_m,1),(y_1,2),\ldots,(y_{n-m},2))\\
&\qquad \times\alpha^{(m)}_1(\md(x_1,\ldots,x_m))\,\alpha^{(n-m)}_2(\md(y_1,\ldots,y_{n-m})).
\end{align*}
Therefore we obtain from the symmetry properties of the difference operators that
\begin{align*}
&\int D^{n}_{(x_1,i_1),\ldots,(x_n,i_n)}h(o)\,\alpha^{(n)}(\md((x_1,i_1),\ldots,(x_n,i_n)))\\
&=\sum_{m=0}^n \binom{n}{m}\iint D^m_{(x_1,1),\ldots,(x_m,1)}\big[D^{n-m}_{(y_1,2),\ldots,(y_{n-m},2)}h\big](o)\\
&\qquad\qquad\qquad \times\alpha^{(m)}_1(\md(x_1,\ldots,x_m))\alpha^{(n-m)}_2(\md(y_1,\ldots,y_{n-m}))\\
&=\sum_{m=0}^n \binom{n}{m}\iint D^{m,1}_{x_1,\ldots,x_m}\big[D^{n-m,2}_{y_1,\ldots,y_{n-m}}g\big](o,o)
\,\alpha^{(m)}_1(\md(x_1,\ldots,x_m))\,\alpha^{(n-m)}_2(\md(y_1,\ldots,y_{n-m})),
\end{align*}
where we have used the definition $h=g\circ T$ to get the second equality.
The same calculation applies to the integrals of the absolute value of the
difference operator. Therefore the assertions follows from Theorem \ref{t:FME}.
\end{proof}

\section{Total variation bounds for Gaussian vectors}
\label{s:tvgrv}

\begin{lemma}\label{l:gauss beta cov bound}\rm{}
Suppose that $X_1,X_2$ are $\R^d$-valued jointly Gaussian and identically distributed random vectors. Then there exists a constant $c>0$ such that
\begin{equation}\label{e:gauss beta cov bound}
    \| \BP((X_1,X_2)\in \cdot) - \BP(X_1\in \cdot)^{\otimes  2} \| \leq c \| \BC[X_1,X_2] \|,
\end{equation}
where $c$ only depends on the dimension $d$, the covariance matrix $\BC[X_1]$, and the chosen matrix norm.
\end{lemma}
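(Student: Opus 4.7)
The plan is to reduce to the case of standard Gaussians via a linear change of variables, and then apply Pinsker's inequality combined with the explicit formula for the Kullback--Leibler divergence between two Gaussians.

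First, I would reduce to the case $\mu = 0$ and $\Sigma_0 := \BC[X_1] = I$, assuming $\Sigma_0$ is invertible. The affine map $x \mapsto \Sigma_0^{-1/2}(x-\mu)$, applied componentwise, is a bijection so it preserves total variation distance between the joint law and the product of marginals. Under this map, the cross-covariance becomes $\tilde C := \Sigma_0^{-1/2}\BC[X_1,X_2]\Sigma_0^{-1/2}$, and the chosen norm satisfies $\|\tilde C\| \le c_1 \|\BC[X_1,X_2]\|$ for a constant $c_1$ depending only on $d$, $\Sigma_0$, and the norm. (If $\Sigma_0$ is singular, we can either work in its range, where $X_1$ and $X_2$ almost surely take values, or first regularize by adding an independent $\varepsilon$-noise and let $\varepsilon \to 0$.)

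Next, since $\|P-Q\| \le 2$ always, it suffices to prove \eqref{e:gauss beta cov bound} when $\|\tilde C\|_{\mathrm{op}} \le 1/2$, say; outside this regime the bound follows trivially by making $c$ large enough (using that $\|\tilde C\|$ is bounded below in any fixed norm once $\|\tilde C\|_{\mathrm{op}}$ is). Under the reduction, $P$ and $Q$ are mean-zero Gaussians on $\R^{2d}$ with covariance matrices
\begin{equation*}
\Sigma = \begin{pmatrix} I & \tilde C \\ \tilde C^T & I \end{pmatrix}, \qquad \Sigma' = I_{2d},
\end{equation*}
respectively. Applying Pinsker's inequality (in the convention of the paper, $\|P-Q\| \le \sqrt{2\,\mathrm{KL}(P\|Q)}$), the task reduces to bounding the KL divergence.

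The standard Gaussian KL formula gives
\begin{equation*}
\mathrm{KL}(P\|Q) = \tfrac{1}{2}\bigl(\mathrm{tr}(\Sigma) - 2d - \log\det\Sigma\bigr) = -\tfrac{1}{2}\log\det\bigl(I - \tilde C^T \tilde C\bigr),
\end{equation*}
where the trace term vanishes because the diagonal blocks of $\Sigma$ are $I$, and the determinant simplification uses the Schur complement. In the regime $\|\tilde C\|_{\mathrm{op}} \le 1/2$ the eigenvalues of $\tilde C^T\tilde C$ are at most $1/4$, so the elementary inequality $-\log(1-t) \le 2t$ for $t \in [0,1/2]$ yields $\mathrm{KL}(P\|Q) \le \mathrm{tr}(\tilde C^T\tilde C) = \|\tilde C\|_F^2 \le c_2 \|\tilde C\|^2$ by equivalence of matrix norms on $\R^{d\times d}$. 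Combining these estimates produces \eqref{e:gauss beta cov bound}.

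The only genuinely technical point I expect is tracking the norm dependence through the change of variables and the norm equivalences (operator vs.\ Frobenius vs.\ the arbitrary chosen norm), plus the mild regularization step needed to handle singular $\Sigma_0$. Everything else is a direct invocation of Pinsker and the Gaussian KL formula.
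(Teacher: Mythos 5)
Your proof is correct and follows essentially the same route as the paper: reduce to mean zero and identity marginal covariance, restrict to small cross-covariance, apply Pinsker together with the Gaussian KL formula and a Schur-complement determinant identity, and finish with an elementary bound on the logarithm. The only cosmetic difference is that you bound $-\log\det(I-\tilde C^T\tilde C)$ eigenvalue-by-eigenvalue to reach the Frobenius norm, whereas the paper uses $\det(I-A^TA)\ge(1-\|A\|^2)^d$ directly; both yield the same linear bound.
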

\begin{proof}
First of all, as all matrix-norms are equivalent, we will assume that the matrix norm $\|\cdot\|$ is the spectral norm. Note that it is submultiplicative.
As both sides of \eqref{e:gauss beta cov bound} are invariant under joint deterministic translations of $X_1,X_2$, without loss of generaltity, we can assume that $\BE[X_1]$ = 0. Further, as $\BC[X_1]$ is positive semi-definite, there exists an invertible matrix $L\in\R^{d\times d}$ such that 
\begin{equation*}
    \BC[L X_1] = L\BC[X_1]L^T = \text{diag}(1,\ldots, 1,0,\ldots,0).
\end{equation*}
Because $L$ is invertible, we have
\begin{equation*}
    \| \BP((LX_1,LX_2)\in \cdot) - \BP(LX_1\in \cdot)^{\otimes  2} \| = \| \BP((X_1,X_2)\in \cdot) - \BP(X_1\in \cdot)^{\otimes  2} \|.
\end{equation*}
Additionally,
\begin{equation*}
    \|\BC[LX_1,LX_2]\| = \| L\BC[X_1,X_2] L^T \| \leq \|L\|^2 \|\BC[X_1, X_2]\|.
\end{equation*}
Hence, without loss of generality, we can assume that $\BC[X_1] = \text{diag}(1,\ldots, 1,0,\ldots,0)$. Because zeros on the diagonal only lead to a reduction in the dimension, we can even assume that $\BC[X_1]=I_d$. Finally, we can also assume that $\|\BC[X_1,X_2]\|\leq \tfrac{1}{2}$ as the LHS of \eqref{e:gauss beta cov bound} is bounded by $1$. Let $A:=\BC[X_1,X_2], \Sigma:=\begin{pmatrix}
    I_d & A \\
    A^T & I_d
\end{pmatrix}$.
Using the block-form of $\Sigma$, we can derive that  
\begin{equation*}
    \det(\Sigma) = \det(I_d) \det(I_d- A^T I_d^{-1} A) = \det(I_d - A^T A) \geq (1- \|A\|^2)^d.
\end{equation*}
This bound, Pinsker's inequality, 
and a well known formula for the Kullback–Leibler divergence of two normal distributions (\cite[Chapter II, Section 4.1.10]{Soch2024}) yield 
\begin{align*}
    \| \BP((X_1,X_2)\in \cdot) - \BP(X_1\in \cdot)^{\otimes  2} \| 
    &\leq \sqrt{ 2 D_{KL}(N(0,\Sigma)\| N(0,I_{2d}))}\\
    &= \sqrt{ -\log(\det(\Sigma))} \\
    &\leq \sqrt{ -\log((1 - \|A\|^2)^d))} \\
    &= \sqrt{ -d\log(1 - \|A\|^2)}.
\end{align*}
Finally, the assertion can be proven using $-\log(1-x)\leq \frac{4}{3}x$ for $x\in[0, \frac{1}{4}]$ and $\|A\|\leq \frac{1}{2}$, as
\begin{equation*}
    \| \BP((X_1,X_2)\in \cdot) - \BP(X_1\in \cdot)^{\otimes  2} \| \leq \sqrt{ -d \log(1 - \|A\|^2)} \leq \sqrt{\frac{4d}{3}} \|A\|.
\end{equation*}
\end{proof}

\end{appendices}

\section*{Acknowledgements}
\addcontentsline{toc}{section}{Acknowledgements}
DY's research was supported by ANRF Core Research Grant CRG/2023/002667,
SERB-MATRICS grant MTR/2020/000470 and CPDA from the Indian Statistical
Institute. The work was also facilitated by his visits to KIT and he is
thankful to the institute for hosting him. He is also thankful to
Manjunath Krishnapur for discussions regarding total variation bounds
for Gaussian vectors. 
This work was also supported by the Deutsche Forschungsgemeinschaft
(DFG, German Research Foundation) through the SPP 2265, under grant
numbers KL 3391/2-2, ME 1361/16-1, WI 5527/1-1, and LO 418/25-1, as well
as by the Helmholtz Association and the DLR via the Helmholtz Young
Investigator Group ``DataMat''.

\addcontentsline{toc}{section}{References}

\end{document}